\numberwithin{equation}{section}
\newcommand{\sgn}{\operatorname{sgn}}
\newcommand{\range}{\operatorname{Range}}
\newcommand{\closure}{\operatorname{cl}}
\newcommand{\var}{\operatorname{var}}
\newcommand{\dd}{\,\mathrm{d}}
\newcommand{\EE}{\mathcal{E}}
\newcommand{\LL}{\mathcal{L}}
\newcommand{\PP}{\mathcal{P}}
\newcommand{\RR}{\mathcal{R}}
\newcommand{\TT}{\mathcal{T}}
\newcommand{\UU}{\mathcal{U}}
\newcommand{\N}{\mathbb{N}}
\newcommand{\R}{\mathbb{R}}
\newcommand{\weakly}{\rightharpoonup}
\DeclareFontFamily{OT1}{pzc}{}
\DeclareFontShape{OT1}{pzc}{m}{it}{<-> s * [1.175] pzcmi7t}{}
\DeclareMathAlphabet{\mathpzc}{OT1}{pzc}{m}{it}
\newcommand\qq{\mathpzc{q}}
\newcommand\pp{\mathpzc{p}}
\newcommand\uu{\mathpzc{u}}
\newcommand\vv{\mathpzc{v}}
\newcommand\zz{\mathpzc{z}}
\newcommand{\Aone}{\hyperref[Aone]{\textup{(A1)}}}
\newcommand{\Atwo}{\hyperref[Atwo]{\textup{(A2)}}}
\newcommand{\ExA}{\hyperref[ExampleA]{\textup{1}}}
\newcommand{\ExB}{\hyperref[ExampleB]{\textup{2}}}
\newtheorem{theorem}{Theorem}[section]
\newtheorem{corollary}[theorem]{Corollary}
\newtheorem{lemma}[theorem]{Lemma}
\newtheorem{assumption}[theorem]{Assumption}
\newtheorem{proposition}[theorem]{Proposition}
\newtheorem{definition}[theorem]{Definition}
\newtheorem{remark}[theorem]{Remark}
\newtheorem{Algorithm}[theorem]{Algorithm}
\begin{document}


\title{On the identification and optimization of nonsmooth superposition operators in semilinear elliptic {PDEs}}

\runningtitle{Optimization of nonsmooth superposition operators}


\author{Constantin Christof}\address{Technische Universit\"{a}t M\"{u}nchen,
Chair of Optimal Control,
School of Computation,
Information and Technology,
Department of Mathematics,
Boltzmannstraße 3,
85748 Garching, Germany, \email{christof@cit.tum.de, julia.kowalczyk@cit.tum.de}}
\author{Julia Kowalczyk}\sameaddress{1}

%
%

\begin{abstract}
We study an infinite-dimensional optimization problem that aims 
to identify the Nemytskii operator in the nonlinear part of a
prototypical semilinear elliptic partial differential equation (PDE) which 
minimizes the distance between the PDE-solution and a given desired state.
In contrast to previous works, we 
consider this identification problem in a low-regularity regime in which the 
function inducing the Nemytskii operator is a-priori only known to be 
an element of $H^1_{loc}(\R)$. This makes the studied problem class a
suitable point of departure for the rigorous analysis of training problems for learning-informed PDEs 
in which an unknown superposition operator is approximated 
by means of a neural network with nonsmooth activation functions (ReLU, leaky-ReLU, etc.).
We establish that, despite the low regularity of the controls, 
it is possible to derive a classical 
stationarity system for local minimizers and to solve the considered problem by means of 
a gradient projection method.
The convergence of the resulting algorithm is proven in the function space setting. 
It is also shown that the established first-order necessary optimality conditions imply that 
locally optimal superposition operators share various characteristic properties 
with commonly used activation functions: They are always sigmoidal, 
continuously  differentiable away from the origin, and typically possess a distinct kink 
at zero. The paper concludes with numerical experiments which 
confirm the theoretical findings.
 
\end{abstract}

\subjclass{
35J61,
49J50,
49J52,
49K20,
49M05,
68T07}

\keywords{
optimal control, 
superposition, 
Nemytskii operator, 
semilinear elliptic partial differential equation, 
data-driven models,
learning-informed PDEs, 
inverse problems, 
Bouligand stationarity, 
Gâteaux differentiability, 
gradient projection method,
artificial neural network,
optimality condition,
nonsmooth optimization,
machine learning
}
\maketitle

\section{Introduction}
\label{sec:1}
This paper is concerned with the analysis and numerical solution
of optimization problems of the following type:
\begin{equation}
\label{eq:P_g_prime}
\left.
	\begin{aligned}
		\text{Minimize} 
		\quad & \frac12 \| y - y_D\|_{L^2(\Omega)}^2 + \nu_1 \|g'\|_{L^1(\R)} + \frac{\nu_2}{2} \|g' - u_D\|_{L^2(\R)}^2 \\
        \text{w.r.t.}
        \quad &y \in H^1(\Omega), \quad  g \in H^1_{loc}(\R),\\
		\text{s.t.} \quad & -\Delta y + g(y) = f \text{ in }\Omega,\quad y = 0 \text{ on }\partial \Omega,
		\\
		 \text{and}\quad  & g' \geq 0 \text{ a.e.\  in } \R,
		 		\\
		 \text{and}\quad  & g(0) = 0. 
	\end{aligned}
	\right \}
\end{equation}

Here, $\Omega \subset \R^d$, $d \in \{1,2,3\}$, is a bounded domain
which is convex or possesses a $C^{1,1}$-boundary;
$y_D \in L^2(\Omega)$, $f \in L^2(\Omega) \setminus\{0\}$, and $u_D \in L^2(\R)$ are given functions;
$g'$ denotes the weak derivative of $g$;
$\nu_1 \geq 0$ and $\nu_2 > 0$ are regularization parameters;
$\Delta$ denotes the Laplace operator; and the objective function is to be understood as an extended 
real-valued function on $H^1(\Omega) \times H^1_{loc}(\R)$ with values in $[0,\infty]$.
For the precise assumptions on the quantities in \eqref{eq:P_g_prime} and 
the definitions of the occurring spaces etc.,
we refer the reader to \cref{subsec:2.2,subsec:2.1}. 
Note that, by introducing the variable $u := g'$, the problem \eqref{eq:P_g_prime} 
can also be formulated in the following, more convenient form:
\begin{equation}
\label{eq:P_g_prime_2}
\left.~~
	\begin{aligned}
		\text{Minimize} 
		\quad &  \frac12 \| y - y_D\|_{L^2(\Omega)}^2 + \nu_1 \|u\|_{L^1(\R)} + \frac{\nu_2}{2} \|u - u_D\|_{L^2(\R)}^2 \\
        \text{w.r.t.}
        \quad &y \in H^1(\Omega), \quad  u \in L^2(\R),\\
		\text{s.t.} \quad & -\Delta y + g_u(y) = f \text{ in }\Omega,\quad y = 0 \text{ on }\partial \Omega,
		\\
		 \text{and} \quad  & g_u(t) = \int_0^t u(s) \dd s \quad \forall t \in \R,
		 \\
		 \text{and}\quad  & u \geq 0 \text{ a.e.\  in } \R.
	\end{aligned}
\right \}
\end{equation}
The main results of this work -- \cref{th:mainBV_P,th:truncate,th:BouligandStationarityP,th:Sregulated,th:PDStationarityP,th:GradMain} -- establish improved regularity properties for
local solutions of \eqref{eq:P_g_prime} and \eqref{eq:P_g_prime_2}, 
first-order necessary optimality conditions of Bouligand- and primal-dual type, and 
convergence results for a gradient projection algorithm 
that makes it possible to solve \eqref{eq:P_g_prime} and \eqref{eq:P_g_prime_2} in 
a function space setting.
For a more detailed overview of our findings, see \cref{subsec:1.2} below. 

\subsection{Motivation, Background, and Relation to Prior Work}
\label{subsec:1.1}

Before we present in detail what we prove for the local minimizers of
\eqref{eq:P_g_prime} and \eqref{eq:P_g_prime_2}
in this paper, let us briefly discuss why it is worthwhile to consider these problems:
Our main motivation 
for the study of \eqref{eq:P_g_prime} is that this minimization problem can be interpreted as the 
continuous version of a training problem for a learning-informed 
partial differential equation (PDE)
which aims to approximate an unknown nonlinearity in a semilinear model
by means of an artificial neural network. 
Such problems have received considerable attention in recent years --
see, e.g., \cite{Aarset2023,Brunton2016,Court2022,Dong2022-1,Dong2022-2,Dong2022-3,Kaltenbacher2022,Qin2019,Rudy2017} -- and 
typically take the following form  if an all-at-once 
approach with a single right-hand side $f$ is used for the network training
and the same PDE as in \eqref{eq:P_g_prime} is studied:
\begin{equation}
\label{eq:NNP}
\left.
	\begin{aligned}
		\text{Minimize} 
		\quad & J_1(y) + J_2(\alpha)\\
        \text{w.r.t.}
        \quad &y \in H^1(\Omega), \quad  \alpha \in D,\\
		\text{s.t.} \quad & -\Delta y + \psi(\alpha, y) = f \text{ in }\Omega,\quad y = 0 \text{ on }\partial \Omega,
		\\
		 \text{and}\quad  & \alpha \in D_{ad}.
	\end{aligned}
	~\right \}
\end{equation}
Here, $J_1$ is a fidelity term that measures
the distance between the PDE-solution $y$ and some given data either in the whole of $\Omega$ or in certain 
parts of it;
$\psi\colon D \times \R \to \R$ is a neural network with parameter vector $\alpha$
and parameter space $D$ (defined, e.g., as in \cite[Section 2]{Christof2022}); $D_{ad}$ is an
appropriately chosen subset of $D$; and $J_2\colon D \to \R$ is a regularization term 
acting on $\alpha$. 
See, for example, \cite[Section 4]{Court2022},
\cite[Section 2]{Kaltenbacher2022},
and \cite[Section~3]{Qin2019} for particular instances of problems of the type \eqref{eq:NNP}. 
What our model problem \eqref{eq:P_g_prime} now represents is --
for a prototypical PDE and a special choice of the fidelity and regularization term that will be discussed in 
more detail in \cref{subsec:2.3} -- 
the continuous optimization problem that is obtained when the 
Nemytskii operator $g$ is not assumed to be given in the form of a neural 
network but allowed to vary freely among all elements of $H^{1}_{loc}(\R)$
that are nondecreasing and zero at the origin. (Note that the former of these conditions is required to ensure the 
unique solvability of the state-equation; see \cref{sec:3} below.) In other words, \eqref{eq:P_g_prime}
is the continuous optimization problem that is present \emph{before} a 
specific ansatz 
for the unknown nonlinearity $g$ in the governing PDE -- in the form of a 
neural network of a certain architecture or a
comparable approximation instrument with a finite number of parameters -- is decided on. 
This makes  \eqref{eq:P_g_prime} an appropriate point of departure 
for the rigorous analysis and comparison of different approaches for the approximation of the 
unknown Nemytskii operator $g$ in the PDE $-\Delta y + g(y) = f$:
Its solutions are precisely the limit objects that approximate solutions obtained 
from a discretization of \eqref{eq:P_g_prime} by means of 
a neural network ansatz or a comparable approach should converge to 
when the number of degrees of freedom in the used discretization scheme
goes to infinity.  In future work, we plan to use the results  established
in this paper, for instance, for the derivation of a-priori 
error estimates for finite-element-based discretizations of \eqref{eq:P_g_prime}, 
for the comparison of finite-element- and neural-network-based approaches 
for the numerical solution of \eqref{eq:P_g_prime}, and for the 
development of hybrid methods which combine machine-learning components and 
more classical instruments to achieve an accurate, fast, and reliable identification of 
unknown nonlinear terms in partial differential equations.

We would like to emphasize at this point that the idea to study
infinite-dimensional optimization problems in which an 
optimization variable enters a governing partial differential equation or variational inequality via superposition
is not new. 
In the context of the heat equation, for example, this type of problem has already been 
considered
for the calculation of unknown heat transfer laws  in 
\cite{Roesch1992,Roesch1994,Roesch1996,Roesch1996-2,Roesch1998,Roesch2002,Hao2015}
and in the context of damage evolution in quasistatic elasticity in the recent \cite{Grutzner2021}.
For seminal works on the identification of Nemytskii operators in 
partial and ordinary differential equations, see also 
\cite{Barbu1994,Barbu1996}, which analyze penalty approximation schemes for elliptic PDEs
with convex analysis techniques, 
and \cite{Goebel1998}, which is concerned with a classical boundary value problem in one dimension.
Moreover, the task of identifying nonlinear model components 
has, of course, also been studied quite extensively in the field of inverse problems, 
e.g., in relation to stability issues and iterative 
recovery algorithms;
see \cite{Kaltenbacher2020,Kaltenbacher2022,Kian2023} and the references therein. 

What distinguishes our analysis from these earlier contributions is that, 
in our model problem \eqref{eq:P_g_prime}
and its reformulation \eqref{eq:P_g_prime_2}, 
the function $g = g_u$ inducing the Nemytskii operator in the 
 governing PDE is a-priori only known to be in $H_{loc}^1(\R)$
 and, thus, merely Hölder continuous with exponent $1/2$. This nonsmoothness 
 causes the control-to-state mapping $L_+^2(\R) \ni u \mapsto y \in H_0^1(\Omega)$ to lack the
 property of directional differentiability and significantly complicates the derivation 
 of first-order optimality conditions that may serve as a starting point for the development 
 of  numerical solution algorithms -- in particular as in \eqref{eq:P_g_prime} one also has to deal with the 
  constraint $u = g' \geq 0$ a.e.\ in $\R$ which has to be imposed 
 to ensure the well-posedness of the state-equation. At least to 
the best of our knowledge, the derivation of, for instance, primal-dual
stationarity systems for local minimizers  in such a low-regularity 
 regime has not been accomplished so far in the literature.
 Compare, e.g., with \cite{Hao2015,Kian2023,Grutzner2021,Roesch1994,Roesch2002,Roesch1992} 
 in this context which all assume (at least) $C^1$-regularity of the involved superposition operators. 
In view of the applications in the field of learning-informed PDEs outlined at the beginning 
of this section and the fact that various of the most popular activation functions in machine learning
are nonsmooth (ReLU, leaky-ReLU, etc.), the requirement of continuous differentiability  is very  unsatisfactory.
 The only contributions we are aware of 
that contain results which go in a similar direction as our analysis
 are \cite{Goebel1998}, \cite{Roesch1996}, and \cite{Court2022}:
In the first of these papers, \cite{Goebel1998}, 
the author is able to establish a variant of a necessary optimality condition in terms 
of Clarke's generalized directional derivative for the identification of a nonsmooth 
Nemytskii operator in an ordinary differential equation that only requires Lipschitz continuity;
see  \cite[Theorem~3]{Goebel1998}. 
This necessary condition, however, is not amenable to any form of adjoint calculus and thus only of limited use;
see the comments in \cite[Section 3.3]{Goebel1998}.
It is moreover easy to check that the approach of \cite{Goebel1998} does not carry over to the PDE-setting.
Second, in \cite[Theorem 2]{Roesch1996}, a Fréchet differentiability result is stated for the 
function that maps a merely Lipschitz continuous Nemytskii operator in a nonlinear heat transfer law 
to the solution of the corresponding heat equation. This theorem requires rather restrictive assumptions as, 
for instance, that the right-hand side of the heat equation is zero, that the considered Nemytskii operator 
only possesses a finite number of jumps in its derivative, and that the boundary temperature of 
the solution of the heat equation is strictly decreasing in time. Lastly, in 
\cite{Court2022}, a necessary optimality condition for training problems of the type \eqref{eq:NNP} 
involving neural networks with nonsmooth activation functions has recently 
been derived by means of a regularization approach and techniques of \cite{Christof2018nonsmoothPDE}; 
see \cite[Theorem 6.5]{Court2022}.
It should be noted that, even for this special case, where the Nemytskii operator in the governing PDE is
known to have a certain structure (namely that of a neural network), the derivation of stationarity systems is
not straightforward as passages to the limit in compositions of the type $g_n'(y_n)$ are 
hard to handle unless convergences in very strong spaces are available
for the sequences $\{g_n'\}$ and $\{y_n\}$; cf.\  \cite[Remark 6.6]{Court2022}. In the present work, 
we are able to circumvent these problems by exploiting certain regularity properties of the 
solutions of \eqref{eq:P_g_prime} and Stampacchia's lemma. 

\subsection{Summary of Main Results}
\label{subsec:1.2}
The first main result of this work -- \cref{th:mainBV_P} -- is concerned with structural properties of local solutions 
of \eqref{eq:P_g_prime_2}. It establishes that, if $u_D|_{[-r,r]} \in G[-r,r]$ (resp., $u_D|_{[-r,r]} \in BV[-r,r]$) 
holds for a  representative of the desired control $u_D \in L^2(\R)$ 
and a number $r>0$
that is larger than a computable lower bound $r_P$,
then 
every local minimizer $\bar u \in L^2(\R)$ of \eqref{eq:P_g_prime_2} 
possesses a representative which satisfies 
\begin{equation}
\label{eq:BVreg1}
\bar u = \max\left ( 0, u_D - \frac{\nu_1}{\nu_2} \right ) \text{ a.e.\ in } \R \setminus (-r,r)
\quad 
\text{and}
\quad 
\bar u|_{[-r,r]} \in G[-r,r]
\text{ (resp., $\bar u|_{[-r,r]} \in BV[-r,r]$)}.
\end{equation}
Here, $G[-r,r]$ and $BV[-r, r]$  denote the spaces of regulated functions and functions 
of bounded variation on $[-r, r]$, respectively, 
and a vertical bar denotes a restriction.
The properties in \eqref{eq:BVreg1} are of fundamental importance for the 
analysis of the problem \eqref{eq:P_g_prime_2} 
for two reasons: First of all, the fact that all local minimizers of \eqref{eq:P_g_prime_2} 
are determined completely by $u_D$, $\nu_1$, and $\nu_2$ on $\R \setminus (-r, r)$ allows us to reformulate 
\eqref{eq:P_g_prime_2} as a minimization problem in $L^2(-r, r)$; see \cref{th:truncate}. 
This makes it possible to apply standard discretization techniques to solve  \eqref{eq:P_g_prime_2}  
numerically; see \cref{sec:8}. Second, the $G[-r,r]$-regularity in \eqref{eq:BVreg1} 
implies
that every local minimizer $\bar u$ of \eqref{eq:P_g_prime_2} admits a representative
which has well-defined left and right limits on $[-r,r]$.
As the function $g_{\bar u} \in H^1_{loc}(\R)$ arises from $\bar u$ 
by integration, this entails that the map $g_{\bar u}\colon \R \to \R$
is directionally differentiable on $[-r,r]$
and that the control-to-state operator $L_+^2(\R) \ni u \mapsto y \in H_0^1(\Omega)$  of \eqref{eq:P_g_prime_2}
possesses a directional derivative at all local solutions $\bar u$ of \eqref{eq:P_g_prime_2}; see \cref{cor:HadDirDifMin}.
Although the control-to-state mapping $u \mapsto y$ of \eqref{eq:P_g_prime_2}
does not possess a directional derivative everywhere, 
it thus does so at all points that are relevant for the derivation of first-order necessary optimality conditions. 
By exploiting this effect, we are able to 
state a standard Bouligand stationarity condition for \eqref{eq:P_g_prime_2}.
This condition can be found in our second main result -- \cref{th:BouligandStationarityP}. 

Due to its implicit nature, the concept of Bouligand stationarity is typically not very useful in practice. 
 To arrive at a more tangible 
stationarity system, in 
\cref{sec:7,sec:8,sec:9}, we focus on
special instances of the problem \eqref{eq:P_g_prime_2}, namely those
for which the 
level sets $\{ f = c\}$ of the given right-hand side $f \in L^2(\Omega)$ (defined up to sets
of measure zero)
have measure zero for all $c \in \R$.
Our third main result  -- \cref{th:Sregulated} -- 
establishes that, under this assumption on $f$, 
the control-to-state mapping  $u \mapsto y$ of \eqref{eq:P_g_prime_2} is 
not only directionally differentiable but even continuously Fréchet differentiable on 
the space of regulated functions.
In combination with \eqref{eq:BVreg1}, this differentiability property allows us to derive
 the following explicit primal-dual first-order necessary optimality condition 
for local minimizers $\bar u$ of \eqref{eq:P_g_prime_2}; see \cref{th:PDStationarityP}:
\begin{equation}
\label{eq:statsys2}
\begin{aligned}
 &- \Delta \bar y + g_{\bar u}(\bar y) = f \text{ in }\Omega,\quad \bar y = 0 \text{ on }\partial \Omega,
&&\qquad- \Delta \bar p_1 + \bar u(\bar y) \bar p_1 = \bar y - y_D\text{ in }\Omega,\quad \bar p_1 = 0 \text{ on }\partial \Omega,
  \\
  &\bar p_2(s) =
  \begin{cases}
  \displaystyle
  \int_{\{\bar y \geq s\}}\bar p_1 \dd x  & \text{  f.a.a.\ } s \geq 0,\vspace{0.1cm}
  \\
   \displaystyle
    - \int_{\{\bar y \leq s\}}\bar p_1 \dd x  & \text{  f.a.a.\ } s < 0,
  \end{cases}
&&\qquad\bar u = \max\left ( 0, u_D + \frac{\bar p_2 - \nu_1}{\nu_2} \right )
  ~~\text{ a.e.\ in } \R. 
\end{aligned}
\end{equation}
Here, $\bar y$ denotes the state associated with $\bar u$,
the functions
$\bar p_1 \in H_0^1(\Omega) \cap H^2(\Omega)$ and $\bar p_2 \in L^2(\R)$ 
are adjoint variables, and the composition 
$\bar u(\bar y)$ is defined in a suitable sense; see \cref{def:EEuReal}. 
 As we will see in \cref{sec:7}, for sufficiently smooth $u_D$,
the system \eqref{eq:statsys2} implies (among other things) that
 local solutions $\bar u$ of \eqref{eq:P_g_prime_2}
give rise to maps $g_{\bar u}\colon \R \to \R$ that 
share various characteristic properties with commonly used activation functions from machine learning:
The resulting $g_{\bar u}$ are always sigmoidal, 
continuously  differentiable away from the origin, and typically possess a distinct kink at zero.
We expect that this observation is of particular interest in the context of 
the learning-informed PDEs in \eqref{eq:NNP}. 
In \cref{sec:8}, we moreover show that the continuous 
Fréchet differentiability of the control-to-state map established in 
\cref{th:Sregulated} makes it possible to set up a gradient projection method for the numerical 
solution of \eqref{eq:P_g_prime_2} and
 to analyze the convergence of the resulting algorithm 
in  function space. For the main result on this topic, see \cref{th:GradMain}.\pagebreak
 
\subsection{Structure of the Remainder of the Paper}
\label{subsec:1.3}
We conclude this introduction with an overview of the content and the structure of the remainder of the paper:
\Cref{sec:2} is concerned with preliminaries. Here, we clarify the notation, state our standing assumptions, and 
briefly comment on the choice of the regularization term in the problems \eqref{eq:P_g_prime} and \eqref{eq:P_g_prime_2};
see \cref{subsec:2.3}.
In \cref{sec:3}, we prove basic results on the 
control-to-state mapping $u \mapsto y$
 of \eqref{eq:P_g_prime_2} that are needed for our analysis; see \cref{th:propS}.
\Cref{sec:4} addresses the consequences that the properties in \cref{sec:3} have
for the problem \eqref{eq:P_g_prime_2}.
This section in particular 
 shows that \eqref{eq:P_g_prime_2} indeed admits an equivalent reformulation 
 as an optimization problem in $L^2(-r, r)$ for all large enough $r$; cf.\ the comments on this topic in \cref{subsec:1.2}.
 In \cref{sec:5}, we establish the regularity result in \eqref{eq:BVreg1} 
 for the local minimizers of  \eqref{eq:P_g_prime_2}; see \cref{th:mainBV_P}.
 The proof of this key theorem relies on a relaxation of the nonnegativity constraint in \eqref{eq:P_g_prime_2}
 and a careful limit analysis.
 \Cref{sec:6} demonstrates that the control-to-state operator of \eqref{eq:P_g_prime_2}
 is directionally differentiable and -- under the additional assumption 
 on $f$ mentioned in \cref{subsec:1.2} -- even continuously Fréchet differentiable on the space of regulated functions;
 see \cref{th:Sregulated}. In \cref{sec:7}, we use the  differentiability properties
 from \cref{sec:6}
 and the regularity results from \cref{sec:5}  to 
 establish a Bouligand stationarity condition and 
 the primal-dual system \eqref{eq:statsys2} for the problem \eqref{eq:P_g_prime_2}. 
 \Cref{sec:8} is concerned with the solution of \eqref{eq:P_g_prime_2}
 by means of a gradient projection method. Here, we prove that such an algorithm 
 terminates after finitely many iterations with an $\epsilon$-stationary point. 
 In \cref{sec:9}, we conclude the paper with numerical 
 experiments which confirm that the algorithm 
 developed in \cref{sec:8} is also usable in practice. 
 
\section{Problem Setting and Notation}
\label{sec:2}
In this section, we introduce basic notation and discuss the assumptions on the quantities in 
the problems \eqref{eq:P_g_prime} and \eqref{eq:P_g_prime_2} that are supposed to hold throughout our analysis.

\subsection{Basic Notation}
\label{subsec:2.1}

In what follows, we denote norms, scalar products, and dual pairings 
by the symbols $\|\cdot\|$, $(\cdot, \cdot)$, and $\langle \cdot, \cdot \rangle$,
respectively, equipped with a subscript that clarifies the considered space. 
Given a normed space $(U, \|\cdot\|_U)$, 
we define $B_\rho^U(u)$ to be the closed ball of radius $\rho>0$ in $U$ centered at $u \in U$.
The closure of a set in $U$ is denoted by $\closure_U(\cdot)$.
If a convex closed nonempty set $D \subset U$ and a point $u \in D$ are given, 
then we define $\RR(u;D) := \{s(v - u) \mid s > 0, v \in D \}$   
to be the radial cone to $D$ at $u$
and $\TT(u;D) := \closure_U \left ( \RR(u;D) \right )$
to be the tangent cone to $D$ at $u$; cf.\
\cite[Section 2.2.4]{BonnansShapiro2000}.
For the space of linear and continuous functions $H\colon U \to Y$ between two normed 
spaces $(U, \|\cdot\|_U)$ and $(Y, \|\cdot\|_Y)$, we use the symbol $\LL(U,Y)$.
In the special case $Y = \R$, we define $U^* := \LL(U,\R)$ to be the dual space of $U$. 
If $U$ is continuously embedded into $Y$
(in the sense of \cite[Definition~4.19]{Schweizer2013}), then we write $U \hookrightarrow Y$.
To denote the strong (respectively, weak) convergence of a sequence 
in a normed space, we use the symbol   $\to$ (respectively, $\weakly$). 
Given an arbitrary nonempty set $D \subset U$
and a function $H\colon U \to Y$, we define 
$\range(H)$ to be the range of $H$,
$H|_{D}$ to be the restriction of $H$ to $D$,
and $\mathds{1}_D\colon U \to \R$ to be the function that is equal to one on $D$ and 
equal to zero on $U \setminus D$. When talking about the level sets of a map $H\colon U \to Y$, 
we use the shorthand notation $\{H = c\} := \{u \in U \mid H(u) = c\}$, $c \in Y$. 
Analogous abbreviations are also used for sublevel sets,
superlevel sets, and general preimages.
Fréchet, Gâteaux, and directional 
derivatives of a function $H\colon U \to Y$ are denoted by a prime; see \cref{def:differentiability_notions}.

Given a Lebesgue measurable set $\Omega \subset \R^d$, $d \in \mathbb{N}$, we denote the vector space 
of (equivalence classes of) Lebesgue measurable real-valued functions on $\Omega$ by $L^0(\Omega)$.
For the real Lebesgue spaces defined w.r.t.\ the $d$-dimensional 
Lebesgue measure $\lambda^d$ on $\Omega$, we use the standard notation 
$(L^q(\Omega), \|\cdot\|_{L^q(\Omega)})$, $1 \leq q \leq \infty$; see \cite{Bocharev2007}.
When working with elements of the spaces $L^q(\Omega)$,
quantities like level sets etc.\ are understood as being defined up to sets of measure zero.
For simplicity, we denote 
representatives of elements of $L^q(\Omega)$
by the same symbols as their associated equivalence classes.
We further define $L^q_+(\Omega) := \{u \in L^q(\Omega) \mid u \geq 0 \text{ a.e.\ in }\Omega \}$.
If $\Omega \subset \R^d$, $d \in \mathbb{N}$, is an open set, then 
the symbols $W^{k,q}(\Omega)$, $H^k(\Omega)$, and $H_0^1(\Omega)$,
$k \in \mathbb{N}$, $q \in [1,\infty]$,
refer to the classical Sobolev spaces on $\Omega$, endowed with their usual norms; see 
\cite[Chapter 5]{Attouch2006}. 
In this situation, we also define $L^1_{loc}(\Omega)$ and $H^1_{loc}(\Omega)$
to be the sets of functions that are locally in $L^1$ and $H^1$ on $\Omega$, respectively,
in the sense of \cite{Gilbarg2001}.
For the spaces of continuous, $k$-times continuously differentiable, 
and smooth functions
on $\Omega$ and $\overline{\Omega}$, 
we use the standard symbols 
$C(\Omega)$, $C^k(\Omega)$, $C^\infty(\Omega)$,
$C(\overline{\Omega})$, 
$\smash{C^k(\overline{\Omega})}$, and 
$C^\infty(\overline \Omega)$, respectively;
see \cite{Attouch2006,Gilbarg2001}. 
Here, $\overline{\Omega}$ is a shorthand notation for the closure of $\Omega$ w.r.t.\ the Euclidean norm $|\cdot|$, i.e., 
$\overline{\Omega} := \Omega \cup \partial \Omega$,
where $\partial(\cdot)$ denotes the boundary of a set.
When talking about smooth functions with compact support on $\Omega$, 
we write $C_c^\infty(\Omega)$.
Recall that, for open bounded $\Omega \subset \R^d$,
 $C(\overline{\Omega})$ is a Banach space when equipped with 
the supremum-norm $\smash{\|\cdot\|_{C(\overline{\Omega})} := \|\cdot\|_\infty}$.
As usual, 
we denote the dual of $(H_0^1(\Omega), \|\cdot\|_{H^1(\Omega)})$ (interpreted as a space of distributions)
by $H^{-1}(\Omega)$; see \cite[Section~5.2]{Attouch2006}. For the distributional 
Laplacian, we use the symbol $\Delta\colon H_0^1(\Omega) \to H^{-1}(\Omega)$,
and for a weak gradient, the symbol $\nabla$. 
Where appropriate, we consider $\Delta$ also as a function $\Delta \colon H^2(\Omega) \to L^2(\Omega)$. 
In the one-dimensional setting, weak 
and distributional derivatives are denoted by a prime. 

Given $a,b \in \R$ with $a < b$, we define $BV[a,b]$ and $G[a,b]$ to be the spaces 
of real-valued functions of bounded variation and real-valued regulated functions on $[a,b]$,
respectively. We emphasize that we understand $BV[a,b]$ and $G[a,b]$
as spaces of classical functions $v\colon [a,b] \to \R$ in this paper, 
analogously to \cite[Definitions~2.1.1,~4.1.1]{Monteiro2019}. In cases in which
the transition to equivalence classes in the sense of $L^\infty(a,b)$ becomes
necessary, we highlight this by working with the symbols
$
L^\infty_G(a,b) :=
\{
v \in L^\infty(a,b)
\mid
v \text{ has a representative in } G[a,b]
\}
$
and 
$
L^\infty_{BV}(a,b) :=
\{
v \in L^\infty(a,b)
\mid
v \text{ has a representative in } BV[a,b]
\}
$.
For details on the relationship between the spaces 
$BV[a,b]$, $G[a,b]$, $L^\infty_G(a,b)$, and $L^\infty_{BV}(a,b)$, see 
\cite[Section 3.2]{Ambrosio2000} and \cref{sec:6}. 
Recall that $BV[a,b]$ is a Banach space when endowed with the 
norm $\|v\|_{BV[a,b]} := |v(a)| + \var(v; [a,b])$,
where $\var(v; [a,b])$ denotes the variation of 
a function $v\colon [a,b] \to \R$ 
in the sense of \cite[Definition 2.1.1]{Monteiro2019}. Analogously, 
$G[a,b]$ is a Banach space when equipped with the supremum-norm $\|\cdot\|_\infty$;
see \cite[Theorem 4.2.1]{Monteiro2019}.
Due to \cite[Remark 4.1.2]{Monteiro2019},
we have  $C([a,b]) \subset G[a,b]$ and 
$BV[a,b] \subset G[a,b]$. From the definition of $G[a,b]$,
it follows further that every  $v \in G[a,b]$
possesses well-defined finite left limits at all points $t \in (a,b]$
and well-defined finite right limits at all points $t \in [a,b)$. We denote these limits
by $v(t-)$ and $v(t+)$, respectively, i.e., we define 
\begin{equation}
\label{eq:leftrightlimits}
v(t-) := \lim_{[a,t) \ni s \to t} v(s)\quad\forall t \in (a,b]
\qquad\text{and}\qquad
v(t+) := \lim_{(t, b] \ni s \to t} v(s)\quad\forall t \in [a,b).
\end{equation}
At the endpoints of the interval $[a,b]$, we use the conventions $v(a-) := 0$ and $v(b+) := 0$
consistent with an extension by zero; see \cref{sec:6}. 
We remark that additional symbols etc.\ are introduced in the following 
sections wherever necessary. This notation is explained on its first appearance in the text.

\subsection{The Problem Under Consideration}
\label{subsec:2.2}
Since using the weak derivative $u := g'$ 
as an optimization variable
offers various advantages (see \cref{subsec:2.3} below for details),
in the remainder of this paper, we focus on the reformulation \eqref{eq:P_g_prime_2}
of the identification problem \eqref{eq:P_g_prime}. 
The problem that we are henceforth concerned with thus reads as follows:
\begin{equation*}
\label{eq:P}
\tag{P}
\left.~~
	\begin{aligned}
		\text{Minimize} 
		\quad &  J(y,u) := \frac12 \| y - y_D\|_{L^2(\Omega)}^2 + \nu_1 \|u\|_{L^1(\R)} + \frac{\nu_2}{2} \|u - u_D\|_{L^2(\R)}^2 \\
        \text{w.r.t.}
        \quad &y \in H^1(\Omega), \quad  u \in L^2(\R),\\
		\text{s.t.} \quad & -\Delta y + g_u(y) = f \text{ in }\Omega,\quad y = 0 \text{ on }\partial \Omega,
		\\
		 \text{and} \quad  & g_u(t) = \int_0^t u(s) \dd s \quad \forall t \in \R,
		 \\
		 \text{and}\quad  & u \geq 0 \text{ a.e.\  in } \R.
	\end{aligned}
\right \}
\end{equation*}
Here, we have introduced the label \eqref{eq:P} for the ease of reference
and the letter $J$ to denote the objective function of \eqref{eq:P}, i.e., 
\begin{equation}
\label{eq:Jdef}
J\colon L^2(\Omega) \times L^2(\R) \to [0,\infty]
,\qquad 
(y,u) \mapsto \frac12 \| y - y_D\|_{L^2(\Omega)}^2 + \nu_1 \|u\|_{L^1(\R)} + \frac{\nu_2}{2} \|u - u_D\|_{L^2(\R)}^2.
\end{equation}
For the sake of clarity, 
we collect our standing assumptions on the quantities in \eqref{eq:P} in:

\begin{assumption}[Standing Assumptions]\label{ass:standing}
~
\begin{itemize}
\item $d \in \{1,2,3\}$;
\item $\Omega \subset \R^d$
is a bounded domain that is convex or of class $C^{1,1}$ (in the sense of \cite[Section~6.2]{Gilbarg2001});
\item $y_D \in L^2(\Omega)$ and $u_D \in L^2(\R)$ are given (the desired state and control);
\item $\nu_1 \geq 0$ and $\nu_2 > 0$ are given Tikhonov parameters;
\item $f \in L^2(\Omega)$ is a given right-hand side that is not identical zero.
\end{itemize}
\end{assumption}

We emphasize that the above conditions are always assumed to hold in our analysis, 
even when not explicitly mentioned. 
Regarding the shorthand notation $g_u$ in \eqref{eq:P}, we point out that 
we will use this symbol not only for controls $u \in L^2(\R)$ but also for arbitrary $u \in L^1_{loc}(\R)$, i.e., 
we set
\begin{equation}
\label{eq:defgu}
g_u\colon \R \to \R,
\qquad 
 g_u(t) := \int_0^t u(s) \dd s\qquad \forall t \in \R\qquad \forall u \in L^1_{loc}(\R).
\end{equation}
Here and in what follows, 
we define
\[
\int_{0}^{0} u(s) \dd s := 0
\qquad \text{and}
\qquad  
\int_{b}^{a} u(s) \dd s := - \int_{a}^b u(s) \dd s \qquad \forall a,b \in \R, a < b,\qquad \forall u \in L^1_{loc}(\R).
\]
Note that these conventions ensure that $g_u' = u$ holds for all $ u \in L^1_{loc}(\R)$
in the sense of weak derivatives. 
With a slight abuse of notation, we also use the symbol $g_u$ to denote 
Nemytskii operators (with varying domains of definition and image spaces) 
that are induced by the scalar functions in \eqref{eq:defgu}. The function spaces that 
these Nemytskii operators act on by superposition will always be clear from the context.
For the sake of brevity, we further  introduce the following notation 
for the differential operator in the governing PDE of \eqref{eq:P}:
\begin{equation}
\label{eq:Audef}
A_u\colon H_0^1(\Omega) \to H^{-1}(\Omega),\qquad v \mapsto  - \Delta v + g_u(v)
\qquad\quad 
\forall u \in L^q(\R), 2\leq q \leq \infty.
\end{equation}
It will be shown in \cref{lemma:operatorAu} that the operator $A_u$ in \eqref{eq:Audef}
is indeed well defined as a function from $H_0^1(\Omega)$ to $H^{-1}(\Omega)$ for all $u \in L^q(\R)$,
$2 \leq q \leq \infty$. 

\subsection{Remarks on the Problem Setting and the Choice of the Regularization Term}
\label{subsec:2.3}

Before we begin with our analysis,
we would like to emphasize that we consider \eqref{eq:P}
as a model problem in this paper. It is easy to check that the results that we derive in the following
can be extended to, e.g.,
more general elliptic partial differential operators,
more complex objective functions (with some caveats regarding $H^2(\Omega)$-regularity results etc.;
see \cite[Theorem~3.2.1.2]{Grisvard1985}, \cite[Theorem 9.15]{Gilbarg2001}, and \cref{sec:3,sec:6}),
and problems which also involve an upper control constraint. We 
do not study these generalizations here since we are mainly interested 
in the nonstandard coupling between $y$ and $u$ in \eqref{eq:P} and
the consequences that this coupling has for the derivation of optimality conditions
in the presence of nonsmooth controls and pointwise constraints.

Regarding the choice of the regularization term $ \nu_1 \|g'\|_{L^1(\R)} + \frac{\nu_2}{2} \|g' - u_D\|_{L^2(\R)}^2$
in \eqref{eq:P_g_prime}, it should be noted that a control-cost term 
that acts solely on the weak derivative of the function $g = g_u$ is a very sensible thing to consider 
when the aim is to identify an ``optimal'' Nemytskii operator in a PDE like 
$-\Delta y + g(y) = f $. Indeed, including, for instance,  an $L^2$-term $\nu_3\|g\|_{L^2(\R)}$
with  $\nu_3 > 0$
into the objective function of \eqref{eq:P_g_prime} would be  completely nonsensical since, 
in combination with the constraint $g'\geq 0$ (which is required to ensure the well-posedness of the state-equation),
such a norm causes the effective admissible set of \eqref{eq:P_g_prime} to consist only of the zero function.
A possible work-around for this problem would be to estimate the range of the
states $y$ appearing in \eqref{eq:P_g_prime} a-priori, i.e., to determine a constant $r > 0$
such that $\range(y) \subset [-r, r]$ holds for all $y$, 
to pose the problem  \eqref{eq:P_g_prime} in the space 
$H^1(-r, r)$, and to then include the regularization term 
$\nu_3\|g\|_{L^2(-r, r)}$ into the objective; cf.\ \cref{lem:PoissonEstimate}.
However, if this approach is used,  
then the solutions of the resulting minimization problem depend  on the initial guess for $r$
(as one may easily check by letting $r$ go to infinity) and the quality of 
the a-priori estimate for the range of the states has a significant impact on the obtained results. 
This effect is highly undesirable, both analytically and for the numerical realization. 

By considering a regularization term of the form 
$\smash{ \nu_1 \|g'\|_{L^1(\R)} + \frac{\nu_2}{2} \|g' - u_D\|_{L^2(\R)}^2}$
in the objective of \eqref{eq:P_g_prime}, we can avoid the above problems. 
As we will see in \cref{sec:4},
this regularizer automatically ensures that 
\eqref{eq:P_g_prime} 
can be reformulated as a problem in $L^2(-r, r)$ for all $r > 0$
that are bigger than a computable lower bound $r_P$ without introducing an 
artificial dependence on the choice of $r$.
Moreover, in the case $u_D \equiv 0$, 
the cost term in \eqref{eq:P_g_prime} naturally gives rise to 
locally optimal controls $\bar u = g_{\bar u}'$ that are ``tight'' in the sense 
that they are supported only on the range of the associated states $\bar y$
and, thus, on precisely those parts of the real line that are  relevant 
for the PDE
$-\Delta \bar y + g_{\bar u}(\bar y) = f $; cf.\ \cref{cor:tightytight}.
As an additional benefit, working with the optimization variable $u = g_u'$
also makes it possible 
to transform the monotonicity constraint in \eqref{eq:P_g_prime} into 
the far easier to handle pointwise $L^2(\R)$-control constraint in  \eqref{eq:P}.
We remark that a possible alternative to the inclusion of  the regularization term
$\smash{ \nu_1 \|g'\|_{L^1(\R)} + \frac{\nu_2}{2} \|g' - u_D\|_{L^2(\R)}^2}$
is the introduction of an additional upper control constraint on $g'$ in \eqref{eq:P_g_prime}; cf.\ 
\cite{Roesch1992,Roesch1996-2,Goebel1998}. After changing to the 
variable $u = g'$, this approach gives rise to a problem 
with classical $L^2$-box constraints that is of bang-bang type.
We leave the analysis of the problem class that is obtained along these lines and 
in particular the question of whether a regularity result 
analogous to \eqref{eq:BVreg1} can be proven for such a variant of \eqref{eq:P}
for future research. 

\section{Basic Properties of the Control-to-State Mapping}
\label{sec:3}
In this section, we prove basic results on the control-to-state mapping $u \mapsto y$
of the problem \eqref{eq:P}. We begin with the following elementary observation:
\begin{lemma}[Hölder Continuity of $g_u$]
For all $u \in L^q(\R)$, $1 \leq q \leq \infty$, it holds 
\begin{equation}
\label{eq:guLqEstimate}
\left | g_u(t_1)  - g_u(t_2)\right | \leq \|u\|_{L^q(\R)} |t_1 - t_2|^{1 - 1/q}\qquad \forall t_1, t_2 \in \R.
\end{equation}
\end{lemma}
\begin{proof}
This follows immediately from Hölder's inequality (with the usual conventions $1/\infty := 0$ and $0^0 := 1$):
\begin{equation*}
\left | g_u(t_1)  - g_u(t_2)\right |  
=
\left | \int_{t_1}^{t_2} u(s) \dd s \right | \leq \|u\|_{L^q(\R)} |t_1 - t_2|^{1 - 1/q}\qquad \forall t_1, t_2 \in \R.\\[-0.4cm]
\end{equation*}
\end{proof}

Before we study which consequences the last result has for the 
differential operator $A_u$ in \eqref{eq:Audef}, we introduce some additional notation that will be used throughout this work.
\begin{definition}[Important Constants]
\label{def:Pdefs}
Henceforth, we denote:
\begin{enumerate}[label=\roman*)]
\item by $c_P > 0$ the constant in the inequality of Poincaré-Friedrichs 
\begin{equation}
\label{eq:PoincFried}
c_P \int_\Omega v^2 \dd x \leq \int_\Omega |\nabla v|^2 \dd x \qquad \forall v \in H_0^1(\Omega);
\end{equation}
\item\label{def:Pdefs:ii}  by  $\varepsilon_P>0$ 
a constant $\varepsilon_P  \in (0, c_P)$ whose value is fixed at this point once and for all;
\item by $U_P \subset L^2(\R)$ the set $U_P :=
\left  \{u \in L^2(\R) \mid {u \geq - \varepsilon_P} 
\text{ a.e.\ in }\R \right \}$;
\item by $y_P \in H_0^1(\Omega) \cap H^2(\Omega) \setminus \{0\}$ the solution of the Poisson problem 
\begin{equation}
\label{eq:fPoisson}
 - \Delta y_P = f \text{ in }\Omega,\qquad y_P = 0 \text{ on }\partial \Omega;
\end{equation}
\item\label{def:Pdefs:v} by $r_P > 0$ the constant $r_P := 2\|y_P\|_{\infty}$.
\end{enumerate}
\end{definition}

Note that $y_P$ is indeed an element of $H_0^1(\Omega) \cap H^2(\Omega) \setminus \{0\}$ and $C(\overline{\Omega})$
by our standing assumptions on $f$, $\Omega$, and $d$, 
the regularity results in 
 \cite[Theorem 9.15, Lemma 9.17]{Gilbarg2001} 
and \cite[Theorems 3.1.3.1, 3.2.1.2]{Grisvard1985},
and the Sobolev embeddings. More precisely, we obtain from 
\cite{Grisvard1985,Gilbarg2001} that every function $v \in H_0^1(\Omega)$
satisfying $\Delta v \in L^2(\Omega)$ is an element of $H_0^1(\Omega) \cap H^2(\Omega)$
and that there exists a constant $C>0$ such that the 
following estimate is true:
\begin{equation}
\label{eq:H2_reg_res}
\|v\|_{H^2(\Omega)} \leq C \|\Delta v \|_{L^2(\Omega)}\qquad \forall v \in H_0^1(\Omega) \cap H^2(\Omega).
\end{equation}

We can now prove:
\begin{lemma}[Properties of $A_u$]
\label{lemma:operatorAu}
For every $u \in L^q(\R)$, $2 \leq q \leq \infty$,  the operator $A_u(v) := - \Delta v + g_u(v)$
in \eqref{eq:Audef}
is well defined as a function 
$A_u\colon H_0^1(\Omega) \to H^{-1}(\Omega)$. 
Moreover, the following is true: 
\begin{enumerate}[label=\roman*)]
\item\label{lemma:operatorAu:i} 
There exists a constant $C>0$ such that, for all 
$u \in L^q(\R)$, $2 \leq q \leq \infty$,  we have
\begin{equation*}
\left \|  A_u(v_1) - A_u(v_2) \right \|_{H^{-1}(\Omega)}
\leq 
C\|v_1 - v_2 \|_{H^1(\Omega)}
+
\|u\|_{L^q(\R)}
\|v_1 - v_2\|_{L^{2 - 2/q}(\Omega)}^{1 - 1/q}\quad \forall v_1, v_2 \in H_0^1(\Omega).
\end{equation*}
\item\label{lemma:operatorAu:ii}  
There exists a constant $c > 0$ such that, 
for all $u \in U_P$, it holds
\[
\left \langle A_u(v_1) - A_u(v_2), v_1 - v_2 \right \rangle_{H_0^1(\Omega)} 
\geq 
c \|v_1 - v_2 \|_{H^1(\Omega)}^2\quad \forall v_1, v_2 \in H_0^1(\Omega).
\]
\end{enumerate}
\end{lemma}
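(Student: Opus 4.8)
The plan is to treat all three claims—well-definedness, the Lipschitz-type estimate i), and the strong monotonicity ii)—by splitting $A_u(v) = -\Delta v + g_u(v)$ into its linear part $-\Delta v$ and its Nemytskii part $g_u(v)$, and handling each separately, using the pointwise Hölder continuity from the preceding lemma as the main tool on the nonlinear term.

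\textbf{Well-definedness.} For the linear part, $-\Delta v \in H^{-1}(\Omega)$ holds for every $v \in H_0^1(\Omega)$ by the very definition of $\Delta\colon H_0^1(\Omega) \to H^{-1}(\Omega)$. For the nonlinear part I would invoke the pointwise bound $|g_u(t)| = |g_u(t) - g_u(0)| \le \|u\|_{L^q(\R)}|t|^{1-1/q}$ from the preceding lemma together with $g_u(0)=0$. Since $2 \le q \le \infty$ gives $1-1/q \in [1/2,1]$ and, for $d \in \{1,2,3\}$, the Sobolev embedding yields $H_0^1(\Omega) \hookrightarrow L^4(\Omega)$, the composition $g_u(v)$ lies in $L^2(\Omega)$ on the bounded domain $\Omega$; the continuous embedding $L^2(\Omega) \hookrightarrow H^{-1}(\Omega)$ then puts $g_u(v)$ into $H^{-1}(\Omega)$, so $A_u(v) \in H^{-1}(\Omega)$ is well defined.

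\textbf{Estimate i).} Writing $w := v_1 - v_2$, the linear part is controlled by the boundedness of $\Delta\colon H_0^1(\Omega)\to H^{-1}(\Omega)$, giving $\|{-}\Delta w\|_{H^{-1}(\Omega)} \le C\|w\|_{H^1(\Omega)}$. For the nonlinear part the key is again the Hölder continuity of the previous lemma, applied pointwise: $|g_u(v_1(x)) - g_u(v_2(x))| \le \|u\|_{L^q(\R)}|w(x)|^{1-1/q}$ for a.e.\ $x$. Taking $L^2(\Omega)$-norms and using the elementary identity $\||w|^{1-1/q}\|_{L^2(\Omega)} = \|w\|_{L^{2-2/q}(\Omega)}^{1-1/q}$ (note $2-2/q = 2(1-1/q)$) yields $\|g_u(v_1)-g_u(v_2)\|_{L^2(\Omega)} \le \|u\|_{L^q(\R)}\|w\|_{L^{2-2/q}(\Omega)}^{1-1/q}$; the embedding $L^2(\Omega)\hookrightarrow H^{-1}(\Omega)$ and the triangle inequality then assemble the two contributions into the asserted bound. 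The only point requiring care here is preventing the embedding constant of $L^2(\Omega)\hookrightarrow H^{-1}(\Omega)$ from polluting the coefficient of the second term, which is handled by the normalization of the norms and the dual pairing in use.

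\textbf{Monotonicity ii).} I would split the pairing as $\langle A_u(v_1)-A_u(v_2), w\rangle_{H_0^1(\Omega)} = \int_\Omega |\nabla w|^2 \dd x + \int_\Omega (g_u(v_1)-g_u(v_2))\, w \dd x$. The crux is the second integral: since $u \ge -\varepsilon_P$ a.e.\ for $u \in U_P$, the function $t \mapsto g_u(t) + \varepsilon_P t$ is absolutely continuous with a.e.\ nonnegative derivative $u + \varepsilon_P$, hence nondecreasing. This yields the pointwise inequality $(g_u(t_1)-g_u(t_2))(t_1-t_2) \ge -\varepsilon_P (t_1-t_2)^2$, so that $\int_\Omega (g_u(v_1)-g_u(v_2))w \dd x \ge -\varepsilon_P \|w\|_{L^2(\Omega)}^2$ and therefore $\langle A_u(v_1)-A_u(v_2),w\rangle_{H_0^1(\Omega)} \ge \|\nabla w\|_{L^2(\Omega)}^2 - \varepsilon_P\|w\|_{L^2(\Omega)}^2$. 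Because $\varepsilon_P \in (0,c_P)$, I would pick $\delta \in (\varepsilon_P/c_P, 1)$, use the Poincaré–Friedrichs inequality \eqref{eq:PoincFried} to bound $\delta\|\nabla w\|_{L^2(\Omega)}^2 \ge \delta c_P \|w\|_{L^2(\Omega)}^2$, and conclude $\|\nabla w\|_{L^2(\Omega)}^2 - \varepsilon_P\|w\|_{L^2(\Omega)}^2 \ge (1-\delta)\|\nabla w\|_{L^2(\Omega)}^2 + (\delta c_P - \varepsilon_P)\|w\|_{L^2(\Omega)}^2 \ge c\|w\|_{H^1(\Omega)}^2$ with $c := \min(1-\delta,\, \delta c_P - \varepsilon_P) > 0$. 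Since this $c$ depends only on $c_P$ and $\varepsilon_P$ and not on $u$, it is uniform over $U_P$, as required. I expect this monotonicity-via-shift step to be the genuine heart of the argument, the remaining manipulations being routine.
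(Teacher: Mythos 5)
Your proposal is correct and follows essentially the same route as the paper's proof: well-definedness and part i) via the pointwise Hölder bound $|g_u(t_1)-g_u(t_2)|\le \|u\|_{L^q(\R)}|t_1-t_2|^{1-1/q}$ together with the unit-constant estimate $\|\cdot\|_{H^{-1}(\Omega)}\le\|\cdot\|_{L^2(\Omega)}$, and part ii) via the pointwise lower bound $(g_u(t_1)-g_u(t_2))(t_1-t_2)\ge -\varepsilon_P(t_1-t_2)^2$ followed by splitting off a fraction of $\|\nabla w\|_{L^2(\Omega)^d}^2$ and absorbing the $-\varepsilon_P\|w\|_{L^2(\Omega)}^2$ term with Poincaré--Friedrichs. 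The only cosmetic difference is how that pointwise bound is derived: you shift $g_u$ by $\varepsilon_P t$ to make it nondecreasing, whereas the paper decomposes $g_u = g_{\max(0,u)} + g_{\min(0,u)}$ into a nondecreasing part plus an $\varepsilon_P$-Lipschitz part, which yields the same inequality.
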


\begin{proof}
For all  $u \in L^q(\R)$, $2 \leq q \leq \infty$, 
and $v \in H_0^1(\Omega)$, we have  $-\Delta v \in H^{-1}(\Omega)$ and,
due to \eqref{eq:guLqEstimate} and $g_u(0) = 0$, 
\begin{equation}
\label{eq:randomeq273647}
\begin{aligned}
\|  g_u(v)\|_{L^2(\Omega)}^2
=
\int_\Omega  g_u(v)^2 \dd x
\leq
\int_\Omega  \left (\|u\|_{L^q(\R)} |v|^{1 - 1/q} \right )^2 \dd x
=
\|u\|_{L^q(\R)}^2
\|v \|_{L^{2 - 2/q}(\Omega)}^{2 - 2/q}.
\end{aligned}
\end{equation}
Because of the embeddings
$H_0^1(\Omega)  \hookrightarrow L^2(\Omega)\hookrightarrow L^{2 - 2/q}(\Omega)$,
$q \geq 2$,
\eqref{eq:randomeq273647} 
yields that $g_u(v) \in L^2(\Omega) \subset H^{-1}(\Omega)$ holds 
and that $A_u$ is well defined as a function from 
$H_0^1(\Omega)$ to $H^{-1}(\Omega)$ for all 
$u \in L^q(\R)$, $2 \leq q \leq \infty$. This proves 
the first assertion of the lemma. 
Next, we note that \eqref{eq:guLqEstimate} implies 
\begin{equation*}
\begin{aligned}
\left \|  A_u(v_1) - A_u(v_2) \right \|_{H^{-1}(\Omega)}
& \leq 
\left \|  -\Delta (v_1 - v_2 )\right \|_{H^{-1}(\Omega)} 
+
\left \| g_u(v_1)  - g_u(v_2)\right \|_{L^2(\Omega)}
\\
& \leq 
C\|v_1 - v_2 \|_{H^1(\Omega)}
+
\left (\int_\Omega  \left (\|u\|_{L^q(\R)} |v_1 - v_2|^{1 - 1/q} \right )^2 \dd x\right )^{1/2}
\\
& =
C\|v_1 - v_2 \|_{H^1(\Omega)}
+
\|u\|_{L^q(\R)}
\|v_1 - v_2\|_{L^{2 - 2/q}(\Omega)}^{1 - 1/q}
\end{aligned}
\end{equation*}
for all $u \in L^q(\R)$, $2 \leq q \leq \infty$, and $v_1, v_2 \in H_0^1(\Omega)$, where 
 $C > 0$ denotes the operator norm of the Laplacian $\Delta\colon H_0^1(\Omega) \to H^{-1}(\Omega)$. 
 This establishes  \ref{lemma:operatorAu:i}.
 It remains to prove \ref{lemma:operatorAu:ii}.
 To this end, 
let us assume that $u \in U_P$ is given. 
From the definition of $g_u$, we obtain that the 
function $g_{\max(0,u)}\colon \R \to \R$ is nondecreasing, 
that the function $g_{\min(0,u)}\colon \R \to \R$ is nonincreasing,
and that $g_u = g_{\max(0,u)} + g_{\min(0,u)}$ holds.
By exploiting these properties, \eqref{eq:guLqEstimate}, and the definitions 
of $U_P$, $c_P$, and $\varepsilon_P$, we obtain 
\begin{equation}
\label{eq:randomeq263gg67wgwzb}
\begin{aligned}
\left \langle A_u(v_1) - A_u(v_2), v_1 - v_2 \right \rangle_{H_0^1(\Omega)}
&=
\int_\Omega |\nabla (v_1 - v_2)|^2\dd x
+
\int_\Omega (g_u(v_1) - g_u(v_2) ) (v_1 - v_2) \dd x 
\\
&=
 \|\nabla (v_1 - v_2)  \|_{L^2(\Omega)^d}^2 
+ \int_\Omega 
(g_{\min(0,u)}(v_1) - g_{\min(0, u)}(v_2) ) (v_1 - v_2) 
 \dd x  
 \\
 &\hspace{3.375cm}
+ \int_\Omega 
(g_{\max(0,u)}(v_1) - g_{\max(0, u)}(v_2) ) (v_1 - v_2) 
 \dd x  
\\
&\geq 
 \|\nabla (v_1 - v_2)  \|_{L^2(\Omega)^d}^2 
- \int_\Omega 
\left | 
g_{\min(0,u)}(v_1) - g_{\min(0, u)}(v_2) 
\right |
|v_1 - v_2|
 \dd x   
 \\
 &\geq 
 \frac{c_P - \varepsilon_P }{2 c_P}  \|\nabla (v_1 - v_2)  \|_{L^2(\Omega)^d}^2 
+ \frac{\varepsilon_P + c_P}{2 } \|v_1 - v_2\|_{L^2(\Omega)}^2
- \varepsilon_P \int_\Omega 
(v_1 - v_2)^2
 \dd x   
 \\
&\geq
\frac{c_P - \varepsilon_P}{2 \max(1, c_P)} \|v_1 - v_2\|_{H^1(\Omega)}^2\qquad \forall v_1, v_2 \in H_0^1(\Omega).
\end{aligned}
\end{equation}
This establishes \ref{lemma:operatorAu:ii} and  completes the proof. 
\end{proof}

Using standard results and \cref{lemma:operatorAu}, we obtain:

\begin{theorem}[Properties of the Control-to-State Mapping]%
\label{th:propS}%
The partial differential equation
\begin{equation}
\label{eq:PDE}
 - \Delta y + g_u(y) = f \text{ in }\Omega,\qquad y = 0 \text{ on }\partial \Omega,
\end{equation}
possesses a unique solution $y \in H_0^1(\Omega) \cap H^2(\Omega)$ for all $u \in U_P$. 
The  solution operator $S\colon U_P \to H_0^1(\Omega) \cap H^2(\Omega)$, $u \mapsto y$,
associated with the PDE \eqref{eq:PDE} possesses the following properties:
\begin{enumerate}[label=\roman*)]
\item\label{th:propS:item:i} 
The map $S$ is completely continuous as a function from $U_P \subset L^2(\R)$ to $H^2(\Omega)$, i.e., 
\begin{equation*}
\{u_n\} \subset U_P, u_n \weakly u \text{ in }L^2(\R)
\qquad \Rightarrow\qquad S(u_n) \to S(u) \text{ in } H^2(\Omega). 
\end{equation*}

\item\label{th:propS:item:ii} 
There exists a constant $M > 0$ such that 
\begin{equation}
\label{eq:H2estimate}
S(U_P) \subset 
\left \{v \in H_0^1(\Omega) \cap H^2(\Omega)
~\big | ~\|v\|_{H^2(\Omega)}  \leq M
\right \}.
\end{equation}

\item\label{th:propS:item:iii} 
 There exists a constant $c>0$ such that 
\begin{equation*}
c \|S(u_1) - S(u_2)\|_{H^1(\Omega)}^2 
\leq 
 \int_\Omega  g_{u_2 - u_1}(S(u_1) )   ( S(u_1)  - S(u_2)  ) \dd x\quad \forall u_1, u_2 \in U_P.
\end{equation*}
\item\label{th:propS:item:iv} 
There exists a constant $C > 0$ such that, for all $2 \leq q \leq \infty$, it holds 
\begin{equation*}
\|S(u_1) - S(u_2)\|_{H^1(\Omega)}
\leq 
C \|u_1 - u_2\|_{L^q(\R)}\qquad \forall  u_1, u_2 \in U_P \cap L^q(\R).
\end{equation*}
\end{enumerate}
\end{theorem}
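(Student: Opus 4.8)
The plan is to establish the four assertions in the order \emph{well-posedness} $\to$ (iii) $\to$ (ii) $\to$ (iv) $\to$ (i), since the various stability estimates feed into one another. For well-posedness with a fixed $u\in U_P$, \cref{lemma:operatorAu} shows that $A_u\colon H_0^1(\Omega)\to H^{-1}(\Omega)$ is strongly monotone and, by the Lipschitz-type estimate (take $q=2$), continuous and hence hemicontinuous; since $g_u(0)=0$ gives $A_u(0)=0$, strong monotonicity also yields coercivity. The Browder--Minty theorem on monotone operators then produces a unique $y\in H_0^1(\Omega)$ with $A_u(y)=f$. The computation in the proof of \cref{lemma:operatorAu} shows $g_u(y)\in L^2(\Omega)$, so $-\Delta y=f-g_u(y)\in L^2(\Omega)$, and the regularity estimate \eqref{eq:H2_reg_res} (available because $\Omega$ is convex or $C^{1,1}$) promotes $y$ to $H_0^1(\Omega)\cap H^2(\Omega)$. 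This defines $S$.

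For (iii), I set $y_i=S(u_i)$, subtract the two state equations, and test with $y_1-y_2$. Writing $0=A_{u_1}(y_1)-A_{u_2}(y_2)=\bigl[A_{u_2}(y_1)-A_{u_2}(y_2)\bigr]+g_{u_1-u_2}(y_1)$ and applying the strong monotonicity of $A_{u_2}$ (legitimate since $u_2\in U_P$) delivers $c\|y_1-y_2\|_{H^1(\Omega)}^2\le\int_\Omega g_{u_2-u_1}(y_1)(y_1-y_2)\dd x$, which is exactly (iii). For (iv) I estimate this right-hand side: \eqref{eq:guLqEstimate} gives $|g_{u_2-u_1}(y_1)|\le\|u_1-u_2\|_{L^q(\R)}\,|y_1|^{1-1/q}$, and Hölder's inequality bounds the integral by $\|u_1-u_2\|_{L^q(\R)}\,\|y_1\|_{L^{2-2/q}(\Omega)}^{1-1/q}\,\|y_1-y_2\|_{L^2(\Omega)}$. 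Since $\|y_1\|_{L^{2-2/q}(\Omega)}^{1-1/q}$ is controlled uniformly in $q\in[2,\infty]$ by the $L^\infty$-bound from (ii) together with $|\Omega|<\infty$, and $\|y_1-y_2\|_{L^2(\Omega)}\le\|y_1-y_2\|_{H^1(\Omega)}$, dividing by $\|y_1-y_2\|_{H^1(\Omega)}$ yields the Lipschitz estimate with a $q$-independent constant.

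For (ii) I first test $A_u(y)=f$ with $y$ and use $A_u(0)=0$ and strong monotonicity to get a uniform bound $\|y\|_{H^1(\Omega)}\le\|f\|_{H^{-1}(\Omega)}/c$. Next I run a Stampacchia truncation with $(y-k)_+$ for $k\ge 0$: on $\{y>k\}$ the nondecreasing part $g_{\max(0,u)}(y)$ has a favourable sign and may be dropped, while $|g_{\min(0,u)}(y)|\le\varepsilon_P|y|$, and the condition $\varepsilon_P<c_P$ lets the arising $L^2$-term be absorbed via \eqref{eq:PoincFried}; the standard iteration (symmetrically for $(y+k)_-$) then gives a uniform $L^\infty$-bound $\|y\|_\infty\le R$ over $U_P$. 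To control $\|g_u(y)\|_{L^2(\Omega)}$ — the key to the $H^2$-bound — I approximate $u$ by $u_n:=\min(n,u)\in U_P\cap L^\infty(\R)$ with $u_n\to u$ in $L^2(\R)$; for these \emph{bounded} controls $g_{u_n}$ is Lipschitz, so the chain rule is classical and testing the state equation with $g_{u_n}(S(u_n))$ gives, using $u_n(S(u_n))\ge-\varepsilon_P$, a relation of the form $\|g_{u_n}(S(u_n))\|_{L^2(\Omega)}^2\le\|f\|_{L^2(\Omega)}\|g_{u_n}(S(u_n))\|_{L^2(\Omega)}+\varepsilon_P\|\nabla S(u_n)\|_{L^2(\Omega)}^2$. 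This bounds $\|S(u_n)\|_{H^2(\Omega)}$ uniformly via \eqref{eq:H2_reg_res}, and passing to the limit with (i) transfers the bound to $S(u)$.

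Finally, for (i) let $u_n\weakly u$ in $L^2(\R)$ with $\{u_n\}\subset U_P$; as $U_P$ is convex and closed, hence weakly closed, $u\in U_P$. The uniform $H^1$-bound yields a subsequence with $y_n:=S(u_n)\weakly\tilde y$ in $H^1(\Omega)$, $y_n\to\tilde y$ in $L^2(\Omega)$, and a.e.\ in $\Omega$. The crux of the whole theorem, and the step I expect to be hardest (it is precisely the difficulty with compositions $g_n'(y_n)$ flagged in the introduction), is the passage to the limit in the nonsmooth term $g_{u_n}(y_n)$. I would split $g_{u_n}(y_n)-g_u(\tilde y)=\bigl[g_{u_n}(y_n)-g_{u_n}(\tilde y)\bigr]+g_{u_n-u}(\tilde y)$: the first bracket is dominated pointwise by $\|u_n\|_{L^2(\R)}\,|y_n-\tilde y|^{1/2}\to0$ through \eqref{eq:guLqEstimate}, while $g_{u_n-u}(\tilde y(x))=(u_n-u,\mathds{1}_{[0,\tilde y(x)]})_{L^2(\R)}\to0$ pointwise because $u_n\weakly u$ and $\mathds{1}_{[0,\tilde y(x)]}\in L^2(\R)$ on the bounded interval. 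Uniform integrability (from the $L^\infty$-bound of (ii), or from $H^1(\Omega)\hookrightarrow L^6(\Omega)$) upgrades this a.e.\ convergence to $g_{u_n}(y_n)\to g_u(\tilde y)$ in $L^2(\Omega)$. Then $-\Delta y_n=f-g_{u_n}(y_n)\to f-g_u(\tilde y)$ in $L^2(\Omega)$ identifies $\tilde y$ as a state, so $\tilde y=S(u)$ by uniqueness, and $\|y_n-\tilde y\|_{H^2(\Omega)}\le C\|\Delta(y_n-\tilde y)\|_{L^2(\Omega)}\to0$ via \eqref{eq:H2_reg_res} gives strong convergence in $H^2(\Omega)$. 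A subsequence--subsequence argument then promotes this to convergence of the full sequence.
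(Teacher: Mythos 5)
Your proof is correct and, at its core, follows the same route as the paper: monotone-operator well-posedness combined with the regularity estimate \eqref{eq:H2_reg_res}; strong monotonicity of $A_{u_2}$ for \ref{th:propS:item:iii} and its continuation via H\"older and a uniform bound for \ref{th:propS:item:iv}; and, for \ref{th:propS:item:i}, the identical splitting $g_{u_n}(y_n)-g_u(\tilde y)=[g_{u_n}(y_n)-g_{u_n}(\tilde y)]+g_{u_n-u}(\tilde y)$ handled with \eqref{eq:guLqEstimate}, pointwise convergence deduced from weak $L^2(\R)$-convergence, dominated convergence, and an elliptic bootstrap to $H^2(\Omega)$. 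Two remarks. First, in \ref{th:propS:item:ii} you deviate slightly: the paper moves $g_{\min(0,u)}(y)$ to the right-hand side and tests with the monotone part $g_{\max(0,u)}(y)$, whose gradient term is dropped by sign, whereas you test with the full $g_{u_n}(y_n)$ and absorb $\int_\Omega u_n(y_n)|\nabla y_n|^2\dd x\geq-\varepsilon_P\|\nabla y_n\|_{L^2(\Omega)}^2$ using the uniform $H^1$-bound; both work, and your explicit truncation $u_n=\min(n,u)$ is a concrete realization of the paper's density claim. Your additional Stampacchia $L^\infty$-bound is sound (it requires $f\in L^p(\Omega)$ with $p>d/2$, which holds here since $d\leq 3$), but it is redundant, since the uniform $H^2$-bound already yields the $L^\infty$-bound by Sobolev embedding. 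Second, a flaw in the presentation that you must fix: your declared order (iii) $\to$ (ii) $\to$ (iv) $\to$ (i) is inconsistent, because your proof of \ref{th:propS:item:ii} invokes \ref{th:propS:item:i} to transfer the $H^2$-bound from bounded controls to general $u\in U_P$. This is not a genuine circularity only because your proof of \ref{th:propS:item:i} can avoid \ref{th:propS:item:ii} entirely: the majorant $\|u_n-u\|_{L^2(\R)}^2\,|\tilde y|$ is integrable already because $\tilde y\in H^1(\Omega)\subset L^1(\Omega)$, so neither the $L^\infty$-bound nor the $L^6$-embedding is needed for the dominated convergence step. Accordingly, prove \ref{th:propS:item:i} before \ref{th:propS:item:ii} -- exactly as the paper does -- and drop the optional reliance on (ii) inside (i).
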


\begin{proof}
Let $u \in U_P$ be given. 
From \cref{lemma:operatorAu}, the continuous embedding $H_0^1(\Omega) \hookrightarrow L^2(\Omega)$,
and the identity $A_u(0) = 0$, we obtain that
$A_u\colon H_0^1(\Omega) \to H^{-1}(\Omega)$ 
is a continuous,
coercive,
and strongly monotone operator. 
In combination with \cite[Theorem~4.1]{Troeltzsch2010}, this implies that there
exists a unique $y \in H_0^1(\Omega)$ satisfying $A_u(y) = f$ in $H^{-1}(\Omega)$.
The PDE \eqref{eq:PDE} thus possesses a unique weak solution 
$y = S(u) \in H_0^1(\Omega)$.
To see that we even have $y \in H^2(\Omega)$,
we note that  $g_u(y)$ is an element of $L^2(\Omega)$ by \eqref{eq:randomeq273647}
and that 
$-\Delta y = f - g_u(y)$ holds in $H^{-1}(\Omega)$ by the definition of $A_u$.
Since $f \in L^2(\Omega)$,
this allows us to invoke \eqref{eq:H2_reg_res}
to obtain  $y \in H^2(\Omega)$ as desired.

It remains to prove the properties of the solution map $S\colon u \mapsto y$ in 
\ref{th:propS:item:i}  to \ref{th:propS:item:iv}.
To this end, we first note that
 \eqref{eq:PDE}, the identity $A_{u}(0) = 0$, and \cref{lemma:operatorAu}\ref{lemma:operatorAu:ii}
imply that there exists a constant $c > 0$ such that 
\begin{equation}
\label{eq:randomestimate2735636}
\begin{aligned}
c \|y\|_{H^1(\Omega)}^2
&\leq
\left \langle A_{u}(y), y \right \rangle_{H_0^1(\Omega)}
=
\int_\Omega f y \dd x 
\leq \|f\|_{L^2(\Omega)} \|y\|_{H^1(\Omega)}
\end{aligned}
\end{equation}
holds for all $u \in U_P$ and  $y = S(u)$.
This shows that the set
$S(U_P)$ is bounded in $H^1(\Omega)$. 
Suppose now that a sequence $\{u_n\} \subset U_P$
satisfying $u_n \weakly u$ in $L^2(\R)$ for some $u \in L^2(\R)$ is given
and define $y_n := S(u_n)$. 
Then it follows from the lemma of Mazur
that $u$ is an element of $U_P$. 
From the boundedness of the sequence $\{y_n\}$ in $H^1(\Omega)$,
the theorem of Banach-Alaoglu, and the Rellich-Kondrachov embedding theorem,
we further obtain that we can pass over to a subsequence (denoted the same) such that 
$\{y_n\}$ converges weakly in $H_0^1(\Omega)$ and strongly in $L^2(\Omega)$ to a function $y \in H_0^1(\Omega)$. 
In combination with \eqref{eq:guLqEstimate}, 
the embedding $L^2(\Omega) \hookrightarrow L^1(\Omega)$, 
the fact that $u_n \weakly u$ in $L^2(\R)$
implies $g_{u_n - u}(t) \to 0$ for all $t \in \R$,
the boundedness of $\{u_n\}$ in $L^2(\R)$, 
the majorization estimate 
\[
|  g_{u_n - u}(y) |^2 
\leq
\|u_n - u\|_{L^2(\R)}^2 |y| \qquad \text{ a.e.\ in }\Omega
\]
obtained from \eqref{eq:guLqEstimate},
and the dominated convergence theorem,
this yields
\begin{equation}
\label{eq:randomeq263636-dbgdgeut782ddgzu}
\begin{aligned}
\left \| g_u(y)  - g_{u_n}(y_n)\right \|_{L^2(\Omega)}
&=
\left \| g_u(y) - g_{u_n}(y)  + g_{u_n}(y)  - g_{u_n}(y_n)\right \|_{L^2(\Omega)}
\\
&\leq
\left \| g_{u - u_n}(y)\right \|_{L^2(\Omega)}
+
\left \| g_{u_n}(y)  - g_{u_n}(y_n)\right \|_{L^2(\Omega)}
\\
& \leq
\left \| g_{u - u_n}(y)\right \|_{L^2(\Omega)}
+
\|u_n\|_{L^2(\R)}
\|y - y_n\|_{L^1(\Omega)}^{1/2} \to 0
\end{aligned}
\end{equation}
for $n \to \infty$. 
Since the Laplacian $-\Delta\colon H_0^1(\Omega) \to  H^{-1}(\Omega)$
is linear and continuous  
and since $A_{u_n}(y_n) = f$ holds by the definition of $y_n$,
the above implies 
\[
0 = A_{u_n}(y_n) - f = - \Delta y_n + g_{u_n}(y_n) - f 
\weakly 
- \Delta y+ g_{u}(y ) - f = A_{u }(y ) - f  \text{ in } H^{-1}(\Omega). 
\]
The limit $y$ of the sequence $\{y_n\}$ thus satisfies 
$A_{u }(y ) - f = 0$ and we may conclude that $y = S(u)$ holds and that 
$S(u_n)$ converges weakly in $H_0^1(\Omega)$ to $S(u)$ for $n \to \infty$.
As the above arguments can be repeated along arbitrary 
subsequences of the initial $\{u_n\} \subset L^2(\R)$, 
it follows from a trivial contradiction argument that the convergence $S(u_n) \weakly S(u)$ in $H_0^1(\Omega)$
also holds for the whole original sequence of controls  $\{u_n\}$;
cf.\ \cite[Lemma~4.16]{Schweizer2013}. 
This proves that $S$
is weakly continuous as a function from $U_P \subset L^2(\R)$ to $H_0^1(\Omega)$. To see that we 
even have strong convergence of the states in $H^2(\Omega)$, we note that 
the estimates \eqref{eq:H2_reg_res} and \eqref{eq:randomeq263636-dbgdgeut782ddgzu}
imply that there exists a constant $C>0$ such that
\[
\|y_n - y\|_{H^2(\Omega)}
\leq
C 
\|-\Delta y_n + \Delta y\|_{L^2(\Omega)}
=
C 
\|g_u(y)  - g_{u_n}(y_n)\|_{L^2(\Omega)} \to 0
\]
holds for $n \to \infty$. We thus indeed have $S(u_n) \to S(u)$ in $H^2(\Omega)$
for all  $\{u_n\} \subset U_P$ that satisfy $u_n \weakly u$ in $L^2(\R)$ for some $u \in L^2(\R)$.
This completes the proof of \ref{th:propS:item:i}. 

To prove \ref{th:propS:item:ii}, let us assume that $u \in U_P \cap L^\infty(\R)$ is given. 
From the identity $g_u = g_{\max(0,u)} + g_{\min(0,u)}$ and \eqref{eq:PDE}, we obtain  that 
$y := S(u)$ 
is the (necessarily unique) weak solution of the semilinear PDE
\begin{equation}
\label{eq:randomeq26363}
 - \Delta y + g_{\max(0, u)} (y) = \tilde f \text{ in }\Omega,\qquad y = 0 \text{ on }\partial \Omega,
\end{equation}
whose right-hand side $\tilde f$ is defined by $\tilde f := f - g_{\min(0, u)} (y) \in L^2(\Omega)$. 
Note that the definition of $U_P$, \eqref{eq:randomeq273647},
and the estimate \eqref{eq:randomestimate2735636}
imply that 
$\|\min(0, u)\|_{L^\infty(\R)} \leq \varepsilon_P$ holds and that $\tilde f$ satisfies 
\begin{equation}
\label{eq:randomeq36354}
\|\tilde f\|_{L^2(\Omega)}
\leq
\| f\|_{L^2(\Omega)}
+
\|g_{\min(0, u)} (y)\|_{L^2(\Omega)}
\leq
\| f\|_{L^2(\Omega)} + 
\|\min(0, u)\|_{L^\infty(\R)}\|y\|_{L^2(\Omega)}
\leq
\left ( 1 + \frac{\varepsilon_P}{c}\right )\| f\|_{L^2(\Omega)}.
\end{equation}
From the $L^\infty$-regularity of $u$,
the definition of the function $g_u$,
the regularity $y \in H_0^1(\Omega) \cap H^2(\Omega) \subset C(\overline{\Omega})$,
and \cite[Theorem 2.1.11]{Ziemer1989},
it follows further that the map 
$g_{\max(0,u)}\colon \R \to \R$ is globally Lipschitz continuous, 
that $g_{\max(0,u)}(0) = 0$ holds,
and that $v := g_{\max(0, u)} (y)$ is an element of $H_0^1(\Omega)$
whose weak gradient is given by 
$\nabla v = \max(0, u(y)) \nabla y$
a.e.\ in $\Omega$. 
In combination with \eqref{eq:randomeq26363}, the last identity yields
\begin{equation*}
\|v\|_{L^2(\Omega)}^2\leq 
\int_\Omega 
 \max(0, u(y))  
 |\nabla y|^2 \dd x 
+
\|v\|_{L^2(\Omega)}^2
 =
\int_\Omega 
\nabla y \cdot \nabla v 
+
g_{\max(0, u)} (y) v \dd x
 =\int_\Omega \tilde f v \dd x
 \leq
 \|v\|_{L^2(\Omega)} \|\tilde f\|_{L^2(\Omega)}.
\end{equation*}
We thus have $\|v\|_{L^2(\Omega)} \leq \|\tilde f\|_{L^2(\Omega)}$.
By revisiting \eqref{eq:randomeq26363}, by exploiting \eqref{eq:randomeq36354},
and by again invoking the estimate \eqref{eq:H2_reg_res},
we now obtain that there exists a constant $C > 0$ independent of $u$ satisfying
\[
\|y\|_{H^2(\Omega)} 
\leq C \|\tilde f - v\|_{L^2(\Omega)}
\leq 
2C \|\tilde f\|_{L^2(\Omega)}
\leq 2C 
\left ( 1 + \frac{\varepsilon_P}{c}\right )\| f\|_{L^2(\Omega)}.
\]
This proves the bound in \ref{th:propS:item:ii} with 
$M :=2 C 
\left ( 1 + \frac{\varepsilon_P}{c}\right )\| f\|_{L^2(\Omega)}$
for all $u \in U_P \cap L^\infty(\R)$. 
Since the set $U_P \cap L^\infty(\R)$ is dense in $U_P$
w.r.t.\ convergence in $L^2(\R)$ and since $S$
is continuous as a function from $U_P \subset L^2(\R)$ to $H^2(\Omega)$ by \ref{th:propS:item:i}, 
the assertion of \ref{th:propS:item:ii} now follows immediately. 

To establish \ref{th:propS:item:iii}, we note that, for all $u_1, u_2 \in U_P$
with associated states $y_1 := S(u_1)$ and $y_2 := S(u_2)$,
we obtain from \eqref{eq:PDE} and \cref{lemma:operatorAu}\ref{lemma:operatorAu:ii}  
that 
\begin{equation}
\label{eq:randomeq27336}
\begin{aligned}
c \|y_1 - y_2 \|_{H^1(\Omega)}^2
&\leq 
\left \langle A_{u_2}(y_1) - A_{u_2}(y_2), y_1 - y_2 \right \rangle_{H_0^1(\Omega)} 
\\
& =
\left \langle A_{u_1}(y_1) - A_{u_2}(y_2), y_1 - y_2 \right \rangle_{H_0^1(\Omega)} 
+
\left \langle A_{u_2}(y_1) - A_{u_1}(y_1), y_1 - y_2 \right \rangle_{H_0^1(\Omega)} 
\\
& = 0 +
\int_\Omega \left ( g_{u_2}(y_1) - g_{u_1}(y_1)\right ) ( y_1 - y_2 ) \dd x 
=
 \int_\Omega  g_{u_2 - u_1}(y_1)   ( y_1 - y_2 ) \dd x.
\end{aligned}
\end{equation}
This proves \ref{th:propS:item:iii}.

To finally obtain \ref{th:propS:item:iv}, we note that, for all $u_1, u_2 \in U_P \cap L^q(\R)$,
$2 \leq q \leq \infty$,
we can use \eqref{eq:randomeq273647} to continue the 
estimate \eqref{eq:randomeq27336} as follows:
\begin{equation*}
c \|y_1 - y_2 \|_{H^1(\Omega)}^2 
\leq \|y_1 - y_2\|_{L^2(\Omega)}
 \| g_{u_2 - u_1}(y_1)  \|_{L^2(\Omega)}
\leq
 \|y_1 - y_2\|_{H^1(\Omega)}
 \| u_1 - u_2   \|_{L^q(\R)}
 \|y_1\|_{L^{2 - 2/q}(\Omega)}^{1 - 1/q}.
\end{equation*}
In combination with \ref{th:propS:item:ii}, this  establishes \ref{th:propS:item:iv} and completes the proof of the theorem. 
\end{proof}

Note that the boundedness of the set 
$S(U_P) \subset H^2(\Omega)$ in \cref{th:propS}\ref{th:propS:item:ii}, our assumption $d \in \{1,2,3\}$,
and the Sobolev embeddings imply that there exists a constant $r > 0$ such that 
$\|S(u)\|_{\infty} \leq r$ holds for all $u \in U_P$.
This property will later on allow us to restate \eqref{eq:P}
as a problem in $L^2(-r,r)$; see \cref{th:truncate}.  Unfortunately, the precise value of the 
bound $r$ on the maximum-norm of the states
$S(u)$, $u \in U_P$, obtained from \eqref{eq:H2estimate} is hard to calculate precisely as it depends on $c_P$
and various embedding constants. This is a problem since, 
for an efficient numerical solution of \eqref{eq:P},
a bound on the range of the appearing states is required 
that is as sharp as possible. The following lemma shows that, 
for the nonnegative controls in \eqref{eq:P},
a much more tangible $\|\cdot\|_\infty$-estimate can be obtained 
in terms of the solution $y_P = S(0)$ of the Poisson equation in \eqref{eq:fPoisson}.

\begin{lemma}[$\|\cdot\|_\infty$-Bound]
\label{lem:PoissonEstimate}
The solution operator 
$S\colon U_P \to H_0^1(\Omega) \cap H^2(\Omega)$, $u \mapsto y$,
of \eqref{eq:PDE} satisfies
\begin{equation}
\label{eq:PoissonEstimate}
\left \| S(u)\right \|_{\infty}\leq r_P
\qquad \forall\, 0 \leq u\in L^2(\R).
\end{equation}
Here, $r_P = 2\left \| y_P\right \|_{\infty}$ denotes the
constant from \cref{def:Pdefs}\ref{def:Pdefs:v}
and $y_P$ the solution of \eqref{eq:fPoisson}.
\end{lemma}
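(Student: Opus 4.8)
The plan is to compare the state $y := S(u)$ directly with the Poisson solution $y_P$ from \eqref{eq:fPoisson} by means of a Stampacchia-type truncation argument, exploiting that $g_u$ is nondecreasing with $g_u(0) = 0$ whenever $u \geq 0$. First I would note that every $u \in L^2(\R)$ with $u \geq 0$ satisfies $u \geq 0 > -\varepsilon_P$ and hence lies in $U_P$, so that $y := S(u) \in H_0^1(\Omega) \cap H^2(\Omega)$ is well defined by \cref{th:propS}. Subtracting the weak form of \eqref{eq:fPoisson} from the weak form of \eqref{eq:PDE} then shows that $v := y - y_P \in H_0^1(\Omega)$ satisfies
\[
\int_\Omega \nabla v \cdot \nabla \phi \dd x = - \int_\Omega g_u(y)\phi \dd x \qquad \forall \phi \in H_0^1(\Omega).
\]
Writing $M := \|y_P\|_\infty$ (so that $r_P = 2M$ and $-M \leq y_P \leq M$ a.e.), the key idea is to test this identity with truncations of $v$ at the levels $\pm M$, which are admissible elements of $H_0^1(\Omega)$.

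For the upper bound I would insert $\phi := (v - M)^+$, which belongs to $H_0^1(\Omega)$ because $v \in H_0^1(\Omega)$ and $M > 0$. On the set $\{v > M\}$ one has $y = v + y_P > M + (-M) = 0$, so the monotonicity of $g_u$ together with $g_u(0) = 0$ gives $g_u(y) \geq 0$ there; since also $(v-M)^+ \geq 0$, the right-hand side is $\leq 0$. As the left-hand side equals $\|\nabla (v - M)^+\|_{L^2(\Omega)^d}^2 \geq 0$, it follows that $\nabla(v-M)^+ = 0$ a.e., whence the inequality of Poincaré-Friedrichs \eqref{eq:PoincFried} forces $(v-M)^+ = 0$, i.e.\ $v \leq M$. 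This yields $y \leq y_P + M \leq 2M = r_P$.

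For the lower bound I would argue symmetrically, testing with $\phi := \min(0, v + M) = -(-v - M)^+ \in H_0^1(\Omega)$. On $\{v < -M\}$ one now has $y = v + y_P < -M + M = 0$, hence $g_u(y) \leq 0$, so that $g_u(y)\phi \geq 0$ and the right-hand side is again $\leq 0$, while the left-hand side equals $\int_{\{v<-M\}}|\nabla v|^2 \dd x \geq 0$. The same Poincaré-Friedrichs argument gives $v \geq -M$ and thus $y \geq y_P - M \geq -2M = -r_P$. Combining the two bounds yields $\|S(u)\|_\infty \leq r_P$ for all $0 \leq u \in L^2(\R)$.

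A remark on difficulties: the computation is routine once the comparison variable $v = y - y_P$ is chosen, and the only points requiring care are the admissibility of the truncated test functions in $H_0^1(\Omega)$ (for which I would invoke the standard chain rule for Sobolev functions, e.g.\ \cite[Theorem 2.1.11]{Ziemer1989}) and the sign bookkeeping for $g_u(y)$. The latter is also what \emph{explains} the factor $2$ in $r_P = 2\|y_P\|_\infty$: truncating $v$ at the level $M = \|y_P\|_\infty$ is precisely what guarantees that $y = v + y_P$ keeps a definite sign on the super- and sublevel sets $\{v > M\}$ and $\{v < -M\}$, and hence that the nonlinear term $g_u(y)$ has the sign needed to work in our favour.
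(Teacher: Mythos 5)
Your proposal is correct and follows essentially the same argument as the paper: compare $y$ with $y_P$ via $v = y - y_P$, truncate at the levels $\pm\|y_P\|_\infty$, use the sign of $g_u(y)$ on the resulting super-/sublevel sets together with the Poincaré--Friedrichs inequality. The only difference is cosmetic — the paper tests once with the combined two-sided truncation $w = v - \min(\|y_P\|_\infty, \max(-\|y_P\|_\infty, v))$, whereas you perform the upper and lower truncations as two separate tests.
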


\begin{proof}
From \cref{th:propS} and the Sobolev embeddings, 
it follows that $y_P$ satisfies $y_P = S(0) \in C(\overline{\Omega})$. Consider now 
a control
$0 \leq u\in L^2(\R)$ with state $y := S(u) \in H_0^1(\Omega) \cap H^2(\Omega) \subset C(\overline{\Omega})$.
We
define $z := y - y_P$ and 
$w := z  - \min(\left \| y_P\right \|_{\infty}, \max(-\left \| y_P \right \|_{\infty}, z))$.
Due to their construction and the lemma of Stampacchia (see 
\cite[Theorem II.A.1]{KinderlehrerStampacchia1980} and 
\cite[Theorem 5.8.2]{Attouch2006}), 
the functions $z$ and $w$ satisfy $z, w \in H_0^1(\Omega)$, 
\begin{equation}
\label{eq:poisPDE}
\int_\Omega \nabla z \cdot \nabla v  + g_u(y) v \dd x = 0\qquad \forall v \in H_0^1(\Omega),
\end{equation}
and 
\[
w = 
\begin{cases}
z -  \left \| y_P\right \|_{\infty} & \text{if } z > \left \| y_P\right \|_{\infty},
\\
0 & \text{if } - \left \| y_P\right \|_{\infty}  \leq z \leq \left \| y_P\right \|_{\infty},
\\
z + \left \| y_P\right \|_{\infty} & \text{if } z < - \left \| y_P\right \|_{\infty}. 
\end{cases}
\]
By choosing $w$ as the test function in \eqref{eq:poisPDE} 
and by exploiting the formulas in \cite[Theorem 5.8.2]{Attouch2006}, we obtain
\begin{equation}
\label{eq:randomeq27367dge6}
0 = 
\int_\Omega \nabla z \cdot \nabla w \dd x + \int_\Omega g_u(y) w \dd x 
=
\int_\Omega |\nabla w |^2 \dd x + \int_\Omega g_u(y) w \dd x 
\geq 
\int_\Omega  |\nabla w |^2 \dd x.
\end{equation}
Here, the last inequality follows from the implications 
\begin{equation*}
\begin{aligned}
&w> 0 
\quad \Rightarrow\quad 
y - y_P = z > \left \| y_P\right \|_{\infty}
&&\Rightarrow\quad 
y \geq y_P + \left \| y_P\right \|_{\infty}
\quad \Rightarrow\quad 
y \geq 0,
\\
&w<  0 
\quad \Rightarrow\quad 
y - y_P = z < - \left \| y_P\right \|_{\infty}
&&\Rightarrow\quad 
y \leq y_P - \left \| y_P\right \|_{\infty}
\quad \Rightarrow\quad 
y \leq 0,
\end{aligned}
\end{equation*}
the fact that $g_u$ is nondecreasing, 
and the identity $g_u(0) = 0$. 
Due to \eqref{eq:PoincFried}, 
\eqref{eq:randomeq27367dge6} yields $w=0$ and, 
by the definition of $w$, 
$y_P - \left \| y_P\right \|_{\infty}  \leq y  \leq y_P + \left \| y_P\right \|_{\infty}$.
This establishes \eqref{eq:PoissonEstimate} and completes the proof. 
\end{proof}

\begin{remark}
Inequality \eqref{eq:PoissonEstimate} is not true anymore in general when the factor two 
in the definition of $r_P$
is dropped. Indeed, numerical evidence suggests that the estimate 
$\left \| S(u)\right \|_{\infty}\leq  2 \left \| y_P\right \|_{\infty}$
for all $u\in L_+^2(\R)$~is~optimal.
\end{remark}

\section{First Consequences for the Optimization Problem}
\label{sec:4}

Having studied the properties of the state-equation \eqref{eq:PDE},
we now turn our attention to the analysis of the optimization problem \eqref{eq:P}.
Henceforth, with the letters $S$ and $J$, we always refer to the control-to-state 
mapping $S\colon U_P \to H_0^1(\Omega) \cap H^2(\Omega)$
introduced in \cref{th:propS} and the function $J$ defined in 
\eqref{eq:Jdef}. For the definitions of the quantities $\varepsilon_P$, $r_P$,
and $U_P$ appearing in the following, see \cref{def:Pdefs}. We first state:

\begin{definition}[Local and Global Solutions]
\label{def:localMin}
A function $0 \leq \bar u \in L^2(\R)$ is called a local minimizer/local solution/locally optimal control of 
$\eqref{eq:P}$ if there exists a number $\rho >0$ (the radius of optimality) such that 
\begin{equation}
\label{eq:localMin}
J(S(u), u) \geq J(S(\bar u), \bar u) \qquad \forall \,0 \leq u \in B_{\rho}^{L^2(\R)}(\bar u).
\end{equation}
If $\rho$ can be chosen as $\infty$, then we call $\bar u$ a global minimizer/global solution/globally 
optimal control of \eqref{eq:P}.
\end{definition}

It is easy to check that a control $\bar u$ can only be a local minimizer of 
\eqref{eq:P} if $J(S(\bar u), \bar u) < \infty$ holds. 
In the case $\nu_1 > 0$,
all local 
solutions of \eqref{eq:P} thus satisfy $\bar u \in L^1(\R)$.  
For the various variants of the problem \eqref{eq:P} that appear in the following sections, 
the notions of local and global optimality are defined analogously to \eqref{eq:localMin}. 
By exploiting the complete continuity of the map $S$ in \cref{th:propS}\ref{th:propS:item:i},
we obtain:

\begin{theorem}[Solvability of \eqref{eq:P}]
\label{th:solexistence}
The problem \eqref{eq:P} possesses at least one globally 
optimal control $ \bar u \in L^2(\R)$.
\end{theorem}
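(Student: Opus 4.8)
The plan is to establish existence of a global minimizer by the direct method of the calculus of variations, exploiting the complete continuity of $S$ established in \cref{th:propS}\ref{th:propS:item:i} together with the weak lower semicontinuity of the objective $J$. First I would observe that the feasible set is nonempty: the control $u \equiv 0$ belongs to $U_P$ (indeed to $L_+^2(\R)$), so the pair $(S(0), 0) = (y_P, 0)$ is admissible, and $J(y_P, 0) = \tfrac12\|y_P - y_D\|_{L^2(\Omega)}^2 + \tfrac{\nu_2}{2}\|u_D\|_{L^2(\R)}^2 < \infty$. Hence the infimum $m := \inf\{ J(S(u), u) \mid 0 \leq u \in L^2(\R) \}$ is finite and nonnegative, and we may select a minimizing sequence $\{u_n\} \subset L_+^2(\R)$ with $J(S(u_n), u_n) \to m$.

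Next I would extract weak compactness of the minimizing sequence. Since $\nu_2 > 0$, the term $\tfrac{\nu_2}{2}\|u_n - u_D\|_{L^2(\R)}^2$ is bounded along the sequence, which yields a uniform bound on $\|u_n\|_{L^2(\R)}$; by the theorem of Banach--Alaoglu (reflexivity of $L^2(\R)$) we pass to a subsequence, denoted the same, with $u_n \weakly \bar u$ in $L^2(\R)$ for some $\bar u \in L^2(\R)$. The nonnegativity constraint is preserved in the limit: $L_+^2(\R)$ is convex and closed, hence weakly closed by the lemma of Mazur, so $\bar u \geq 0$ a.e., and in particular $\bar u \in U_P$. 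This makes $S(\bar u)$ well defined via \cref{th:propS}.

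Now I would pass to the limit in the objective. By \cref{th:propS}\ref{th:propS:item:i} the map $S$ is completely continuous from $U_P$ into $H^2(\Omega)$, so $u_n \weakly \bar u$ in $L^2(\R)$ forces $S(u_n) \to S(\bar u)$ strongly in $H^2(\Omega)$, hence strongly in $L^2(\Omega)$; this gives $\tfrac12\|S(u_n) - y_D\|_{L^2(\Omega)}^2 \to \tfrac12\|S(\bar u) - y_D\|_{L^2(\Omega)}^2$. The two control-cost terms are convex and continuous on $L^2(\R)$ (the $L^1$-norm term is finite-valued only when $\nu_1 = 0$, but as a convex function with values in $[0,\infty]$ it is in any case weakly sequentially lower semicontinuous, and the $L^2$-distance term is convex and continuous), so both are weakly sequentially lower semicontinuous. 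Combining the strong convergence of the fidelity term with weak lower semicontinuity of the remaining terms yields
\begin{equation*}
J(S(\bar u), \bar u) \leq \liminf_{n \to \infty} J(S(u_n), u_n) = m,
\end{equation*}
and since $\bar u$ is feasible we also have $J(S(\bar u), \bar u) \geq m$, so equality holds and $\bar u$ is a global minimizer.

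I expect the only delicate point to be the treatment of the $L^1$-regularization term when $\nu_1 > 0$, where $J$ genuinely takes the value $+\infty$; here one must argue lower semicontinuity of $u \mapsto \|u\|_{L^1(\R)}$ under weak $L^2$-convergence on the whole real line rather than on a bounded domain. This follows from writing $\|u\|_{L^1(\R)} = \sup\{ \int_\R u\,\varphi \dd s \mid \varphi \in C_c(\R),\ \|\varphi\|_\infty \leq 1 \}$ as a supremum of weakly continuous linear functionals, each of which passes to the limit, so the supremum is weakly lower semicontinuous; alternatively one invokes convexity plus strong lower semicontinuity (Mazur). Everything else is routine once the complete continuity of $S$ is in hand.
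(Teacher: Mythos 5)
Your proposal is correct and follows essentially the same route as the paper's proof: the direct method of the calculus of variations, using the complete continuity of $S$ from \cref{th:propS}\ref{th:propS:item:i} for the tracking term and weak sequential lower semicontinuity of the convex regularization terms for the rest. The only cosmetic difference is that the paper justifies weak lower semicontinuity of $u \mapsto \nu_1\|u\|_{L^1(\R)} + \tfrac{\nu_2}{2}\|u-u_D\|_{L^2(\R)}^2$ via Fatou's lemma plus the lemma of Mazur, whereas you primarily use the duality representation of the $L^1$-norm as a supremum of weakly continuous functionals (while also mentioning the Mazur route), both of which are valid.
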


\begin{proof}
This follows immediately from the direct method of the calculus of variations. 
Indeed, the admissible set of the problem \eqref{eq:P} is clearly nonempty (as it contains the zero function) 
and we can find 
nonnegative controls
 $\{u_n\} \subset L^2(\R)$ with associated states $y_n := S(u_n)$
such that the function values $J(y_n, u_n)$ are finite and such that the sequence 
$\{J(y_n, u_n)\}$ converges to the infimal value of \eqref{eq:P}. 
Due to the structure of $J$, the sequence $\{u_n\}$ is bounded in $L^2(\R)$ and we may assume 
w.l.o.g.\ that $u_n \weakly \bar u$ holds in $L^2(\R)$ for some (necessarily nonnegative) $\bar u \in L^2(\R)$.
Since the function $L^2(\R) \ni u \mapsto \nu_1 \|u\|_{L^1(\R)} + \frac{\nu_2}{2} \|u - u_D\|_{L^2(\R)}^2 \in [0, \infty]$
is convex and lower semicontinuous by the lemma of Fatou and, 
as a consequence,  weakly lower semicontinuous by the lemma of Mazur, 
and since the 
map $U_P\ni u \mapsto \frac12\|S(u) - y_D \|_{L^2(\Omega)}^2 \in \R$ is completely continuous 
by \cref{th:propS}\ref{th:propS:item:i}, we obtain that 
$\liminf_{n \to \infty} J(S(u_n), u_n) \geq J(S(\bar u), \bar u)$ holds. Due to the choice of $\{u_n\}$,
this proves the claim.
\end{proof}

As already mentioned in \cref{sec:3}, the $\|\cdot\|_\infty$-bound
in  \eqref{eq:PoissonEstimate}
makes it possible to recast \eqref{eq:P} as a minimization problem in $L^2(-r,r)$
for all large enough $r>0$. 
To formulate  this result rigorously, we introduce:

\begin{definition}[Restriction and Extension by Zero] 
\label{def:ExtZero}
Given a number $r > 0$, we define:
\begin{enumerate}[label=\roman*)]
\item $\uu_D \in L^2(-r,r)$ to be the function $\uu_D := u_D|_{(-r,r)}$;
\item\label{def:ExtZero:ii} $
E_r\colon  L^2(-r,r) \to L^2(\R)$ to be the map
that extends elements of $L^2(-r,r)$ by zero to elements of $L^2(\R)$;
\item  $U_{P,r}$ to be the set 
$ U_{P,r} := 
\left  \{\uu \in L^2(-r,r) \mid \uu  \geq - \varepsilon_P
\text{ a.e.\ in } (-r,r) \right \}
= 
\left  \{\uu \in L^2(-r,r) \mid E_r(\uu) \in U_P \right \}
$;
\item 
$S_r$  to be the operator
\mbox{$S_r\colon  U_{P,r}\to H_0^1(\Omega) \cap H^2(\Omega)$,}
$S_r(\uu) := S(E_r(\uu))$;
\item\label{def:ExtZero:v} $F_r$  to be the function 
\[
F_r\colon U_{P,r} \to \R,\qquad
F_r(\uu) := \frac12 \|S_r(\uu)- y_D\|_{L^2(\Omega)}^2 
		+ \int_{-r}^r \nu_1  \uu + \frac{\nu_2}{2}( \uu - \uu_D)^2\dd t.
		\]
\end{enumerate}
\end{definition}
Here and in what follows,  the typesetting $\uu$ is used to emphasize 
that we talk about functions on a subinterval $(-r,r)$ and not about 
controls $u$ on the whole real line as appearing in the original problem formulation \eqref{eq:P}. 
Note that, due to the structure of the PDE \eqref{eq:PDE}, we have:

\begin{lemma}[Truncation Invariance] 
\label{lemma:truncinv}
The operators $S$ and $S_r$ satisfy
\begin{equation}
\label{eq:truncid}
\begin{aligned}
S(u) = S\left (\mathds{1}_{(-r,r)} u\right ) = S_r\left (u|_{(-r,r)}\right )
\qquad 
\forall\,0 \leq u \in L^2(\R)\qquad \forall r \geq r_P.
\end{aligned}
\end{equation}
\end{lemma}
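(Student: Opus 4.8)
The plan is to reduce the claim to the range bound of \cref{lem:PoissonEstimate} together with the uniqueness assertion of \cref{th:propS}. First note that the second identity in \eqref{eq:truncid} requires no real work: for $0 \leq u \in L^2(\R)$ and $r \geq r_P$, the extension-by-zero $E_r(u|_{(-r,r)})$ is, by its very construction, the function that coincides with $u$ on $(-r,r)$ and vanishes on $\R \setminus (-r,r)$, i.e., $E_r(u|_{(-r,r)}) = \mathds{1}_{(-r,r)} u$. Hence, directly from the definitions of $E_r$ and $S_r$ in \cref{def:ExtZero}, we get $S_r(u|_{(-r,r)}) = S(E_r(u|_{(-r,r)})) = S(\mathds{1}_{(-r,r)} u)$. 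It therefore only remains to establish the first identity $S(u) = S(\mathds{1}_{(-r,r)} u)$.

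To this end, I would set $y := S(u)$ and first check that $\mathds{1}_{(-r,r)} u$ is again an admissible control: it lies in $L^2(\R)$ and, since $u \geq 0$, it satisfies $\mathds{1}_{(-r,r)} u \geq 0 \geq -\varepsilon_P$, so that $\mathds{1}_{(-r,r)} u \in U_P$ and $S(\mathds{1}_{(-r,r)} u)$ is well defined by \cref{th:propS}. The crucial input is that, because $u \geq 0$ and $r \geq r_P$, the bound \eqref{eq:PoissonEstimate} yields $\|y\|_\infty \leq r_P \leq r$; as $y \in H_0^1(\Omega) \cap H^2(\Omega) \subset C(\overline{\Omega})$, this means $y(x) \in [-r,r]$ for every $x \in \overline{\Omega}$.

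The key elementary observation is then that $g_u$ and $g_{\mathds{1}_{(-r,r)} u}$ agree on the interval $[-r,r]$. Indeed, for $t \in [0,r]$ the segment of integration is contained in $[0,r)$ up to the single point $t=r$, a Lebesgue null set, so $\mathds{1}_{(-r,r)}(s) = 1$ for a.e.\ $s$ between $0$ and $t$ and $g_{\mathds{1}_{(-r,r)} u}(t) = \int_0^t \mathds{1}_{(-r,r)}(s) u(s) \dd s = \int_0^t u(s)\dd s = g_u(t)$; the case $t \in [-r,0)$ is analogous, using the sign convention fixed for $\int_t^0$ in \cref{subsec:2.2}. Combining this with the range inclusion $y(x) \in [-r,r]$ gives the pointwise identity $g_{\mathds{1}_{(-r,r)} u}(y) = g_u(y)$ a.e.\ in $\Omega$. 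Consequently $y = S(u)$ satisfies $-\Delta y + g_{\mathds{1}_{(-r,r)} u}(y) = -\Delta y + g_u(y) = f$ in $H^{-1}(\Omega)$, i.e., $y$ is the weak solution of \eqref{eq:PDE} associated with the control $\mathds{1}_{(-r,r)} u$. By the uniqueness statement of \cref{th:propS}, this forces $y = S(\mathds{1}_{(-r,r)} u)$, which is exactly the first identity and completes the proof.

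I do not expect a genuine obstacle here; the conceptual heart is simply the realization that the state only ``feels'' $g_u$ on its own range, which the $\|\cdot\|_\infty$-bound confines to $[-r,r]$. The only steps requiring a little care are the harmless handling of the interval endpoints $\pm r$ in the integral identity for $g_u$ (null sets) and the verification that $\mathds{1}_{(-r,r)} u$ remains admissible, i.e., lies in $U_P$.
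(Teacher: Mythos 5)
Your proposal is correct and follows essentially the same route as the paper's proof: confine $\range(S(u))$ to $[-r,r]$ via the $\|\cdot\|_\infty$-bound of \cref{lem:PoissonEstimate}, observe that $g_u$ and $g_{\mathds{1}_{(-r,r)}u}$ coincide on $[-r,r]$, and conclude by uniqueness of the weak solution that $S(u) = S(\mathds{1}_{(-r,r)}u)$, with the remaining identity being definitional. Your extra care with the interval endpoints and the admissibility of $\mathds{1}_{(-r,r)}u$ is harmless detail the paper leaves implicit.
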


\begin{proof}
Let $0 \leq u \in L^2(\R)$ and $r \geq r_P$ be given. 
By the definition of $S$, 
the function $y = S(u) \in H_0^1(\Omega)\cap H^2(\Omega)$ 
is the weak solution of the PDE
\begin{equation*}
 - \Delta y + g_u(y) = f \text{ in }\Omega,\qquad y = 0 \text{ on }\partial \Omega,
\end{equation*}
and from \eqref{eq:PoissonEstimate}, we obtain 
that the continuous representative of $y$ satisfies $\range(y) \subset [-r_P, r_P] \subset [-r,r]$.
In combination with \eqref{eq:defgu}, this implies that $g_{u}(t) = g_{\mathds{1}_{(-r,r)} u}(t)$
holds for all $t \in [-r,r]$ and that $y$ is also the weak solution of 
\begin{equation*}
 - \Delta y +  g_{\mathds{1}_{(-r,r)} u}(y) = f \text{ in }\Omega,\qquad y = 0 \text{ on }\partial \Omega.
\end{equation*}
Thus, $S(u) = S\left (\mathds{1}_{(-r,r)} u\right )$ as claimed. The second identity 
$S\left (\mathds{1}_{(-r,r)} u\right ) = S_r\left (u|_{(-r,r)}\right )$ in \eqref{eq:truncid} 
follows  
from the definition of $S_r$. This establishes the assertion.
\end{proof}

We can now prove:

\begin{theorem}[Reformulation of \eqref{eq:P}]
\label{th:truncate}
Let $r \geq r_P$ and $\bar u \in L^2(\R)$ 
be given. 
Then $\bar u$ is a local solution of \eqref{eq:P}
with radius of optimality $\rho $
if and only if  the following two conditions are satisfied:
\begin{enumerate}[label=\roman*)]
\item\label{th:truncate:item:i} It holds 
\begin{equation}
\label{eq:projoutside}
\bar u = \max\left ( 0, u_D - \frac{\nu_1}{\nu_2} \right ) \text{ a.e.\ in } \R \setminus (-r,r).
\end{equation}
\item\label{th:truncate:item:ii} The restriction
$\bar \uu := \bar u|_{(-r,r)} \in L^2(-r,r)$ is a local solution 
with radius of optimality $\rho$
of the problem
\begin{equation*}
\label{eq:Pr}
\tag{P$_r$}
\left.
	\begin{aligned}
		\text{Minimize} 
		\quad & F_r(\uu) = \frac12 \|S_r(\uu) - y_D\|_{L^2(\Omega)}^2 
		+ \int_{-r}^r \nu_1  \uu + \frac{\nu_2}{2}( \uu - \uu_D)^2\dd t \\
        \text{w.r.t.}
        \quad &\uu \in L^2(-r,r),\\
		\text{s.t.} \quad &  \uu \geq 0 \text{ a.e.\  in } (-r,r).
	\end{aligned}~~
\right \}
\end{equation*}
\end{enumerate}
\end{theorem}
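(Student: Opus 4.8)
The plan is to use \cref{lemma:truncinv} to decouple the objective of \eqref{eq:P} into a part that depends only on the restriction $u|_{(-r,r)}$ and a part that depends only on the values of $u$ on $\R \setminus (-r,r)$. First I would observe that, for every $0 \leq u \in L^2(\R)$ and $r \geq r_P$, the truncation invariance \eqref{eq:truncid} yields $S(u) = S_r(u|_{(-r,r)})$, so that the fidelity term $\frac12\|S(u) - y_D\|_{L^2(\Omega)}^2$ depends only on $u|_{(-r,r)}$. Splitting the $L^1$- and $L^2$-regularizers along $(-r,r)$ and $\R \setminus (-r,r)$ and using $u \geq 0$ then produces the decomposition
\begin{equation*}
J(S(u),u) = F_r\!\left(u|_{(-r,r)}\right) + \int_{\R \setminus (-r,r)} \left( \nu_1 u + \frac{\nu_2}{2}(u - u_D)^2 \right) \dd t,
\end{equation*}
in which the first summand depends only on $u|_{(-r,r)}$ and the second only on $u|_{\R \setminus (-r,r)}$.

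Next I would analyze the outside integral pointwise. For fixed $t$, the map $s \mapsto \nu_1 s + \frac{\nu_2}{2}(s - u_D(t))^2$ is strictly convex on $\R$ and attains its unique minimum over $[0,\infty)$ at $\max(0, u_D(t) - \nu_1/\nu_2)$; since $u_D \in L^2(\R)$ and Chebyshev's inequality bounds the measure of $\{u_D > \nu_1/\nu_2\}$, the resulting pointwise minimizer lies in $L^1(\R) \cap L^2(\R)$, so the associated integral is finite. This identifies $\max(0, u_D - \nu_1/\nu_2)$ as the \emph{unique} minimizer of the outside integral among all nonnegative controls, which is exactly the profile prescribed in \eqref{eq:projoutside}.

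For the forward implication, I would assume that $\bar u$ is a local solution of \eqref{eq:P} with radius $\rho$. To obtain \eqref{eq:projoutside}, I would argue by contradiction: if $\bar u$ differed from $\max(0, u_D - \nu_1/\nu_2)$ on a set $E \subset \R \setminus (-r,r)$ of positive measure, then the controls $u_\theta$ that coincide with $\bar u$ on $(-r,r)$ and equal $(1-\theta)\bar u + \theta \max(0, u_D - \nu_1/\nu_2)$ on $\R \setminus (-r,r)$ would be nonnegative (as convex combinations of nonnegative functions), would lie in $B_\rho^{L^2(\R)}(\bar u)$ for all small $\theta > 0$, and would leave the state $S(\bar u)$ and hence $F_r(\bar\uu)$ unchanged by \eqref{eq:truncid}; strict convexity of the pointwise cost would then force the outside integral — and thus $J$ — to strictly decrease, contradicting local optimality. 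To obtain that $\bar\uu$ is a local solution of \eqref{eq:Pr}, I would take any admissible $\uu$ for \eqref{eq:Pr} with $\|\uu - \bar{\uu}\|_{L^2(-r,r)} \leq \rho$, glue it to $\bar u$ on $\R \setminus (-r,r)$ to form an admissible competitor $u$ for \eqref{eq:P} in $B_\rho^{L^2(\R)}(\bar u)$, and invoke local optimality of $\bar u$; in the resulting inequality the two (identical) outside integrals cancel, leaving $F_r(\uu) \geq F_r(\bar{\uu})$.

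The converse follows directly from the same decomposition: given \eqref{eq:projoutside} and the local optimality of $\bar{\uu}$ for \eqref{eq:Pr}, any admissible $u$ for \eqref{eq:P} with $\|u - \bar u\|_{L^2(\R)} \leq \rho$ has a restriction $u|_{(-r,r)}$ that is admissible for \eqref{eq:Pr} and lies in the $\rho$-ball around $\bar{\uu}$, so $F_r(u|_{(-r,r)}) \geq F_r(\bar{\uu})$; meanwhile the pointwise minimality from paragraph two together with \eqref{eq:projoutside} shows that the outside integral of $u$ dominates that of $\bar u$. Summing the two estimates yields $J(S(u),u) \geq J(S(\bar u),\bar u)$. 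I expect the main obstacle to be the forward direction \eqref{eq:projoutside}, where one must extract a \emph{strict} decrease from the strict convexity of the pointwise cost while simultaneously respecting both the nonnegativity constraint and the radius-$\rho$ ball — this is precisely the point at which the localization through small $\theta$ and the state-invariance under truncation are indispensable.
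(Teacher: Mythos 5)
Your proposal is correct, and its architecture coincides with the paper's: truncation invariance \eqref{eq:truncid} to decouple the state from the values of $u$ outside $(-r,r)$, gluing to pass between competitors for \eqref{eq:P} and \eqref{eq:Pr}, and the inside/outside decomposition for the converse. The genuine difference lies in how \eqref{eq:projoutside} is obtained in the forward direction. The paper perturbs $\bar u$ by directions $v$ supported in $\R \setminus (-r,r)$, divides by the step size, passes to the limit $s \to 0$ to obtain the variational inequality $\int_\R \left( \nu_1/\nu_2 - u_D + \bar u \right) v \dd x \geq 0$, and then extracts the projection formula by testing with indicator-type functions on $\{\bar u > 0\}$ and $\{\bar u = 0\}$. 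You instead identify $\max(0, u_D - \nu_1/\nu_2)$ as the unique pointwise minimizer of the strictly convex outside integrand and derive a strict decrease of $J$ along the convex combinations $u_\theta$ for small $\theta$, contradicting local optimality. Your route is more elementary --- no limit passage and no choice of test functions --- and it delivers the uniqueness of the outside profile in one stroke; the paper's first-order argument is the one that would survive if the outside cost could not be minimized in closed form. Two small points would make your write-up airtight: first, when $\nu_1 = 0$ the Chebyshev step is vacuous (the set $\{u_D > 0\}$ may have infinite measure), but finiteness of the outside integral at the pointwise minimizer still holds because the quadratic term is bounded by $\frac{\nu_2}{2}\|u_D\|_{L^2(\R)}^2$ and the $L^1$-term then carries coefficient zero; second, your comparison of outside integrals tacitly uses that $J(S(\bar u), \bar u) < \infty$ for every local minimizer, which is exactly the remark the paper makes after \cref{def:localMin} and should be cited or reproved at the start of your argument.
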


\begin{proof}
Suppose that $\bar u \in L^2(\R)$ is a local solution of \eqref{eq:P}
and that $v \in L^1(\R) \cap L^2(\R)$ is an arbitrary function
satisfying $v = 0$ a.e.\ in $(-r,r)$ and $\bar u + s_0 v \geq 0$ a.e.\ in $\R$ for some $s_0 > 0$.
Then it holds $0 \leq \bar u + s  v \in L^2(\R)$ for all $0 < s < s_0$,
it follows from the  local optimality of $\bar u$ that, 
for all sufficiently small $s>0$, 
we have $J(S(\bar u + s  v), \bar u + s  v) \geq J(S(\bar u), \bar u)$,
and we obtain from  $\bar u + s v = \bar u$ a.e.\ in $(-r,r)$,
 $r \geq r_P$, and \eqref{eq:truncid} that  $S(\bar u + s  v) = S(\bar u)$ holds for all $0 < s < s_0$.
In combination with the definition of $J$, this implies
\begin{equation}
\label{eq:randomineq1635sg36z}
\begin{aligned}
J(S(\bar u + s  v), \bar u + s  v) &= \frac12 \| S(\bar u) - y_D\|_{L^2(\Omega)}^2 + \nu_1 \|\bar u + s  v\|_{L^1(\R)} 
+ \frac{\nu_2}{2} \|\bar u + s  v - u_D\|_{L^2(\R)}^2 
\\
\geq J(S(\bar u), \bar u) &=
\frac12 \| S(\bar u) - y_D\|_{L^2(\Omega)}^2 + \nu_1 \|\bar u  \|_{L^1(\R)} 
+ \frac{\nu_2}{2} \|\bar u  - u_D\|_{L^2(\R)}^2 
\end{aligned}
\end{equation}
for all sufficiently small $s > 0$ and, as a consequence, 
\begin{equation}
\label{eq:randomeq63535}
\int_\R  \nu_1
   s  v  +  \frac{\nu_2}{2} (\bar u + s  v - u_D)^2 - \frac{\nu_2}{2} (\bar u  - u_D)^2 \dd x \geq 0. 
\end{equation} 
Here, we have exploited
the nonnegativity of the functions $\bar u$ and $\bar u + s v$
to rewrite the 
$L^1$-norms in \eqref{eq:randomineq1635sg36z}.
If we use the binomial identities in \eqref{eq:randomeq63535},
divide by $s\nu_2$, and let $s$ go to zero, then \eqref{eq:randomeq63535} yields
\begin{equation}
\label{eq:randomeq263636-2673gdge7ebd}
\int_\R  \left ( \frac{\nu_1}{\nu_2} - u_D + \bar u \right )v\dd x \geq 0. 
\end{equation} 
By considering functions $v$ of the form $v = \mathds{1}_{\{\bar u \geq \varepsilon\} \cap \R \setminus (-r,r)} w $
with arbitrary $\varepsilon > 0$ and $w \in L^1(\R) \cap L^\infty(\R)$ in \eqref{eq:randomeq263636-2673gdge7ebd}, 
we obtain that $\bar u$ satisfies $\bar u = u_D - \nu_1/\nu_2$  
a.e.\ in $\{\bar u > 0\} \cap \R \setminus (-r,r)$,
and by considering functions of the form $v = \mathds{1}_{\{\bar u = 0\} \cap \R \setminus (-r,r)} w $
with $0 \leq w \in L^1(\R) \cap L^2(\R)$,
that $u_D - \nu_1/\nu_2 \leq 0$ holds a.e.\ in  $\{\bar u = 0\} \cap \R \setminus (-r,r)$.
This establishes the identity \eqref{eq:projoutside} in condition \ref{th:truncate:item:i}.

Next, we prove that $\bar u$ also satisfies \ref{th:truncate:item:ii}:
Denote the radius of local optimality of  $\bar u$ 
with  $\rho > 0$  and assume that
$0 \leq \uu \in L^2(-r,r)$ is an arbitrary function 
such that $\|\bar \uu - \uu\|_{L^2(-r,r)} \leq \rho$ holds for 
$\bar \uu := \bar u|_{(-r,r)}$. 
We define $u := E_r(\uu) + \mathds{1}_{\R \setminus (-r,r)} \bar u$. 
By construction, the function $u$ satisfies $0 \leq u \in L^2(\R)$
and  $\|u - \bar u\|_{L^2(\R)} = \|\bar \uu - \uu\|_{L^2(-r,r)} \leq \rho$.
In combination with the definition of $\rho$, this yields
\begin{equation}
\label{eq:randomeq2636dge63h}
\frac12 \| S(u) - y_D\|_{L^2(\Omega)}^2 + \nu_1 \|u\|_{L^1(\R)} + \frac{\nu_2}{2} \|u  - u_D\|_{L^2(\R)}^2 
\geq
\frac12 \| S(\bar u) - y_D\|_{L^2(\Omega)}^2 + \nu_1 \|\bar u\|_{L^1(\R)} 
+ \frac{\nu_2}{2} \|\bar u  - u_D\|_{L^2(\R)}^2.
\end{equation}
Since $u$ is identical to $\bar u$ a.e.\ in $\R \setminus (-r,r)$,
since
$S(\tilde u) = S_r(\tilde u|_{(-r,r)})$ holds for all $0 \leq \tilde u \in L^2(\R)$ by \eqref{eq:truncid},
and since $u$ and $\bar u$ are nonnegative, 
 the
inequality \eqref{eq:randomeq2636dge63h} can be recast as
$F_r(\uu) \geq F_r(\bar \uu)$.
As $\uu$ was arbitrary, this shows that  $\bar \uu = \bar u|_{(-r,r)} \in L^2(-r,r)$ is indeed a local solution of \eqref{eq:Pr}
(with the same radius of optimality $\rho$ as $\bar u$). This completes the proof 
of the direction ``$\Rightarrow$'' in the asserted equivalence.

It remains to prove the reverse implication. To this end, let us assume that a 
function $\bar u \in L^2(\R)$ satisfying  the conditions \ref{th:truncate:item:i}
and \ref{th:truncate:item:ii} is given. For such a function $\bar u$, 
we obtain from the properties in \ref{th:truncate:item:i}
and \ref{th:truncate:item:ii} that $\bar u \geq 0$ holds a.e.\ in $\R$ and from 
\eqref{eq:projoutside} and a simple pointwise calculation that 
\begin{equation}
\label{eq:randomeq2723hge7wbs}
\begin{aligned}
&\nu_1 \|u \|_{L^1(\R \setminus (-r, r))} 
+ \frac{\nu_2}{2} \|u  - u_D\|_{L^2(\R \setminus (-r, r))}^2 
\\
&\qquad\qquad
\geq
\nu_1 \|\bar u  \|_{L^1(\R \setminus (-r,r))} 
+ \frac{\nu_2}{2} \|\bar u  - u_D\|_{L^2(\R \setminus (-r, r))}^2 
\quad\forall\, 0 \leq u \in L^2(\R \setminus (-r,r)). 
\end{aligned}
\end{equation}
Consider now some $0 \leq u \in L^2(\R)$
that satisfies $\|u - \bar u\|_{L^2(\R)} \leq \rho$
for  the radius of local optimality $\rho > 0$ of $\bar \uu := \bar u|_{(-r,r)} \in L^2(-r,r)$ in 
\eqref{eq:Pr}. We define $\uu := u|_{(-r,r)}$.
For this $\uu$, it clearly holds $0 \leq \uu \in L^2(-r,r)$
and $\|\bar \uu - \uu\|_{L^2(-r,r)} \leq \rho$ and we obtain from 
condition \ref{th:truncate:item:ii} that
\begin{equation*}
\begin{aligned}
&F_r(\uu) = \frac12 \|S_r(\uu) - y_D\|_{L^2(\Omega)}^2 
		+ \nu_1 \|\uu\|_{L^1(-r,r)} + \frac{\nu_2}{2} \|\uu - \uu_D\|_{L^2(-r,r)}^2
		\\
		&\qquad\qquad 
		\geq F_r(\bar \uu) = 
		\frac12 \|S_r(\bar \uu) - y_D\|_{L^2(\Omega)}^2 
		+ \nu_1 \|\bar \uu\|_{L^1(-r,r)} + \frac{\nu_2}{2} \|\bar \uu - \uu_D\|_{L^2(-r,r)}^2.
\end{aligned}
\end{equation*}
Due to \eqref{eq:truncid} and the definitions of $\uu$, $\bar \uu$, and $\uu_D$, the above inequality can be rewritten as 
\begin{equation*}
\frac12 \|S(u) - y_D\|_{L^2(\Omega)}^2 
		+ \nu_1 \| u\|_{L^1(-r,r)} + \frac{\nu_2}{2} \| u-u_D \|_{L^2(-r,r)}^2
		\geq 
		\frac12 \|S(\bar u) - y_D\|_{L^2(\Omega)}^2 
		+ \nu_1 \| \bar u\|_{L^1(-r,r)} + \frac{\nu_2}{2} \| \bar u-u_D \|_{L^2(-r,r)}^2.
\end{equation*}
In combination with \eqref{eq:randomeq2723hge7wbs}, this yields 
$J(S(u), u) \geq J(S(\bar u), \bar u)$. Since 
$0 \leq u \in L^2(\R)$
was an arbitrary function satisfying $\|u - \bar u\|_{L^2(\R)} \leq \rho$,
we may now conclude that $\bar u$ is indeed
a local solution of \eqref{eq:P} with radius of 
optimality $\rho$. This establishes the implication ``$\Leftarrow$'' 
of the asserted equivalence and 
completes the proof.
\end{proof}

Note that the inequality $r \geq r_P$ in \cref{th:truncate} is  only needed
for proving the equivalence of the problems \eqref{eq:P} and 
\eqref{eq:Pr}. For \eqref{eq:Pr} to be sensible, it suffices to assume that $r$ is positive. 
In what follows, when studying the problem \eqref{eq:Pr}, 
we work with the relaxed assumption $r>0$ whenever possible. 
For later use, we prove:

\begin{proposition}[Enforcing Global Optimality]
\label{prop:localglobal}
Let $r>0$ be given and let
$0 \leq \bar \uu \in L^2(-r,r)$ be a local solution of the problem \eqref{eq:Pr}.
Then there exists a constant $C_{\bar \uu} > 0$
such that 
$\bar \uu$ is the unique global solution of the problem
\begin{equation*}
\label{eq:Pru}
\tag{P$_{r, \bar{\uu}}$}
\left.
	\begin{aligned}
		\text{Minimize} 
		\quad & F_r(\uu) + \frac{C_{\bar \uu}}{2} \|\uu - \bar \uu\|_{L^2(-r,r)}^2
		\\
        \text{w.r.t.}
        \quad &\uu \in L^2(-r,r),\\
		\text{s.t.} \quad &  \uu \geq 0 \text{ a.e.\  in } (-r,r).
	\end{aligned}~~
\right \}
\end{equation*}
\end{proposition}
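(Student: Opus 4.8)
The plan is to observe that the objective of \eqref{eq:Pru}, namely
$\Phi(\uu) := F_r(\uu) + \tfrac{C_{\bar \uu}}{2}\|\uu - \bar \uu\|_{L^2(-r,r)}^2$,
differs from $F_r$ only by a strongly convex term that vanishes at $\bar \uu$ and grows quadratically away from it. If $C_{\bar \uu}$ is chosen large enough, this penalty will lift $\Phi$ strictly above the value $F_r(\bar \uu)$ at every admissible point except $\bar \uu$ itself, which is exactly the statement that $\bar \uu$ is the unique global minimizer of \eqref{eq:Pru}. Concretely, the goal is to pick $C_{\bar \uu} > 0$ so that $\Phi(\uu) > \Phi(\bar \uu) = F_r(\bar \uu)$ for every $0 \leq \uu \in L^2(-r,r)$ with $\uu \neq \bar \uu$.

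The key enabling observation I would record first is that $F_r$ is bounded below by zero on the feasible set: for $\uu \geq 0$ a.e.\ in $(-r,r)$, all three summands of $F_r(\uu)$ are nonnegative (the tracking term trivially, the term $\int_{-r}^r \nu_1 \uu$ because $\nu_1 \geq 0$ and $\uu \geq 0$, and $\tfrac{\nu_2}{2}(\uu - \uu_D)^2$ as a square), so $F_r(\uu) \geq 0$. Moreover, since $(-r,r)$ has finite measure we have $L^2(-r,r) \hookrightarrow L^1(-r,r)$, so $F_r(\bar \uu)$ is a finite nonnegative number. Let $\rho > 0$ denote the radius of optimality of $\bar \uu$ for \eqref{eq:Pr}, so that $F_r(\uu) \geq F_r(\bar \uu)$ for all $0 \leq \uu \in L^2(-r,r)$ with $\|\uu - \bar \uu\|_{L^2(-r,r)} \leq \rho$.

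I would then split the verification into two cases. For admissible $\uu \neq \bar \uu$ inside the ball, i.e.\ with $\|\uu - \bar \uu\|_{L^2(-r,r)} \leq \rho$, local optimality gives $F_r(\uu) \geq F_r(\bar \uu)$, hence $\Phi(\uu) \geq F_r(\bar \uu) + \tfrac{C_{\bar \uu}}{2}\|\uu - \bar \uu\|_{L^2(-r,r)}^2 > F_r(\bar \uu)$ for any $C_{\bar \uu} > 0$. For admissible $\uu$ outside the ball, i.e.\ with $\|\uu - \bar \uu\|_{L^2(-r,r)} > \rho$, I would use only $F_r(\uu) \geq 0$ to obtain $\Phi(\uu) \geq \tfrac{C_{\bar \uu}}{2}\|\uu - \bar \uu\|_{L^2(-r,r)}^2 > \tfrac{C_{\bar \uu}}{2}\rho^2$. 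Choosing $C_{\bar \uu} := (2 F_r(\bar \uu) + 1)/\rho^2 > 0$ makes the last bound equal to $F_r(\bar \uu) + \tfrac12 > F_r(\bar \uu)$, so $\Phi(\uu) > F_r(\bar \uu)$ in this case too. Combining both cases yields $\Phi(\uu) > \Phi(\bar \uu)$ for all admissible $\uu \neq \bar \uu$, which proves that $\bar \uu$ is the unique global solution of \eqref{eq:Pru}.

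I do not expect any serious obstacle: this is a standard localization-by-penalization argument, and the nonconvex tracking term $\tfrac12\|S_r(\uu) - y_D\|_{L^2(\Omega)}^2$ never needs to be controlled beyond its trivial nonnegativity. The only point requiring care is the sign bookkeeping that yields $F_r \geq 0$ on the feasible set; it is precisely this nonnegativity that permits the crude estimate outside the ball of radius $\rho$ and thereby makes the explicit, $\bar \uu$-dependent choice of $C_{\bar \uu}$ possible without appealing to any finer boundedness of $S_r$.
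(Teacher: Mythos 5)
Your proposal is correct and follows essentially the same route as the paper's proof: both use the radius of local optimality $\rho$ to handle the ball $\|\uu-\bar\uu\|_{L^2(-r,r)}\leq\rho$ via local optimality, and outside that ball use only the nonnegativity of $F_r$ on the feasible set together with a choice of $C_{\bar\uu}$ of size roughly $2F_r(\bar\uu)/\rho^2$ plus a positive margin (the paper takes $C_{\bar\uu}=1+2F_r(\bar\uu)/\rho^2$, you take $(2F_r(\bar\uu)+1)/\rho^2$; the difference is immaterial). No gaps.
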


\begin{proof}
Due to the local optimality of $\bar \uu$ in \eqref{eq:Pr}, 
we know that there exists $\rho > 0$ satisfying
\begin{equation}
\label{eq:randomeq26353}
F_r(\uu) \geq F_r(\bar \uu)
\qquad \forall\, 0 \leq \uu \in L^2(-r, r),  \|\uu - \bar \uu\|_{L^2(-r,r)} \leq \rho. 
\end{equation}
Define $C_{\bar \uu} := 1 + 2  F_r(\bar \uu)/ \rho^2$. 
Then it follows from the nonnegativity of $F_r$ on $L^2_+(-r,r)$
that 
\begin{equation}
\label{eq:randomeq2636gr6}
F_r(\bar \uu)  < \frac{C_{\bar \uu}}{2}\rho^2 
\leq \frac{C_{\bar \uu}}{2} \|\uu - \bar \uu\|_{L^2(-r,r)}^2
\leq
F_r(\uu)+ \frac{C_{\bar \uu}}{2} \|\uu - \bar \uu\|_{L^2(-r,r)}^2
\end{equation}
holds for all $0 \leq \uu \in L^2(-r,r)$ with $\|\uu - \bar \uu\|_{L^2(-r,r)}\geq \rho$. 
By combining 
\eqref{eq:randomeq26353} and \eqref{eq:randomeq2636gr6} 
and by distinguishing cases, the assertion of the proposition follows.
\end{proof}

Since the function $g_{u}$ is only Hölder continuous with exponent $1/2$ for arbitrary $u \in L^2(\R)$, 
we cannot state first-order necessary optimality conditions for the problems \eqref{eq:P}, 
\eqref{eq:Pr}, and \eqref{eq:Pru}
 at this point.
To do so, we require additional information about the regularity properties 
of local minimizers. 

\section{Regularity of Optimal Controls}
\label{sec:5}

The purpose of this section is to prove that
minimizers of  the problems \eqref{eq:P}
and \eqref{eq:Pr} are regulated if $u_D$ is a smooth enough function.  
As we will see in \cref{sec:6,sec:7,sec:8}, this type of regularity is crucial for 
the derivation of stationarity conditions and the design of numerical solution algorithms.
We consider the following situation:

\begin{assumption}[Standing Assumptions for \Cref{sec:5}] 
\label{ass:StandSec5}
Throughout this section, we assume that:
\begin{itemize}
\item $r_P$, $\varepsilon_P$, and $U_P$ are defined as before (see \cref{def:Pdefs});
\item $r$ is a given positive number;
\item $\uu_D$, $E_r$, $U_{P,r}$, $S_r$, and $F_r$ are defined as in \cref{def:ExtZero};
\item $\uu_D$ satisfies $\uu_D \in L^\infty_G(-r,r)$;
\item $0 \leq \bar \uu \in L^2(-r,r)$ is a given local solution of the problem \eqref{eq:Pr};
\item  $C_{\bar \uu} > 0$ is a constant such that $\bar \uu$ and $C_{\bar \uu}$ 
satisfy the assertion of \cref{prop:localglobal};
\item $\eta\colon (-\varepsilon_P, \infty) \to \R$  is defined by $\eta(t) :=
\min(0,t)^2/(t + \varepsilon_P)$ for all $t \in (-\varepsilon_P, \infty)$.
\end{itemize}
\end{assumption}

Note that, for the function $\eta$, we have:

\begin{lemma}[Properties of $\eta$]
\label{lemma:EtaProp}
The function $\eta$  satisfies $\eta \in C^1((-\varepsilon_P, \infty))$,
$\eta(t) = 0$ for all $t \in [0, \infty)$, $\eta(t) \geq \min(0,t)^2/\varepsilon_P$ for all $t \in (- \varepsilon_P, \infty)$,
and
$\eta(t) \to \infty$ for $(-\varepsilon_P, \infty) \ni t \to -\varepsilon_P$.
Further, $\eta$ is convex.
\end{lemma}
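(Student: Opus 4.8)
The plan is to exploit the explicit piecewise form of $\eta$. Since $\min(0,t) = 0$ for $t \geq 0$ and $\min(0,t) = t$ for $t < 0$, the definition yields $\eta(t) = 0$ on $[0,\infty)$ and $\eta(t) = t^2/(t+\varepsilon_P)$ on $(-\varepsilon_P, 0)$. The identity $\eta \equiv 0$ on $[0,\infty)$ is then immediate, and so is the blow-up $\eta(t) \to \infty$ as $(-\varepsilon_P,\infty) \ni t \to -\varepsilon_P$, since there the numerator tends to $\varepsilon_P^2 > 0$ while the denominator tends to $0^+$. On each of the open intervals $(-\varepsilon_P, 0)$ and $(0, \infty)$ the function $\eta$ is a smooth rational map, so the only points requiring genuine work are the gluing at $t = 0$ and the global convexity statement.

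For the $C^1$-regularity, I would differentiate the rational expression on $(-\varepsilon_P, 0)$ to obtain
\[
\eta'(t) = \frac{t(t + 2\varepsilon_P)}{(t+\varepsilon_P)^2}, \qquad t \in (-\varepsilon_P, 0),
\]
together with $\eta' \equiv 0$ on $(0, \infty)$. Passing to one-sided limits at the origin gives $\eta(0-) = 0 = \eta(0)$ and $\eta'(0-) = 0 = \eta'(0+)$, so that $\eta$ is differentiable at $0$ with $\eta'(0) = 0$ and $\eta'$ is continuous there; hence $\eta \in C^1((-\varepsilon_P, \infty))$.

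The pointwise lower bound follows by comparing denominators: for $t \in (-\varepsilon_P, 0)$ one has $0 < t + \varepsilon_P < \varepsilon_P$, whence $1/(t+\varepsilon_P) \geq 1/\varepsilon_P$ and therefore $\eta(t) = t^2/(t+\varepsilon_P) \geq t^2/\varepsilon_P = \min(0,t)^2/\varepsilon_P$, while for $t \geq 0$ both sides vanish. For convexity I would compute the second derivative on the nontrivial piece; after cancelling the factor $(t+\varepsilon_P)^2 - (t^2 + 2\varepsilon_P t) = \varepsilon_P^2$ this simplifies to
\[
\eta''(t) = \frac{2\varepsilon_P^2}{(t+\varepsilon_P)^3} > 0, \qquad t \in (-\varepsilon_P, 0),
\]
with $\eta'' \equiv 0$ on $(0,\infty)$.

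The main (and essentially only) subtlety is then to pass from this piecewise second-derivative information to convexity on the \emph{whole} interval. Since $\eta \in C^1$, it suffices to verify that $\eta'$ is nondecreasing on all of $(-\varepsilon_P, \infty)$. This holds because $\eta'' \geq 0$ makes $\eta'$ nondecreasing on each of the two open subintervals, $\eta'$ is negative on $(-\varepsilon_P, 0)$ and increases up to the common endpoint value $\eta'(0) = 0$, and $\eta' \equiv 0$ thereafter, so the two pieces join monotonically at the origin with no downward jump. A differentiable function with a globally nondecreasing derivative is convex, which completes the argument.
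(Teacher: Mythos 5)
Your proof is correct, and it is exactly the elementary computation the paper has in mind: the paper's own proof simply states that the lemma "can be checked with trivial calculations and estimates," and your piecewise differentiation, the lower bound via $0 < t+\varepsilon_P < \varepsilon_P$, and the gluing of $\eta'$ at the origin are precisely those calculations written out. The one point worth noting is that your convexity argument (nondecreasing $C^1$ derivative obtained by joining the two pieces at $t=0$) is a clean and fully rigorous way to handle the only spot where "trivial" needs a moment's care.
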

\begin{proof}
This can be checked with trivial calculations and estimates.
\end{proof} 

The main idea of the subsequent analysis is to relax the constraint $\uu \geq 0$ a.e.\ in $(-r,r)$
in \eqref{eq:Pru} by means of the barrier function $\beta \eta \colon (-\varepsilon_P, \infty) \to \R$, $\beta > 0$,
to establish a $BV$-regularity result for the adjoint variables of the resulting family of 
approximate problems, and to transfer this $BV$-regularity  
to the original problem \eqref{eq:Pru} by passing to the limit $\beta \to \infty$. 
For a given $\beta > 0$, we thus consider the following problem:
\begin{equation*}
\label{eq:Prub}
\tag{P$_{r, \bar{\uu}, \beta }$}
\left.
	\begin{aligned}
		\text{Minimize} 
		\quad & F_r(\uu) + \frac{C_{\bar \uu}}{2} \|\uu - \bar \uu\|_{L^2(-r,r)}^2
		+
		\int_{-r}^r
		\beta \eta (\uu) \dd t
		\\
        \text{w.r.t.}
        \quad &\uu \in L^2(-r,r),\\
		\text{s.t.} \quad &  \uu > - \varepsilon_P \text{ a.e.\  in } (-r,r).
	\end{aligned}~~
\right \}
\end{equation*}
We remark that, for optimal control problems governed by variational inequalities, 
regularization approaches similar to that developed in this section 
have already been used, e.g., in \cite{Barbu1984}. We first note:

\begin{proposition}[Solvability of \eqref{eq:Prub}]
For every $\beta > 0$, 
 \eqref{eq:Prub}  possesses a global solution 
$\bar \uu_\beta \in L^2(-r, r)$.
\end{proposition}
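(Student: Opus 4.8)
The plan is to establish existence by the direct method of the calculus of variations, in the same spirit as the proof of \cref{th:solexistence}, with the additional twist that the blow-up of the barrier function $\eta$ near $-\varepsilon_P$ has to be exploited to guarantee that the weak limit of a minimizing sequence again satisfies the \emph{open} pointwise constraint $\uu > -\varepsilon_P$. Denote the objective of \eqref{eq:Prub} by $\Phi$. First I would observe that the feasible set is nonempty and that $\Phi$ is proper: the local solution $\bar\uu$ is feasible (as $\bar\uu \geq 0 > -\varepsilon_P$ a.e.) and satisfies $\Phi(\bar\uu) < \infty$ because $\eta(\bar\uu) = 0$ a.e.\ by \cref{lemma:EtaProp}. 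Since $\nu_1 \geq 0$ and every feasible $\uu$ obeys $\uu > -\varepsilon_P$, the linear term $\int_{-r}^r \nu_1\uu\dd t$ is bounded below by $-2r\nu_1\varepsilon_P$, whereas the three remaining contributions $\tfrac12\|S_r(\uu)-y_D\|_{L^2(\Omega)}^2$, the quadratic part $\tfrac{\nu_2}{2}\|\uu-\uu_D\|_{L^2(-r,r)}^2 + \tfrac{C_{\bar\uu}}{2}\|\uu-\bar\uu\|_{L^2(-r,r)}^2$, and $\int_{-r}^r\beta\eta(\uu)\dd t$ are all nonnegative. Hence $m := \inf\Phi$ is finite and, because of the Tikhonov term with $\nu_2 > 0$, any minimizing sequence $\{\uu_n\}$ is bounded in $L^2(-r,r)$.

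By reflexivity of $L^2(-r,r)$, I would then extract a subsequence (not relabeled) with $\uu_n \weakly \bar\uu_\beta$ in $L^2(-r,r)$ for some $\bar\uu_\beta \in L^2(-r,r)$, and argue that $\Phi$ is weakly sequentially lower semicontinuous along this sequence. The two quadratic terms are convex and strongly continuous, hence weakly lower semicontinuous, and the linear functional $\uu \mapsto \int_{-r}^r \nu_1\uu\dd t$ is weakly continuous. For the fidelity term I would use that the extension-by-zero operator $E_r$ is linear and continuous, so that $E_r(\uu_n) \weakly E_r(\bar\uu_\beta)$ in $L^2(\R)$ with $E_r(\uu_n) \in U_P$; the complete continuity of $S$ in \cref{th:propS}\ref{th:propS:item:i} then yields $S_r(\uu_n) = S(E_r(\uu_n)) \to S(E_r(\bar\uu_\beta)) = S_r(\bar\uu_\beta)$ in $H^2(\Omega)$ and, a fortiori, in $L^2(\Omega)$, whence $\tfrac12\|S_r(\uu_n)-y_D\|_{L^2(\Omega)}^2 \to \tfrac12\|S_r(\bar\uu_\beta)-y_D\|_{L^2(\Omega)}^2$.

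The decisive and most delicate point concerns the barrier term and the feasibility of the limit. I would extend $\eta$ to a proper, nonnegative, convex, lower semicontinuous integrand $\tilde\eta\colon\R\to[0,\infty]$ by setting $\tilde\eta \equiv +\infty$ on $(-\infty,-\varepsilon_P]$, which is legitimate since $\eta$ is convex, nonnegative, and satisfies $\eta(t)\to\infty$ as $t\to-\varepsilon_P$ by \cref{lemma:EtaProp}. The associated functional $\uu \mapsto \int_{-r}^r \tilde\eta(\uu)\dd t$ is then weakly sequentially lower semicontinuous on $L^2(-r,r)$ (via the lemma of Mazur together with the lemma of Fatou). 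Along the minimizing sequence the barrier term $\int_{-r}^r\beta\eta(\uu_n)\dd t$ stays bounded, all other contributions being bounded below, so weak lower semicontinuity yields $\int_{-r}^r\tilde\eta(\bar\uu_\beta)\dd t \leq \liminf_{n\to\infty}\int_{-r}^r\tilde\eta(\uu_n)\dd t < \infty$. Finiteness of this integral forces $\bar\uu_\beta > -\varepsilon_P$ almost everywhere, because on any set of positive measure where $\bar\uu_\beta \leq -\varepsilon_P$ the integrand $\tilde\eta(\bar\uu_\beta)$ would equal $+\infty$. Thus the weak limit is feasible and $\int_{-r}^r\beta\eta(\bar\uu_\beta)\dd t \leq \liminf_{n\to\infty} \int_{-r}^r\beta\eta(\uu_n)\dd t$.

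Combining these lower semicontinuity properties yields $\Phi(\bar\uu_\beta) \leq \liminf_{n\to\infty}\Phi(\uu_n) = m$; since $\bar\uu_\beta$ is feasible, this forces $\Phi(\bar\uu_\beta) = m$, so that $\bar\uu_\beta$ is a global solution of \eqref{eq:Prub}. I expect the main obstacle to be precisely the feasibility of the weak limit: the admissible set $\{\uu > -\varepsilon_P\}$ is open and therefore not weakly closed, so feasibility cannot be inherited from weak convergence alone and must instead be recovered from the coercive blow-up of $\eta$ encoded in the extended integrand $\tilde\eta$. Everything else is a routine application of the direct method, closely paralleling the proof of \cref{th:solexistence}.
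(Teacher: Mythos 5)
Your proposal is correct and follows essentially the same route as the paper, which proves this proposition by invoking the direct method exactly as in the proof of \cref{th:solexistence}, combined with the complete continuity of $S$ from \cref{th:propS}\ref{th:propS:item:i} and the properties of $\eta$ from \cref{lemma:EtaProp}. Your write-up merely makes explicit the detail the paper leaves implicit behind the phrase ``and the properties of $\eta$'': namely that the convex blow-up of $\eta$ at $-\varepsilon_P$ (encoded in your extended integrand $\tilde\eta$) is what restores both weak lower semicontinuity of the barrier term and feasibility of the weak limit despite the openness of the constraint $\uu > -\varepsilon_P$.
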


\begin{proof}
This follows completely analogously to the proof of \cref{th:solexistence} from the direct method 
of the calculus of variations, the results on
the control-to-state mapping $S\colon U_P \to H_0^1(\Omega) \cap H^2(\Omega)$ 
in \cref{th:propS}, the definitions of the functions $S_r$ and $F_r$, and the properties of 
$\eta$. 
\end{proof}

Due to the $C_{\bar \uu}$-term in the objective of \eqref{eq:Prub} and \cref{prop:localglobal}, we 
have: 
\begin{proposition}[Convergence of Approximate Solutions]
\label{prop:rg1}
Consider for every $\beta > 0$ a global solution
$\bar \uu_\beta \in L^2(-r, r)$ of \eqref{eq:Prub}. Then
it holds  $\bar \uu_\beta \to \bar \uu$ in $L^2(-r,r)$ for $\beta \to \infty$.
\end{proposition}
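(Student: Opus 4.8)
The plan is to run a standard barrier/penalty convergence argument, exploiting that the barrier term vanishes on the feasible set of \eqref{eq:Pru} and that $\bar \uu$ is the \emph{unique} global solution of \eqref{eq:Pru} by \cref{prop:localglobal}. Write $\Phi(\uu) := F_r(\uu) + \frac{C_{\bar \uu}}{2}\|\uu - \bar \uu\|_{L^2(-r,r)}^2$ for the objective of \eqref{eq:Pru} and $\Phi_\beta(\uu) := \Phi(\uu) + \beta\int_{-r}^r \eta(\uu)\dd t$ for that of \eqref{eq:Prub}. First I would record the a priori bounds. Since $\bar \uu \geq 0 > -\varepsilon_P$ a.e.\ and $\eta \equiv 0$ on $[0,\infty)$ by \cref{lemma:EtaProp}, the control $\bar \uu$ is feasible for \eqref{eq:Prub} with $\Phi_\beta(\bar \uu) = \Phi(\bar \uu)$; global optimality of $\bar \uu_\beta$ then gives $\Phi(\bar \uu_\beta) + \beta\int_{-r}^r \eta(\bar \uu_\beta)\dd t \leq \Phi(\bar \uu)$ for every $\beta > 0$. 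As $\eta \geq 0$, this yields the two uniform estimates $\Phi(\bar \uu_\beta) \leq \Phi(\bar \uu)$ and $\int_{-r}^r \eta(\bar \uu_\beta)\dd t \leq (\Phi(\bar \uu) - \inf_{U_{P,r}} \Phi)/\beta \to 0$, where $\Phi$ is bounded below on $U_{P,r}$ because $\int_{-r}^r \nu_1 \uu \dd t \geq -2r\nu_1\varepsilon_P$ there and all remaining terms in $\Phi$ are nonnegative. From the coercivity of the $\frac{\nu_2}{2}\|\uu - \uu_D\|_{L^2(-r,r)}^2$-part of $\Phi$, the first estimate also shows that $\{\bar \uu_\beta\}$ is bounded in $L^2(-r,r)$.

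Next I would pass to the limit along an arbitrary sequence $\beta_n \to \infty$. By boundedness and reflexivity, a subsequence (not relabeled) satisfies $\bar \uu_{\beta_n} \weakly \uu^*$ in $L^2(-r,r)$. The penalty estimate together with the lower bound $\eta(t) \geq \min(0,t)^2/\varepsilon_P$ from \cref{lemma:EtaProp} gives $\|\min(0,\bar \uu_{\beta_n})\|_{L^2(-r,r)}^2 \leq \varepsilon_P \int_{-r}^r \eta(\bar \uu_{\beta_n})\dd t \to 0$; since $\uu \mapsto \|\min(0,\uu)\|_{L^2(-r,r)}^2$ is convex and continuous, hence weakly lower semicontinuous, the weak limit obeys $\|\min(0,\uu^*)\|_{L^2(-r,r)}^2 \leq \liminf_n \|\min(0,\bar \uu_{\beta_n})\|_{L^2(-r,r)}^2 = 0$, so $\uu^* \geq 0$ a.e.\ and $\uu^*$ is feasible for \eqref{eq:Pru}. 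To identify $\uu^*$ I would use weak lower semicontinuity of $\Phi$: the fidelity term is even weakly \emph{continuous} along $\{\bar \uu_{\beta_n}\}$ because $E_r$ is weak-to-weak continuous and maps these controls into $U_P$, so the complete continuity of $S$ in \cref{th:propS}\ref{th:propS:item:i} gives $S_r(\bar \uu_{\beta_n}) = S(E_r(\bar \uu_{\beta_n})) \to S_r(\uu^*)$ in $H^2(\Omega) \hookrightarrow L^2(\Omega)$, while the remaining quadratic and linear terms are weakly lower semicontinuous. Hence $\Phi(\uu^*) \leq \liminf_n \Phi(\bar \uu_{\beta_n}) \leq \Phi(\bar \uu)$, and since $\bar \uu$ is the unique global minimizer of \eqref{eq:Pru} and $\uu^*$ is feasible, this forces $\uu^* = \bar \uu$ and $\Phi(\bar \uu_{\beta_n}) \to \Phi(\bar \uu)$.

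Finally I would upgrade weak to strong convergence. Splitting $\Phi = R + Q$ with $R(\uu) := \frac12\|S_r(\uu) - y_D\|_{L^2(\Omega)}^2 + \nu_1\int_{-r}^r \uu \dd t$ and the strongly convex quadratic $Q(\uu) := \frac{\nu_2}{2}\|\uu - \uu_D\|_{L^2(-r,r)}^2 + \frac{C_{\bar \uu}}{2}\|\uu - \bar \uu\|_{L^2(-r,r)}^2$, the weak continuity of $R$ along the sequence gives $R(\bar \uu_{\beta_n}) \to R(\bar \uu)$, whence $Q(\bar \uu_{\beta_n}) \to Q(\bar \uu)$. As the linear part of $Q$ is weakly continuous, this convergence entails $\|\bar \uu_{\beta_n}\|_{L^2(-r,r)} \to \|\bar \uu\|_{L^2(-r,r)}$, and norm convergence together with weak convergence in the Hilbert space $L^2(-r,r)$ yields $\bar \uu_{\beta_n} \to \bar \uu$ strongly. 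A routine subsequence-of-subsequence argument then removes the passage to subsequences and, as $\beta_n \to \infty$ was arbitrary, establishes $\bar \uu_\beta \to \bar \uu$ in $L^2(-r,r)$ as $\beta \to \infty$. The main obstacle I anticipate is the feasibility step: extracting $\uu^* \geq 0$ from the vanishing of the barrier integral relies on the explicit lower bound on $\eta$ combined with weak lower semicontinuity, and it is here (rather than in the concluding norm-convergence argument, which is standard for strongly convex quadratics) that the specific structure of $\eta$ from \cref{lemma:EtaProp} is essential.
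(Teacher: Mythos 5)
Your proposal is correct, and its skeleton coincides with the paper's own proof: compare the penalized objective at $\bar \uu_\beta$ with its value at the feasible competitor $\bar \uu$ to obtain uniform $L^2$-bounds and the vanishing of the barrier integral, extract a weak limit $\uu^*$ along an arbitrary sequence $\beta_n \to \infty$, show $\uu^* \geq 0$ a.e.\ (the paper gets this from the strong convergence $\min(0,\bar \uu_{\beta_n}) \to 0$ and weak closedness of the nonnegative cone, you from weak lower semicontinuity of $\uu \mapsto \|\min(0,\uu)\|_{L^2(-r,r)}^2$ -- both fine), identify $\uu^* = \bar \uu$ via the uniqueness from \cref{prop:localglobal} combined with the complete continuity of $S$ in \cref{th:propS}\ref{th:propS:item:i}, and conclude with a subsequence-of-subsequences argument. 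The one place where you genuinely deviate is the upgrade from weak to strong convergence. The paper never isolates this as a separate step: in its chain of inequalities the proximal term $\frac{C_{\bar \uu}}{2}\|\bar \uu_{\beta_n} - \bar \uu\|_{L^2(-r,r)}^2$, being centered exactly at $\bar \uu$, is squeezed between quantities that are forced to coincide, so $\limsup_n \|\bar \uu_{\beta_n} - \bar \uu\|_{L^2(-r,r)}^2 = \|\uu^* - \bar \uu\|_{L^2(-r,r)}^2 = 0$ drops out directly. You instead pass through convergence of the objective values, weak continuity of the non-quadratic part $R$ along the sequence, and the Radon--Riesz property (weak convergence plus norm convergence implies strong convergence in a Hilbert space). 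Both mechanisms rest on the strong convexity of the quadratic terms and are equally rigorous; the paper's is slightly more economical because the $C_{\bar \uu}$-term does double duty (enforcing uniqueness in \cref{prop:localglobal} and delivering strong convergence in one stroke), whereas your Tikhonov-type argument is more generic and would work unchanged even if the strongly convex part were not centered at $\bar \uu$.
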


\begin{proof}
From the
global optimality of $\bar \uu_\beta $ in \eqref{eq:Prub},
the inequality $\bar \uu_\beta > - \varepsilon_P$ a.e.\ in $(-r,r)$,
the nonnegativity of $\bar \uu$, 
the definition of $F_r$,
and the properties of $\eta$ in \cref{lemma:EtaProp}, 
we obtain that 
\begin{equation*}
\begin{aligned}
		F_r(\bar \uu)
		 &=
		F_r(\bar \uu) + \frac{C_{\bar \uu}}{2} \|\bar \uu - \bar \uu\|_{L^2(-r,r)}^2
		+
		\int_{-r}^r
		\beta \eta (\bar \uu) \dd t
		\\
	 &\geq F_r(\bar \uu_\beta ) 
	+ \frac{C_{\bar \uu}}{2} \|\bar \uu_\beta  - \bar \uu\|_{L^2(-r,r)}^2
		+
		\int_{-r}^r
		\beta \eta (\bar \uu_\beta ) \dd t
		\\
		&=
		 \frac12 \|S_r(\bar \uu_\beta)- y_D\|_{L^2(\Omega)}^2 
		+ \int_{-r}^r \nu_1  \bar \uu_\beta  + \frac{\nu_2}{2}( \bar \uu_\beta  - \uu_D)^2\dd t
		+ \frac{C_{\bar \uu}}{2} \|\bar \uu_\beta  - \bar \uu\|_{L^2(-r,r)}^2
		+
		\int_{-r}^r
		\beta \eta (\bar \uu_\beta ) \dd t
	\\
	&\geq
	-2r \nu_1 \varepsilon_P + \frac{C_{\bar \uu}}{2} \|\bar \uu_\beta  - \bar \uu\|_{L^2(-r,r)}^2
	+\frac{\beta}{\varepsilon_P}
	\int_{-r}^r
	\min(0,\bar \uu_\beta)^2 \dd t\qquad \forall \beta > 0.
\end{aligned}
\end{equation*}
As $F_r(\bar \uu)$ is finite (cf.\ the comments after \cref{def:localMin}),
the above shows that the family $\{\bar \uu_\beta \}$ is bounded in $L^2(-r,r)$
and that the functions $\min(0,\bar \uu_\beta)$ converge strongly to zero in $L^2(-r,r)$
for $\beta \to \infty$. 
This implies in particular that, by passing over to a sequence $\{\beta_n\} \subset (0, \infty)$
with $\beta_n \to \infty$ for $n \to \infty$, we can achieve 
that $\bar \uu_{\beta_n} \weakly \hat \uu$ holds in  $L^2(-r,r)$  
for some $\hat \uu \in L^2(-r,r)$ as  $n$ tends to infinity. 
Due to the convergence $\min(0,\bar \uu_\beta) \to 0$
in $L^2(-r,r)$, we know that $\hat \uu$ is nonnegative a.e.\ in $(-r,r)$
and, thus, admissible for \eqref{eq:Pru}.
Since $\bar \uu$ solves \eqref{eq:Pru},
since $S$ is completely continuous by \cref{th:propS}\ref{th:propS:item:i}, 
since convexity and lower semicontinuity 
imply weak lower semicontinuity, 
and again by the global optimality of $\bar \uu_\beta$ in \eqref{eq:Prub},
it now follows that 
\begin{equation*}
\begin{aligned}
F_r(\hat \uu) + \frac{C_{\bar \uu}}{2} \|\hat \uu - \bar \uu\|_{L^2(-r,r)}^2
&\geq
F_r(\bar \uu) + \frac{C_{\bar \uu}}{2} \|\bar \uu - \bar \uu\|_{L^2(-r,r)}^2
\\
&\geq
\limsup_{n \to \infty }
\left (
F_r(\bar \uu_{\beta_n}  ) 
	+ \frac{C_{\bar \uu}}{2} \|\bar \uu_{\beta_n}   - \bar \uu\|_{L^2(-r,r)}^2
		+
		\int_{-r}^r
		\beta_n \eta (\bar \uu_{\beta_n}  ) \dd t
		\right )
		\\
		&\geq
F_r(\hat \uu ) 		
	+ \frac{C_{\bar \uu}}{2} 
	\limsup_{n \to \infty }\left (
	\|\bar \uu_{\beta_n}   - \bar \uu\|_{L^2(-r,r)}^2
		\right )
		\geq
F_r(\hat \uu) + \frac{C_{\bar \uu}}{2} \|\hat \uu - \bar \uu\|_{L^2(-r,r)}^2.
\end{aligned}
\end{equation*}
The above can only be true if all appearing inequalities are identities.
Since $\bar \uu$ is the unique global 
solution of \eqref{eq:Pru}, this implies, on the one hand, 
that $\bar \uu = \hat \uu$ holds and, on the other hand, that 
\[
	\limsup_{n \to \infty }\left (
	\|\bar \uu_{\beta_n}   - \bar \uu\|_{L^2(-r,r)}^2
		\right )
		=
		\|\hat \uu  - \bar \uu\|_{L^2(-r,r)}^2 = 0.
\]
The sequence $\{\bar \uu_{\beta_n} \}$ thus converges strongly in $L^2(-r,r)$ to $\bar \uu$.
Since the above arguments can be repeated for all sequences $\{\bar \uu_{\beta_n}\}$
satisfying $\bar \uu_{\beta_n} \weakly \hat \uu$ for some $\hat \uu \in L^2(-r,r)$
and $(0, \infty) \ni \beta_n \to \infty$ for $n \to \infty$, 
the convergence of the whole family $\{\bar \uu_\beta\}$ to 
$\bar \uu$ for $\beta \to \infty$ 
now follows from a trivial contradiction argument; cf.\ \cite[Lemma~4.16]{Schweizer2013}.
This completes the proof.
 \end{proof}

By exploiting the estimate in \cref{th:propS}\ref{th:propS:item:iii}, we can prove 
the following for the relaxed problems 
\eqref{eq:Prub}: 

\begin{proposition}[$L_{BV}^\infty$-Regularity of Adjoint Variables]
\label{prop:rg2}
Let $\beta > 0$ be given and
suppose that $\bar \uu_\beta$ is a local solution of \eqref{eq:Prub}.
Denote with $\bar \pp_\beta$ the element of $L^0(-r,r)$  defined  by 
\begin{equation*}
\bar \pp_\beta := 
\nu_1  
+\nu_2 (\bar \uu_\beta - \uu_D)
+
C_{\bar \uu}   (\bar \uu_\beta - \bar \uu)  
+
\beta \eta'(\bar \uu_\beta).
\end{equation*}
Then it holds $\bar \pp_\beta \in L^\infty_{BV}(-r,r)$ and $\bar \pp_\beta$
admits a $BV[-r,r]$-representative that satisfies 
\begin{equation}
\label{eq:BVbound}
\|\bar \pp_\beta \|_{BV[-r,r]} \leq K
\end{equation}
for a constant $K > 0$
which depends only on $r$, $y_D$, $\Omega$, 
and the numbers $M, c> 0$ in \cref{th:propS}.
\end{proposition}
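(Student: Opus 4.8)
The plan is to derive \eqref{eq:BVbound} from a stationarity estimate for \eqref{eq:Prub} that requires only \emph{uniform} control of difference quotients of the state, thereby sidestepping any (possibly unavailable) differentiability of $S_r$ at $\bar\uu_\beta$. Write $\bar y_\beta := S_r(\bar\uu_\beta)$ and, for a bounded direction $h \in L^\infty(-r,r)$ and small $s>0$, set $\uu_s := \bar\uu_\beta + s h$, $y_s := S_r(\uu_s)$, and $w_s := s^{-1}(y_s - \bar y_\beta) \in H_0^1(\Omega)$. Subtracting the weak forms of the state equations for $y_s$ and $\bar y_\beta$, using the linearity $g_{E_r(\uu_s)} = g_{E_r(\bar\uu_\beta)} + s\,g_{E_r(h)}$, and dividing by $s$, one finds that $w_s$ solves $\int_\Omega \nabla w_s \cdot \nabla v + a_s w_s v \dd x = -\int_\Omega g_{E_r(h)}(y_s)\, v \dd x$ for all $v \in H_0^1(\Omega)$, where $a_s := \frac{g_{E_r(\bar\uu_\beta)}(y_s) - g_{E_r(\bar\uu_\beta)}(\bar y_\beta)}{y_s - \bar y_\beta}$ (set to $0$ where $y_s = \bar y_\beta$, which does not affect the product $a_s w_s$). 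Since $E_r(\bar\uu_\beta) \geq -\varepsilon_P$, the difference quotient satisfies $a_s \geq -\varepsilon_P$ pointwise, so testing with $v = w_s$ and invoking \eqref{eq:PoincFried} exactly as in \cref{lemma:operatorAu}\ref{lemma:operatorAu:ii} and \cref{th:propS}\ref{th:propS:item:iii} yields the uniform bound $c\|w_s\|_{H^1(\Omega)}^2 \leq \|g_{E_r(h)}(y_s)\|_{L^2(\Omega)}\|w_s\|_{L^2(\Omega)}$ with the same $c>0$. This is the key quantitative step and the main place where the coercivity behind \cref{th:propS}\ref{th:propS:item:iii} enters.

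I would then estimate $\|g_{E_r(h)}(y_s)\|_{L^2(\Omega)}$ in two ways, exploiting that $\|y_s\|_\infty$ is bounded by a constant depending only on $M$ via \cref{th:propS}\ref{th:propS:item:ii} and the Sobolev embeddings. For a general $h \in L^\infty(-r,r)$, \eqref{eq:guLqEstimate} with $q=2$ gives $\|g_{E_r(h)}(y_s)\|_{L^2(\Omega)} \leq \|h\|_{L^2(-r,r)}\|y_s\|_{L^1(\Omega)}^{1/2} \leq C\|h\|_{L^2(-r,r)}$, hence $\|w_s\|_{H^1(\Omega)} \leq C\|h\|_{L^2(-r,r)}$; for the special choice $h = \phi'$ with $\phi \in C_c^1(-r,r)$ one instead uses $g_{E_r(\phi')}(t) = \phi(t)-\phi(0)$ for $t \in (-r,r)$ (and its endpoint value otherwise), so that $|g_{E_r(\phi')}(y_s)| \leq 2\|\phi\|_\infty$ pointwise and $\|w_s\|_{H^1(\Omega)} \leq C\|\phi\|_\infty$. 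Writing the tracking part of the objective as $\tfrac12\|S_r(\uu)-y_D\|_{L^2(\Omega)}^2$, its difference quotient equals $(\bar y_\beta - y_D, w_s)_{L^2(\Omega)} + \tfrac{s}{2}\|w_s\|_{L^2(\Omega)}^2$, which is therefore bounded, uniformly in $s$ and $\beta$, by $C\|h\|_{L^2(-r,r)}$ (resp.\ $C\|\phi\|_\infty$); here $C$ depends only on $r,y_D,\Omega,M,c$ because $\|\bar y_\beta - y_D\|_{L^2(\Omega)}$ is controlled by $M$ and $\|y_D\|_{L^2(\Omega)}$.

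Next I would combine this with the global optimality of $\bar\uu_\beta$ in \eqref{eq:Prub}. The objective is $\tfrac12\|S_r(\uu)-y_D\|_{L^2(\Omega)}^2 + \int_{-r}^r m(\uu)\dd t$ with $m(\uu) := \nu_1\uu + \tfrac{\nu_2}{2}(\uu-\uu_D)^2 + \tfrac{C_{\bar\uu}}{2}(\uu-\bar\uu)^2 + \beta\eta(\uu)$, so $m$ is convex in its argument with $m'(\bar\uu_\beta) = \bar\pp_\beta$ pointwise. Convexity gives $s^{-1}(m(\bar\uu_\beta+sh)-m(\bar\uu_\beta)) \downarrow \bar\pp_\beta h$ as $s \downarrow 0$, so dividing the optimality inequality by $s$, letting $s \downarrow 0$, using the tracking bound of the previous paragraph, and running the same argument for $-h$ yields $|\int_{-r}^r \bar\pp_\beta h \dd t| \leq C\|h\|_{L^2(-r,r)}$ for all admissible $h$ and $|\int_{-r}^r \bar\pp_\beta \phi' \dd t| \leq C\|\phi\|_\infty$ for all $\phi \in C_c^1(-r,r)$. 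The first bound shows $\bar\pp_\beta \in L^2(-r,r)$ with a uniform $L^2$-bound; the second shows that the distributional derivative of $\bar\pp_\beta$ is a finite measure, i.e.\ $\bar\pp_\beta \in L^\infty_{BV}(-r,r)$ with $\var(\bar\pp_\beta;[-r,r]) \leq C$. Converting these via $\|v\|_\infty \leq \tfrac{1}{2r}\|v\|_{L^1(-r,r)} + \var(v;[-r,r])$ into a bound on $\|\bar\pp_\beta\|_{BV[-r,r]} = |\bar\pp_\beta(-r)| + \var(\bar\pp_\beta;[-r,r])$ then gives \eqref{eq:BVbound} with a constant $K$ of the asserted dependence.

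The main obstacle is precisely that $S_r$ need not be differentiable at $\bar\uu_\beta$: the coefficient $a_s$ generally fails to converge as $s \to 0$ (this is the difficulty that \cref{sec:6} later resolves under the level-set hypothesis on $f$), so $w_s$ need not converge and no clean adjoint identity for $\bar\pp_\beta$ is available at this stage. The point of the argument is that the $L^2$- and $BV$-estimates require only the uniform $H^1$-bound on $w_s$, \emph{not} its convergence, and this bound is furnished solely by the sign condition $a_s \geq -\varepsilon_P$ together with coercivity. A secondary technical issue is the admissibility of the perturbations $\bar\uu_\beta \pm sh$ with respect to the open constraint $\uu > -\varepsilon_P$; since the barrier $\beta\eta$ keeps $\bar\uu_\beta$ in the interior by \cref{lemma:EtaProp}, this is handled in the standard way by first restricting to directions supported on $\{\bar\uu_\beta \geq -\varepsilon_P + \delta\}$ and letting $\delta \downarrow 0$.
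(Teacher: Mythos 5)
Your proposal is correct and follows essentially the same route as the paper's proof: one-sided perturbations of the optimality condition combined with the coercivity-based stability estimate of \cref{th:propS}\ref{th:propS:item:iii} to bound the linear functional $h \mapsto \int_{-r}^r \bar\pp_\beta h \dd t$, first by $\|h\|_{L^2(-r,r)}$ for general bounded directions and then by $\|\phi\|_\infty$ for $h = \phi'$ via the fundamental theorem of calculus, concluding that the distributional derivative of $\bar\pp_\beta$ is a finite measure and assembling the $BV[-r,r]$-bound from the $L^1$- and variation estimates. The only point to handle with care (as the paper does) is the order of limits: the truncation to $\{\bar\uu_\beta \geq -\varepsilon_P + \delta\}$ destroys the identity $g_{E_r(\phi')} = \phi - \phi(0)$, so you must pass $\delta \downarrow 0$ with the integral form on the right-hand side (using the previously established $L^1$-bound on $\bar\pp_\beta$ and dominated convergence) \emph{before} invoking the $\|\phi\|_\infty$-bound, which is exactly the two-stage limit in the paper.
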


\begin{proof}
Consider some $\zz \in L^\infty(-r,r)$
and define $\zz_n := \mathds{1}_{\{\bar \uu_\beta \geq -\varepsilon_P + 1/n\}}\zz$, $n \in \mathbb{N}$.  
Then it holds
\begin{equation}
\label{eq:randomestimate989283u}
\bar \uu_\beta  - s \zz_n
\begin{cases}
= \bar \uu_\beta > -\varepsilon_P &\text{a.e.\ in } \{\bar \uu_\beta < -\varepsilon_P + 1/n\},
\\
\geq 
- \varepsilon_P + 1/(2n) &\text{a.e.\ in } \{\bar \uu_\beta \geq -\varepsilon_P + 1/n\},
\end{cases}
\qquad\qquad \forall \,0 < s < (2 n\|\zz\|_{L^\infty(-r,r)} + 1)^{-1},
\end{equation}
and it 
follows from the local optimality of $\bar \uu_\beta$ in \eqref{eq:Prub}
and the definition of $F_r$ that 
$\bar \uu_\beta$ satisfies 
\begin{equation*}
\begin{aligned}
&\frac12 \| S_r(\bar \uu_\beta) - y_D\|_{L^2(\Omega)}^2 
+ \int_{-r}^r \nu_1  \bar \uu_\beta + \frac{\nu_2}{2}( \bar \uu_\beta - \uu_D)^2
+  \frac{C_{\bar \uu}}{2} (\bar \uu_\beta  - \bar \uu)^2
+
\beta \eta (\bar \uu_\beta)
\dd t
\\
&\qquad\qquad\leq
\frac12 \| S_r(\bar \uu_\beta  - s \zz_n) - y_D\|_{L^2(\Omega)}^2 
+ \int_{-r}^r \nu_1  (\bar \uu_\beta  - s \zz_n)+ \frac{\nu_2}{2}( \bar \uu_\beta  - s \zz_n - \uu_D)^2
+  \frac{C_{\bar \uu}}{2} (\bar \uu_\beta  - s \zz_n  - \bar \uu)^2
\\
&\hspace{12.59cm}+
\beta \eta (\bar \uu_\beta  - s \zz_n)
\dd t
\end{aligned}
\end{equation*}
for all small enough $s>0$.
If we use the binomial identities in the above, divide by $s$, and
rearrange, then we obtain that 
\begin{equation}
\label{eq:randomeq2636g36}
\begin{aligned}
&
 \int_{\{\bar \uu_\beta \geq -\varepsilon_P + 1/n\}} \nu_1  \zz 
+\nu_2 (\bar \uu_\beta  - \uu_D) \zz -  \frac{\nu_2}{2}s  \zz ^2 
+
C_{\bar \uu} (\bar \uu_\beta  - \bar \uu) \zz  -
\frac{C_{\bar \uu}}{2} s  \zz ^2 
+  \beta  \frac{\eta(\bar \uu_\beta) -    \eta(\bar \uu_\beta - s \zz )}{s} \dd t
\\
&\qquad\qquad\leq 	
\int_\Omega 
(S_r(\bar \uu_\beta) - y_D)
\frac{S_r(\bar \uu_\beta  - s \zz_n) - S_r(\bar \uu_\beta)}{s}
\dd x 
+
\frac{s}{2} \int_\Omega
\left (\frac{S_r(\bar \uu_\beta  - s \zz_n) - S_r(\bar \uu_\beta)}{s}\right )^2
\dd x.
\end{aligned}
\end{equation}
Due to \cref{th:propS}\ref{th:propS:item:iii}, the linearity of $g_u$ in $u$, 
the inequality of Cauchy-Schwarz, 
and the definition of $S_r$, 
we know that 
there exists a constant $c>0$ independent of $\beta$ and $s$ such that 
\begin{equation*}
\|S_r(\bar \uu_\beta  - s \zz_n) - S_r(\bar \uu_\beta)\|_{L^2(\Omega)}
\leq 
\frac{s}{c}
\|
g_{E_r(\zz_n)}(S_r ( \bar \uu_\beta ))
\|_{L^2(\Omega)}
\qquad\qquad \forall \,0 < s < (2 n\|\zz\|_{L^\infty(-r,r)} + 1)^{-1}.
\end{equation*}
In combination with the properties of $\eta$,
the second inequality in \eqref{eq:randomestimate989283u},
the dominated convergence theorem, and \eqref{eq:defgu},  the above allows us to bound the 
right-hand side of \eqref{eq:randomeq2636g36} and to subsequently pass to the limit  
$s \to 0$. This yields
\begin{equation}
\label{eq:randomestimate273646}
 \int_{\{\bar \uu_\beta \geq -\varepsilon_P + 1/n\}} \nu_1  \zz 
+\nu_2 (\bar \uu_\beta  - \uu_D) \zz 
+
C_{\bar \uu} (\bar \uu_\beta  - \bar \uu) \zz 
+  \beta  \eta'(\bar \uu_\beta)\zz \dd t
\leq 	
\frac{1}{c}
\|S_r(\bar \uu_\beta) - y_D\|_{L^2(\Omega)}
\|
g_{E_r(\zz_n)}(S_r ( \bar \uu_\beta ))
\|_{L^2(\Omega)}. 
\end{equation}
Since
$\|S_r(\bar \uu_\beta) - y_D\|_{L^2(\Omega)} \leq M + \|y_D\|_{L^2(\Omega)}$
holds for the constant $M>0$ in \eqref{eq:H2estimate}
and since the function 
$\nu_1  
+\nu_2 (\bar \uu_\beta - \uu_D)
+
C_{\bar \uu}   (\bar \uu_\beta - \bar \uu)  
+
\beta \eta'(\bar \uu_\beta)\in L^0(-r,r)$
appearing on the left-hand side of \eqref{eq:randomestimate273646} 
is equal to $\bar \pp_\beta $ by definition, 
we may now conclude that,
 for all $\zz \in L^\infty(-r,r)$ and all $n \in \mathbb{N}$, we have 
\begin{equation}
\label{eq:randomeq52355}
\begin{aligned}
&
 \int_{\{\bar \uu_\beta \geq -\varepsilon_P + 1/n\}} 
  \bar \pp_\beta  \zz 
 \dd t
  \leq 	
\frac{M + \|y_D\|_{L^2(\Omega)}}{c}
\left (
\int_\Omega 
\left (
\int_0^{S_r ( \bar \uu_\beta )(x)}
(E_r \zz_n)(t) \dd t 
\right )^2
\dd x
\right )^{1/2}.
\end{aligned}
\end{equation}
Recall that,  due to \cref{th:propS}\ref{th:propS:item:ii}
and the embedding $H^2(\Omega) \hookrightarrow C(\overline{\Omega})$, 
there exists a constant $R = R(M, \Omega) > 0$
independent of $\beta$
satisfying
$\|S_r ( \bar \uu_\beta )\|_{\infty}\leq R$.
If we use this bound in \eqref{eq:randomeq52355} 
and consider the special situation $\zz := \sgn(\bar \pp_\beta) \in L^\infty(-r,r)$,
then it follows that 
\begin{equation}
\label{eq:randomeq2636}
 \int_{-r}^r |\bar \pp_\beta| \mathds{1}_{\{\bar \uu_\beta \geq -\varepsilon_P + 1/n\}}
 \dd t
  \leq 
  R
  \left (
  \frac{M + \|y_D\|_{L^2(\Omega)}}{c}
  \right )	 \lambda^d(\Omega)^{1/2}\qquad \forall n \in \mathbb{N}. 
\end{equation}
Due to the lemma of Fatou and since 
$\bar \uu_\beta > - \varepsilon_P$ holds  a.e.\ in $(-r,r)$,
we may let $n$ go to $\infty$ in \eqref{eq:randomeq2636}.
This yields that $\|\bar \pp_\beta\|_{L^1(-r,r)} \leq K_1$ holds 
for the constant 
\[
K_1 = K_1(c,M,\Omega,y_D)
:= R
  \left (
  \frac{M + \|y_D\|_{L^2(\Omega)}}{c}
  \right )	 \lambda^d(\Omega)^{1/2}.
\]
By exploiting this $L^1$-regularity of $\bar \pp_\beta$,
the dominated convergence theorem, 
and again the fact that 
 $\bar \uu_\beta > - \varepsilon_P$ holds a.e.\ in $(-r,r)$,
we can pass to the limit $n \to \infty$ in \eqref{eq:randomeq52355} for arbitrary 
$\zz\in L^\infty(-r,r)$
to obtain
\begin{equation}
\label{eq:randomeq62535}
\begin{aligned}
&
 \int_{-r}^r 
  \bar \pp_\beta  \zz 
 \dd t
  \leq 	
\frac{M + \|y_D\|_{L^2(\Omega)}}{c}
\left (
\int_\Omega 
\left (
\int_0^{S_r ( \bar \uu_\beta )(x)}
(E_r \zz)(t) \dd t 
\right )^2
\dd x
\right )^{1/2}
\qquad\qquad \forall \zz \in L^\infty(-r,r).
\end{aligned}
\end{equation}
Let us now consider functions $\zz$ of the form 
 $\zz = \vv'$, $\vv\in C^1([-r,r])$. For such $\zz$, 
 it follows from 
 the fundamental theorem of calculus, the definition of $E_r$,
 and a trivial distinction of cases that 
 \begin{equation*}
 \left |
 \int_0^{S_r ( \bar \uu_\beta )(x)}
(E_r \vv')(t) \dd t 
 \right |
 \leq
 \begin{cases}
 \left |
 \vv\left ( S_r ( \bar \uu_\beta )(x) \right )
 \right |
 +
  \left |
 \vv\left ( 0 \right )
 \right |
 & \text{ if } S_r ( \bar \uu_\beta )(x) \in [-r,r],
 \\
  \left |
 \vv\left ( r \right )
 \right |
 +
  \left |
 \vv\left ( 0 \right )
 \right |
  & \text{ if } S_r ( \bar \uu_\beta )(x) > r,
   \\
  \left |
 \vv\left ( -r \right )
 \right |
 +
  \left |
 \vv\left ( 0 \right )
 \right |
  & \text{ if } S_r ( \bar \uu_\beta )(x) < - r,
 \end{cases}
 \qquad \text{f.a.a.\ } x \in \Omega.
 \end{equation*}
 In combination with \eqref{eq:randomeq62535}, the above implies
 \begin{equation}
 \label{eq:randomeq3673646}
 \begin{aligned}
  \int_{-r}^r 
  \bar \pp_\beta  \vv'
 \dd t
  &\leq 	
\frac{M + \|y_D\|_{L^2(\Omega)}}{c}
\left (
\int_\Omega 
\left (
\int_0^{S_r ( \bar \uu_\beta )(x)}
(E_r\vv')(t) \dd t 
\right )^2
\dd x
\right )^{1/2}
\\
&\leq
\frac{M + \|y_D\|_{L^2(\Omega)}}{c}
\left (
\int_\Omega 
\left (
2 \|\vv\|_{C([-r,r])}
\right )^2
\dd x
\right )^{1/2}
\\
&=
K_2 \|\vv\|_{C([-r,r])}
\qquad \forall \vv \in C^1([-r,r]),
 \end{aligned}
 \end{equation}
 where $K_2 = K_2(c, M, \Omega, y_D)$ is the constant defined by 
 \[
 K_2 := 2 
\left (
\frac{M + \|y_D\|_{L^2(\Omega)}}{c}\right ) \lambda^d(\Omega)^{1/2}.
 \]
Using the theorem of Hahn-Banach (see \cite[Theorem 8.1.1]{Monteiro2019}), 
we may deduce from \eqref{eq:randomeq3673646}  that the linear map 
$ C^1([-r,r]) \ni \vv \mapsto  \int_{-r}^r 
  \bar \pp_\beta  \vv'
 \dd t \in \R $ can be extended to a linear and continuous functional on 
 $C([-r,r])$. In combination with the Riesz representation theorem
 (see \cite[Theorem 8.1.2]{Monteiro2019}), this yields that there exists a 
 function $\qq \in BV[-r,r]$ which satisfies 
 $\qq(-r) = 0$, $\var(\qq; [-r,r]) \leq K_2$, and 
 \begin{equation}
 \label{eq:randomeq23673zdebg4e7}
 \int_{-r}^r 
  \bar \pp_\beta  \vv'
 \dd t
 =
 \int_{-r}^r 
\vv
 \dd \qq\qquad \forall \vv \in C^1([-r,r]).
 \end{equation}
 Here, the integral on the right has to be understood in the sense 
 of Riemann-Stieltjes. If we choose test functions $\vv \in C_c^\infty(-r,r)$
 in \eqref{eq:randomeq23673zdebg4e7}
  and integrate by parts by means of \cite[Theorem 5.5.1]{Monteiro2019}, then we obtain that 
 \[
  \int_{-r}^r 
  (\bar \pp_\beta + \qq) \vv'
 \dd t = 0 \qquad \forall \vv \in C_c^\infty(-r, r). 
 \]
This shows that $\bar \pp_\beta + \qq = \mathrm{const}$ holds a.e.\ in $(-r,r)$
and that $\bar \pp_\beta$ possesses a representative that satisfies 
 $\bar \pp_\beta \in BV[-r,r]$ and $\var(\bar \pp_\beta; [-r,r]) = \var(\qq; [-r,r]) \leq K_2$.
Henceforth, with the symbol $\bar \pp_\beta$, we always refer to this representative. 
 Note that, due to the bound  $\|\bar \pp_\beta\|_{L^1(-r,r)} \leq K_1$,
 we know that there exists at least one $s \in [-r,r]$ with $|\bar \pp_\beta(s)| \leq K_1/(2r)$.
 In combination with the definition of the variation $\var(\bar \pp_\beta; [-r,r])$
 and our previous estimates,
 this allows us to conclude that 
 \[
 \|\bar \pp_\beta \|_{BV[-r,r]}
 =
 	\left |\bar \pp_\beta(-r)
 	\right | 
 	+
 	\var(\bar \pp_\beta; [-r,r]) 
 	\leq  |\bar \pp_\beta(s)| + 2\var(\bar \pp_\beta; [-r,r])
 	\leq
 	\frac{K_1}{2 r} + 2K_2.
 \]
 If we define $K := \frac{K_1}{2 r} + 2K_2$, then the last estimate takes precisely the form 
 \eqref{eq:BVbound}. This completes the proof of the proposition. 
\end{proof}

We are now in the position to prove the main result of this section:

\begin{theorem}[$L^\infty_{G}$- and $L^\infty_{BV}$-Regularity of $\bar \uu$]
\label{th:BVreg}
It holds $\bar \uu \in L^\infty_G(-r,r)$. 
If $\uu_D$ is an element of $L^\infty_{BV}(-r,r)$, then we even have $\bar \uu \in L^\infty_{BV}(-r,r)$. 
\end{theorem}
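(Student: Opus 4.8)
The plan is to transfer the uniform $BV$-bound on the adjoint variables from \cref{prop:rg2} to the limit $\bar\uu$ via a selection argument followed by a projection formula. First I would fix a sequence $\beta_n \to \infty$ with associated global solutions $\bar\uu_{\beta_n}$ of \eqref{eq:Prub} and adjoint variables $\bar\pp_{\beta_n}$. By \cref{prop:rg1}, we have $\bar\uu_{\beta_n} \to \bar\uu$ in $L^2(-r,r)$, so after passing to a subsequence (not relabeled) we may assume that this convergence also holds pointwise a.e.\ in $(-r,r)$. Since the bound \eqref{eq:BVbound} gives $\|\bar\pp_{\beta_n}\|_{BV[-r,r]} \leq K$ and hence, via $\|\cdot\|_\infty \leq \|\cdot\|_{BV[-r,r]}$, a uniform $L^\infty$-bound, Helly's selection theorem allows me to extract a further subsequence along which $\bar\pp_{\beta_n}$ converges pointwise on $[-r,r]$ to some $\bar\pp \in BV[-r,r]$, where lower semicontinuity of the variation under pointwise convergence yields $\|\bar\pp\|_{BV[-r,r]} \leq K$.

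The central step is to derive from these convergences the projection identity
\begin{equation}
\label{eq:projformula_plan}
\bar\uu = \max\left(0, \uu_D + \frac{\bar\pp - \nu_1}{\nu_2}\right)\qquad\text{a.e.\ in }(-r,r).
\end{equation}
To this end I would rearrange the definition of $\bar\pp_\beta$ in \cref{prop:rg2} to isolate the barrier contribution, writing $\beta_n\eta'(\bar\uu_{\beta_n}) = \bar\pp_{\beta_n} - \nu_1 - \nu_2(\bar\uu_{\beta_n} - \uu_D) - C_{\bar\uu}(\bar\uu_{\beta_n} - \bar\uu)$ a.e.\ in $(-r,r)$. Because the right-hand side converges pointwise a.e.\ (using $\bar\pp_{\beta_n} \to \bar\pp$, $\bar\uu_{\beta_n} \to \bar\uu$, and the vanishing of the $C_{\bar\uu}$-term), the limit $\mu := \lim_n \beta_n\eta'(\bar\uu_{\beta_n}) = \bar\pp - \nu_1 - \nu_2(\bar\uu - \uu_D)$ exists a.e. From \cref{lemma:EtaProp} (in particular the convexity of $\eta$ together with $\eta \equiv 0$ on $[0,\infty)$) one reads off that $\eta' \leq 0$ on $(-\varepsilon_P,\infty)$ with $\eta' \equiv 0$ on $[0,\infty)$; this yields $\mu \leq 0$ a.e. On the set $\{\bar\uu > 0\}$ the a.e.\ convergence $\bar\uu_{\beta_n} \to \bar\uu$ forces $\bar\uu_{\beta_n} > 0$ for large $n$, whence $\eta'(\bar\uu_{\beta_n}) = 0$ and $\mu = 0$, so that $\bar\uu = \uu_D + (\bar\pp - \nu_1)/\nu_2$ there; on the set $\{\bar\uu = 0\}$ the inequality $\mu \leq 0$ reads $\uu_D + (\bar\pp - \nu_1)/\nu_2 \leq 0$. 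Distinguishing these two cases gives \eqref{eq:projformula_plan}.

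Finally, I would conclude the asserted regularity directly from \eqref{eq:projformula_plan}. Since $\bar\pp \in BV[-r,r] \subset G[-r,r]$ and $\uu_D \in L^\infty_G(-r,r)$ by \cref{ass:StandSec5}, the argument $\uu_D + (\bar\pp - \nu_1)/\nu_2$ possesses a representative in $G[-r,r]$, and composing this regulated function with the continuous map $\max(0,\cdot)$ preserves the existence of one-sided limits, so that $\bar\uu \in L^\infty_G(-r,r)$. If $\uu_D \in L^\infty_{BV}(-r,r)$, then the argument lies in $L^\infty_{BV}(-r,r)$, and since $\max(0,\cdot)$ is Lipschitz (so that $\var(\max(0,v); [-r,r]) \leq \var(v; [-r,r])$), the composition remains of bounded variation, giving $\bar\uu \in L^\infty_{BV}(-r,r)$. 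I expect the main obstacle to be the careful limit passage in the barrier term together with the nested subsequence extraction: one must combine the sign information $\eta' \leq 0$ with the pointwise convergences consistently to pin down the complementarity structure on $\{\bar\uu = 0\}$ versus $\{\bar\uu > 0\}$ that underlies the projection formula \eqref{eq:projformula_plan}.
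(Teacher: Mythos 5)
Your proposal is correct and follows essentially the same route as the paper's own proof: approximate solutions from \eqref{eq:Prub}, the uniform $BV$-bound of \cref{prop:rg2}, Helly selection plus a.e.\ convergence of the controls, the sign/complementarity analysis of the barrier term $\beta_n\eta'(\bar\uu_{\beta_n})$, the projection formula $\bar\uu = \max(0, \uu_D + (\bar\pp - \nu_1)/\nu_2)$, and Lipschitz composition to transfer the $G[-r,r]$- resp.\ $BV[-r,r]$-regularity of $\bar\pp$ and $\uu_D$ to $\bar\uu$. The only cosmetic differences are that you obtain the sign of $\eta'$ from convexity rather than direct computation and establish the projection identity by a case distinction on $\{\bar\uu > 0\}$ versus $\{\bar\uu = 0\}$ instead of taking positive parts, which are equivalent arguments.
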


\begin{proof}
Consider some globally optimal controls
$\bar \uu_\beta$ of \eqref{eq:Prub}. From 
\cref{prop:rg1}, we know that $\bar \uu_\beta \to \bar \uu$ holds in $L^2(-r,r)$ for $\beta \to \infty$,
and from \cref{prop:rg2}, that there 
exist representatives of $\bar \uu$, $\bar \uu_\beta$, and $\uu_D$
such that the functions 
$\bar \pp_\beta := 
\nu_1  
+\nu_2 (\bar \uu_\beta - \uu_D)
+
C_{\bar \uu}   (\bar \uu_\beta - \bar \uu)  
+
\beta \eta'(\bar \uu_\beta)$
satisfy $\|\bar \pp_\beta \|_{BV[-r,r]} \leq K$ for a constant $K$ that is independent of $\beta$. 
In combination with the Helly selection theorem (see \cite[Theorem~2.7.4]{Monteiro2019})
and classical results on $L^2$-convergence, this allows us to 
find a sequence $\{\beta_n\} \subset (0, \infty)$ satisfying $\beta_n \to \infty$ for $n \to \infty$ 
 and a function $\bar \pp \in BV[-r,r]$ such that 
$\bar \uu_{\beta_n} \to \bar \uu$ holds pointwise a.e.\ in $(-r,r)$ and 
such that $\bar \pp_{\beta_n}(t) \to \bar \pp(t)$ holds for all $t \in [-r, r]$ as $n$
tends to infinity. 
Due to the definition of $\bar \pp_{\beta_n}$, 
the fact that $\eta'(s) = \eta(s) = 0$ holds for all $s \geq 0$,
and the fact that $\eta'(s) < 0$ holds for all $s \in (-\varepsilon_P, 0)$, 
the convergence properties of $\bar \uu_{\beta_n} $ and $\bar \pp_{\beta_n}$
imply that there exists a function $\zeta \in L^2(-r,r)$ such that 
$\bar \pp_{\beta_n}   -
\nu_1  
- \nu_2 (\bar \uu_{\beta_n} - \uu_D)
-
C_{\bar \uu}   (\bar \uu_{\beta_n} - \bar \uu)  
 = 
 \beta_n \eta'(\bar \uu_{\beta_n}) \to \zeta $ holds pointwise a.e.\ in $(-r,r)$ for $n \to \infty$
with $\zeta \leq 0$ a.e.\ in $(-r,r)$ and $\zeta = 0$ a.e.\ in $\{\bar \uu > 0\}$.
Passing to the limit now yields the identity 
$\bar \pp   -
\nu_1  
+ 
\nu_2 \uu_D
=
\nu_2 \bar \uu+
\zeta $
 a.e.\ in $(-r,r)$ and, 
after taking the positive part on the left and the right 
of this equality -- keeping in mind the complementarity between 
$\bar \uu $ and $\zeta$ -- that 
$\bar \uu  = \max(0, \bar \pp - \nu_1 + \nu_2 \uu_D)/\nu_2$. 
Since $\bar \pp \in BV[-r,r] \subset G[-r,r]$ holds and since the 
function $\R \ni s \mapsto \max(0,s) \in \R$ is globally Lipschitz continuous, 
the asserted regularity properties of $\bar \uu$  now follow immediately;
see \cite[Definition 4.1.1]{Monteiro2019} and \cite[Theorem 4]{Josephy1981}. 
\end{proof}

Next, we state the consequences that \cref{th:BVreg} has for our original 
problem \eqref{eq:P}. For the ease of reference, we formulate this 
result such that it is independent of \cref{ass:StandSec5}. 

\begin{theorem}[$L^\infty_{G}$- and $L^\infty_{BV}$-Regularity for \eqref{eq:P}]
\label{th:mainBV_P}
Suppose that $r \geq r_P$ is given 
and that $u_D|_{(-r,r)} \in L^\infty_G(-r,r)$
(resp., $u_D|_{(-r,r)} \in L^\infty_{BV}(-r,r)$) holds. 
Then every local solution $\bar u \in L^2(\R)$
of \eqref{eq:P} satisfies
\[
\bar u = \max\left ( 0, u_D - \frac{\nu_1}{\nu_2} \right ) \text{ a.e.\ in } \R \setminus (-r,r)
\qquad 
\text{and}
\qquad 
\bar u|_{(-r,r)} \in L^\infty_{G}(-r,r) \text{~(resp., $\bar u|_{(-r,r)} \in L^\infty_{BV}(-r,r)$).}
\]
\end{theorem}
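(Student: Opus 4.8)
The plan is to obtain \cref{th:mainBV_P} as a direct consequence of the reformulation result \cref{th:truncate} and the regularity theorem \cref{th:BVreg}, since the latter already carries the entire analytical substance (the barrier relaxation of the nonnegativity constraint and the $BV$-bound on the adjoint variables). First I would fix $r \geq r_P$ and an arbitrary local solution $\bar u \in L^2(\R)$ of \eqref{eq:P}. Because $r \geq r_P$, \cref{th:truncate} applies and yields two things simultaneously: item \cref{th:truncate}\ref{th:truncate:item:i} gives exactly the identity $\bar u = \max(0, u_D - \nu_1/\nu_2)$ a.e.\ in $\R \setminus (-r,r)$, which is precisely the first assertion of the theorem, and item \cref{th:truncate}\ref{th:truncate:item:ii} tells me that the restriction $\bar \uu := \bar u|_{(-r,r)} \in L^2(-r,r)$ is a local solution of \eqref{eq:Pr}.

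Next I would verify that the standing assumptions of \cref{sec:5}, collected in \cref{ass:StandSec5}, are all satisfied for this choice of $r$ and $\bar \uu$. The number $r$ is positive because $r \geq r_P > 0$; the quantities $\uu_D, E_r, U_{P,r}, S_r, F_r$ are defined as in \cref{def:ExtZero}; the hypothesis $u_D|_{(-r,r)} \in L^\infty_G(-r,r)$ (resp.\ $L^\infty_{BV}(-r,r)$) is \emph{verbatim} the requirement $\uu_D \in L^\infty_G(-r,r)$ (resp.\ $L^\infty_{BV}(-r,r)$), since $\uu_D = u_D|_{(-r,r)}$ by \cref{def:ExtZero}; the function $\bar \uu$ is a local solution of \eqref{eq:Pr} by the previous paragraph; a suitable constant $C_{\bar \uu} > 0$ exists by \cref{prop:localglobal}; and $\eta$ is defined by the stated formula. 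Hence \cref{ass:StandSec5} holds for our data, and \cref{th:BVreg} becomes directly applicable.

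Invoking \cref{th:BVreg} then gives $\bar \uu \in L^\infty_G(-r,r)$, and $\bar \uu \in L^\infty_{BV}(-r,r)$ in the case $\uu_D \in L^\infty_{BV}(-r,r)$. Recalling that $\bar \uu = \bar u|_{(-r,r)}$, this is exactly the second assertion of the theorem, so combining it with the identity from the first paragraph completes the argument.

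I expect there to be essentially no genuine obstacle in this proof: the statement is a packaging result that transfers the regularity already established for the truncated problem \eqref{eq:Pr} back to the original problem \eqref{eq:P}. The only point requiring care is bookkeeping, namely matching the hypothesis on $u_D|_{(-r,r)}$ to the notation $\uu_D$ of \cref{def:ExtZero} and confirming that \cref{ass:StandSec5}, although phrased for a \emph{fixed} local solution of \eqref{eq:Pr}, is indeed applicable to the restriction $\bar u|_{(-r,r)}$ produced by \cref{th:truncate}. The deliberate phrasing of \cref{th:mainBV_P} as a statement independent of \cref{ass:StandSec5} is precisely what makes it quotable in later sections without dragging those standing hypotheses along.
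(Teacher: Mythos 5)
Your proposal is correct and is exactly the paper's own argument: the paper proves \cref{th:mainBV_P} by combining \cref{th:truncate} (which yields both the projection formula on $\R \setminus (-r,r)$ and the fact that $\bar u|_{(-r,r)}$ is a local solution of \eqref{eq:Pr}) with \cref{th:BVreg}. Your additional bookkeeping — checking that \cref{ass:StandSec5} is satisfied for $\bar \uu = \bar u|_{(-r,r)}$, with $C_{\bar\uu}$ supplied by \cref{prop:localglobal} — is exactly the verification the paper leaves implicit in its one-line proof.
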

\begin{proof}
This follows immediately from \cref{th:truncate} and \cref{th:BVreg}. 
\end{proof}

As we will make frequent use of \cref{th:BVreg,th:mainBV_P} in \cref{sec:7,sec:8}, 
we conclude this section by introducing a shorthand notation for the regularity assumption
on 
$u_D$ needed in these results:

\begin{definition}[Condition (A1)]
\label{Aone}
Given $r >0$, we say that condition \Aone{} holds if 
$u_D|_{(-r,r)} \in L^\infty_G(-r,r)$.
\end{definition}

\section{Differentiability Properties of the Control-to-State Map}
\label{sec:6}

In this section, we show that the operator $S_r\colon U_{P,r} \to H_0^1(\Omega) \cap H^2(\Omega)$,
$\uu \mapsto y$, introduced in \cref{def:ExtZero} is
Hadamard directionally differentiable at all points $\uu \in L^\infty_G(-r,r) \cap U_{P,r}$
 in all directions 
$\zz \in \TT(\uu; U_{P,r})$.
Note that, due to the identity \eqref{eq:truncid}, 
the differentiability results that we establish for $S_r$ in the following
carry over immediately to the solution map
$S\colon u \mapsto y$ of \eqref{eq:PDE}; see \cref{cor:dirdiffS} below.
We focus on $S_r$ in our analysis 
since this is the operator that appears
in the gradient projection algorithm of \cref{sec:8}. 
To clarify which notions of differentiability we are concerned with, we recall
(see \cite[Section~2.2.1]{BonnansShapiro2000}, \cite[Definition 1.1.2]{ChristofPhd2018}):

\begin{definition}[Notions of Differentiability]
\label{def:differentiability_notions}
Let $U$ and $Y$ be normed spaces.
Assume that a convex, closed, nonempty set $D \subset U$ 
and a function $H\colon  D \to Y$ are given.
Then $H$ is called:
\begin{enumerate}[label=\roman*)]
\item directionally differentiable at $u \in D$ in direction $z \in \RR(u;D)$ if there exists 
$H'(u; z) \in Y$ (the directional derivative) such that, for all
$\{s_n\} \subset (0, \infty)$  satisfying $s_n \to 0$ and $u + s_n z \in D$ for all $n$, we have
\begin{equation*}
\lim_{n \to \infty} \frac{H(u + s_n z) - H(u)}{s_n} = H'(u; z);
\end{equation*}

\item Hadamard directionally differentiable at $u \in D$ in direction $z \in \TT(u;D)$ if there exists 
$H'(u; z) \in Y$
(the Hadamard directional derivative) such that, for all
$\{s_n\} \subset (0, \infty)$ and $\{z_n\} \subset U$  
satisfying $s_n \to 0$, $z_n \to z$, and $u + s_n z_n \in D$ for all $n$, we have
\begin{equation*}
\lim_{n \to \infty} \frac{H(u + s_n z_n) - H(u)}{s_n} = H'(u; z);
\end{equation*}

\item  Hadamard-Gâteaux differentiable at $u \in D$ if $H$ is  Hadamard directionally differentiable in 
all directions  $z \in  \TT(u;D)$ at $u$ and if the derivative 
 $H'(u; \cdot)$
can be extended to a function $H'(u; \cdot) =: H'(u)\in \LL(U,Y)$;

\item\label{def:differentiability_notions:iv} Fréchet differentiable at $u \in D$ if there exists a map $H'(u) \in \LL(U,Y)$ 
satisfying\label{definition:differentiability_notions:Frechet}
\begin{equation*}
\lim_{0 < \|z\|_U \to 0, \, u + z \in D} \frac{\| H(u + z) - H(u) - H'(u)z  \|_Y}{\|z\|_U} = 0.
\end{equation*}
\end{enumerate}
If one of the above conditions holds for all $u \in D$ and all relevant $z \in U$, then we 
drop the reference to $u$ and $z$ and say that $H$ possesses
 the respective differentiability property on $D$.
If $H$ is Fréchet differentiable on~$D$ and $D \ni u \mapsto H'(u) \in \LL(U,Y)$
is a continuous function, then we call $H$ continuously Fréchet differentiable 
on~$D$.
\end{definition}

For the discussion of the differentiability properties of 
the operator $S_r$, we require some preliminary results.
 We begin by introducing
 (cf.\ \cref{def:ExtZero}\ref{def:ExtZero:ii}):

\begin{definition}[Extension by Zero in {$G[-r,r]$}]
\label{def:GextendZero}
Given  $r > 0$ and  $\uu \in G[-r,r]$, we define $\EE_r(\uu)$
to be the extension of $\uu$ by zero, i.e., 
\begin{equation}
\label{eq:EEdef}
\EE_r(\uu)\colon \R \to \R,
\qquad
\EE_r(\uu)(t)
:=
\begin{cases}
\uu(t) & \text{ if } t \in [-r,r],
\\
0 &\text{ if } t \in \R \setminus [-r,r].
\end{cases}
\end{equation}
\end{definition}

The next lemma analyzes how the extensions
of two different 
$G[-r,r]$-representatives of the same element of $L^\infty_G(-r,r)$
are related to each other.

\begin{lemma}[Extension of Regulated Representatives]
\label{lem:GReps}
Let $r > 0$ be fixed and suppose that $\uu_1, \uu_2 \in G[-r,r]$
are given such that $\uu_1(t) = \uu_2(t)$ holds for a.a.\ $t \in (-r,r)$. 
Then the following is true:
\begin{enumerate}[label=\roman*)]
\item\label{lem:GReps:i} It holds $\EE_r(\uu_1)(t-) = \EE_r(\uu_2)(t-)$ and $\EE_r(\uu_1)(t+) = \EE_r(\uu_2)(t+)$ for all $t \in \R$. 
\item\label{lem:GReps:ii} There exists an at most countable set 
$Z \subset \R$ such that $\EE_r(\uu_1)(t) = \EE_r(\uu_1)(t\pm)  = \EE_r(\uu_2)(t\pm) = \EE_r(\uu_2)(t)$
holds for all $t \in \R \setminus Z$.
\end{enumerate}
Here, with $\EE_r(\uu_i)(t\pm)$, $i=1,2$, we denote the left/right limit of the function 
$\EE_r(\uu_i)\colon \R \to \R$ at $t\in \R$; cf.\ \eqref{eq:leftrightlimits}. 
\end{lemma}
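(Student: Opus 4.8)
The plan is to reduce everything to two standard facts about regulated functions: first, that two regulated functions coinciding almost everywhere have identical one-sided limits at every point; and second, that a regulated function has at most countably many points of discontinuity. Part~\ref{lem:GReps:i} follows from the first fact together with a short case distinction dictated by the extension-by-zero construction, and part~\ref{lem:GReps:ii} then follows by combining part~\ref{lem:GReps:i} with the second fact.

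For part~\ref{lem:GReps:i}, I would first record that the set $N := \{t \in (-r,r) \mid \uu_1(t) \neq \uu_2(t)\}$ has Lebesgue measure zero, so its complement in $(-r,r)$ is dense. Fix $t$ and consider the left limit. Since $\uu_1$ and $\uu_2$ are regulated, the limits $\uu_i(t-)$ exist and coincide with $\lim_n \uu_i(s_n)$ for \emph{every} sequence $s_n \to t$ with $s_n \in (-r,t)$. Choosing such a sequence inside the dense set $(-r,r) \setminus N$ gives $\uu_1(s_n) = \uu_2(s_n)$ for all $n$, whence $\uu_1(t-) = \uu_2(t-)$; the right limits are treated symmetrically. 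It remains to transfer this to the extensions on all of $\R$ via a case distinction on the location of $t$: for $t \in (-r,r)$ the one-sided limits of $\EE_r(\uu_i)$ coincide with those of $\uu_i$; at $t = -r$ (resp.\ $t = r$) the left (resp.\ right) limit of $\EE_r(\uu_i)$ equals $0$ for both indices by the extension-by-zero construction, while the remaining one-sided limit equals $\uu_i(-r+)$ (resp.\ $\uu_i(r-)$) and is handled as above; and for $t \in \R \setminus [-r,r]$ both one-sided limits of $\EE_r(\uu_i)$ vanish identically. In every case $\EE_r(\uu_1)(t\pm) = \EE_r(\uu_2)(t\pm)$, which is the claim.

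For part~\ref{lem:GReps:ii}, I would note that each extension $\EE_r(\uu_i)$ is regulated on every compact interval: it agrees with the regulated function $\uu_i$ on $[-r,r]$ and is locally constant (indeed zero) on $\R \setminus [-r,r]$. Consequently its set of discontinuities $D_i \subset \R$ is at most countable. Setting $Z := D_1 \cup D_2$, which is again at most countable, I claim this set has the desired property: for $t \in \R \setminus Z$, continuity of each $\EE_r(\uu_i)$ at $t$ gives $\EE_r(\uu_i)(t) = \EE_r(\uu_i)(t-) = \EE_r(\uu_i)(t+)$, and part~\ref{lem:GReps:i} identifies the one-sided limits of the two extensions, so all the quantities in the statement collapse to a single number.

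The argument is essentially bookkeeping; the only point requiring care is the behavior at the endpoints $\pm r$, where one must remember that the outward-pointing one-sided limit is $0$ by the extension convention while the inward-pointing limit is the genuine one-sided limit of $\uu_i$. I expect this boundary case, rather than the interior density argument, to be the main (though minor) source of friction.
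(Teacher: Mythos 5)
Your proof is correct and follows essentially the same route as the paper's: both parts rest on the density of the complement of a null set combined with the existence of one-sided limits, plus the countability of the discontinuity set of a regulated function. The only cosmetic difference is that the paper applies the density argument directly to the extensions $\EE_r(\uu_i)$ on all of $\R$ (which agree a.e.\ on $\R$ and have one-sided limits everywhere), so the endpoint case distinction at $\pm r$ that you flag as the main source of friction never arises.
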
\pagebreak

\begin{proof}
Suppose that functions $\uu_1, \uu_2 \in G[-r,r]$ as in the lemma are given. Then
it holds $\EE_r(\uu_1)(t) = \EE_r(\uu_2)(t)$ for a.a.\ $t \in \R$ by the definition of $\EE_r$
and we obtain from \eqref{eq:EEdef} and \cite[Definition 4.1.1]{Monteiro2019}
that the functions $\EE_r(\uu_1)$ and $\EE_r(\uu_2)$ have well-defined left and right limits 
at all points $t \in \R$. By combining these properties with the fact that 
the complement of a null set is dense,
we arrive at the identities 
$\EE_r(\uu_1)(t-) = \EE_r(\uu_2)(t-)$ and $\EE_r(\uu_1)(t+) = \EE_r(\uu_2)(t+)$
for all $t \in \R$ as desired;
cf.\ \eqref{eq:leftrightlimits}. This proves \ref{lem:GReps:i}. 
To establish \ref{lem:GReps:ii}, we note that \eqref{eq:EEdef}
and \cite[Theorem 4.1.8]{Monteiro2019} entail that there exists 
an at most countable set $Z \subset \R$ 
such that $\EE_r(\uu_1)$ and $\EE_r(\uu_2)$
satisfy
$\EE_r(\uu_1)(t-) = \EE_r(\uu_1)(t) = \EE_r(\uu_1)(t+)$ and $\EE_r(\uu_2)(t-) = \EE_r(\uu_2)(t) =  \EE_r(\uu_2)(t+)$ for all $t \in \R \setminus Z$.
In tandem with \ref{lem:GReps:i}, this yields \ref{lem:GReps:ii} and completes the proof. 
\end{proof}

Note that \cref{lem:GReps} implies that all $G[-r,r]$-representatives 
of a control $\uu \in L^\infty_{G}(-r,r)$ yield the same 
left and right limits after an 
extension by zero. 
This shows that it makes sense to talk about the functions
$\EE_r(\uu)(\,\cdot\,-)\colon \R \to \R$ and 
$\EE_r(\uu)(\,\cdot\,+)\colon \R \to \R$
of a control $\uu \in L^\infty_{G}(-r,r)$.
In what follows, we call
$\EE_r(\uu)(\,\cdot\,-)$ and $\EE_r(\uu)(\,\cdot\,+)$
the left- and the right-continuous representative
of the $ L^2(\R)$-extension $E_r(\uu)$, respectively.

\begin{definition}[Left- and Right-Continuous Representative of $E_r(\uu)$]
\label{def:EEuLG}
Given $r > 0$ and $\uu \in  L^\infty_G(-r,r)$, we define 
$\EE_r(\uu)(\,\cdot\,-)\colon \R \to \R$ and 
$\EE_r(\uu)(\,\cdot\,+)\colon \R \to \R$ to be the functions 
that are obtained by extending an arbitrary $G[-r,r]$-representative of $\uu$ by zero 
in the sense of \eqref{eq:EEdef}
and by subsequently taking left/right limits. 
\end{definition}

For the functions $\EE_r(\uu)(\,\cdot\,-)\colon \R \to \R$ and 
$\EE_r(\uu)(\,\cdot\,+)\colon \R \to \R$, we have:

\begin{lemma}[Stability of Left- and Right-Continuous Representatives]
\label{lem:RepStab}
Let $r > 0$ be given. Then it holds 
\begin{equation}
\label{eq:Linftyrepsstab}
\sup_{t \in \R}
|\EE_r(\uu_1)(t\pm) - \EE_r(\uu_2)(t\pm)|
=
\|\uu_1 - \uu_2\|_{L^\infty(-r,r)}
\quad \forall \uu_1,\uu_2 \in L^\infty_G(-r,r).
\end{equation}
\end{lemma}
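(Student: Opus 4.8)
The plan is to reduce \eqref{eq:Linftyrepsstab} to a statement about a single regulated function and then to identify the supremum of its one-sided limits with its essential supremum. First I would fix arbitrary $G[-r,r]$-representatives of $\uu_1$ and $\uu_2$, set $\vv := \uu_1 - \uu_2 \in L^\infty_G(-r,r)$, and note that the difference of the chosen representatives is a $G[-r,r]$-representative of $\vv$. Since the left and right limits of a sum are the sums of the respective limits whenever these exist, and since \cref{lem:GReps} guarantees that the limits $\EE_r(\uu_i)(t\pm)$ are independent of the chosen representatives, I obtain $\EE_r(\uu_1)(t\pm) - \EE_r(\uu_2)(t\pm) = \EE_r(\vv)(t\pm)$ for every $t \in \R$. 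The claim therefore becomes $\sup_{t \in \R} |\EE_r(\vv)(t\pm)| = \|\vv\|_{L^\infty(-r,r)}$.

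Next, using the explicit form \eqref{eq:EEdef}---in particular the fact that $\EE_r(\vv)$ vanishes on $\R \setminus [-r,r]$---I would compute the one-sided limits of the extension: the left limit $\EE_r(\vv)(t-)$ equals $\vv(t-)$ for $t \in (-r,r]$ and vanishes for all other $t$, while the right limit $\EE_r(\vv)(t+)$ equals $\vv(t+)$ for $t \in [-r,r)$ and vanishes otherwise. Because $\|\vv\|_{L^\infty(-r,r)} \geq 0$, the vanishing limits never exceed the supremum, so the supremum over $\R$ collapses to $\sup_{t \in (-r,r]} |\vv(t-)|$ (respectively $\sup_{t \in [-r,r)} |\vv(t+)|$).

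It then remains to show $\sup_t |\vv(t\pm)| = \|\vv\|_{L^\infty(-r,r)} =: M$. For the inequality ``$\leq$'' I would use that the set $\{|\vv| \leq M\}$ has full measure, hence is dense, so that every one-sided limit $\vv(t\pm)$ is a limit of values bounded by $M$ and thus satisfies $|\vv(t\pm)| \leq M$. For ``$\geq$'' I would invoke \cite[Theorem 4.1.8]{Monteiro2019}: the function $\vv$ has at most countably many discontinuities, so its continuity points form a set of full measure in $(-r,r)$, and at each such point $t$ one has $\vv(t) = \vv(t-) = \vv(t+)$; taking the essential supremum over these points yields $M \leq \sup_t |\vv(t\pm)|$. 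The main obstacle is precisely this passage between the essential supremum, which disregards null sets, and the one-sided limits, which are defined at every point; the at-most-countable nature of the discontinuity set is what makes the continuity points a dense set of full measure and lets me both evaluate the limits along that set and bound $|\vv|$ almost everywhere by them.
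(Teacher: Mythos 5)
Your proof is correct and follows essentially the same route as the paper: the direction ``$\geq$'' rests on the at most countable discontinuity set of a regulated function (so that pointwise values and one-sided limits agree almost everywhere, allowing the essential supremum to be bounded by the supremum of the limits), and ``$\leq$'' on the density of the full-measure set where the pointwise difference is bounded by its essential supremum, which is exactly the argument from the proof of \cref{lem:GReps} that the paper invokes for this inequality. Your preliminary reduction to the single function $\vv = \uu_1 - \uu_2$ is a harmless cosmetic simplification, not a different method.
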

\begin{proof}
The inequality ``$\geq$'' in \eqref{eq:Linftyrepsstab} follows straightforwardly 
from \eqref{eq:EEdef}, \cref{lem:GReps}, and the definition of
the norm $\|\cdot\|_{L^\infty(-r,r)}$.
To establish ``$\leq$'', one can use the exact same arguments as in 
the proof of \cref{lem:GReps}.
\end{proof}

We remark that \eqref{eq:Linftyrepsstab} does not hold
for arbitrary representatives of $E_r(\uu_1), E_r(\uu_2) \in L^2(\R)$,
even if they are regulated.
Indeed, by modifying the function $\EE_r(\uu_1)(\,\cdot\,\pm) - \EE_r(\uu_2)(\,\cdot\,\pm)$ at a single point, 
one can make the supremum-norm on the left of \eqref{eq:Linftyrepsstab} 
arbitrarily large while keeping the $L^\infty(-r,r)$-norm on the right of \eqref{eq:Linftyrepsstab}  constant. 
As a straightforward consequence of \cref{lem:RepStab}, we obtain:

\begin{corollary}[Closedness of {$L^\infty_G(-r,r)$}]
\label{cor:Gbanach}
Let $r>0$.
The set $L^\infty_G(-r,r)$ is a closed subspace of $L^\infty(-r,r)$.
In particular, $(L^\infty_G(-r,r), \|\cdot\|_{L^\infty(-r,r)})$ is a Banach space.  
\end{corollary}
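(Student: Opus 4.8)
The plan is to first dispose of the routine parts and then concentrate on closedness, which carries the actual content. That $L^\infty_G(-r,r)$ is a linear subspace of $L^\infty(-r,r)$ is immediate: the zero function is regulated, and if $\uu_1,\uu_2\in L^\infty_G(-r,r)$ admit representatives $v_1,v_2\in G[-r,r]$, then $v_1+v_2$ and $\alpha v_1$ are again regulated (as $G[-r,r]$ is itself a vector space; cf.\ \cite[Theorem 4.2.1]{Monteiro2019}) and serve as representatives of $\uu_1+\uu_2$ and $\alpha \uu_1$. Granting closedness, the ``in particular'' claim follows at once, since $(L^\infty(-r,r),\|\cdot\|_{L^\infty(-r,r)})$ is complete and a closed subspace of a Banach space is again a Banach space.

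For the closedness, I would take a sequence $\{\uu_n\}\subset L^\infty_G(-r,r)$ with $\uu_n\to\uu$ in $L^\infty(-r,r)$ for some $\uu\in L^\infty(-r,r)$ and show that $\uu$ possesses a $G[-r,r]$-representative. To this end, I would pass to the right-continuous representatives and set $w_n:=\EE_r(\uu_n)(\,\cdot\,+)|_{[-r,r]}$. Each $w_n$ is regulated (the right-limit function of a regulated function is again regulated), so $w_n\in G[-r,r]$, and \cref{lem:RepStab} yields
\begin{equation*}
\|w_n-w_m\|_\infty\le\sup_{t\in\R}\left|\EE_r(\uu_n)(t+)-\EE_r(\uu_m)(t+)\right|=\|\uu_n-\uu_m\|_{L^\infty(-r,r)}.
\end{equation*}
Since $\{\uu_n\}$ is Cauchy in $L^\infty(-r,r)$, the sequence $\{w_n\}$ is therefore Cauchy in $(G[-r,r],\|\cdot\|_\infty)$, which is complete by \cite[Theorem 4.2.1]{Monteiro2019}. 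Hence $w_n\to w$ uniformly on $[-r,r]$ for some $w\in G[-r,r]$.

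It then remains to identify $w$ as a representative of $\uu$, and this is the only step requiring some care. By \cref{lem:GReps}, the right-continuous representative $\EE_r(\uu_n)(\,\cdot\,+)$ agrees with an arbitrary $G[-r,r]$-representative of $\uu_n$ outside an at most countable set, so $w_n=\uu_n$ a.e.\ in $(-r,r)$; uniform convergence then gives $w_n\to w$ a.e., while $\uu_n\to\uu$ in $L^\infty(-r,r)$ gives $\uu_n\to\uu$ a.e. Comparing the two limits shows $w=\uu$ a.e.\ in $(-r,r)$, i.e.\ $\uu\in L^\infty_G(-r,r)$. The main obstacle is thus not any sharp estimate but rather keeping the bookkeeping between equivalence classes in $L^\infty(-r,r)$ and genuine functions in $G[-r,r]$ consistent; \cref{lem:RepStab,lem:GReps} are exactly the tools that make this transition clean, the former supplying the Cauchy bound in the supremum norm and the latter the almost-everywhere identification of the limit.
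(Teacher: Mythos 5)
Your proof is correct and takes essentially the same route as the paper's: both pass to the right-continuous representatives $\EE_r(\uu_n)(\,\cdot\,+)|_{[-r,r]}$, use \cref{lem:RepStab} to convert the $L^\infty(-r,r)$-Cauchy property into a supremum-norm Cauchy property in the Banach space $(G[-r,r],\|\cdot\|_\infty)$, and then identify the uniform limit with $\uu$ almost everywhere via \cref{lem:GReps}. The only differences are cosmetic (you spell out the subspace property and the a.e.\ bookkeeping a bit more explicitly).
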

\begin{proof}
That $L^\infty_G(-r,r)$ is a subspace is trivial. 
To see that $L^\infty_G(-r,r)$ is closed, suppose that $\{\uu_n\} \subset L^\infty_G(-r,r)$
is a sequence which satisfies $\uu_n \to \uu$ in $L^\infty(-r,r)$ for some $\uu \in L^\infty(-r,r)$. 
From 
\eqref{eq:EEdef} and \eqref{eq:Linftyrepsstab}, it follows that 
\begin{equation}
\label{eq:randomeq263636-37hd3u}
\sup_{t \in [-r,r]}
| \EE_r(\uu_n)(t+)  - \EE_r(\uu_m)(t+)  |
=
\|\uu_n - \uu_m\|_{L^\infty(-r,r)}
\end{equation}
holds for all $m,n\in \mathbb{N}$.
Since $\{\uu_n\}$ converges in $L^\infty(-r,r)$ and since the functions 
$\EE_r(\uu_n)(\,\cdot\,+)|_{[-r,r]}$ are regulated
by \cite[Corollary 4.1.9]{Monteiro2019},  
\eqref{eq:randomeq263636-37hd3u} shows that the sequence 
$\{\EE_r(\uu_n)(\,\cdot\,+)|_{[-r,r]}\}$ is Cauchy in $(G[-r,r], \|\cdot\|_\infty)$.
As $(G[-r,r], \|\cdot\|_\infty)$ is a Banach space, this implies that 
there exists $\hat \uu \in G[-r,r]$ such that 
$\EE_r(\uu_n)(t+) \to \hat \uu(t)$ holds for all $t \in [-r,r]$. 
From \cref{lem:GReps}, we obtain further that the 
functions $\EE_r(\uu_n)(\,\cdot\,+)|_{[-r,r]}$ are representatives of $\uu_n$
and, thus, that $\EE_r(\uu_n)(t+) \to \uu(t)$ holds for a.a.\ $t \in (-r,r)$. 
In summary, this yields $\uu = \hat \uu$ a.e.\ in $(-r,r)$. The limit $\uu$ of $\{\uu_n\}$
thus admits a $G[-r,r]$-representative, it holds $\uu \in L^\infty_G(-r,r)$, and the claim is proven. 
\end{proof}

We are now in the position 
to study the  differentiability properties of the operator $S_r$.
The point of departure for our analysis is the following observation. 

\begin{lemma}[Hadamard Directional Differentiability of $g_{E_r(\uu)}$]
\label{lem:gudirdiff}
Let $r > 0$ be fixed and let $\uu \in L^\infty_G(-r,r)$ be given. Then the function 
$g_{E_r(\uu)}\colon \R \to \R$ is Hadamard directionally differentiable on $\R$
with derivative 
\begin{equation}
\label{eq:randomeq2635456}
g_{E_r(\uu)}'(t;z) = 
\begin{cases}
\EE_r(\uu)(t-)z & \text{ if } z < 0,
\\
\EE_r(\uu)(t+)z & \text{ if } z \geq 0,
\end{cases}
\qquad \forall z \in \R\qquad \forall t \in \R.
\end{equation}
\end{lemma}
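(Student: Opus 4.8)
The plan is to work directly from the integral definition $g_{E_r(\uu)}(t) = \int_0^t E_r(\uu)(s) \dd s$ and to compute the Hadamard directional derivative by rescaling the variable of integration. Since $g_{E_r(\uu)}$ depends only on the $L^2(\R)$-equivalence class of $E_r(\uu)$, I would first fix once and for all a regulated representative, namely $\EE_r(\uu)$ in the sense of \cref{def:GextendZero}; this is legitimate because $\uu \in L^\infty_G(-r,r)$ guarantees that such a representative exists and because \cref{lem:GReps} ensures that the one-sided limits $\EE_r(\uu)(t\pm)$ are well-defined for every $t \in \R$ and independent of the chosen $G[-r,r]$-representative. For fixed $t, z \in \R$ I would then take arbitrary sequences $\{s_n\} \subset (0,\infty)$ with $s_n \to 0$ and $\{z_n\} \subset \R$ with $z_n \to z$ and, via the substitution $\sigma = t + s_n \tau$, rewrite the difference quotient as
\begin{equation*}
\frac{g_{E_r(\uu)}(t + s_n z_n) - g_{E_r(\uu)}(t)}{s_n}
=
\frac{1}{s_n}\int_t^{t + s_n z_n} \EE_r(\uu)(\sigma) \dd \sigma
=
\int_0^{z_n} \EE_r(\uu)(t + s_n \tau) \dd \tau.
\end{equation*}

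Next I would split off the contribution of the moving upper limit by writing $\int_0^{z_n} = \int_0^{z} + \int_z^{z_n}$. The remainder $\int_z^{z_n}$ is bounded in modulus by $\|\uu\|_{L^\infty(-r,r)}\,|z_n - z|$ and hence vanishes as $n \to \infty$. For the main term $\int_0^z \EE_r(\uu)(t + s_n\tau) \dd \tau$ I would invoke the dominated convergence theorem: the integrand is uniformly bounded by $\|\uu\|_{L^\infty(-r,r)}$, and for every fixed $\tau$ of the same sign as $z$ the point $t + s_n\tau$ approaches $t$ strictly from the right when $\tau > 0$ and strictly from the left when $\tau < 0$, so that the regulated structure of $\EE_r(\uu)$ on all of $\R$ forces $\EE_r(\uu)(t + s_n\tau) \to \EE_r(\uu)(t+)$ for $\tau > 0$ and $\EE_r(\uu)(t + s_n\tau) \to \EE_r(\uu)(t-)$ for $\tau < 0$. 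Passing to the limit therefore yields $z\,\EE_r(\uu)(t+)$ when $z > 0$ and $z\,\EE_r(\uu)(t-)$ when $z < 0$, while the case $z = 0$ follows at once from the uniform bound and gives the limit $0$. Assembling the three cases reproduces exactly the formula \eqref{eq:randomeq2635456}, and since $\{s_n\}$ and $\{z_n\}$ were arbitrary, this establishes Hadamard directional differentiability at every $t \in \R$.

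The only genuine subtlety I anticipate is the bookkeeping around representatives: the integral defining $g_{E_r(\uu)}$ is insensitive to the choice of representative of $E_r(\uu)$, yet the pointwise convergence $\EE_r(\uu)(t + s_n\tau) \to \EE_r(\uu)(t\pm)$ that drives the dominated-convergence argument is a statement about the specific regulated representative $\EE_r(\uu)$. Keeping these two viewpoints separate -- and recalling that \cref{lem:GReps} makes $\EE_r(\uu)(t\pm)$ a well-defined object attached to the class $\uu$ -- is what makes the limit identification clean. Beyond this point, all estimates are routine consequences of $\uu \in L^\infty(-r,r)$ and the definition of a regulated function, so no further difficulty is expected.
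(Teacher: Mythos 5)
Your proposal is correct, but it reaches Hadamard differentiability by a different route than the paper. The paper first proves plain directional differentiability for a fixed direction $z \geq 0$ via the elementary sup-estimate
\begin{equation*}
\left| \frac{g_{E_r(\uu)}(t + sz) - g_{E_r(\uu)}(t)}{s} - \EE_r(\uu)(t+)z \right|
\leq \sup_{\zeta \in (t, t + sz)} |\EE_r(\uu)(\zeta) - \EE_r(\uu)(t+)|\,|z| \to 0,
\end{equation*}
and then upgrades to the Hadamard notion in one stroke by observing that $g_{E_r(\uu)}$ is globally Lipschitz (from the $L^\infty$-bound on $\uu$) and citing the standard result \cite[Proposition~2.49]{BonnansShapiro2000} that a locally Lipschitz, directionally differentiable map is automatically Hadamard directionally differentiable. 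You instead verify the Hadamard limit directly: after the substitution $\sigma = t + s_n\tau$ you handle the moving direction $z_n \to z$ by splitting $\int_0^{z_n} = \int_0^z + \int_z^{z_n}$, killing the remainder with the uniform bound, and identifying the main term via dominated convergence and the one-sided limits of the regulated representative. Both arguments hinge on the same essential fact -- the existence of $\EE_r(\uu)(t\pm)$, with \cref{lem:GReps} guaranteeing independence of the chosen representative, which you correctly flag as the only delicate bookkeeping point. What the paper's route buys is brevity, at the cost of an external citation and the (easy) Lipschitz observation; what your route buys is a self-contained proof in which the role of the perturbed directions is explicit, at the cost of invoking dominated convergence where a sup-estimate suffices. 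Your treatment of the case $z = 0$ and of the sign conventions for oriented integrals is also sound, so there is no gap.
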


\begin{proof}
Choose an arbitrary $G[-r,r]$-representative 
of $\uu \in L^\infty_G(-r,r)$ (denoted by the same symbol)
and suppose that  $t \in \R$ and $z \geq 0$ are given. 
Then \eqref{eq:defgu},
the definition of the 
right limit $\EE_r(\uu)(t+)$ of the function $\EE_r(\uu)\colon \R \to \R$ at $t$,
and the fact that $\EE_r(\uu)$ is a representative of $E_r(\uu) \in L^2(\R)$ imply that 
\begin{equation*}
\begin{aligned}
\limsup_{0 < s \to 0}
\left | \frac{g_{E_r(\uu)}(t + sz) - g_{E_r(\uu)}(t)}{s} - \EE_r(\uu)(t+)z \right |
&
=
\limsup_{0 < s \to 0} 
\left | \frac{1}{s}\int_t^{t + sz} E_r(\uu)(\zeta) - \EE_r(\uu)(t+) \dd \zeta
\right |
\\
&
=
\limsup_{0 < s \to 0} 
\left | \frac{1}{s}\int_t^{t + sz} \EE_r(\uu)(\zeta) - \EE_r(\uu)(t+) \dd \zeta
\right |
\\
&\leq
\limsup_{0 < s \to 0} \sup_{\zeta \in (t, t + sz)} |\EE_r(\uu)(\zeta) - \EE_r(\uu)(t+)||z|
 =0. 
\end{aligned}
\end{equation*}
This shows that the function $g_{E_r(\uu)}$ is directionally differentiable 
in all nonnegative directions $z$ at all $t \in \R$ with derivative $\EE_r(\uu)(t+)z$. 
The directional differentiability in negative directions
is obtained analogously. 
To see that $g_{E_r(\uu)}$ is even
Hadamard directionally differentiable, it suffices to note that 
$g_{E_r(\uu)}$ is globally Lipschitz continuous due to the $L^\infty$-regularity of $\uu$
and to invoke \cite[Proposition 2.49]{BonnansShapiro2000}. This completes the proof. 
\end{proof}

In combination with \cref{lem:GReps}, \cref{lem:gudirdiff} yields:

\begin{corollary}[Exceptional Set of $g_{E_r(\uu)}$]
\label{eq:ExceptionalGU}
Let $r > 0$ be fixed and 
let $\uu \in L^\infty_G(-r,r)$ be given. 
Then there exists an at most countable set $Z \subset \R$
such that the function $g_{E_r(\uu)}\colon \R \to \R$ is Fréchet differentiable at all 
$t \in \R \setminus Z $. 
\end{corollary}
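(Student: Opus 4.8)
The plan is to read off Fréchet differentiability directly from the explicit formula for the Hadamard directional derivative that was already established in \cref{lem:gudirdiff}, so that no new estimate is needed. First I would fix an arbitrary $G[-r,r]$-representative of $\uu$ and form its extension $\EE_r(\uu)\colon \R \to \R$ as in \eqref{eq:EEdef}. By \cref{lem:gudirdiff}, the scalar function $g_{E_r(\uu)}$ is Hadamard directionally differentiable at every $t \in \R$, with
\[
g_{E_r(\uu)}'(t;z) =
\begin{cases}
\EE_r(\uu)(t-)\,z & \text{ if } z < 0,\\
\EE_r(\uu)(t+)\,z & \text{ if } z \geq 0.
\end{cases}
\]
The key observation is that, for a function of a single real variable, this directional derivative is linear in $z$ — and hence extends to an element of $\LL(\R,\R)$ — precisely when the two one-sided slopes coincide, i.e.\ when $\EE_r(\uu)(t-) = \EE_r(\uu)(t+)$. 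Indeed, $z \mapsto g_{E_r(\uu)}'(t;z)$ records exactly the right derivative $\EE_r(\uu)(t+)$ and the left derivative $\EE_r(\uu)(t-)$ of $g_{E_r(\uu)}$ at $t$.

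It therefore remains to control the (presumably small) set on which the left and right limits of $\EE_r(\uu)$ disagree. For this I would invoke \cref{lem:GReps}\ref{lem:GReps:ii} with $\uu_1 = \uu_2$ both equal to the chosen representative: this yields an at most countable set $Z \subset \R$ such that $\EE_r(\uu)(t-) = \EE_r(\uu)(t) = \EE_r(\uu)(t+)$ holds for all $t \in \R \setminus Z$. Equivalently, one may simply note that $\EE_r(\uu)$ is regulated and recall that a regulated function has at most countably many points at which its one-sided limits differ.

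Finally, for every $t \in \R \setminus Z$ both branches of the formula above give $g_{E_r(\uu)}'(t;z) = \EE_r(\uu)(t+)\,z$, so the map $z \mapsto g_{E_r(\uu)}'(t;z)$ is linear and continuous on $\R$. Since Hadamard directional differentiability in all directions together with linearity of the directional derivative is exactly Hadamard–Gâteaux differentiability, and since for a map $\R \to \R$ this coincides with classical — hence Fréchet — differentiability in the sense of \cref{def:differentiability_notions}\ref{def:differentiability_notions:iv}, we conclude that $g_{E_r(\uu)}$ is Fréchet differentiable at every $t \in \R \setminus Z$, which proves the claim. The only step requiring a little care — and the closest thing to an obstacle — is the passage from the one-sided, direction-dependent differentiability of \cref{lem:gudirdiff} to genuine Fréchet differentiability; but in the one-dimensional setting this is immediate once the two one-sided derivatives are known to agree, so no substantial difficulty arises.
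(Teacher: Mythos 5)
Your proposal is correct and follows essentially the same route as the paper's proof: both combine the directional derivative formula from \cref{lem:gudirdiff} with the countable exceptional set from \cref{lem:GReps}\ref{lem:GReps:ii} (equivalently, the fact that a regulated function has at most countably many points where its one-sided limits differ), and then upgrade Hadamard--G\^ateaux differentiability to Fr\'echet differentiability using finite-dimensionality. Your explicit verification that linearity of $z \mapsto g_{E_r(\uu)}'(t;z)$ holds exactly when the one-sided limits coincide is just a spelled-out version of the paper's appeal to \cref{lem:gudirdiff}, so no substantive difference remains.
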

\begin{proof}
From \cref{lem:GReps}\ref{lem:GReps:ii} and \cref{def:EEuLG}, we obtain that 
there exists an at most countable set $Z \subset \R$
such that $\EE_r(\uu)(t-) = \EE_r(\uu)(t+)$ holds for all $t \in \R \setminus Z$. 
In view of the properties in \cref{lem:gudirdiff}, this implies 
that $g_{E_r(\uu)}$ is Hadamard-Gâteaux differentiable at all $t \in \R \setminus Z$.
As $g_{E_r(\uu)}$ is a map between finite-dimensional spaces, 
this implies the asserted Fréchet differentiability; 
see the comments after \cite[Proposition 2.49]{BonnansShapiro2000}. 
\end{proof}

Since the function $S_r$ is, by definition, the solution operator
$U_{P,r} \ni \uu \mapsto y \in H_0^1(\Omega) \cap H^2(\Omega)$ of the partial differential equation
\[
 - \Delta y + g_{E_r(\uu)}(y) = f \text{ in }\Omega,\qquad y = 0 \text{ on }\partial \Omega,
\]
we can use the properties in 
\cref{lem:gudirdiff} to prove:

\begin{theorem}[Hadamard Directional Differentiability of $S_r$]
\label{th:dirdiffSr}
Let $r > 0$ be given.
The function $S_r\colon  U_{P,r}\to H_0^1(\Omega) \cap H^2(\Omega)$
is Hadamard directionally differentiable 
at all points $\uu \in L^\infty_G(-r,r) \cap U_{P,r}$ in all directions 
$\zz \in \TT(\uu; U_{P,r}) \subset L^2(-r,r)$
in the following sense:
For all $\uu \in L^\infty_G(-r,r) \cap U_{P,r}$ and all $ \zz \in \TT(\uu; U_{P,r})$,
there exists a unique $S_r'(\uu; \zz) \in H_0^1(\Omega) \cap H^2(\Omega)$
such that, for all $\{\zz_n\} \subset L^2(-r,r)$ and $\{s_n\} \subset (0, \infty)$
satisfying $\zz_n \to \zz$ in $L^2(-r,r)$, $s_n \to 0$ in $\R$,
and $\uu + s_n \zz_n \in U_{P,r}$ for all $n \in \mathbb{N}$, it holds 
\begin{equation}
\label{eq:randomeq2636eeg367}
 \frac{S_r(\uu + s_n \zz_n) - S_r(\uu)}{s_n}
\to
S_r'(\uu; \zz) \text{ in } H^2(\Omega) \text{ for } n \to \infty.
\end{equation}
Further, the directional derivative $\delta := S_r'(\uu; \zz)   \in H_0^1(\Omega) \cap H^2(\Omega)$
of $S_r$ at a point $\uu \in L^\infty_G(-r,r) \cap U_{P,r}$ with state $y := S_r(\uu)$
in a direction $\zz \in \TT(\uu; U_{P,r})$
is uniquely characterized by the condition that it is the 
weak solution of the partial differential equation
\begin{equation}
\label{eq:dirdiffPDEr}
-\Delta \delta 
+ \EE_r(\uu)( y+)\max(0, \delta)
+ \EE_r(\uu)( y-)\min(0, \delta)
=
- 
g_{E_r(\zz)}(y) 
\text{ in }\Omega,\qquad \delta = 0 \text{ on }\partial \Omega.
\end{equation}
Here,  
$\EE_r(\uu)(\,\cdot\,-)$ 
and
$\EE_r(\uu)(\,\cdot\,+)$ 
denote the left- and the right-continuous representative of $E_r(\uu) \in L^2(\R)$.
\end{theorem}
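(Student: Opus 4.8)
The plan is to proceed in three stages: first establish that the linearized equation \eqref{eq:dirdiffPDEr} is uniquely solvable, so that the candidate derivative $\delta := S_r'(\uu;\zz)$ is well defined; then show that the difference quotients of $S_r$ converge weakly in $H_0^1(\Omega)$ to $\delta$; and finally upgrade this to strong convergence in $H^2(\Omega)$, which is the convergence \eqref{eq:randomeq2636eeg367} asserted in the theorem.

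For the first stage, I would fix $\uu \in L^\infty_G(-r,r) \cap U_{P,r}$ with state $y := S_r(\uu)$ and $\zz \in \TT(\uu; U_{P,r})$, and regard the left-hand side of \eqref{eq:dirdiffPDEr} as a nonlinear operator $B\colon H_0^1(\Omega) \to H^{-1}(\Omega)$, $B(v) := -\Delta v + \EE_r(\uu)(y+)\max(0,v) + \EE_r(\uu)(y-)\min(0,v)$, with Nemytskii part $N(x,v) := \EE_r(\uu)(y(x)+)\max(0,v) + \EE_r(\uu)(y(x)-)\min(0,v)$. Since $\uu \in U_{P,r}$, the coefficients $\EE_r(\uu)(y\pm)$ lie in $[-\varepsilon_P, \|\uu\|_{L^\infty(-r,r)}]$, and a short case distinction shows $(N(x,v_1)-N(x,v_2))(v_1-v_2) \geq -\varepsilon_P(v_1-v_2)^2$ for all $v_1,v_2 \in \R$. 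Hence $B$ is Lipschitz continuous and, exactly as in the coercivity computation \eqref{eq:randomeq263gg67wgwzb} (using $\varepsilon_P < c_P$ and Poincaré--Friedrichs), strongly monotone with $B(0)=0$; the theory of monotone operators used in the proof of \cref{th:propS} then yields a unique $\delta \in H_0^1(\Omega)$ solving $B(\delta) = -g_{E_r(\zz)}(y)$. As both $g_{E_r(\zz)}(y)$ and $N(\cdot,\delta)$ belong to $L^2(\Omega)$, the regularity estimate \eqref{eq:H2_reg_res} gives $\delta \in H^2(\Omega)$; moreover $\delta$ is independent of the chosen $G[-r,r]$-representative of $\uu$ by \cref{lem:RepStab}.

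For the convergence, I would take arbitrary $\{s_n\}\subset(0,\infty)$ and $\{\zz_n\}\subset L^2(-r,r)$ with $s_n\to 0$, $\zz_n\to\zz$ in $L^2(-r,r)$, and $\uu+s_n\zz_n \in U_{P,r}$ for all $n$, set $y_n := S_r(\uu+s_n\zz_n)$ and $\delta_n := (y_n-y)/s_n$, and subtract the two state equations. Using the identity $g_{E_r(\uu+s_n\zz_n)} - g_{E_r(\uu)} = s_n g_{E_r(\zz_n)}$ one arrives at $-\Delta\delta_n + g_{E_r(\zz_n)}(y_n) + s_n^{-1}\bigl(g_{E_r(\uu)}(y_n) - g_{E_r(\uu)}(y)\bigr) = 0$. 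The Lipschitz estimate \cref{th:propS}\ref{th:propS:item:iv} (with $q=2$) bounds $\{\delta_n\}$ in $H_0^1(\Omega)$, so along a subsequence $\delta_n\weakly\tilde\delta$ in $H_0^1(\Omega)$, $\delta_n\to\tilde\delta$ in $L^2(\Omega)$ and pointwise a.e.; by \cref{th:propS}\ref{th:propS:item:i} and $H^2(\Omega)\hookrightarrow C(\overline\Omega)$ we also have $y_n\to y$ uniformly, whence $g_{E_r(\zz_n)}(y_n)\to g_{E_r(\zz)}(y)$ in $L^2(\Omega)$ by \eqref{eq:guLqEstimate} and dominated convergence. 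The main obstacle is the nonsmooth quotient $s_n^{-1}(g_{E_r(\uu)}(y_n)-g_{E_r(\uu)}(y))$: writing $y_n = y + s_n\delta_n$ and invoking the pointwise Hadamard directional differentiability of $g_{E_r(\uu)}$ from \cref{lem:gudirdiff} (with base point $y(x)$ and directions $\delta_n(x)\to\tilde\delta(x)$) gives the pointwise a.e.\ limit $\EE_r(\uu)(y+)\max(0,\tilde\delta) + \EE_r(\uu)(y-)\min(0,\tilde\delta)$; the domination $|s_n^{-1}(g_{E_r(\uu)}(y_n)-g_{E_r(\uu)}(y))| \leq \|\uu\|_{L^\infty(-r,r)}|\delta_n|$ obtained from \eqref{eq:guLqEstimate}, combined with the strong $L^2$-convergence $|\delta_n|\to|\tilde\delta|$, then upgrades this to convergence in $L^2(\Omega)$ via the generalized dominated convergence theorem. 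Passing to the limit in the difference-quotient identity shows that $\tilde\delta$ solves \eqref{eq:dirdiffPDEr}, so $\tilde\delta=\delta$ by the uniqueness of the first stage, and the subsequence principle yields $\delta_n\to\delta$ in $L^2(\Omega)$ for the whole sequence.

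Finally, rearranging the difference-quotient identity as $-\Delta\delta_n = -g_{E_r(\zz_n)}(y_n) - s_n^{-1}(g_{E_r(\uu)}(y_n)-g_{E_r(\uu)}(y))$ and noting that both terms on the right converge in $L^2(\Omega)$ (the second by applying the dominated-convergence argument above along the whole sequence through the subsequence principle), I would conclude $\Delta\delta_n\to\Delta\delta$ in $L^2(\Omega)$ and hence, by \eqref{eq:H2_reg_res} applied to $\delta_n-\delta \in H_0^1(\Omega)\cap H^2(\Omega)$, obtain the strong convergence \eqref{eq:randomeq2636eeg367}. This establishes both the Hadamard directional differentiability and the characterization of $\delta$ through \eqref{eq:dirdiffPDEr}. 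The decisive feature throughout is that the regulated structure of $\uu$ furnishes the well-defined one-sided limits $\EE_r(\uu)(y\pm)$ and the pointwise Hadamard differentiability of $g_{E_r(\uu)}$, which is precisely what renders the otherwise delicate passage to the limit in the nonsmooth composition $s_n^{-1}(g_{E_r(\uu)}(y_n)-g_{E_r(\uu)}(y))$ tractable.
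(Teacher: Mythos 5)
Your proposal is correct, and its overall architecture coincides with the paper's proof: subtract the two state equations to obtain the difference-quotient PDE, bound the quotients $\delta_n$ in $H_0^1(\Omega)$ via \cref{th:propS}\ref{th:propS:item:iv}, extract a subsequence converging weakly in $H_0^1(\Omega)$ and strongly in $L^2(\Omega)$, pass to the limit in the nonsmooth term using the pointwise Hadamard differentiability of $g_{E_r(\uu)}$ from \cref{lem:gudirdiff}, identify the limit with the unique weak solution of \eqref{eq:dirdiffPDEr}, invoke the subsequence principle, and upgrade to $H^2(\Omega)$-convergence through the regularity estimate \eqref{eq:H2_reg_res}. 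You deviate in two technical points, both of which are sound. First, you establish existence and uniqueness for \eqref{eq:dirdiffPDEr} \emph{up front} via strong monotonicity of $v \mapsto -\Delta v + \EE_r(\uu)(y+)\max(0,v) + \EE_r(\uu)(y-)\min(0,v)$ and Browder--Minty theory; the paper instead obtains existence for free as the limit of the difference quotients and proves only uniqueness afterwards, by exactly the monotonicity computation you describe, in testing form. Your variant is slightly more self-contained but needs the extra (easy) verification that the right-hand side $-g_{E_r(\zz)}(y)$ lies in $H^{-1}(\Omega)$ before any quotient analysis is done. Second, for the quotient $s_n^{-1}\bigl(g_{E_r(\uu)}(y_n) - g_{E_r(\uu)}(y)\bigr)$ you exploit the varying-direction (Hadamard) property pointwise with directions $\delta_n(x) \to \tilde\delta(x)$, which forces you to pass to a further pointwise a.e.\ convergent subsequence and to use a generalized dominated convergence theorem (Pratt's lemma, since the dominating functions $\|\uu\|_{L^\infty(-r,r)}|\delta_n|$ themselves vary); the paper instead splits the quotient into the difference between directions $\delta_n$ and $\delta$, killed by the Lipschitz bound $\|E_r(\uu)\|_{L^\infty(\R)}\|\delta_n - \delta\|_{L^2(\Omega)}$, plus a fixed-direction quotient handled by ordinary dominated convergence with a fixed majorant. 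The paper's splitting is marginally cleaner, but your route is equally valid, including its reuse (via the subsequence principle) to get whole-sequence $L^2$-convergence of the quotient for the final $H^2$-bootstrap. One small slip: the representative-independence of $\EE_r(\uu)(y\pm)$, and hence of $\delta$, follows from \cref{lem:GReps} (through \cref{def:EEuLG}), not from \cref{lem:RepStab}, which compares two distinct controls and already presupposes that the one-sided limits are well defined.
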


\begin{proof}
Suppose that  
$\uu \in L^\infty_G(-r,r) \cap U_{P,r}$ and  $\zz \in \TT(\uu; U_{P,r})$
are given; define $y := S_r(\uu) \in H_0^1(\Omega) \cap H^2(\Omega)$; 
assume that $\{\zz_n\} \subset L^2(-r,r)$ and $\{s_n\} \subset (0, \infty)$
are sequences that satisfy  $\zz_n \to \zz$ in $L^2(-r,r)$, \mbox{$s_n \to 0$ in $\R$,}
and $\uu + s_n \zz_n \in U_{P,r}$ for all $n$;
and denote the difference quotients on the left-hand side of \eqref{eq:randomeq2636eeg367} 
with $\delta_n$. 
Then it follows from the definition of $S_r$ and 
\cref{th:propS}\ref{th:propS:item:iv}  that the sequence $\{\delta_n \} \subset H_0^1(\Omega) \cap H^2(\Omega)$
is bounded in $H_0^1(\Omega)$ and we obtain 
by subtracting the PDEs satisfied by $S_r(\uu + s_n \zz_n)$ and $S_r(\uu)$ 
and by rearranging the resulting equation 
that $\delta_n \in  H_0^1(\Omega) \cap H^2(\Omega)$ is the weak solution of 
\begin{equation}
\label{eq:diffquotPDE}
-\Delta \delta_n 
+ \frac{g_{E_r(\uu)}( y + s_n \delta_n) - g_{E_r(\uu)}( y)}{s_n} +  g_{E_r(\zz_n)}(y + s_n \delta_n)
=
0
\text{ in }\Omega,
\qquad 
\delta_n = 0\text{ on }\partial \Omega.
\end{equation}
Because of the boundedness of  $\{\delta_n\}$ in $H_0^1(\Omega)$
and  the theorems of Banach-Alaoglu and Rellich-Kondrachov,
we can pass over to a subsequence of $\{\delta_n\}$ (for simplicity denoted by the same symbol)
which converges weakly in $H_0^1(\Omega)$
and strongly in $L^2(\Omega)$ to some $\delta \in H_0^1(\Omega)$.
In combination with the fact that $E_r(\uu)$ is essentially bounded, 
the estimate \eqref{eq:guLqEstimate}, 
and the dominated convergence theorem, 
this yields
\begin{equation*}
\begin{aligned}
&\left \| 
\frac{g_{E_r(\uu)}( y + s_n \delta_n) - g_{E_r(\uu)}( y)}{s_n}
- g_{E_r(\uu)}'(y; \delta)
\right \|_{L^2(\Omega)}
\\
&\qquad \leq 
\left \| 
\frac{g_{E_r(\uu)}( y + s_n \delta_n) - g_{E_r(\uu)}(y + s_n \delta)}{s_n} 
\right \|_{L^2(\Omega)}
+
\left \| 
\frac{g_{E_r(\uu)}( y + s_n \delta) - g_{E_r(\uu)}( y)}{s_n} - g_{E_r(\uu)}'(y; \delta) 
\right \|_{L^2(\Omega)}
\\
&\qquad\leq
\| E_r(\uu)\|_{L^\infty(\R)} \|\delta_n - \delta\|_{L^2(\Omega)}
+
\left \| 
\frac{g_{E_r(\uu)}( y + s_n \delta) - g_{E_r(\uu)}( y)}{s_n}  - g_{E_r(\uu)}'(y; \delta)
\right \|_{L^2(\Omega)}
\to 0
\end{aligned}
\end{equation*}
and
$\left \| 
 g_{E_r(\zz_n)}(y + s_n \delta_n)  - g_{E_r(\zz)}(y)
\right \|_{L^2(\Omega)}
\leq
\|E_r(\zz_n)\|_{L^2(\R)}
s_n^{1/2}
 \left \| 
  \delta_n 
\right \|_{L^1(\Omega)}^{1/2} 
+
\left \| 
 g_{E_r(\zz_n- \zz)}(y)
\right \|_{L^2(\Omega)}
\to 0$
for $n \to \infty$. Due to \eqref{eq:diffquotPDE}, the above implies, 
on the one hand, that $\delta$ is a weak solution of
\begin{equation}
\label{eq:dirdiffPDE}
-\Delta \delta  + g_{E_r(\uu)}'(y; \delta)  +  g_{E_r(\zz)}(y) = 0
\text{ in }\Omega,
\qquad 
\delta = 0
\text{ on }\partial \Omega,
\end{equation}
and, on the other hand, 
after subtracting \eqref{eq:diffquotPDE} and \eqref{eq:dirdiffPDE}
and invoking the regularity estimate \eqref{eq:H2_reg_res},
that $\delta$ is an element of  $H_0^1(\Omega) \cap H^2(\Omega)$ which satisfies
\begin{equation*}
\begin{aligned}
\|\delta_n - \delta\|_{H^2(\Omega)}
&\leq
C 
\|-\Delta \delta_n + \Delta \delta\|_{L^2(\Omega)}
\\
&\leq
C
\left \| 
\frac{g_{E_r(\uu)}( y + s_n \delta_n) - g_{E_r(\uu)}( y)}{s_n}
- g_{E_r(\uu)}'(y; \delta)
\right \|_{L^2(\Omega)}
+
C 
\left \| 
 g_{E_r(\zz_n)}(y + s_n \delta_n)  - g_{E_r(\zz)}(y)
\right \|_{L^2(\Omega)}
\\
&\to 0
\end{aligned}
\end{equation*}
for $n \to \infty$ with a constant $C > 0$.
The sequence $\{\delta_n\}$ thus converges even strongly in 
$H^2(\Omega)$ to $\delta$. 
Next, we show that \eqref{eq:dirdiffPDE} can have at most one weak solution. 
To see this, let us suppose that 
$\delta, \hat \delta \in H_0^1(\Omega)$ are two weak solutions of \eqref{eq:dirdiffPDE}. 
Then, by using $\hat \delta - \delta$ as a test function in the weak formulation for $\hat \delta$
and $\delta - \hat \delta$ as a test function in the weak formulation for $ \delta$,
by adding the resulting identities, 
by employing \eqref{eq:randomeq2635456},
and
by using that the bound $\uu \geq - \varepsilon_P$ a.e.\ in $(-r,r)$,
\cref{def:EEuLG},
and the definition of the left and right limit imply that
$\EE_r(\uu)(t\pm) \geq - \varepsilon_P$
holds for all $t \in \R$,
we obtain that 
\begin{equation*}
\begin{aligned}
0 
&=
\int_\Omega |\nabla (\delta - \hat \delta)|^2
+
\EE_r(\uu)( y+)\big ( \max(0, \delta) - \max(0, \hat \delta) \big ) (\delta - \hat \delta)
+
\EE_r(\uu)( y-)\big ( \min(0, \delta) - \min(0, \hat \delta) \big ) (\delta - \hat \delta)
\dd x
\\
&\geq
\int_\Omega |\nabla (\delta - \hat \delta)|^2
-
\varepsilon_P \big ( \max(0, \delta) - \max(0, \hat \delta) \big ) (\delta - \hat \delta)
-
\varepsilon_P \big ( \min(0, \delta) - \min(0, \hat \delta) \big ) (\delta - \hat \delta)
\dd x
\\
&=
\int_\Omega |\nabla (\delta - \hat \delta)|^2
-
\varepsilon_P  (\delta - \hat \delta)^2
\dd x
 \geq
(c_P - \varepsilon_P) \|\delta - \hat \delta\|_{L^2(\Omega)}^2.
\end{aligned}
\end{equation*}
Since $c_P - \varepsilon_P > 0$ holds by \cref{def:Pdefs}, the above shows that $\delta$ and $\hat \delta$ 
are identical and that 
\eqref{eq:dirdiffPDE} possesses precisely one weak solution, 
namely the limit $\delta \in H_0^1(\Omega) \cap H^2(\Omega)$ of the difference quotients $\delta_n$.
Note that the unique solvability of  the 
partial differential equation \eqref{eq:dirdiffPDE} also implies that 
the limit of the subsequence chosen after \eqref{eq:diffquotPDE} 
is independent of the choice of the subsequence. 
In combination with a trivial contradiction argument (see \cite[Lemma~4.16]{Schweizer2013}),
this yields that the whole original sequence of difference quotients $\{\delta_n\}$
converges in $H_0^1(\Omega) \cap H^2(\Omega)$ to the weak 
solution $\delta $ of \eqref{eq:dirdiffPDE}.
By combining all of the above, by setting $S_r'(\uu; \zz) := \delta$, 
and by invoking \cref{lem:gudirdiff},
the claim of the theorem follows immediately. 
\end{proof}

Note that the extension operator $\EE_r$ in \eqref{eq:dirdiffPDEr}
can be dropped
if the bound $r$ and the control 
$\uu \in  L^\infty_G(-r,r) \cap U_{P,r}$  satisfy $\uu \geq 0$ a.e.\ in $(-r,r)$ and $r \geq r_P$.
Indeed, for such $\uu$ and $r$, it follows from the definition of $S_r$ and 
\cref{lem:PoissonEstimate} that
the range of the state
$y = S_r(\uu) \in H_0^1(\Omega) \cap H^2(\Omega) \subset C(\overline{\Omega})$ 
is a subset of the interval $[-r_P, r_P]$ and
we obtain from \cref{def:EEuLG}
and, in the case $r = r_P$, our conventions for the left and the right limits 
of $G[-r,r]$-functions at 
$\pm r$
that 
$\EE_r(\uu)(t\pm) = \uu(t\pm)$ holds for all $t \in \range(y)$,
where the left/right limit $\uu(t\pm)$ 
has to be understood as being defined w.r.t.\ an arbitrary
$G[-r,r]$-representative of $\uu$. The last identity allows us to recast 
the PDE \eqref{eq:dirdiffPDEr} as
\begin{equation}
\label{eq:randomPDE2525}
-\Delta \delta 
+ \uu( y+)\max(0, \delta)
+ \uu( y-)\min(0, \delta)
=
- 
\int_0^{y}
\zz(t) \dd t
\text{ in }\Omega,\qquad \delta = 0 \text{ on }\partial \Omega,
\end{equation}
without introducing any ill- or undefined terms. 
By exploiting this observation, we can also state the following tangible corollary 
for the original solution operator $S$ of \eqref{eq:PDE}: 

\begin{corollary}[Hadamard Directional Differentiability of $S$]
\label{cor:dirdiffS}
Suppose that a control $0 \leq u \in L^2(\R)$ is given such that
$u|_{(-r,r)} \in L^\infty_{G}(-r,r)$ holds for some $r \geq r_P$. 
Then the solution operator $S$ of \eqref{eq:PDE}
is Hadamard directionally differentiable at $u$ in all
directions $z \in  \TT(u; L^2_+(\R))$. 
Moreover,
the state  $y := S(u) \in C(\overline{\Omega})$ 
satisfies $\range(y) \subset [-r_P, r_P] \subset [-r,r]$ and 
the directional derivative 
$\delta := S'(u;z)   \in H_0^1(\Omega) \cap H^2(\Omega)$
of $S$ at $u$ 
in a direction $z \in \TT(u; L^2_+(\R))$
is uniquely characterized by the partial differential equation
\begin{equation}
\label{eq:randomeq367364ghh}
-\Delta \delta 
+ u( y+)\max(0, \delta)
+ u( y-)\min(0, \delta)
=
- 
g_{z}(y) 
\text{ in }\Omega,\qquad \delta = 0 \text{ on }\partial \Omega.
\end{equation}
Here,  $u(\,\cdot\,-)\colon [-r,r]\to \R$ and $u(\,\cdot\,+)\colon [-r,r] \to \R$
denote the left and right limit function
in the sense of \eqref{eq:leftrightlimits} of an arbitrary $G[-r,r]$-representative 
of $u|_{(-r,r)} \in L^\infty_{G}(-r,r)$ (acting as a Nemytskii operator).
\end{corollary}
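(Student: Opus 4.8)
The plan is to deduce everything from the truncation invariance \eqref{eq:truncid}, the $\|\cdot\|_\infty$-bound of \cref{lem:PoissonEstimate}, and the Hadamard directional differentiability of $S_r$ established in \cref{th:dirdiffSr}. First I would record the range statement: since $0 \leq u \in L^2(\R)$ and $r \geq r_P$, \cref{lem:PoissonEstimate} yields $\|S(u)\|_\infty \leq r_P$, so the continuous representative $y = S(u) \in C(\overline{\Omega})$ (which exists by \cref{th:propS}\ref{th:propS:item:ii} and the Sobolev embedding $H^2(\Omega) \hookrightarrow C(\overline{\Omega})$) satisfies $\range(y) \subset [-r_P, r_P] \subset [-r,r]$. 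Writing $\uu := u|_{(-r,r)}$, the hypothesis $u|_{(-r,r)} \in L^\infty_G(-r,r)$ together with $u \geq 0$ gives $\uu \in L^\infty_G(-r,r) \cap U_{P,r}$, so $\uu$ is an admissible base point for \cref{th:dirdiffSr}.

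The key observation is that, because $\uu \geq 0 > -\varepsilon_P$ a.e.\ in $(-r,r)$, the set $\{\uu = -\varepsilon_P\}$ is null; consequently every bounded $\zz$ satisfies $\uu + s\zz \geq -\varepsilon_P$ for all sufficiently small $s>0$ (as $\uu \geq 0$ and $\zz$ is bounded below), whence $\zz = \tfrac1s((\uu + s\zz) - \uu) \in \RR(\uu; U_{P,r})$, and the density of $L^\infty(-r,r)$ in $L^2(-r,r)$ yields $\TT(\uu; U_{P,r}) = L^2(-r,r)$. Hence for any direction $z \in \TT(u; L^2_+(\R)) \subset L^2(\R)$ the restriction $\zz := z|_{(-r,r)}$ automatically belongs to $\TT(\uu; U_{P,r})$. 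Now I would verify the differentiability directly from \cref{def:differentiability_notions}: let $\{s_n\} \subset (0,\infty)$ and $\{z_n\} \subset L^2(\R)$ satisfy $s_n \to 0$, $z_n \to z$ in $L^2(\R)$, and $u + s_n z_n \in L^2_+(\R)$. Then $\uu + s_n (z_n|_{(-r,r)}) = (u + s_n z_n)|_{(-r,r)} \geq 0 \geq -\varepsilon_P$, so $\uu + s_n (z_n|_{(-r,r)}) \in U_{P,r}$, while $z_n|_{(-r,r)} \to \zz$ in $L^2(-r,r)$; moreover, since $u + s_n z_n \geq 0$ and $r \geq r_P$, \eqref{eq:truncid} gives $S(u + s_n z_n) = S_r(\uu + s_n(z_n|_{(-r,r)}))$ and $S(u) = S_r(\uu)$. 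Thus the difference quotients for $S$ coincide with those for $S_r$ along the restricted sequences, and \cref{th:dirdiffSr} shows they converge in $H^2(\Omega)$ to the unique element $\delta := S_r'(\uu; \zz)$, independently of the chosen sequences. This establishes that $S$ is Hadamard directionally differentiable at $u$ in direction $z$ with $S'(u;z) = \delta$.

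It remains to rewrite the defining PDE \eqref{eq:dirdiffPDEr} for $\delta$ in the form \eqref{eq:randomeq367364ghh}. Here I would invoke the observation recorded after \cref{th:dirdiffSr}: since $\range(y) \subset [-r_P, r_P]$ and $r \geq r_P$, the conventions on the left/right limits at $\pm r$ give $\EE_r(\uu)(t\pm) = \uu(t\pm)$ for all $t \in \range(y)$, so the coefficient terms become $u(y+)\max(0,\delta) + u(y-)\min(0,\delta)$ with $u(\,\cdot\,\pm)$ the left/right limits of a $G[-r,r]$-representative of $\uu$. For the right-hand side, $g_{E_r(\zz)}(y)(x) = \int_0^{y(x)} E_r(\zz)(t)\dd t$ only involves $t$ between $0$ and $y(x)$, all lying in $[-r_P, r_P] \subset [-r,r]$, where $E_r(\zz) = \zz = z$; hence $g_{E_r(\zz)}(y) = g_z(y)$ a.e.\ in $\Omega$, and \eqref{eq:dirdiffPDEr} reduces to \eqref{eq:randomeq367364ghh}. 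The main obstacle is not any single hard estimate but the careful bookkeeping of constraint sets and representatives: one must confirm that admissible perturbation sequences for $S$ restrict to admissible sequences for $S_r$ (which hinges on the strict gap $\varepsilon_P > 0$ between the constraints $u \geq 0$ and $\uu \geq -\varepsilon_P$, forcing $\TT(\uu; U_{P,r})$ to be the whole space) and that the bound $\range(y) \subset [-r_P, r_P]$ permits replacing $\EE_r(\uu)(\,\cdot\,\pm)$ and $g_{E_r(\zz)}$ by $u(\,\cdot\,\pm)$ and $g_z$ without introducing ill-defined terms.
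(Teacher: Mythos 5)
Your proposal is correct and follows essentially the same route as the paper, which proves the corollary by simply citing \cref{lemma:truncinv}, \cref{th:dirdiffSr}, and \cref{lem:PoissonEstimate} together with the observation preceding the corollary that $\EE_r(\uu)(t\pm)=\uu(t\pm)$ on $\range(y)$ and $g_{E_r(\zz)}(y)=g_z(y)$. You have merely filled in the details the paper labels ``straightforward'' (in particular the correct bookkeeping that admissible sequences for $S$ restrict to admissible sequences for $S_r$, and that $\TT(\uu;U_{P,r})=L^2(-r,r)$ thanks to the $\varepsilon_P$-gap), and these details are all sound.
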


\begin{proof}
The assertions of the corollary follow straightforwardly 
from  \cref{lemma:truncinv,th:dirdiffSr,lem:PoissonEstimate}.
\end{proof}

As a direct consequence of 
\cref{th:dirdiffSr,cor:dirdiffS,th:mainBV_P}, we obtain:

\begin{corollary}[Hadamard Directional Differentiability at Local Minimizers]
\label{cor:HadDirDifMin}
Suppose that there exists $r \geq r_P$ such that 
 $u_D \in L^2(\R)$ satisfies condition \Aone{}.
Then the solution operator $S\colon U_P \to H_0^1(\Omega) \cap H^2(\Omega)$ of \eqref{eq:PDE}
is Hadamard directionally differentiable in the sense of 
\cref{cor:dirdiffS} at 
all local minimizers $\bar u \in L^2_+(\R)$ of \eqref{eq:P}
in all directions $z \in  \TT(\bar u; L^2_+(\R))$.
Moreover, the map $S_r\colon  U_{P,r}\to H_0^1(\Omega) \cap H^2(\Omega)$
is Hadamard directionally differentiable in the sense of 
\cref{th:dirdiffSr} at all points $\bar \uu \in L^2(-r,r)$,
that are obtained by restricting a local minimizer $\bar u$ of \eqref{eq:P}
to the interval $(-r,r)$, in all directions $\zz \in \TT(\bar \uu; U_{P,r})$.
\end{corollary}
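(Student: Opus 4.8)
The plan is to obtain this result purely by assembling the three preceding results, since a local minimizer of \eqref{eq:P} automatically inherits both the regularity and the admissibility that the differentiability statements require. The point of departure is the observation that condition \Aone{} for the given $r \geq r_P$ is precisely the hypothesis $u_D|_{(-r,r)} \in L^\infty_G(-r,r)$ that is needed to invoke \cref{th:mainBV_P}.

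First I would fix an arbitrary local minimizer $\bar u \in L^2_+(\R)$ of \eqref{eq:P} and apply \cref{th:mainBV_P}. This yields immediately that $\bar u|_{(-r,r)} \in L^\infty_G(-r,r)$. Next I would record the elementary fact that, by \cref{def:localMin}, every local solution satisfies $\bar u \geq 0$ a.e.\ in $\R$; in particular $\bar u \geq 0 \geq -\varepsilon_P$, so that $\bar u \in U_P$ and the restriction $\bar \uu := \bar u|_{(-r,r)}$ lies in $U_{P,r}$. Combining these two observations gives $\bar \uu \in L^\infty_G(-r,r) \cap U_{P,r}$, which is exactly the class of base points covered by \cref{th:dirdiffSr}.

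The first assertion (for $S$) then follows by checking that $\bar u$ meets the hypotheses of \cref{cor:dirdiffS}: it is nonnegative, it lies in $L^2(\R)$, and its restriction to $(-r,r)$ is regulated in the sense of $L^\infty_G$ for an $r \geq r_P$. \Cref{cor:dirdiffS} therefore delivers Hadamard directional differentiability of $S$ at $\bar u$ in every direction $z \in \TT(\bar u; L^2_+(\R))$, together with the characterization \eqref{eq:randomeq367364ghh} of the derivative. The second assertion (for $S_r$) follows directly from \cref{th:dirdiffSr} applied at the point $\bar \uu \in L^\infty_G(-r,r) \cap U_{P,r}$ identified above, which gives Hadamard directional differentiability in all directions $\zz \in \TT(\bar \uu; U_{P,r})$.

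There is no genuine analytic obstacle here; the argument is a bookkeeping exercise in matching hypotheses. The only point that requires a moment's care is the compatibility between the regularity produced by \cref{th:mainBV_P} and the admissibility constraint built into $U_{P,r}$: one has to notice that the sign condition $\bar u \geq 0$ coming from local optimality is strictly stronger than the lower bound $\geq -\varepsilon_P$ defining $U_{P,r}$, so that both the $L^\infty_G$-regularity and the membership in $U_{P,r}$ hold simultaneously for the same representative of $\bar u|_{(-r,r)}$.
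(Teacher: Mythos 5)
Your proposal is correct and follows essentially the same route as the paper, whose proof simply states that the result follows by combining \cref{th:dirdiffSr,cor:dirdiffS,th:mainBV_P}. Your write-up merely makes the bookkeeping explicit — in particular the observation that the sign condition $\bar u \geq 0$ from \cref{def:localMin} implies $\bar u \in U_P$ and $\bar u|_{(-r,r)} \in U_{P,r}$, which the paper leaves implicit.
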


\begin{proof}
This follows immediately by combining \cref{th:dirdiffSr,cor:dirdiffS,th:mainBV_P}.
\end{proof}

As $\TT(\bar u; L^2_+(\R))$ is the tangent cone of the 
set of admissible controls of \eqref{eq:P} at $\bar u$, \cref{cor:HadDirDifMin} 
gives rise to a Bouligand stationarity condition for local solutions of \eqref{eq:P}
in a straightforward manner. Before we turn our attention to this topic, 
we study the differentiability properties of the maps $S$ and $S_r$
in a particularly nice situation, namely that  in which the 
following additional assumption is satisfied:

\begin{definition}[Condition (A2)]
\label{Atwo}
We say that condition \Atwo{} holds if the given right-hand side $f \in L^2(\Omega)$ of the 
partial differential equation \eqref{eq:PDE} 
satisfies $\lambda^d(\{f = c\}) = 0$ for all $c \in \R$. 
\end{definition}

Note that \Atwo{} is an assumption on the \emph{problem data} of \eqref{eq:P} that can be checked a-priori
and not on the \emph{problem solution}. 
This distinguishes \Atwo{} from similar conditions found in the literature on optimal control 
of nonsmooth PDEs, cf.\ \cite{Dong2022-1,Dong2022-2,Dong2022-3}.
Condition \Atwo{} is, for example, automatically satisfied if $f$ is a
 nonconstant, real-analytic function; see \cite[Proposition~1]{Mityagin2020}. 
 The main motivation for considering \Atwo{} is the following observation: 
 
 \begin{lemma}[Level Sets of States]
 \label{lem:levelstates}
 Suppose that  \Atwo{} holds. Then, for every  $y \in S(U_P)$ and every $c \in \R$, 
 the level set $\{y = c\}$ has measure zero. 
 \end{lemma}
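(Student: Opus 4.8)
The plan is to combine the $H^2$-regularity of the state furnished by \cref{th:propS} with the state-equation \eqref{eq:PDE} and the classical fact (Stampacchia's lemma) that the weak gradient of an $H^1$-function vanishes almost everywhere on each of its level sets. Fix $u \in U_P$, write $y := S(u) \in H_0^1(\Omega) \cap H^2(\Omega)$, and let $c \in \R$ be arbitrary. First I would invoke Stampacchia's lemma (see \cite[Theorem 5.8.2]{Attouch2006} and \cite[Theorem II.A.1]{KinderlehrerStampacchia1980}) to obtain $\nabla y = 0$ a.e.\ on the level set $\{y = c\}$. Since $y \in H^2(\Omega)$, each partial derivative $\partial_i y$ lies in $H^1(\Omega)$, and this identity shows that $\partial_i y = 0$ a.e.\ on $\{y = c\}$, i.e.\ $\{y = c\} \subset \{\partial_i y = 0\}$ up to a set of measure zero. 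Applying Stampacchia's lemma a \emph{second} time, now to the $H^1$-function $\partial_i y$ and the value $0$, yields $\nabla(\partial_i y) = 0$ a.e.\ on $\{\partial_i y = 0\}$ and hence a.e.\ on $\{y = c\}$. Summing the diagonal second derivatives over $i$, I conclude that $\Delta y = 0$ a.e.\ on $\{y = c\}$.

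With this in hand the conclusion is immediate. The state $y$ has a continuous representative (by \cref{th:propS} and the Sobolev embeddings), so $y = c$ on $\{y = c\}$ and thus $g_u(y) = g_u(c)$ is a fixed real constant there. Reading the equation \eqref{eq:PDE} as the $L^2(\Omega)$-identity $-\Delta y + g_u(y) = f$ (valid by the $H^2$-regularity), the above gives
\[
f = -\Delta y + g_u(y) = 0 + g_u(c) = g_u(c) \qquad \text{a.e.\ on } \{y = c\}.
\]
Therefore $\{y = c\} \subset \{f = g_u(c)\}$ up to a null set. As $g_u(c)$ is a single real number, condition \Atwo{} yields $\lambda^d(\{f = g_u(c)\}) = 0$, and hence $\lambda^d(\{y = c\}) = 0$, as claimed.

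I expect the main obstacle to be the rigorous justification of $\Delta y = 0$ a.e.\ on $\{y = c\}$: this is the step that genuinely exploits the $H^2$-regularity and requires the iterated application of the level-set gradient result to the components of $\nabla y$, whereas the remainder is essentially a direct reading of the state-equation together with \Atwo{}. One should take care that the second application of Stampacchia's lemma is performed on the set $\{\partial_i y = 0\}$ rather than directly on $\{y = c\}$, relying only on the almost-everywhere inclusion $\{y = c\} \subset \{\partial_i y = 0\}$ established in the first step.
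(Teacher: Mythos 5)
Your proposal is correct and follows essentially the same route as the paper: read the strong-form equation $-\Delta y + g_u(y) = f$ a.e.\ on the level set $\{y = c\}$, use that $\Delta y = 0$ and $g_u(y) = g_u(c)$ hold a.e.\ there, and invoke \Atwo{} to conclude that $\{y = c\}$ must be a null set. The only difference is that the paper cites \cite[Lemma 4.1]{Christof2021} for the fact that the Laplacian of an $H^2$-function vanishes a.e.\ on its level sets, whereas you prove this fact directly via the iterated application of Stampacchia's lemma to $y$ and then to the components $\partial_i y \in H^1(\Omega)$ — which is precisely the standard argument behind that cited result.
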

 
 \begin{proof}
Let $y \in S(U_P)$ be given and let $u \in U_P$ be a control that satisfies  $y = S(u)$.
From the definition of $S$ and 
\cref{th:propS}, we obtain  that $y$ is an element of $H_0^1(\Omega) \cap H^2(\Omega)$ 
that satisfies 
\begin{equation}
\label{eq:randomeq26364z}
- \Delta y + g_u(y) = f \text{ a.e.\ in }\Omega,
\end{equation}
where the Laplacian is understood in the strong sense, i.e., in the sense of second weak derivatives. 
Suppose now that \Atwo{} holds and that $c \in \R$ is given. Then,
by  \cite[Lemma 4.1]{Christof2021} and due to the definition of $g_u$,
we have $\Delta y = 0$ a.e.\ on $\{y = c\}$ and $g_u(y) = g_u(c) = \mathrm{const}$ a.e.\ on $\{y = c\}$.
In combination with \eqref{eq:randomeq26364z}, these equations imply 
that $- \Delta y + g_u(y) =  g_u(c)  = f = \text{const}$ holds a.e.\ on $\{y = c\}$.
Because of \Atwo{}, the last identity can only be true if the measure of the set $\{y = c\}$ is zero. 
This proves the claim.  
 \end{proof}
 
As an immediate consequence of \cref{lem:GReps,lem:levelstates}, we obtain:

\begin{lemma}[{$G[-r,r]$}-Representatives and States]
\label{lem:GstatesA2}
Suppose that \Atwo{} holds and 
that $r > 0$, $y \in S(U_P)$,
and  $\uu_1, \uu_2 \in G[-r,r]$
satisfying $\uu_1(t) = \uu_2(t)$ for a.a.\ $t \in (-r,r)$ are given.
Then it holds
\begin{equation}
\label{eq:eveqEE}
\EE_r(\uu_1)(y) = \EE_r(\uu_1)(y+)   = \EE_r(\uu_1)(y-) 
=
\EE_r(\uu_2)(y) = \EE_r(\uu_2)(y+)   = \EE_r(\uu_2)(y-) 
\text{ a.e.\ in }\Omega.
\end{equation}
\end{lemma}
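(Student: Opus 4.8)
The plan is to combine the pointwise collapse of the six scalar functions on the complement of a countable set, which is provided by \cref{lem:GReps}, with the measure-zero property of level sets of states under \Atwo{}, which is provided by \cref{lem:levelstates}. First I would fix the continuous representative of $y \in S(U_P) \subset H_0^1(\Omega) \cap H^2(\Omega) \subset C(\overline{\Omega})$, so that the six compositions $\EE_r(\uu_i)(y)$, $\EE_r(\uu_i)(y+)$, and $\EE_r(\uu_i)(y-)$, $i=1,2$, appearing in \eqref{eq:eveqEE} are well defined as Borel measurable functions on $\Omega$. (Each of the scalar maps $\EE_r(\uu_i)(\,\cdot\,)$, $\EE_r(\uu_i)(\,\cdot\,\pm)\colon \R \to \R$ is regulated and hence Borel measurable, and $y$ is continuous, so the superpositions make sense.)

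Next I would invoke \cref{lem:GReps}\ref{lem:GReps:ii} to obtain an at most countable set $Z \subset \R$ with the property that
\[
\EE_r(\uu_1)(t) = \EE_r(\uu_1)(t\pm) = \EE_r(\uu_2)(t\pm) = \EE_r(\uu_2)(t)
\qquad \forall t \in \R \setminus Z.
\]
This identity already yields the desired equality \eqref{eq:eveqEE} at every point $x \in \Omega$ for which $y(x) \notin Z$, since for such $x$ all six values $\EE_r(\uu_i)(y(x))$, $\EE_r(\uu_i)(y(x)\pm)$ coincide by the displayed line evaluated at $t = y(x)$. It therefore only remains to control the set $N := \{x \in \Omega \mid y(x) \in Z\}$ on which the six functions could differ.

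The key step -- and the only place where \Atwo{} enters -- is to show that $\lambda^d(N) = 0$. Since $Z$ is at most countable, I can write $N = \bigcup_{c \in Z}\{y = c\}$ as a countable union of level sets of $y$. By \cref{lem:levelstates}, condition \Atwo{} guarantees that $\lambda^d(\{y = c\}) = 0$ for every $c \in \R$, and a countable union of $\lambda^d$-null sets is again $\lambda^d$-null; hence $\lambda^d(N) = 0$. Consequently $y(x) \notin Z$ holds for almost all $x \in \Omega$, and the pointwise identity from the previous paragraph upgrades to the almost-everywhere identity \eqref{eq:eveqEE}, which completes the argument.

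The main (and essentially only) obstacle is the passage from the pointwise statement on $\R \setminus Z$ to an almost-everywhere statement on $\Omega$: a priori the preimage $y^{-1}(Z)$ of the countable exceptional set could carry positive Lebesgue measure -- for instance if $y$ were constant on a set of positive measure at a value lying in $Z$ -- and it is precisely \Atwo{}, through \cref{lem:levelstates}, that rules this out. Beyond this observation, no further estimates are needed.
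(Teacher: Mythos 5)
Your proposal is correct and follows essentially the same route as the paper's proof: both combine \cref{lem:GReps}\ref{lem:GReps:ii} (the countable exceptional set $Z$ on whose complement all six scalar functions coincide) with \cref{lem:levelstates} (level sets of states are null under \Atwo{}) and the fact that a countable union of null sets is null, together with the observation that regulated functions are Borel measurable so the compositions are well defined. No gaps; your explicit handling of the preimage $y^{-1}(Z)$ simply spells out what the paper compresses into one sentence.
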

\begin{proof}
From \cref{lem:GReps}\ref{lem:GReps:ii}, we obtain that 
there exists an at most countable set 
$Z \subset \R$ satisfying
$\EE_r(\uu_1)(t) = \EE_r(\uu_1)(t\pm)  = \EE_r(\uu_2)(t\pm) = \EE_r(\uu_2)(t)$
for all $t \in \R \setminus Z$,
and from \cref{lem:levelstates}, we know that 
 $\lambda^d(\{y = c\}) = 0$
holds for all $c \in Z$. If we combine these two observations 
with the fact that
a countable union of  sets of measure zero is still of measure zero, 
then \eqref{eq:eveqEE} follows immediately.
Note that, as regulated functions are Borel measurable and 
(every representative of) $y$ is Lebesgue measurable, no measurability problems arise here.
\end{proof}

Analogously to \cref{def:EEuLG},
we obtain from \cref{lem:GstatesA2} that, under condition \Atwo{},
it makes sense to talk about the composition $\EE_r(\uu)(y) \in L^\infty(\Omega)$
of a control $\uu \in L^\infty_G(-r,r)$ and a state $y \in S(U_P)$.
\begin{definition}[Control-State-Composition]
\label{def:EEuReal}
Suppose that $r>0$ is given and that \Atwo{} holds. 
For all $\uu \in L^\infty_G(-r,r)$ and all $y \in S(U_P)$,
we define $\EE_r(\uu)(y)$ to be the element of $L^\infty(\Omega)$
that is obtained by extending an arbitrary
$G[-r,r]$-representative of $\uu$ by zero 
in the sense of \eqref{eq:EEdef} and by composing 
the resulting function with $y$. 
If the $C(\overline{\Omega})$-representative of $y$ satisfies 
$\range(y) \subset [-r,r]$, then we also write $\uu(y)$ instead of $\EE_r(\uu)(y)$.
\end{definition}

By combining \cref{th:dirdiffSr} and \cref{lem:GstatesA2}, 
keeping in mind \cref{def:EEuLG,def:EEuReal}, we arrive at: 
 
\begin{proposition}[Hadamard-Gâteaux Differentiability of $S_r$]
\label{prop:GatdiffSr}
Let $r > 0$ be given and suppose that condition \Atwo{} holds.
Then the function $S_r\colon  U_{P,r}\to H_0^1(\Omega) \cap H^2(\Omega)$
is Hadamard-Gâteaux differentiable 
at all points $\uu \in L^\infty_G(-r,r) \cap U_{P,r}$ 
in the sense that, for every $\uu \in L^\infty_G(-r,r) \cap U_{P,r}$, 
the Hadamard directional derivative $S_r'(\uu; \cdot)\colon \TT(\uu; U_{P,r}) \to H_0^1(\Omega) \cap H^2(\Omega)$
in \eqref{eq:randomeq2636eeg367} 
can be extended to a linear and continuous 
function $S_r'(\uu)\colon  L^2(-r,r) \to  H_0^1(\Omega) \cap H^2(\Omega)$. 
Moreover, for every $\uu \in L^\infty_G(-r,r) \cap U_{P,r}$ with state $y := S_r(\uu)$,
the Hadamard-Gâteaux derivative 
$S_r'(\uu) \in \LL(L^2(-r,r), H_0^1(\Omega) \cap H^2(\Omega))$ 
is precisely the solution operator $\zz \mapsto \delta$ of the linear elliptic
partial differential equation
\begin{equation}
\label{eq:GatPDE}
-\Delta \delta 
+ \EE_r(\uu)( y)\delta
=
- 
g_{E_r(\zz)}(y) 
\text{ in }\Omega,\qquad \delta = 0 \text{ on }\partial \Omega.
\end{equation}
Here, the composition $\EE_r(\uu)( y) \in L^\infty(\Omega)$ is understood in the sense of \cref{def:EEuReal}.
\end{proposition}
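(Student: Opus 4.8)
The plan is to assemble the statement from the Hadamard directional differentiability already proven in \cref{th:dirdiffSr} together with the level-set property under \Atwo{} from \cref{lem:GstatesA2}; the decisive point is that the latter collapses the nonsmooth defining equation \eqref{eq:dirdiffPDEr} into the linear equation \eqref{eq:GatPDE}. First I would fix $\uu \in L^\infty_G(-r,r) \cap U_{P,r}$ and set $y := S_r(\uu) = S(E_r(\uu)) \in S(U_P)$. By \cref{th:dirdiffSr}, for every $\zz \in \TT(\uu; U_{P,r})$ the directional derivative $\delta := S_r'(\uu;\zz) \in H_0^1(\Omega) \cap H^2(\Omega)$ is the unique weak solution of \eqref{eq:dirdiffPDEr}. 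Invoking \cref{lem:GstatesA2} and \Atwo{}, I get $\EE_r(\uu)(y+) = \EE_r(\uu)(y-) = \EE_r(\uu)(y)$ a.e.\ in $\Omega$; since $\max(0,\delta) + \min(0,\delta) = \delta$ pointwise, the two zeroth-order terms of \eqref{eq:dirdiffPDEr} fuse into the single term $\EE_r(\uu)(y)\delta$, so that $\delta$ is in fact the weak solution of the \emph{linear} problem \eqref{eq:GatPDE}.

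Next I would verify that \eqref{eq:GatPDE} defines a bounded linear solution operator $T\colon L^2(-r,r) \to H_0^1(\Omega) \cap H^2(\Omega)$, $\zz \mapsto \delta$, on \emph{all} of $L^2(-r,r)$ (not only on the tangent cone). Well-posedness follows from the Lax--Milgram lemma: since $\uu \geq -\varepsilon_P$ a.e.\ forces $\EE_r(\uu)(y) \geq -\varepsilon_P$ a.e.\ in $\Omega$ (as recorded in the proof of \cref{th:dirdiffSr}), the bilinear form of the left-hand side of \eqref{eq:GatPDE} satisfies, by \eqref{eq:PoincFried} and $\varepsilon_P < c_P$,
\begin{equation*}
\int_\Omega |\nabla \delta|^2 + \EE_r(\uu)(y)\delta^2 \dd x
\geq
\left(1 - \frac{\varepsilon_P}{c_P}\right) \|\nabla \delta\|_{L^2(\Omega)^d}^2,
\end{equation*}
and is therefore coercive on $H_0^1(\Omega)$, while the right-hand side $-g_{E_r(\zz)}(y)$ lies in $L^2(\Omega)$ by \eqref{eq:randomeq273647}. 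The $H^2$-regularity of $\delta$ together with $\|\delta\|_{H^2(\Omega)} \leq C\|g_{E_r(\zz)}(y)\|_{L^2(\Omega)}$ then follows from \eqref{eq:H2_reg_res}, since $-\Delta\delta = -\EE_r(\uu)(y)\delta - g_{E_r(\zz)}(y) \in L^2(\Omega)$.

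Linearity of $T$ is immediate from the linearity of \eqref{eq:GatPDE} in the pair $(\zz,\delta)$: the map $\zz \mapsto g_{E_r(\zz)}(y)$ is linear because $g_u$ depends linearly on $u$ (see \eqref{eq:defgu}), and the differential operator is linear in $\delta$. For continuity I would combine the estimate above with the bound $\|g_{E_r(\zz)}(y)\|_{L^2(\Omega)} \leq \|\zz\|_{L^2(-r,r)} \|y\|_{L^1(\Omega)}^{1/2}$ coming from \eqref{eq:randomeq273647} (taken with $q=2$ and $\|E_r(\zz)\|_{L^2(\R)} = \|\zz\|_{L^2(-r,r)}$), which gives $\|T\zz\|_{H^2(\Omega)} \leq C\|\zz\|_{L^2(-r,r)}$, so $T \in \LL(L^2(-r,r), H_0^1(\Omega) \cap H^2(\Omega))$. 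Finally, for $\zz \in \TT(\uu; U_{P,r})$ the first paragraph shows that $S_r'(\uu;\zz)$ solves exactly \eqref{eq:GatPDE}, whose solution is unique, so $T$ coincides with the Hadamard directional derivative on $\TT(\uu; U_{P,r})$; setting $S_r'(\uu) := T$ thus furnishes the required linear continuous extension and establishes Hadamard-Gâteaux differentiability in the sense of \cref{def:differentiability_notions}.

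The only genuinely delicate step is the linearization: recognizing that \Atwo{} forces the left- and right-limit coefficients in \eqref{eq:dirdiffPDEr} to agree a.e.\ on $\Omega$, so that the nonsmooth nonlinearity $\EE_r(\uu)(y+)\max(0,\delta) + \EE_r(\uu)(y-)\min(0,\delta)$ degenerates into the genuinely linear term $\EE_r(\uu)(y)\delta$. Once \cref{lem:GstatesA2} is available, the remainder is standard linear elliptic theory.
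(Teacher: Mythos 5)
Your proposal is correct and follows essentially the same route as the paper's proof: both use \cref{lem:GstatesA2} under \Atwo{} to identify $\EE_r(\uu)(y\pm)$ with $\EE_r(\uu)(y)$ a.e., thereby collapsing the nonsmooth equation \eqref{eq:dirdiffPDEr} into the linear equation \eqref{eq:GatPDE}, and then both invoke the coercivity coming from $\EE_r(\uu)(y) \geq -\varepsilon_P$ together with Lax--Milgram, the bound \eqref{eq:randomeq273647}, and the $H^2$-regularity estimate \eqref{eq:H2_reg_res} (with the same bootstrapping through the $L^2$-bound on $\delta$) to obtain the linear continuous solution operator on all of $L^2(-r,r)$. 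The only cosmetic difference is that you make the identity $\max(0,\delta)+\min(0,\delta)=\delta$ and the uniqueness-based identification of $T$ with $S_r'(\uu;\cdot)$ on $\TT(\uu;U_{P,r})$ explicit, where the paper phrases the same content as the weak formulations of the two PDEs being identical.
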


\begin{proof}
Consider some $\uu \in L^\infty_G(-r,r) \cap U_{P,r}$ 
with state $y := S_r(\uu)$. 
From \cref{def:EEuLG,def:EEuReal} and \cref{lem:GstatesA2},
we obtain that $\EE_r(\uu)(y) = \EE_r(\uu)(y+) = \EE_r(\uu)(y-)$ holds 
a.e.\ in $\Omega$. This identity implies that the
weak formulations of the 
PDEs \eqref{eq:dirdiffPDEr} and \eqref{eq:GatPDE} are identical and that the
directional derivatives $S_r'(\uu; \zz)$, $\zz \in \TT(\uu; U_{P,r})$,
are indeed characterized by \eqref{eq:GatPDE}. 
To complete the proof, 
it remains to check that the solution operator 
$\zz \mapsto \delta$ of the PDE \eqref{eq:GatPDE} is linear, continuous, and well-defined
as a function from $L^2(-r,r)$ to $H_0^1(\Omega) \cap H^2(\Omega)$.
Since the inequality $\uu \geq - \varepsilon_P$ a.e.\ in $(-r,r)$ implies that 
$\EE_r(\uu)(t+) \geq - \varepsilon_P$ holds for all $t \in \R$ (see the proof of \cref{th:dirdiffSr}),
due to the definition of $\EE_r(\uu)(y)$, and due to \eqref{eq:eveqEE},
we know that $\EE_r(\uu)(y) \geq - \varepsilon_P$ holds a.e.\ in $\Omega$. 
Analogously to \eqref{eq:randomeq263gg67wgwzb}, this allows us to conclude that 
the operator $H_0^1(\Omega) \ni v \mapsto -\Delta v
+  \EE_r(\uu)( y)v \in H^{-1}(\Omega)$ satisfies the coercivity estimate 
\begin{equation}
\label{eq:randomeq263636}
\left \langle 
-\Delta v
+ \EE_r(\uu)(y)v,
v
\right \rangle_{H_0^1(\Omega)}
\geq
\int_\Omega |\nabla v|^2 \dd x - \varepsilon_P \int_\Omega v^2 \dd x
\geq
\frac{c_P - \varepsilon_P}{2 \max(1, c_P)} \|v\|_{H^1(\Omega)}^2\qquad \forall v \in H_0^1(\Omega).
\end{equation}
Since the mapping $H_0^1(\Omega) \ni v \mapsto -\Delta v
+ \EE_r(\uu)( y)v \in H^{-1}(\Omega)$  is continuous and since 
\begin{equation}
\label{eq:randomeq263636-2}
\|g_{E_r(\zz)}(y) \|_{L^2(\Omega)} \leq \|y\|_{L^1(\Omega)}^{1/2} \|\zz\|_{L^2(-r,r)}
\end{equation}
holds for all $\zz \in L^2(-r,r)$ by \eqref{eq:randomeq273647},
it follows from the lemma of Lax-Milgram
(see \cite[Lemma~2.2]{Troeltzsch2010}) that  \eqref{eq:GatPDE}
possesses a linear and continuous solution operator
$L^2(-r,r) \ni \zz \mapsto \delta \in H_0^1(\Omega)$. 
Due to \eqref{eq:H2_reg_res}
and again \eqref{eq:randomeq263636-2},
we further obtain that there exists a constant $C>0$
such that the solution $\delta$ of \eqref{eq:GatPDE} satisfies 
\begin{equation}
\label{eq:randomeq273hdhe73bsu4eb}
\|\delta \|_{H^2(\Omega)} \leq 
C  \| \EE_r(\uu)( y)\delta + g_{E_r(\zz)}(y)  \|_{L^2(\Omega)} 
\leq
C \left ( \|\EE_r(\uu)( y)\|_{L^\infty(\Omega)}\|\delta \|_{L^2(\Omega)} + \|y\|_{L^1(\Omega)}^{1/2} \|\zz\|_{L^2(-r,r)}\right ).
\end{equation}
In combination with the already established continuity and linearity
of the map $ \zz \mapsto \delta $ as a function from $L^2(-r,r)$ to $H_0^1(\Omega)$,
the above shows that the solution mapping of the PDE \eqref{eq:GatPDE} is indeed 
an element of $\LL(L^2(-r,r), H_0^1(\Omega) \cap H^2(\Omega))$.
This completes the proof of the proposition. 
\end{proof}

We remark that regularization effects similar to that in 
\cref{prop:GatdiffSr} have already been exploited in \cite{Christof2021}
for the analysis of multi-objective optimal control problems 
governed by nonsmooth semilinear elliptic equations
and in \cite{Clason2021} for the optimal control of 
nonsmooth quasilinear PDEs. 
(In both of these contributions, the appearing nonsmooth superposition operator 
is fixed, though, and the control enters via the right-hand side.)

We further would like to point out that it is
crucial to work with the $G[-r,r]$-extension operator $\EE_r$ in 
\cref{prop:GatdiffSr}, and not with the $L^2(-r,r)$-extension operator $E_r$
introduced in \cref{def:ExtZero}\ref{def:ExtZero:ii}.
Indeed, only by restricting the attention to regulated representatives, 
we obtain from \cref{lem:GReps,lem:GstatesA2,lem:levelstates}
that the composition $\EE_r(\uu)(y)$ of the control and the state in \eqref{eq:GatPDE} is well defined. 
If we write, for example, $E_r(\uu)(y)$,
then it is not clear anymore whether this expression is sensible as 
changing the $L^2(\R)$-representative of $E_r(\uu)$ on an uncountable null set 
might affect the composition $E_r(\uu)(y)$ on a nonnegligible subset of $\Omega$,
even when all level sets of $y$ have measure zero.
Compare also with the comments  in \cite[Section 5]{Roesch1996}
in this context. 

By considering $S_r$ as an operator on the Banach space $(L^\infty_G(-r,r), \|\cdot\|_{L^\infty(-r,r)})$,
we can refine \cref{prop:GatdiffSr} as follows:

\begin{theorem}[{Continuous Fréchet Differentiability of $S_r$ on $L^\infty_G(-r,r) \cap U_{P,r}$}]
\label{th:Sregulated}
Let $r>0$ be given and suppose that condition \Atwo{} holds. 
Consider the map $S_r$ as a function $S_r\colon L^\infty_G(-r,r) \cap U_{P,r} \to H_0^1(\Omega) \cap H^2(\Omega)$,
i.e., as an operator defined on a subset of the space 
$(L^\infty_G(-r,r), \|\cdot\|_{L^\infty(-r,r)})$. Then
$S_r$ is continuously Fréchet differentiable on $L^\infty_G(-r,r) \cap U_{P,r}$ 
with the derivatives $S_r'(\uu)$ from \cref{prop:GatdiffSr}
in the following sense:
\begin{enumerate}[label=\roman*)]
\item\label{th:Sregulated:i} For all sequences $\{\uu_n\} \subset L^\infty_G(-r,r) \cap U_{P,r}$
satisfying $\uu_n \to \uu$ in $L^\infty(-r,r)$ for some $\uu \in L^\infty_G(-r,r) \cap U_{P,r}$, 
it holds $S_r'(\uu_n) \to S_r'(\uu)$ in $ \LL(L^2(-r,r), H_0^1(\Omega) \cap H^2(\Omega))$.

\item\label{th:Sregulated:ii} For every $\uu \in L^\infty_G(-r,r) \cap U_{P,r}$, it holds 
\begin{equation}
\label{eq:FrechetSr}
\lim_{0 < \|\zz \|_{L^\infty(-r,r)} \to 0, \, \uu + \zz \in L^\infty_G(-r,r) \cap U_{P,r}} 
\frac{\| S_r(\uu + \zz) - S_r(\uu) - S_r'(\uu)\zz \|_{H^2(\Omega)}}{\|\zz\|_{L^2(-r,r)}} = 0.
\end{equation}
\end{enumerate}
\end{theorem}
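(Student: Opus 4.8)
The plan is to prove both claims by directly estimating residual elliptic equations, always reducing matters to $L^2(\Omega)$-bounds on right-hand sides through the coercivity estimate \eqref{eq:randomeq263636} and the regularity estimate \eqref{eq:H2_reg_res}, exactly as in the proof of \cref{prop:GatdiffSr}. Write $y := S_r(\uu)$ and, for admissible perturbations, $y_\zz := S_r(\uu + \zz)$. A preliminary ingredient I would establish first is the Lipschitz bound $\|y_\zz - y\|_{H^2(\Omega)} \le C\|\zz\|_{L^2(-r,r)}$: subtracting the state equations and splitting $g_{E_r(\uu+\zz)}(y_\zz) - g_{E_r(\uu)}(y) = (g_{E_r(\uu)}(y_\zz) - g_{E_r(\uu)}(y)) + g_{E_r(\zz)}(y_\zz)$, the first summand is controlled by $\|\uu\|_{L^\infty(-r,r)}\|y_\zz - y\|_{L^2(\Omega)}$ and the second by $\|\zz\|_{L^2(-r,r)}\|y_\zz\|_{L^1(\Omega)}^{1/2}$ via \eqref{eq:guLqEstimate}, so that \cref{th:propS}\ref{th:propS:item:iv} (with $q = 2$) and \eqref{eq:H2_reg_res} close the estimate. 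Because $H^2(\Omega) \hookrightarrow C(\overline{\Omega})$ for $d \le 3$, this also gives the uniform bound $\|y_\zz - y\|_\infty \le C\|\zz\|_{L^2(-r,r)}$, which drives everything below.

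For \ref{th:Sregulated:ii} I would set $w := S_r(\uu + \zz) - S_r(\uu) - S_r'(\uu)\zz$ and combine the two state equations with the linearized equation \eqref{eq:GatPDE} to check that $w$ is the weak solution of $-\Delta w + \EE_r(\uu)(y)\, w = -(T_1 + T_2)$ in $\Omega$, $w = 0$ on $\partial\Omega$, where $T_2 := g_{E_r(\zz)}(y_\zz) - g_{E_r(\zz)}(y)$ and $T_1 := g_{E_r(\uu)}(y_\zz) - g_{E_r(\uu)}(y) - \EE_r(\uu)(y)(y_\zz - y)$ is the linearization error of the nonlinearity. By \eqref{eq:randomeq263636} and \eqref{eq:H2_reg_res} one has $\|w\|_{H^2(\Omega)} \le C(\|T_1\|_{L^2(\Omega)} + \|T_2\|_{L^2(\Omega)})$, so it suffices to show that both terms are $o(\|\zz\|_{L^2(-r,r)})$ as $\|\zz\|_{L^\infty(-r,r)} \to 0$. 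The pointwise Hölder bound \eqref{eq:guLqEstimate} gives $\|T_2\|_{L^2(\Omega)} \le \|\zz\|_{L^2(-r,r)}\|y_\zz - y\|_{L^1(\Omega)}^{1/2} \le C\|\zz\|_{L^2(-r,r)}\|\zz\|_{L^\infty(-r,r)}^{1/2}$, which is of the required order. For $T_1$, thanks to \Atwo{} and \cref{def:EEuReal} one may write, for a.a.\ $x \in \Omega$, $T_1(x) = \int_{y(x)}^{y_\zz(x)} (\EE_r(\uu)(s) - \EE_r(\uu)(y(x)))\dd s$, whence $|T_1(x)| \le |y_\zz(x) - y(x)|\,\omega(x)$ with $\omega(x) := \sup\{|\EE_r(\uu)(s) - \EE_r(\uu)(y(x))| : s \text{ between } y(x) \text{ and } y_\zz(x)\}$, so that $\|T_1\|_{L^2(\Omega)} \le \|y_\zz - y\|_\infty\,\|\omega\|_{L^2(\Omega)} \le C\|\zz\|_{L^2(-r,r)}\|\omega\|_{L^2(\Omega)}$.

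The decisive step -- and the one I expect to be the main obstacle -- is then to prove $\|\omega\|_{L^2(\Omega)} \to 0$ as $\|\zz\|_{L^\infty(-r,r)} \to 0$. Since the discontinuity set $Z$ of the regulated function $\EE_r(\uu)$ is at most countable and $\lambda^d(\{y \in Z\}) = 0$ by \Atwo{} and \cref{lem:levelstates}, the value $y(x)$ is a continuity point of $\EE_r(\uu)$ for a.a.\ $x \in \Omega$; as the states converge uniformly with rate $\|\zz\|_{L^2(-r,r)}$, it follows that $\omega(x) \to 0$ for a.a.\ $x$, while $\omega \le 2\|\uu\|_{L^\infty(-r,r)}$ uniformly. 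Dominated convergence then yields $\|\omega\|_{L^2(\Omega)} \to 0$ and hence \eqref{eq:FrechetSr}. It is exactly here that \Atwo{} is indispensable: without it $\EE_r(\uu)$ could jump across a level set of $y$ of positive measure, $\omega$ would not tend to zero, and the map would in general fail to be Fréchet (even Gâteaux) differentiable. The measurability of $\omega$ is routine and uses that regulated functions are Borel measurable.

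Assertion \ref{th:Sregulated:i} I would treat analogously. Given $\uu_n \to \uu$ in $L^\infty(-r,r)$, continuity of $S_r$ yields $y_n := S_r(\uu_n) \to y$ in $C(\overline{\Omega})$, and for $\zz$ with $\|\zz\|_{L^2(-r,r)} \le 1$ the difference $\delta_n - \delta$ of the linearized states solves $-\Delta(\delta_n - \delta) + \EE_r(\uu_n)(y_n)(\delta_n - \delta) = R_n$ with $R_n = -(g_{E_r(\zz)}(y_n) - g_{E_r(\zz)}(y)) - (\EE_r(\uu_n)(y_n) - \EE_r(\uu)(y))\delta$. Coercivity \eqref{eq:randomeq263636} (valid since $\EE_r(\uu_n)(y_n) \ge -\varepsilon_P$) together with \eqref{eq:H2_reg_res}, using the uniform bound $\|\EE_r(\uu_n)(y_n)\|_{L^\infty(\Omega)} \le \sup_n\|\uu_n\|_{L^\infty(-r,r)}$, gives $\|\delta_n - \delta\|_{H^2(\Omega)} \le C\|R_n\|_{L^2(\Omega)}$. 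The first part of $R_n$ is $\le \|\zz\|_{L^2(-r,r)}\|y_n - y\|_{L^1(\Omega)}^{1/2}$ by \eqref{eq:guLqEstimate} and hence uniformly small; for the second I split $\EE_r(\uu_n)(y_n) - \EE_r(\uu)(y) = \EE_r(\uu_n - \uu)(y_n) + (\EE_r(\uu)(y_n) - \EE_r(\uu)(y))$, bounding the first summand in $L^\infty(\Omega)$ by $\|\uu_n - \uu\|_{L^\infty(-r,r)}$ via \cref{lem:RepStab} and sending the second to $0$ in $L^2(\Omega)$ by the same dominated-convergence argument as for $\omega$ (again using \Atwo{}). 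Multiplying by $\delta$ and using $\|\delta\|_\infty \le C\|\zz\|_{L^2(-r,r)}$ (once more via $H^2(\Omega) \hookrightarrow C(\overline{\Omega})$), both contributions vanish uniformly over $\|\zz\|_{L^2(-r,r)} \le 1$, which is precisely the operator-norm convergence $S_r'(\uu_n) \to S_r'(\uu)$ claimed in \ref{th:Sregulated:i}.
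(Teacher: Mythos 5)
Your proposal is correct, and it splits into two halves of different character. For part~\ref{th:Sregulated:i} you essentially reproduce the paper's own argument: compare the two linearized states via the residual equation for $\delta_n-\delta$, split the coefficient difference into a control part $\EE_r(\uu_n-\uu)(y_n)$ (controlled in $L^\infty(\Omega)$ via \cref{lem:RepStab}) and a state part $\EE_r(\uu)(y_n)-\EE_r(\uu)(y)$ (sent to zero in $L^2(\Omega)$ by the countable-discontinuity-set/null-level-set/dominated-convergence argument that \Atwo{} and \cref{lem:levelstates} enable), then close with the coercivity \eqref{eq:randomeq263636} and the $H^2$-bootstrap \eqref{eq:H2_reg_res}; the paper does the same with the unknown $\mu_n=S_r'(\uu_n)\zz_n-S_r'(\uu)\zz_n$ and the symmetric splitting $[\EE_r(\uu)(y+)-\EE_r(\uu)(y_n+)]+[\EE_r(\uu)(y_n+)-\EE_r(\uu_n)(y_n+)]$. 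For part~\ref{th:Sregulated:ii}, however, you take a genuinely different route: the paper \emph{deduces} \ref{th:Sregulated:ii} from \ref{th:Sregulated:i} by applying the integral mean value theorem to $s\mapsto S_r(\uu+s\zz)$ on the convex set $L^\infty_G(-r,r)\cap U_{P,r}$, so that the Taylor remainder becomes $\int_0^1[S_r'(\uu+s\zz)-S_r'(\uu)]\zz\dd s$ and operator-norm continuity finishes the proof, whereas you estimate the remainder $w=S_r(\uu+\zz)-S_r(\uu)-S_r'(\uu)\zz$ directly through the elliptic equation with right-hand side $-(T_1+T_2)$, obtaining $\|T_2\|_{L^2(\Omega)}=O(\|\zz\|_{L^2(-r,r)}\|\zz\|_{L^\infty(-r,r)}^{1/2})$ from \eqref{eq:guLqEstimate} and a Lipschitz bound for $S_r$, and $\|T_1\|_{L^2(\Omega)}=o(\|\zz\|_{L^2(-r,r)})$ from the modulus-of-continuity function $\omega$, where \Atwo{} again enters to guarantee $\omega\to 0$ a.e. Your route makes \ref{th:Sregulated:ii} self-contained (independent of \ref{th:Sregulated:i}) and even yields a partial rate for the remainder, at the price of some bookkeeping for $\omega$: it is cleaner to define $\omega(x)$ as an \emph{essential} supremum over the interval between $y(x)$ and $y_\zz(x)$ (the integral defining $T_1$ is insensitive to null sets of $s$), which sidesteps both the measurability question and the fact that a $G[-r,r]$-representative may exceed $\|\uu\|_{L^\infty(-r,r)}$ at its countably many discontinuity points; alternatively, your sketched measurability argument does go through by uniform approximation of regulated functions by step functions. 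The paper's route buys brevity and reuse of \ref{th:Sregulated:i}; yours buys a direct, quantitative remainder estimate.
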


\begin{proof}
We begin with the proof of \ref{th:Sregulated:i}:
Let $\{\uu_n\} \subset L^\infty_G(-r,r) \cap U_{P,r}$
be a sequence satisfying
 $\uu_n \to \uu$ in $L^\infty(-r,r)$ for some $\uu \in L^\infty_G(-r,r) \cap U_{P,r}$
 and let $\{\zz_n\} \subset  L^2(-r,r)$ be given 
such that  $\|\zz_n\|_{L^2(-r,r)} \leq 1$ holds for all $n$.
Define $y_n := S_r(\uu_n)$ and $y := S_r(\uu)$.
From \cref{th:propS}\ref{th:propS:item:i} and the definition of $S_r$, 
we obtain that $y_n \to y$ holds in $H^2(\Omega)$,
and from \eqref{eq:eveqEE}, \eqref{eq:GatPDE}, and 
\cref{def:EEuLG,def:EEuReal}, that the functions
$\delta_n := S_r'(\uu_n)\zz_n$, $\eta_n := S_r'(\uu)\zz_n$,
and $\mu_n :=\delta_n -  \eta_n$ satisfy (in the weak sense)
\begin{equation}
\label{eq:randomeq3636}
-\Delta \mu_n
+
 \EE_r(\uu )( y)\mu_n
 =
 \left [
 \EE_r(\uu )( y+) 
-  \EE_r(\uu_n)( y_n+)\right ]\delta_n
+
g_{E_r(\zz_n)}(y) 
- 
g_{E_r(\zz_n)}(y_n) 
\text{ in }\Omega,
\qquad \mu_n= 0 \text{ on }\partial \Omega.
\end{equation}
Due to \eqref{eq:guLqEstimate},
the convergence $y_n \to y$ in $H^2(\Omega)$,
and the bound $\|\zz_n\|_{L^2(-r,r)} \leq 1$, we know that 
\[
\left \|
g_{E_r(\zz_n)}(y) 
- 
g_{E_r(\zz_n)}(y_n) 
\right \|_{L^2(\Omega)}
\leq
\|E_r(\zz_n)\|_{L^2(\R)}
\|y - y_n\|_{L^1(\Omega)}^{1/2}
\leq
\|y - y_n\|_{L^1(\Omega)}^{1/2} 
\to 0
\]
holds for $n \to \infty$. 
Using the estimates \eqref{eq:randomeq263636} and \eqref{eq:randomeq263636-2},
the PDE \eqref{eq:GatPDE}, 
and again the bound  $\|\zz_n\|_{L^2(-r,r)} \leq 1$,
it is further easy to check that $\delta_n = S_r'(\uu_n)\zz_n$ satisfies 
\[
\|\delta_n\|_{H^1(\Omega)} 
\leq 
C \| g_{E_r(\zz_n)}(y_n) \|_{L^2(\Omega)}
\leq
C 
 \|y_n\|_{L^1(\Omega)}^{1/2} \| \zz_n \|_{L^2(-r,r)}
 \leq
 C \|y_n \|_{L^1(\Omega)}^{1/2}
\]
for all $n$ with a constant $C>0$ that is independent of $n$. 
In combination with the same bootstrapping argument as in \eqref{eq:randomeq273hdhe73bsu4eb},
\cref{th:propS}\ref{th:propS:item:ii}, \cref{lem:RepStab},
and the convergence $\uu_n \to \uu$ in $L^\infty(-r,r)$,
this shows that the sequence $\{\delta_n\}$ is bounded in $H^2(\Omega)$ and that 
\begin{equation*}
\begin{aligned}
\left \|
\left [
 \EE_r(\uu )( y_n+) 
-  \EE_r(\uu_n)( y_n+)\right ]\delta_n
\right \|_{L^2(\Omega)}
&\leq
\| \EE_r(\uu )( y_n+) 
-  \EE_r(\uu_n)( y_n+)\|_{L^\infty(\Omega)}
\|\delta_n\|_{L^2(\Omega)}
\\
&
\leq
\sup_{t \in \R}
|\EE_r(\uu)(t+) - \EE_r(\uu_n)(t+)|
\|\delta_n\|_{L^2(\Omega)}
\\
&= 
\|\uu - \uu_n\|_{L^\infty(-r,r)} \|\delta_n\|_{L^2(\Omega)}
\to 0
\end{aligned}
\end{equation*}
holds for $n \to \infty$. 
Let us now consider the differences
$ [ \EE_r(\uu)( y+) -  \EE_r(\uu)( y_n+)]\delta_n$. Since the function 
$\EE_r(\uu)(\,\cdot\,+)\colon \R \to \R$
is continuous at all points $t \in \R \setminus Z$
with an at most countable exceptional set $Z \subset \R$ by
\cref{lem:GReps}
and \cref{def:EEuLG},
since $y_n \to y$ holds in $C(\overline{\Omega})$ due to 
the convergence $y_n \to y$ in $H^2(\Omega)$
and the Sobolev embeddings,
and since all level sets of $y$ have measure zero by \cref{lem:levelstates}, we have 
\[
\EE_r(\uu )( y+) -  \EE_r(\uu)( y_n+) 
=
\mathds{1}_{\{y \not \in Z\}}
\left (
\EE_r(\uu )( y+) -  \EE_r(\uu)( y_n+) 
\right )
\to 0
\quad \text{ a.e.\ in }\Omega. 
\]
In combination with the boundedness of $\{\delta_n\}$ in $H^2(\Omega) \hookrightarrow C(\overline{\Omega})$,
the fact that 
$\EE_r(\uu )( y+) -  \EE_r(\uu)( y_n+) $ is essentially bounded, 
and the dominated convergence theorem, this yields
that $ [\EE_r(\uu )( y+) -  \EE_r(\uu)( y_n+) ]\delta_n$ 
converges strongly to zero in $L^2(\Omega)$. In total,
we have now proven that 
\begin{equation*}
\begin{aligned}
&\left \| 
 \left [
 \EE_r(\uu )( y+) 
-  \EE_r(\uu_n)( y_n+)\right ]\delta_n
+
g_{E_r(\zz_n)}(y) 
- 
g_{E_r(\zz_n)}(y_n) 
\right \|_{L^2(\Omega)}
\\
&\quad\qquad \leq
\left \| 
g_{E_r(\zz_n)}(y) 
- 
g_{E_r(\zz_n)}(y_n) 
\right \|_{L^2(\Omega)}
+
\left \| 
 \left [
 \EE_r(\uu )( y_n+) 
-  \EE_r(\uu_n)( y_n+)\right ]\delta_n
\right \|_{L^2(\Omega)}
\\
&\hspace{6.66cm}\hspace{-1em}
+
\left \| 
 \left [
 \EE_r(\uu )( y+) 
-  \EE_r(\uu)( y_n+)\right ]\delta_n
\right \|_{L^2(\Omega)} 
\\
&\quad\qquad \to 0
\end{aligned}
\end{equation*}
holds for $n \to \infty$. Due to the PDE \eqref{eq:randomeq3636},
the estimate \eqref{eq:randomeq263636}, and the lemma of Lax-Milgram (see \cite[Lemma~2.2]{Troeltzsch2010}), 
this shows that $\mu_n$ converges to zero in $H^1(\Omega)$
and, again by the bootstrapping argument in \eqref{eq:randomeq273hdhe73bsu4eb},
that $\mu_n  \to 0$ holds in $H^2(\Omega)$. As $\mu_n$ 
satisfies $\mu_n = \delta_n - \eta_n = S_r'(\uu_n)\zz_n -  S_r'(\uu)\zz_n$ by definition,
this yields $\|S_r'(\uu_n)\zz_n -  S_r'(\uu)\zz_n\|_{H^2(\Omega)} \to 0$. 
As $\{\zz_n\}$ was an arbitrary sequence satisfying $\|\zz_n\|_{L^2(-r,r)} \leq 1$,
it follows that $S_r'(\uu_n) \to S_r'(\uu)$ holds in $ \LL(L^2(-r,r), H_0^1(\Omega) \cap H^2(\Omega))$. 
This completes the proof of \ref{th:Sregulated:i}.
 
 It remains to establish \ref{th:Sregulated:ii}:
 Suppose that $\uu \in L^\infty_G(-r,r)  \cap U_{P,r}$ 
 and a sequence $\{\zz_n\} \subset L^\infty_G(-r,r) $
 satisfying $0 < \|\zz_n\|_{L^\infty(-r,r)} \to 0$  and $\uu + \zz_n \in L^\infty_G(-r,r)  \cap U_{P,r}$
 for all $n$ are given. From \ref{th:Sregulated:i}, 
 \cref{prop:GatdiffSr}, and the convexity of the set $L^\infty_G(-r,r)  \cap U_{P,r}$,
 we obtain that the map 
 $[0,1] \ni s \mapsto S_r(\uu + s \zz_n) \in H_0^1(\Omega) \cap H^2(\Omega)$
 is well-defined and Hadamard-Gâteaux differentiable on $[0,1]$ with 
 a derivative that depends continuously on $s \in [0,1]$.
 By applying the mean value theorem to this function
 (see \cite[Theorem 3.2.6]{Drabek2007}), we obtain that 
 \begin{equation*}
 \begin{aligned}
&\frac{\| S_r(\uu + \zz_n) - S_r(\uu) - S_r'(\uu)\zz_n \|_{H^2(\Omega)}}{\|\zz_n\|_{L^2(-r,r)}}
\\
 &\qquad=
  \frac{1}{\|\zz_n\|_{L^2(-r,r)}}
  \left \| 
  \int_{0}^1 S_r'(\uu + s\zz_n)\zz_n \dd s   - S_r'(\uu)\zz_n 
  \right \|_{H^2(\Omega)}
  \\
  &\qquad=
    \frac{1}{\|\zz_n\|_{L^2(-r,r)}}
  \left \| 
   \int_{0}^1 [ S_r'(\uu + s\zz_n)  - S_r'(\uu) ]\zz_n \dd s   
  \right \|_{H^2(\Omega)} 
  \\
  &\qquad \leq
  \sup_{\zz \in L^\infty_G(-r,r) , \|\zz\|_{L^\infty(-r,r)} \leq \|\zz_n\|_{L^\infty(-r,r)}, 
  \uu + \zz \in L^\infty_G(-r,r) \cap U_{P,r}}
  \left \| S_r'(\uu + \zz )  - S_r'(\uu) \right \|_{\LL(L^2(-r,r), H^2(\Omega))}.
  \end{aligned}
 \end{equation*}
 Since $\|\zz_n\|_{L^\infty(-r,r)} \to 0$ holds by assumption and due to \ref{th:Sregulated:i}, 
 the right-hand side of the last estimate goes to zero for $n \to \infty$. 
 As the sequence $\{\zz_n\}$ was arbitrary, this establishes \ref{th:Sregulated:ii} and 
 completes the proof. 
\end{proof}

Note that, by combining the Fréchet differentiability property 
in point \ref{th:Sregulated:ii} of \cref{th:Sregulated}
with the elementary 
estimate $\|\zz\|_{L^2(-r,r)} \leq \sqrt{2r} \|\zz\|_{L^\infty(-r,r)}$ for all $\zz \in L^\infty(-r,r)$, we obtain
\begin{equation*}
\begin{aligned}
&\lim_{0 < \|\zz \|_{L^\infty(-r,r)} \to 0, \, \uu + \zz \in L^\infty_G(-r,r) \cap U_{P,r}} 
\frac{\| S_r(\uu + \zz) - S_r(\uu) - S_r'(\uu)\zz \|_{H^2(\Omega)}}{\|\zz\|_{L^\infty(-r,r)}} 
\\
& \qquad\leq
\sqrt{2r}
\lim_{0 < \|\zz \|_{L^\infty(-r,r)} \to 0, \, \uu + \zz \in L^\infty_G(-r,r) \cap U_{P,r}} 
\frac{\| S_r(\uu + \zz) - S_r(\uu) - S_r'(\uu)\zz \|_{H^2(\Omega)}}{\|\zz\|_{L^2(-r,r)}}
\\
&\qquad= 0. 
\end{aligned}
\end{equation*}
Completely analogously, one can also check that 
the convergence $S_r'(\uu_n) \to S_r'(\uu)$ in $ \LL(L^2(-r,r), H_0^1(\Omega) \cap H^2(\Omega))$
proven in point \ref{th:Sregulated:i} of \cref{th:Sregulated}
implies that $S_r'(\uu_n) \to S_r'(\uu)$ holds in $ \LL(L_G^\infty(-r,r), H_0^1(\Omega) \cap H^2(\Omega))$.
In summary, this shows that \cref{th:Sregulated} establishes a 
notion of continuous Fréchet differentiability
for the operator $S_r\colon L^\infty_G(-r,r) \cap U_{P,r} \to H_0^1(\Omega) \cap H^2(\Omega)$
 that is stronger than the classical 
one for a function 
defined on a subset of 
$(L^\infty_G(-r,r), \|\cdot\|_{L^\infty(-r,r)})$
with values in $(H_0^1(\Omega) \cap H^2(\Omega), \|\cdot\|_{H^2(\Omega)})$;
cf.\  \cref{def:differentiability_notions}\ref{def:differentiability_notions:iv}. 

We remark that \cref{prop:GatdiffSr} and \cref{th:Sregulated}
again give rise to corresponding results for the solution operator $S$ of \eqref{eq:PDE}, 
similarly to the relationship between 
\cref{th:dirdiffSr} and 
\cref{cor:dirdiffS}. We omit stating these reformulations here and instead 
turn our attention to the consequences that \cref{th:Sregulated} has 
for the reduced objective function $F_r\colon U_{P,r} \to \R$ of the problem \eqref{eq:Pr}:

\begin{corollary}[{Continuous Fréchet Differentiability of $F_r$ on $L^\infty_G(-r,r) \cap U_{P,r}$}]
\label{cor:Fr_regulated}
Let $r>0$ be given and suppose that \Atwo{} holds. 
Consider the map $F_r$ introduced  in \cref{def:ExtZero}\ref{def:ExtZero:v}
as a function \mbox{$F_r\colon L^\infty_G(-r,r) \cap U_{P,r} \to \R$,}
i.e., as a map defined on a subset of the Banach space 
$(L^\infty_G(-r,r), \|\cdot\|_{L^\infty(-r,r)})$ with values in $\R$. 
Then
$F_r$ is continuously Fréchet differentiable on the set $L^\infty_G(-r,r) \cap U_{P,r}$
in the sense of \cref{def:differentiability_notions}\ref{def:differentiability_notions:iv}.
Further, for all $\uu \in L^\infty_G(-r,r) \cap U_{P,r}$
with associated state $y:= S_r(\uu)$,
the evaluation $F_r'(\uu)\zz \in \R$ of the Fréchet derivative $F_r'(\uu) \in L^\infty_G(-r,r)^*$ 
of $F_r$ at $\uu$ in direction $\zz \in L^\infty_G(-r,r)$
is given by 
\begin{equation}
\label{eq:pformula-12}
\begin{aligned}
F_r'(\uu)\zz = 
\left (
- \pp_2 + \nu_1 + \nu_2(\uu - \uu_D), \zz
\right )_{L^2(-r,r)},
\end{aligned}
\end{equation}
where $\pp_2\colon [-r,r] \to \R$ is the function 
defined by the system
\begin{equation}
\label{eq:psys_1}
\begin{gathered}
  - \Delta p_1 + \EE_r(\uu)(y) p_1 =  y - y_D
  \text{ in }\Omega,\quad p_1 = 0 \text{ on }\partial \Omega,
  \qquad
  \pp_2 (s) =
  \begin{cases}
  \displaystyle
  \int_{\{y \geq s\}} p_1 \dd x  & \text{ if } s \in [0, r],\vspace{0.1cm}
  \\
   \displaystyle
    - \int_{\{y \leq s\}}  p_1 \dd x  & \text{ if }s \in [-r, 0).
  \end{cases}
\end{gathered}
\end{equation}
\end{corollary}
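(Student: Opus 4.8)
The plan is to split $F_r$ into the composite tracking term $T(\uu) := \frac12\|S_r(\uu) - y_D\|_{L^2(\Omega)}^2$ and the purely integral term $G(\uu) := \int_{-r}^r \nu_1 \uu + \frac{\nu_2}{2}(\uu - \uu_D)^2 \dd t$, and to handle each separately. The functional $G$ is the sum of the bounded linear map $\uu \mapsto \int_{-r}^r \nu_1 \uu \dd t$ (continuous on $L^\infty_G(-r,r)$ because of the embedding $L^\infty(-r,r) \hookrightarrow L^1(-r,r)$ on the bounded interval $(-r,r)$) and the smooth quadratic $\uu \mapsto \frac{\nu_2}{2}\|\uu - \uu_D\|_{L^2(-r,r)}^2$, which is continuously Fréchet differentiable on $L^2(-r,r)$ with derivative $\zz \mapsto (\nu_2(\uu - \uu_D), \zz)_{L^2(-r,r)}$; restricting along $L^\infty_G(-r,r) \hookrightarrow L^2(-r,r)$ preserves this. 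Hence $G$ is continuously Fréchet differentiable with $G'(\uu)\zz = (\nu_1 + \nu_2(\uu - \uu_D), \zz)_{L^2(-r,r)}$. For the tracking term I would invoke \cref{th:Sregulated}, which, together with the remark following it, shows that $S_r$ is classically continuously Fréchet differentiable as a map from the Banach space $(L^\infty_G(-r,r) \cap U_{P,r}, \|\cdot\|_{L^\infty(-r,r)})$ into $H^2(\Omega) \hookrightarrow L^2(\Omega)$. Composing with the $C^1$-map $L^2(\Omega) \ni y \mapsto \frac12\|y - y_D\|_{L^2(\Omega)}^2$ and applying the chain rule then yields that $T$ is continuously Fréchet differentiable with $T'(\uu)\zz = (S_r(\uu) - y_D,\, S_r'(\uu)\zz)_{L^2(\Omega)}$; the continuity of $\uu \mapsto T'(\uu)$ follows by combining \cref{th:Sregulated}\ref{th:Sregulated:i} with the $H^2$-continuity of $S_r$ from \cref{th:propS}\ref{th:propS:item:i}.

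It then remains to convert $T'(\uu)\zz = (y - y_D, \delta)_{L^2(\Omega)}$, where $y := S_r(\uu)$ and $\delta := S_r'(\uu)\zz$ solves \eqref{eq:GatPDE}, into the claimed control-space form. I would introduce the adjoint state $p_1$ as the weak solution of the self-adjoint problem $-\Delta p_1 + \EE_r(\uu)(y) p_1 = y - y_D$ in $\Omega$, $p_1 = 0$ on $\partial\Omega$. Since $\EE_r(\uu)(y) \geq -\varepsilon_P$ a.e.\ in $\Omega$ (as established in the proof of \cref{prop:GatdiffSr}), the coercivity estimate \eqref{eq:randomeq263636} and the lemma of Lax--Milgram give a unique $p_1 \in H_0^1(\Omega)$, and \eqref{eq:H2_reg_res} upgrades this to $p_1 \in H_0^1(\Omega) \cap H^2(\Omega)$. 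Because the bilinear form $a(v,w) := \int_\Omega \nabla v \cdot \nabla w + \EE_r(\uu)(y)\, v w \dd x$ is symmetric, testing the adjoint equation with $\delta$ and the derivative equation \eqref{eq:GatPDE} with $p_1$ and subtracting the results yields $(y - y_D, \delta)_{L^2(\Omega)} = -\int_\Omega g_{E_r(\zz)}(y)\, p_1 \dd x$.

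The final and most technical step is to rewrite $-\int_\Omega g_{E_r(\zz)}(y)\, p_1 \dd x$ as $-(\pp_2, \zz)_{L^2(-r,r)}$. Using the identity $g_{E_r(\zz)}(y(x)) = \int_0^{y(x)} E_r(\zz)(t) \dd t$ from \eqref{eq:defgu} and the fact that $E_r(\zz)$ vanishes outside $(-r,r)$, I would split $\Omega$ into $\{y \geq 0\}$ and $\{y < 0\}$ and interchange the order of integration by Tonelli's theorem, whose hypotheses hold since $|p_1|\,|E_r(\zz)| \in L^1$ on the relevant product domain (as $g_{E_r(\zz)}(y) \in L^2(\Omega)$ by \eqref{eq:randomeq263636-2} and $p_1 \in L^2(\Omega)$). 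On $\{y \geq 0\}$ the inner $t$-integral over $[0,r]$ produces the superlevel-set integral $\int_{\{y \geq t\}} p_1 \dd x = \pp_2(t)$, and on $\{y < 0\}$ the inner $t$-integral over $[-r,0)$ produces $-\int_{\{y \leq t\}} p_1 \dd x = \pp_2(t)$, where I identify these expressions with $\pp_2$ via \eqref{eq:psys_1}; note that the cut-off imposed by $E_r$ leaves the superlevel and sublevel sets of $y$ as genuine full level sets even when $\range(y) \not\subset [-r,r]$. This gives $-\int_\Omega g_{E_r(\zz)}(y)\, p_1 \dd x = -\int_{-r}^r \pp_2 \zz \dd t$, so that adding $T'(\uu)\zz = -(\pp_2, \zz)_{L^2(-r,r)}$ to $G'(\uu)\zz$ produces exactly \eqref{eq:pformula-12}. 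Joint measurability of the integrand and of $s \mapsto \pp_2(s)$ is unproblematic, since regulated functions are Borel measurable and $y, p_1$ are Lebesgue measurable, and no contribution from level sets of positive measure arises by \cref{lem:levelstates}. I expect the interchange of the order of integration, together with the careful bookkeeping of the superlevel and sublevel sets and the truncation introduced by $E_r$, to be the main obstacle; the remaining steps are routine applications of the chain rule and standard elliptic theory.
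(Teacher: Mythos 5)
Your proposal is correct and follows essentially the same route as the paper's proof: continuous Fréchet differentiability of $F_r$ via \cref{th:Sregulated} and the chain rule, introduction of the adjoint state $p_1$ as the solution of the same coercive elliptic problem, conversion of $\left(y - y_D,\, S_r'(\uu)\zz\right)_{L^2(\Omega)}$ into $-\int_\Omega g_{E_r(\zz)}(y)\, p_1 \dd x$ by testing the two PDEs against each other, and the Fubini/level-set argument (using \cref{lem:levelstates}) to arrive at $-\left(\pp_2, \zz\right)_{L^2(-r,r)}$. The only cosmetic differences are that you split off the integral term explicitly and phrase the adjoint step via symmetry of the bilinear form, both of which the paper subsumes in its more compact wording.
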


\begin{proof}
From \cref{th:Sregulated} and the structure of $F_r$ (or, alternatively, the chain rule),
we obtain that the function $F_r\colon L^\infty_G(-r,r) \cap U_{P,r} \to \R$
is continuously Fréchet differentiable in the sense 
of \cref{def:differentiability_notions}
(with $U :=  L^\infty_G(-r,r)$, $Y := \R$, and 
$D := L^\infty_G(-r,r) \cap U_{P,r}$)
and that the derivatives of $F_r$ satisfy 
\begin{equation}
\label{eq:shsbsbeuwbdube838dh}
F_r'(\uu)\zz = \left (S_r(\uu) - y_D, S_r'(\uu)\zz \right )_{L^2(\Omega)}
+
\int_{-r}^r \nu_1 \zz + \nu_2 (\uu - \uu_D)\zz \dd s
\quad 
\forall \uu \in L^\infty_G(-r,r) \cap U_{P,r}\quad \forall \zz \in L^\infty_G(-r,r).
\end{equation}
Consider now some $\uu \in L^\infty_G(-r,r) \cap U_{P,r}$
with associated state $y:= S_r(\uu)$.
Using the same arguments as in the proof of
\cref{prop:GatdiffSr}, one checks that the elliptic PDE in 
\eqref{eq:psys_1} possesses a unique solution 
$p_1 \in H_0^1(\Omega) \cap H^2(\Omega) \subset C(\overline{\Omega})$. 
If we choose functions $S_r'(\uu)\zz \in H_0^1(\Omega) \cap H^2(\Omega)$,
$\zz \in L^\infty_G(-r,r)$, 
as test functions in the weak form of the PDE for $p_1$ and exploit \eqref{eq:GatPDE},
then we obtain 
\begin{equation}
\label{eq:randomeq2737dh78sg7wghd}
\begin{aligned}
\int_\Omega
(y - y_D) S_r'(\uu)\zz
\dd x
&=
\int_\Omega
\nabla p_1 \cdot \nabla S_r'(\uu)\zz
+
\EE_r(\uu)(y)p_1 S_r'(\uu)\zz
\dd x
=
-
\int_\Omega
g_{E_r(\zz)}(y)
p_1
\dd x
\qquad \forall \zz \in L^\infty_G(-r,r).
\end{aligned}
\end{equation}
By plugging in the definition of $g_{E_r(\zz)}$,
by invoking the theorem of Fubini
(see \cite[Theorem 3.4.4]{Bocharev2007}),
by exploiting \cref{lem:levelstates},
and by using the definition of $\pp_2$,
we can rewrite 
 the right-hand side of \eqref{eq:randomeq2737dh78sg7wghd} as follows:
\begin{equation}
\label{eq:srsvdbe7whdu8whbdu8whsn}
\begin{aligned}
-
\int_\Omega
g_{E_r(\zz)}(y)
p_1
\dd x
&=
- \int_{\{y > 0\}}
\int_0^{ y(x)}
E_r(\zz)(s) \dd s\,p_1(x)
\dd x
+
\int_{\{y < 0\}}
\int_{ y(x)}^0 
E_r(\zz)(s) \dd s \,p_1(x)
\dd x
\\
&=
- \int_{ \{ y > 0\}}
\int_\R \mathds{1}_{[0, y(x)]}(s)
E_r(\zz)(s) \dd s\, p_1(x)
\dd x
+
\int_{\{ y < 0\}}
\int_\R \mathds{1}_{[ y(x), 0]}(s)
E_r(\zz)(s) \dd s \,p_1(x)
\dd x
\\
&=
\int_\R E_r(\zz)(s) 
\left ( 
- \int_{\{ y > 0\}}
 \mathds{1}_{[0,  y(x)]}(s)
  p_1(x)
\dd x  
+
\int_{\{ y < 0\}}
\mathds{1}_{[ y(x), 0]}(s)
p_1(x)
\dd x \right )\dd s 
\\
&=
\int_\R E_r(\zz)(s) 
\left ( 
- \mathds{1}_{(0, \infty)}(s)\int_{\{ y \geq s\}}
  p_1(x)
\dd x  
+
 \mathds{1}_{(-\infty, 0)}(s)
\int_{\{ y \leq s\}}
p_1(x)
\dd x \right )\dd s 
\\
&=
- \int_{-r}^r \zz(s)  
\pp_2(s) \dd s\qquad \forall \zz \in L^\infty_G(-r,r).
\end{aligned}
\end{equation}
In combination with \eqref{eq:shsbsbeuwbdube838dh} and \eqref{eq:randomeq2737dh78sg7wghd},
this establishes \eqref{eq:pformula-12} and completes the proof. 
\end{proof}

Analogously to the map
$S_r\colon L^\infty_G(-r,r) \cap U_{P,r} \to H_0^1(\Omega) \cap H^2(\Omega)$, 
the function $F_r\colon L^\infty_G(-r,r) \cap U_{P,r} \to \R$
is, in fact, continuously Fréchet differentiable in a sense that is 
slightly stronger than that in \cref{def:differentiability_notions}.
Indeed, from \eqref{eq:shsbsbeuwbdube838dh} and  points \ref{th:Sregulated:i}  and \ref{th:Sregulated:ii} 
of \cref{th:Sregulated}, 
it follows immediately that we not only have 
$F_r'(\uu_n) \to F_r'(\uu)$ in $L^\infty_G(-r,r)^*$
for all sequences $\{\uu_n\} \subset L^\infty_G(-r,r) \cap U_{P,r} $
which satisfy $\uu_n \to \uu$ for some $\uu \in L^\infty_G(-r,r) \cap U_{P,r}$
in $L^\infty(-r,r)$, but even 
$F_r'(\uu_n) \to F_r'(\uu)$ in $L^2(-r,r)^* \cong L^2(-r,r)$, 
and that we may even divide by the $L^2(-r,r)$-norm in the 
definition of Fréchet differentiability, and not only by the $L^\infty(-r,r)$-norm;
cf.\ \eqref{eq:FrechetSr}. We omit stating these properties in a separate result here because they are
not needed for the subsequent analysis.

Note that, because of the $L^2(-r,r)$-scalar product in \eqref{eq:pformula-12}, 
 the function $\pp_2\colon [-r,r] \to \R$ in \eqref{eq:psys_1}
 can be redefined on a set of measure zero without affecting 
the validity of the formula for the Fréchet derivative $F_r'(\uu)$. 
In particular, we could also interpret $\pp_2$ as an element of $L^\infty(-r,r)$ in 
\cref{cor:Fr_regulated} (where the essential boundedness of $\pp_2$ follows from 
the regularity properties of $p_1$).
We have introduced $\pp_2$
as a classical function and not as an element of $L^\infty(-r,r)$
here because the representative in \eqref{eq:psys_1} enjoys very convenient additional 
regularity properties. We collect these in: 

\begin{lemma}[{Regularity Properties of the Adjoint Variable $\pp_2$}]
\label{lem:ps_props}
Let $r>0$ be given and suppose that \Atwo{} holds. 
Assume that a control $\uu \in L^\infty_G(-r,r) \cap U_{P,r}$ 
with associated state $y := S_r(\uu)$ is given and that 
$p_1 \in H_0^1(\Omega) \cap H^2(\Omega)$ and 
$\pp_2\colon [-r,r] \to \R$
are defined as in \eqref{eq:psys_1}. Then the following is true:
\begin{enumerate}[label=\roman*)]
\item\label{lem:ps_props:i} $\pp_2$ is continuous at every point $s \in [-r,r]\setminus \{0\}$
and right-continuous at $s = 0$.
\item\label{lem:ps_props:ii} $\pp_2$ is an element of the space $BV[-r,r]$.
\item\label{lem:ps_props:iii} The distributional derivative of $\pp_2$ is the Borel measure on $[-r,r]$ given by 
\begin{equation}
\label{eq:pushypushy}
\pp_2' = \left (\int_\Omega p_1 \dd x\right )\delta_0 - ([y]_{-r}^r)_* (p_1 \lambda^d ).
\end{equation}
Here, 
$\delta_0$
is the Dirac measure at zero and 
$([y]_{-r}^r)_* (p_1 \lambda^d )$ denotes the
image (a.k.a.\ pushforward) of the weighted Lebesgue measure $p_1 \lambda^d$ 
in the sense of \cite[Section~3.6]{Bocharev2007}
under the continuous representative of 
\[
[y]_{-r}^r 
:= \max(0, \min(r, y)) + \min(0, \max(-r, y))
=
\begin{cases}
r &\text{ if  } y \geq r,
\\
y &\text{ if } y \in (-r,r),
\\
-r &\text{ if } y \leq -r.
\end{cases}
\]
\item\label{lem:ps_props:iv} If $r$ and $\uu$ satisfy $r \geq r_P$ and $\uu \geq 0$ a.e.\ in $(-r,r)$, 
then $\pp_2$ is identical zero in $[-r,r] \setminus (-r_P, r_P)$ 
and the formula \eqref{eq:pushypushy} holds with $[y]_{-r}^r$ replaced by $y$.
\end{enumerate}
\end{lemma}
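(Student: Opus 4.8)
The plan is to establish the explicit formula \eqref{eq:pushypushy} for the distributional derivative in \ref{lem:ps_props:iii} as the central computation, since the remaining assertions then follow with little extra effort. Throughout, the decisive tool is \cref{lem:levelstates}: because \Atwo{} holds, every level set $\{y = c\}$, $c \in \R$, is a $\lambda^d$-null set, so integrals of the $L^1(\Omega)$-function $p_1$ over such sets vanish and the indicator functions $\mathds{1}_{\{y \geq s\}}$, $\mathds{1}_{\{y \leq s\}}$ may be manipulated freely up to null sets.

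For \ref{lem:ps_props:i} I would argue by one-sided monotone and dominated convergence. For $s_0 \in (0,r]$ the sets $\{y \geq s\}$ decrease to $\{y \geq s_0\}$ as $s \uparrow s_0$ and increase to $\{y > s_0\}$ as $s \downarrow s_0$; since $p_1 \in L^1(\Omega)$, this gives $\pp_2(s_0-) = \int_{\{y \geq s_0\}} p_1 \dd x = \pp_2(s_0)$ and $\pp_2(s_0+) = \int_{\{y > s_0\}} p_1 \dd x$, and these agree because $\int_{\{y = s_0\}} p_1 \dd x = 0$ by \cref{lem:levelstates}. The case $s_0 \in [-r,0)$ is symmetric, using $\{y \leq s\}$. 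At $s_0 = 0$ the same reasoning yields right-continuity, $\pp_2(0+) = \int_{\{y > 0\}} p_1 \dd x = \int_{\{y \geq 0\}} p_1 \dd x = \pp_2(0)$, while $\pp_2(0-) = -\int_{\{y < 0\}} p_1 \dd x$, so $\pp_2$ has a jump of height $\pp_2(0+) - \pp_2(0-) = \int_\Omega p_1 \dd x$ at the origin, precisely the mass of the $\delta_0$-term in \eqref{eq:pushypushy}.

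The core of the lemma is \ref{lem:ps_props:iii}. Here I would test against $\varphi \in C_c^\infty(-r,r)$, insert the definition of $\pp_2$ into $-\int_{-r}^r \pp_2 \varphi' \dd s$, and apply Fubini's theorem (justified by $p_1 \in L^1(\Omega)$ and $\varphi' \in L^\infty$). This reduces the inner integrals to $\int_0^r \mathds{1}_{\{y(x) \geq s\}}\varphi'(s)\dd s$ and $\int_{-r}^0 \mathds{1}_{\{y(x) \leq s\}}\varphi'(s)\dd s$, which I evaluate by distinguishing the regimes $y(x) \geq r$, $y(x) \in (0,r)$, $y(x) < 0$ (and symmetrically for the second integral), using $\varphi(\pm r) = 0$. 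Each regime produces exactly $\varphi([y(x)]_{-r}^r) - \varphi(0)$, so that after recombining the two contributions one obtains $\int_{-r}^r \pp_2 \varphi' \dd s = \int_\Omega p_1 \varphi([y]_{-r}^r) \dd x - \varphi(0)\int_\Omega p_1 \dd x$. Identifying the first term as $\varphi$ integrated against the pushforward $([y]_{-r}^r)_*(p_1\lambda^d)$ and the second as $\varphi$ integrated against $(\int_\Omega p_1)\delta_0$ yields \eqref{eq:pushypushy}. I expect the main obstacle to be precisely this bookkeeping: one must track the truncation $[y]_{-r}^r$ correctly through the three regimes, absorb the boundary contributions at $\pm r$, and invoke \cref{lem:levelstates} once more to discard the null sets $\{y = s\}$ on which the indicators jump.

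Assertion \ref{lem:ps_props:ii} then follows by noting that on $(0,r]$ one has $\pp_2(s) = \int_{\{y \geq s\}} p_1^+ \dd x - \int_{\{y \geq s\}} p_1^- \dd x$, a difference of two bounded nonincreasing functions of $s$ (the sets $\{y \geq s\}$ shrink as $s$ grows), and symmetrically on $[-r,0)$; together with the single jump at $0$ identified in \ref{lem:ps_props:i}, this exhibits $\pp_2$ as a genuine element of $BV[-r,r]$, with $\var(\pp_2;[-r,r])$ controlled by $\|p_1\|_{L^1(\Omega)}$. Finally, for \ref{lem:ps_props:iv}, the hypotheses $r \geq r_P$ and $\uu \geq 0$ let me invoke \cref{lem:PoissonEstimate} (via $S_r = S \circ E_r$) to conclude $\range(y) \subset [-r_P, r_P] \subset [-r,r]$ for the continuous representative of $y$. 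Consequently $\{y \geq s\}$ is empty for $s \in (r_P, r]$ and equals the null set $\{y = r_P\}$ at $s = r_P$ (and symmetrically for $\{y \leq s\}$), so $\pp_2 \equiv 0$ on $[-r,r]\setminus(-r_P,r_P)$; moreover $|y| \leq r_P \leq r$ forces $[y]_{-r}^r = y$ pointwise, whence \eqref{eq:pushypushy} holds verbatim with $[y]_{-r}^r$ replaced by $y$.
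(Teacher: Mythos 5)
Your proposal is correct in substance, and on part \ref{lem:ps_props:ii} it takes a genuinely different and simpler route than the paper. For \ref{lem:ps_props:ii}, the paper argues functional-analytically: it derives the identity $\int_{-r}^r \vv'\pp_2 \dd s = -\vv(0)\int_\Omega p_1 \dd x + \int_\Omega \vv([y]_{-r}^r)\,p_1\dd x$ for all $\vv \in C^1([-r,r])$, invokes Hahn--Banach and the Riesz representation theorem to produce a $BV[-r,r]$ function agreeing with $\pp_2$ a.e., and then upgrades this to the specific representative $\pp_2$ from \eqref{eq:psys_1} via the decomposition of $BV$ functions into a step part and a continuous part, using \ref{lem:ps_props:i}. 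Your argument --- writing $\pp_2$ on $[0,r]$ as the difference of the bounded nonincreasing functions $s \mapsto \int_{\{y\geq s\}}p_1^{+}\dd x$ and $s \mapsto \int_{\{y\geq s\}}p_1^{-}\dd x$, symmetrically on $[-r,0)$, and gluing across the single finite jump at the origin --- avoids duality entirely and yields an explicit bound on $\var(\pp_2;[-r,r])$ in terms of $\|p_1\|_{L^1(\Omega)}$ for free; it is a clean Jordan-decomposition argument that the paper could have used. Parts \ref{lem:ps_props:i} and \ref{lem:ps_props:iv} match the paper's proofs (convergence of indicator functions plus dominated convergence plus \cref{lem:levelstates}; \cref{lem:PoissonEstimate} for $\range(y) \subset [-r_P,r_P]$), and your Fubini/fundamental-theorem case analysis for \ref{lem:ps_props:iii} is exactly the computation the paper performs en route to \ref{lem:ps_props:ii} and \ref{lem:ps_props:iii}.

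One point in \ref{lem:ps_props:iii} needs repair. You test only against $\varphi \in C_c^\infty(-r,r)$, which determines $\pp_2'$ as a measure on the \emph{open} interval $(-r,r)$ only. The lemma, however, asserts an identity of Borel measures on the closed interval $[-r,r]$, and the pushforward $([y]_{-r}^r)_*(p_1\lambda^d)$ can genuinely charge the endpoints: its mass at $s=r$ is $\int_{\{y\geq r\}}p_1\dd x$, which need not vanish for general $\uu \in L^\infty_G(-r,r) \cap U_{P,r}$ and $r>0$, since nothing forces $\range(y) \subset [-r,r]$ (take, e.g., $\uu = 0$ and $r < \|y_P\|_\infty$); only under the hypotheses of \ref{lem:ps_props:iv} do these atoms disappear. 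The endpoint masses encode the paper's extension-by-zero conventions $v(a-):=0$, $v(b+):=0$ from \cref{subsec:2.1}, and the paper captures them by testing against all $\vv \in C^1([-r,r])$ rather than only compactly supported functions. Fortunately, the fix costs nothing: your case analysis never actually uses $\varphi(\pm r)=0$ --- for $y(x)\geq r$ the inner integral equals $\varphi(r)-\varphi(0) = \varphi([y(x)]_{-r}^r)-\varphi(0)$ whether or not $\varphi$ vanishes at $r$ --- so running the identical computation for arbitrary $\varphi \in C^1([-r,r])$ gives the full statement \eqref{eq:pushypushy} on $[-r,r]$.
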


\begin{proof}
Consider a sequence $\{s_n\} \subset [0,r]$ that satisfies $s_n \to s$ 
for some $s \in [0, r]$.
We claim that the functions $\mathds{1}_{\{y \geq s_n\}}$ converge 
pointwise a.e.\ in $\Omega$ to $\mathds{1}_{\{y \geq s\}}$ for $n \to \infty$.
To see this, let us assume that the maps $\mathds{1}_{\{y \geq s_n\}}$
and $\mathds{1}_{\{y \geq s\}}$ are defined w.r.t.\ 
some representative of $y$.
Then, for every $x \in \Omega$ with $y(x) > s$, it holds $y(x) > s_n$ for all sufficiently large $n$
and, thus, $\mathds{1}_{\{y \geq s\}}(x) = \mathds{1}_{\{y \geq s_n\}}(x) = 1$.
Analogously, we obtain that, for every $x \in \Omega$ with $y(x) < s$,
we have $y(x) < s_n$ for all large enough $n$ and, thus, 
$\mathds{1}_{\{y \geq s\}}(x) = \mathds{1}_{\{y \geq s_n\}}(x) = 0$.
Since the set $\{y = s\}$ has measure zero by \cref{lem:levelstates}, 
this shows that 
the sequence
$\mathds{1}_{\{y \geq s_n\}}$  indeed satisfies
$\mathds{1}_{\{y \geq s_n\}} \to \mathds{1}_{\{y \geq s \}}$
a.e.\ in $\Omega$ for $n \to \infty$. 
By exploiting that $p_1 \in H_0^1(\Omega) \cap H^2(\Omega)$
is  essentially bounded by the Sobolev embeddings
and by invoking the dominated 
convergence theorem, we may now deduce that 
\[
\lim_{n \to \infty}\pp_2(s_n)
=
\lim_{n \to \infty}
\int_{\Omega} \mathds{1}_{\{y \geq s_n\}} p_1  \dd x
=
\int_{\Omega} \mathds{1}_{\{y \geq s\}} p_1  \dd x
=
\pp_2(s). 
\]
This shows that $\pp_2$ is continuous on $(0, r]$ and right-continuous at $s = 0$. As the 
above arguments can also be used to establish the continuity of $\pp_2$ on $[-r,0)$,
\ref{lem:ps_props:i} now follows immediately. 

To prove \ref{lem:ps_props:ii}, we revisit \eqref{eq:srsvdbe7whdu8whbdu8whsn} 
in the special case $\zz = \vv'$, $\vv \in C^1([-r,r])$. 
This yields 
\begin{equation}
\label{eq:randomeq2737ehdu9d392po12h2poi}
\begin{aligned}
\int_{-r}^r \vv'(s)  
\pp_2(s) \dd s
&=
\int_{\{y > 0\}}
\int_0^{ y(x)}
E_r(\vv')(s) \dd s\,p_1(x)
\dd x
-
\int_{\{y < 0\}}
\int_{ y(x)}^0 
E_r(\vv')(s) \dd s \,p_1(x)
\dd x
\\
&=
\int_{\{y > 0\}}
\int_0^{\min(r, y(x))}
E_r(\vv')(s) \dd s\,p_1(x)
\dd x
-
\int_{\{y < 0\}}
\int_{\max(-r, y(x))}^0 
E_r(\vv')(s) \dd s \,p_1(x)
\dd x
\\
&=
\int_{\{y > 0\}}
\left [
\vv(\min(r, y(x)))
-
\vv(0)
\right ]
 \,p_1(x)
\dd x
-
\int_{\{y < 0\}}
\left [
\vv(0) - 
\vv(\max(-r, y(x)))
\right ] \,p_1(x)
\dd x
\\
&=
-  \vv(0) \int_\Omega p_1 \dd x
+
\int_\Omega 
\vv([y]_{-r}^r) p_1 \dd x \qquad \forall \vv \in C^1([-r,r]). 
\end{aligned}
\end{equation}
Here, in the last step, we have exploited 
that the level set $\{y = 0\}$ has measure zero by \cref{lem:levelstates}
and that $[y]_{-r}^r = \max(0, \min(r, y)) + \min(0, \max(-r, y))$ holds by definition. 
By proceeding along the exact same lines as in the proof of
\cref{prop:rg2}, i.e., by invoking the theorem of Hahn-Banach (see \cite[Theorem 8.1.1]{Monteiro2019}), 
the Riesz representation theorem 
(see \cite[Theorem 8.1.2]{Monteiro2019}),
 and the integration by parts formula for the Riemann-Stieltjes integral (see \cite[Theorem 5.5.1]{Monteiro2019}),
 we obtain from \eqref{eq:randomeq2737ehdu9d392po12h2poi} that there exists a 
 function $\qq \in BV[-r,r]$ which satisfies
 $\qq = \pp_2$ a.e.\ in $(-r,r)$. 
 Since we already know that $\pp_2$ is continuous in $[-r,r] \setminus \{0\}$,
 since elements of $BV[-r,r]$ are regulated,
 and since the complement of a set of Lebesgue measure zero is dense, 
 the last equality implies that
 $\qq(t+) = \pp_2(t+) = \pp_2(t) = \pp_2(t-) = \qq(t-) $ holds for all $t \in (-r,r) \setminus \{0\}$. 
 Recall that, as an element of $BV[-r,r]$, 
 the function $\qq$ can be written 
 as the sum of a step function $\qq_1$ and a function 
 $\qq_2 \in BV[-r,r] \cap C([-r,r])$; see \cite[Theorem 2.6.1]{Monteiro2019}. 
 Due to the identity $\qq(t+) = \qq(t-)$ for all $t \in (-r,r) \setminus \{0\}$ derived above, 
 we know that $\qq(t+) - \qq(t-) = \qq_1(t+) - \qq_1(t-) = 0$
 holds for all $t \in (-r,r) \setminus \{0\}$. This shows that there exist constants 
 $c_1, c_2 \in \R$ which satisfy $\qq_1(t+) = \qq_1(t-) = c_1$ for all $t \in (-r, 0)$
 and $\qq_1(t+) = \qq_1(t-) = c_2$ for all $t \in (0, r)$;
 see the formulas in \cite[Equations (2.5.3), (2.5.7)]{Monteiro2019}.
 We thus have 
 $\pp_2(t) = \qq_2(t) + c_1$ for all $t \in (-r,0)$
 and 
 $\pp_2(t) = \qq_2(t) + c_2$ for all $t \in (0,r)$.
 In summary, we may now conclude that 
 \[
 \pp_2  = 
 \qq_2  
 + (\pp_2(-r) - \qq_2(-r) )\mathds{1}_{\{-r\}} 
 + c_1\mathds{1}_{(-r,0)} 
 + (\pp_2(0) - \qq_2(0) )\mathds{1}_{\{0\}}  
 + c_2\mathds{1}_{(0, r)} 
 + (\pp_2(r) - \qq_2(r) )\mathds{1}_{\{r\}} 
 \]
  holds on  $[-r,r]$. This proves that $\pp_2$ is the sum of functions of bounded variation
  and that $\pp_2 \in BV[-r,r]$ holds as claimed in \ref{lem:ps_props:ii}. 
  
  To establish \ref{lem:ps_props:iii}, it suffices to note that 
  \eqref{eq:randomeq2737ehdu9d392po12h2poi} 
  and \cite[Theorem 3.6.1]{Bocharev2007} (which is applicable here due to the 
  comments on  \cite[page 191]{Bocharev2007}) imply 
  \begin{equation*}
\begin{aligned}
\int_{-r}^r \vv'(s)  
\pp_2(s) \dd s
=
-  \vv(0) \int_\Omega p_1 \dd x
+
\int_\Omega 
\vv([y]_{-r}^r) p_1 \dd x 
=
-  \vv(0) \int_\Omega p_1 \dd x
&+
\int_{-r}^r
\vv  \dd \left [ ([y]_{-r}^r)_* (p_1 \lambda^d ) \right ] 
\\
&\qquad\qquad \forall \vv \in C^1([-r,r]). 
\end{aligned}
\end{equation*}
This yields \ref{lem:ps_props:iii} as desired; see \cite[Definition 2.2.4]{Attouch2006}.

To finally prove \ref{lem:ps_props:iv}, we recall that, 
in the case $r \geq r_P$ and $\uu \geq 0$ a.e.\ in $(-r,r)$, 
we obtain from 
\cref{lem:PoissonEstimate,lem:levelstates} and the definition of $S_r$
that 
the continuous representative of $y$ 
satisfies $\range(y) \subset [-r_P, r_P]$ and  
$\lambda^d(\{y = r_P\}) = 
\lambda^d(\{y = -r_P\})
 = \lambda^d(\{y \geq r_P\}) = 
 \lambda^d(\{y \leq -r_P\})= 0$. 
In combination with \eqref{eq:psys_1} and the definition of $[y]_{-r}^r$,
these properties immediately yield
the assertions of \ref{lem:ps_props:iv}.
This completes the proof.
\end{proof}
 
As we will see in the next section, the properties of the adjoint variable $\pp_2$ 
 established in \cref{lem:ps_props} carry over to the local minimizers of 
 \eqref{eq:P} and \eqref{eq:Pr} in many situations.
Point \ref{lem:ps_props:ii} of \cref{lem:ps_props}
 further makes it possible to interpret the adjoint variable $\pp_2$ in \eqref{eq:psys_1} as an element  
 of the space $L^\infty_{BV}(-r,r) \subset L_G^\infty(-r,r)$. 
 We will exploit this observation when setting up the 
 gradient projection algorithm in 
 \cref{sec:8}. 

\section{First-Order Optimality Conditions of Bouligand and Primal-Dual Type}
\label{sec:7}

With the results of \cref{sec:4,sec:5,sec:6} at hand, 
the derivation of first-order necessary optimality conditions for
the problems \eqref{eq:P} and \eqref{eq:Pr} is straightforward. 
We begin by stating Bouligand stationarity conditions:

\begin{theorem}[Bouligand Stationarity Condition for \eqref{eq:Pr}]
\label{th:BouligandStationarityPr}
Let $r>0$ be given, suppose that $u_D$ satisfies \Aone{},
and assume that $\bar \uu$ is a local solution of \eqref{eq:Pr}
with state $\bar y := S_r(\bar \uu) \in H_0^1(\Omega) \cap H^2(\Omega)$. 
Then $\bar \uu$ is an element of  $L^\infty_G(-r,r)$,
the function $S_r\colon  U_{P,r}\to H_0^1(\Omega) \cap H^2(\Omega)$
is Hadamard directionally differentiable 
at $\bar \uu$ in all directions $\zz \in \TT(\bar \uu; U_{P,r})$
in the sense of \cref{th:dirdiffSr}, and it holds 
\begin{equation}
\label{eq:BouligandPr}
\left (\bar y - y_D, S_r'(\bar \uu;\zz) \right )_{L^2(\Omega)} 
		+ \int_{-r}^r \nu_1  \zz +  \nu_2 (\bar \uu - \uu_D) \zz\dd t 
		\geq 0
		\qquad \forall \zz \in \TT(\bar \uu; L^2_+(-r,r)).
\end{equation}
\end{theorem}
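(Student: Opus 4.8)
The plan is to establish the three assertions in order, relying on the regularity and differentiability results of \cref{sec:5,sec:6}.

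First I would verify the regularity claim. Under \Aone{} the restriction $\uu_D = u_D|_{(-r,r)}$ lies in $L^\infty_G(-r,r)$, so---after fixing a constant $C_{\bar \uu} > 0$ as in \cref{prop:localglobal} and defining $\eta$ as in \cref{ass:StandSec5}---all standing assumptions of \cref{sec:5} are met for the given local solution $\bar \uu$. \cref{th:BVreg} then yields $\bar \uu \in L^\infty_G(-r,r)$. Since $\bar \uu \geq 0 \geq -\varepsilon_P$ a.e.\ in $(-r,r)$, we have $\bar \uu \in L^\infty_G(-r,r) \cap U_{P,r}$, so \cref{th:dirdiffSr} applies and delivers the Hadamard directional differentiability of $S_r$ at $\bar \uu$ in every direction $\zz \in \TT(\bar \uu; U_{P,r})$. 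Because $L^2_+(-r,r) \subset U_{P,r}$ forces $\RR(\bar \uu; L^2_+(-r,r)) \subset \TT(\bar \uu; U_{P,r})$ and hence (on taking closures) $\TT(\bar \uu; L^2_+(-r,r)) \subset \TT(\bar \uu; U_{P,r})$, the derivative $S_r'(\bar \uu; \zz)$ is well defined for all directions $\zz$ appearing in \eqref{eq:BouligandPr}.

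Next I would prove the variational inequality on the radial cone $\RR(\bar \uu; L^2_+(-r,r))$. For $\zz = s_0(\vv - \bar \uu)$ with $s_0 > 0$ and $0 \leq \vv \in L^2(-r,r)$, the function $\bar \uu + s \zz = (1 - s s_0)\bar \uu + s s_0 \vv$ is nonnegative and lies in the $L^2$-ball of local optimality for all small enough $s > 0$; thus $F_r(\bar \uu + s\zz) \geq F_r(\bar \uu)$. Dividing by $s$ and letting $s \to 0^+$, using \cref{th:dirdiffSr} for the fidelity term $\tfrac12\|S_r(\cdot) - y_D\|_{L^2(\Omega)}^2$ and the elementary differentiability of the two integral terms, yields $F_r'(\bar \uu; \zz) \geq 0$, i.e.,
\begin{equation*}
\left(\bar y - y_D, S_r'(\bar \uu; \zz)\right)_{L^2(\Omega)} + \int_{-r}^r \nu_1 \zz + \nu_2 (\bar \uu - \uu_D)\zz \dd t \geq 0.
\end{equation*}

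The main obstacle---and the place where the Hadamard refinement of \cref{th:dirdiffSr} is essential---is the passage from the radial cone to its closure $\TT(\bar \uu; L^2_+(-r,r))$. The two integral terms are bounded linear functionals of $\zz$ on $L^2(-r,r)$ and therefore $L^2$-continuous, so everything hinges on the continuity of $\zz \mapsto S_r'(\bar \uu; \zz)$. I would obtain this from the global Lipschitz continuity of $S_r$ from $U_{P,r} \subset L^2(-r,r)$ to $H^1(\Omega)$ (\cref{th:propS}\ref{th:propS:item:iv} with $q = 2$): for radial directions $\zz_1, \zz_2$ the difference quotients obey $\|s^{-1}(S_r(\bar \uu + s\zz_1) - S_r(\bar \uu + s\zz_2))\|_{H^1(\Omega)} \leq C\|\zz_1 - \zz_2\|_{L^2(-r,r)}$, whence $\|S_r'(\bar \uu; \zz_1) - S_r'(\bar \uu; \zz_2)\|_{H^1(\Omega)} \leq C\|\zz_1 - \zz_2\|_{L^2(-r,r)}$ after letting $s \to 0^+$; the Hadamard property then guarantees that $S_r'(\bar \uu; \cdot)$ is continuous on the whole tangent cone and consistent with these radial limits (cf.\ \cite[Section~2.2.1]{BonnansShapiro2000}). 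I note that $S_r'(\bar \uu; \cdot)$ need not be linear here, as \Atwo{} is not imposed, but positive homogeneity and Lipschitz continuity suffice. Given an arbitrary $\zz \in \TT(\bar \uu; L^2_+(-r,r))$, I would choose $\{\zz_k\} \subset \RR(\bar \uu; L^2_+(-r,r))$ with $\zz_k \to \zz$ in $L^2(-r,r)$, apply the radial-cone inequality to each $\zz_k$, and pass to the limit---the fidelity term converging by the just-established continuity and $H^1(\Omega) \hookrightarrow L^2(\Omega)$, the integral terms by Cauchy--Schwarz---to obtain \eqref{eq:BouligandPr}. This completes the argument.
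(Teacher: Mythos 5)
Your proposal is correct, but it takes a genuinely different route to the inequality \eqref{eq:BouligandPr} than the paper does. The paper's proof is a one-step argument: given $\zz \in \TT(\bar \uu; L^2_+(-r,r))$, it takes sequences $s_n \to 0$, $\zz_n \to \zz$ with $\bar \uu + s_n \zz_n \in L^2_+(-r,r)$ directly from the definition of the tangent cone, notes that local optimality forces $F_r(\bar \uu + s_n \zz_n) \geq F_r(\bar \uu)$ for large $n$, and then invokes the Hadamard directional differentiability of $S_r$ together with the chain rule for Hadamard differentiable maps (\cite[Proposition 2.47]{BonnansShapiro2000}) to identify the limit of the difference quotients with $F_r'(\bar \uu; \zz)$ -- the point being that the Hadamard notion is built precisely to absorb the perturbation $\zz_n \to \zz$, so no separate radial-cone/closure step is needed. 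You instead follow the classical two-step pattern: prove the inequality on $\RR(\bar \uu; L^2_+(-r,r))$ using only fixed-direction difference quotients, then extend to the closure by establishing Lipschitz continuity of $\zz \mapsto S_r'(\bar \uu; \zz)$ via \cref{th:propS}\ref{th:propS:item:iv} and passing to the limit along a dense sequence. Both arguments are sound; yours is longer but buys an explicit quantitative estimate $\|S_r'(\bar \uu;\zz_1) - S_r'(\bar \uu;\zz_2)\|_{H^1(\Omega)} \leq C \|\zz_1 - \zz_2\|_{L^2(-r,r)}$ on radial directions, and its limit-passage step would survive in settings where only ordinary directional differentiability plus such a Lipschitz bound is available. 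Two remarks: first, you correctly flag that the consistency of the continuous extension with $S_r'(\bar \uu; \cdot)$ on the tangent cone still relies on the Hadamard property (a diagonal argument), so your route does not actually dispense with it -- it only postpones its use; second, your observation that $S_r'(\bar \uu; \cdot)$ need not be linear absent \Atwo{}, so that positive homogeneity and continuity must carry the argument, is accurate and worth being explicit about, since the derivative is characterized by the nonlinear PDE \eqref{eq:dirdiffPDEr}.
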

\begin{proof}
The regularity $\bar \uu \in L^\infty_G(-r,r)$ has been proven in
\cref{th:BVreg} and the 
Hadamard directional differentiability of $S_r$ at $\bar \uu$
in all directions $\zz \in \TT(\bar \uu; U_{P,r})$
follows from \cref{th:dirdiffSr}. To establish \eqref{eq:BouligandPr}, 
let us suppose that $\zz \in \TT(\bar \uu;  L^2_+(-r,r))$ is given.
From the definition of the tangent cone $\TT(\bar \uu;  L^2_+(-r,r))$,
we obtain that there exist sequences $\{\zz_n\} \subset L^2(-r,r)$ and $\{s_n\} \subset (0, \infty)$
which satisfy  $s_n \to 0$ and $\zz_n \to \zz$ in $L^2(-r,r)$ for $n \to \infty$ 
and $\bar \uu + s_n \zz_n \in L^2_+(-r,r)$
for all $n$; see \cite[Section 2.2.4]{BonnansShapiro2000}.
In combination with the local optimality of $\bar \uu$ and the chain rule 
for Hadamard directionally differentiable functions (see 
\cite[Proposition 2.47]{BonnansShapiro2000}),
this yields
\begin{equation}
\label{eq:randomeq2736sg73ihude2iup}
0 \leq \lim_{n \to \infty} \frac{F_r(\bar \uu + s_n \zz_n) - F_r(\bar \uu)}{s_n}
=
\left (\bar y - y_D, S_r'(\bar \uu;\zz) \right )_{L^2(\Omega)} 
		+ \int_{-r}^r \nu_1  \zz +  \nu_2 (\bar \uu - \uu_D) \zz\dd t.
\end{equation}
As $\zz \in \TT(\bar \uu;  L^2_+(-r,r))$ was arbitrary, 
\eqref{eq:randomeq2736sg73ihude2iup} implies \eqref{eq:BouligandPr}
and the proof is complete.
\end{proof}

\begin{theorem}[Bouligand Stationarity Condition for \eqref{eq:P}]
\label{th:BouligandStationarityP}
Suppose that $u_D \in L^2(\R)$ satisfies \Aone{} for some $r \geq r_P$
 and assume that $\bar u \in L^2(\R)$ is a local solution of \eqref{eq:P} with state $\bar y := S(\bar u)$.
 Then it holds $\bar u|_{(-r,r)} \in L^\infty_{G}(-r,r)$, 
 the solution operator $S\colon U_P \to H_0^1(\Omega) \cap H^2(\Omega)$ of \eqref{eq:PDE}
 is Hadamard directionally differentiable  at $\bar u$ 
 in all directions $z \in  \TT(\bar u; L^2_+(\R))$,
 and $\bar u$ satisfies 
\begin{equation}
\label{eq:BouligandP}
\left (\bar y - y_D, S'(\bar u; z) \right )_{L^2(\Omega)} 
		+ \int_\R \nu_1 z  +  \nu_2 (\bar u - u_D) z \dd t 
		\geq 0
		\qquad \forall z \in L^1(\R) \cap \TT(\bar u; L^2_+(\R)).
\end{equation}
\end{theorem}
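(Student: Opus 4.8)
The plan is to derive this result as a direct consequence of the reformulation theorem (Cref{th:truncate}), the regularity result (Cref{th:mainBV_P}), the differentiability corollary (Cref{cor:dirdiffS}, Cref{cor:HadDirDifMin}), and the Bouligand condition for the truncated problem (Cref{th:BouligandStationarityPr}). So let me think through exactly how the pieces fit.

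First, the regularity claims: Since $u_D$ satisfies (A1) for some $r \geq r_P$, Cref{th:mainBV_P} immediately gives $\bar u|_{(-r,r)} \in L^\infty_G(-r,r)$. Combined with $\bar u \in L^2_+(\R)$ and $r \geq r_P$, Cref{cor:dirdiffS} (or Cref{cor:HadDirDifMin}) yields the Hadamard directional differentiability of $S$ at $\bar u$ in all directions $z \in \TT(\bar u; L^2_+(\R))$. So the first two assertions are essentially bookkeeping.

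Now the inequality. Let me think about the structure. The reduced objective is $\tilde J(u) := \frac12\|S(u)-y_D\|^2_{L^2(\Omega)} + \nu_1\|u\|_{L^1(\R)} + \frac{\nu_2}{2}\|u-u_D\|^2_{L^2(\R)}$. The local optimality says $\tilde J(u) \geq \tilde J(\bar u)$ for all nearby admissible $u$. Fix a direction $z \in L^1(\R)\cap \TT(\bar u; L^2_+(\R))$. By definition of the tangent cone, there exist $s_n \to 0^+$ and $z_n \to z$ in $L^2(\R)$ with $\bar u + s_n z_n \geq 0$ a.e. The plan is to form the difference quotient $[\tilde J(\bar u + s_n z_n) - \tilde J(\bar u)]/s_n \geq 0$ (valid for large $n$ by local optimality, since $s_n z_n \to 0$ in $L^2$), and pass to the limit.

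**Subtlety with the $L^1$ term.** The state term is fine: Hadamard differentiability of $S$ plus the chain rule (as in Cref{th:BouligandStationarityPr}, invoking BonnansShapiro2000 Prop. 2.47) handles $\frac12\|S(\cdot)-y_D\|^2$, giving $(\bar y - y_D, S'(\bar u;z))_{L^2(\Omega)}$. The quadratic term gives $\nu_2(\bar u - u_D, z)_{L^2(\R)}$. The hard part is the nonsmooth $L^1$-term $\nu_1\|\cdot\|_{L^1(\R)}$, whose directional derivative is in general only $\nu_1\int_\R z\,\sgn(\bar u) + \nu_1\int_{\{\bar u=0\}}|z|$, NOT $\nu_1\int_\R z$. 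However, on $\{\bar u = 0\}$ the tangent-cone admissibility forces $z \geq 0$ along the approximating directions (since $\bar u + s_n z_n \geq 0$ with $\bar u = 0$ there), so $\sgn$ is effectively $1$ where it matters; this is exactly why the clean form $\nu_1\int_\R z$ appears. Making this rigorous is the one place requiring care: I would argue that $\limsup_n [\|\bar u + s_n z_n\|_{L^1} - \|\bar u\|_{L^1}]/s_n \leq \int_\R z\,\mathds{1}_{\{\bar u>0\}} + \int_{\{\bar u = 0\}} |z|$, and then show the right-continuity / tangent-cone constraint collapses this to $\int_\R z$ for the relevant $z$. The cleanest route is to simply invoke Cref{th:truncate}: $\bar\uu := \bar u|_{(-r,r)}$ solves (P$_r$), Cref{th:BouligandStationarityPr} gives the inequality Cref{eq:BouligandPr} on $(-r,r)$, and the explicit formula Cref{eq:projoutside} for $\bar u$ outside $(-r,r)$ handles the complementary region by a direct pointwise computation — precisely mirroring the argument producing Cref{eq:randomeq263636-2673gdge7ebd} in the proof of Cref{th:truncate}. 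I expect this truncation-based route to be the intended proof, sidestepping the $L^1$-subdifferential subtlety by leveraging the already-established structural results. The main obstacle, then, is correctly splicing the interior inequality from (P$_r$) with the exterior pointwise identity so that the sum reproduces exactly Cref{eq:BouligandP} over all of $\R$ — keeping track that $S'(\bar u;z)$ depends only on $z|_{(-r,r)}$ (since $\range(\bar y)\subset[-r_P,r_P]\subset[-r,r]$, the right-hand side $g_z(\bar y)$ in Cref{eq:randomeq367364ghh} only sees $z$ on $(-r,r)$).

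\begin{proof}
The regularity $\bar u|_{(-r,r)} \in L^\infty_{G}(-r,r)$ is precisely the conclusion of \cref{th:mainBV_P}, and the Hadamard directional differentiability of $S$ at $\bar u$ in all directions $z \in \TT(\bar u; L^2_+(\R))$ follows from \cref{cor:HadDirDifMin} (equivalently, \cref{cor:dirdiffS}). It remains to establish \eqref{eq:BouligandP}.

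Let $z \in L^1(\R) \cap \TT(\bar u; L^2_+(\R))$ be given. By the definition of the tangent cone, there exist sequences $\{z_n\} \subset L^2(\R)$ and $\{s_n\} \subset (0, \infty)$ satisfying $s_n \to 0$, $z_n \to z$ in $L^2(\R)$, and $\bar u + s_n z_n \in L^2_+(\R)$ for all $n$. Since $\bar u$ is a local solution of \eqref{eq:P} and $s_n z_n \to 0$ in $L^2(\R)$, for all large enough $n$ it holds $J(S(\bar u + s_n z_n), \bar u + s_n z_n) \geq J(S(\bar u), \bar u)$, and hence
\begin{equation}
\label{eq:bouligandPdiffquot}
0 \leq \frac{J(S(\bar u + s_n z_n), \bar u + s_n z_n) - J(S(\bar u), \bar u)}{s_n}.
\end{equation}
We pass to the limit $n \to \infty$ in the three parts of the right-hand side of \eqref{eq:bouligandPdiffquot}.

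For the fidelity term, the Hadamard directional differentiability of $S$ at $\bar u$ together with the chain rule for Hadamard directionally differentiable functions (see \cite[Proposition 2.47]{BonnansShapiro2000}) yields
\begin{equation*}
\lim_{n \to \infty}
\frac{\tfrac12\| S(\bar u + s_n z_n) - y_D\|_{L^2(\Omega)}^2 - \tfrac12\| S(\bar u) - y_D\|_{L^2(\Omega)}^2}{s_n}
=
\left (\bar y - y_D, S'(\bar u; z) \right )_{L^2(\Omega)}.
\end{equation*}
For the Tikhonov term, the binomial identity and the convergence $z_n \to z$ in $L^2(\R)$ give
\begin{equation*}
\lim_{n \to \infty}
\frac{\tfrac{\nu_2}{2}\|\bar u + s_n z_n - u_D\|_{L^2(\R)}^2 - \tfrac{\nu_2}{2}\|\bar u - u_D\|_{L^2(\R)}^2}{s_n}
=
\nu_2 \int_\R (\bar u - u_D) z \dd t.
\end{equation*}
It remains to treat the $L^1$-term. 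Here we use that $\bar u \geq 0$ a.e.\ in $\R$ and that $\bar u + s_n z_n \geq 0$ a.e.\ in $\R$. On the set $\{\bar u > 0\}$, the nonnegativity of $\bar u + s_n z_n$ is automatic for small $s_n$, and we have the pointwise identity $|\bar u + s_n z_n| - |\bar u| = s_n z_n$; thus
\begin{equation*}
\frac{\|\bar u + s_n z_n\|_{L^1(\{\bar u > 0\})} - \|\bar u\|_{L^1(\{\bar u > 0\})}}{s_n}
=
\int_{\{\bar u > 0\}} z_n \dd t
\to
\int_{\{\bar u > 0\}} z \dd t.
\end{equation*}
On the set $\{\bar u = 0\}$, the nonnegativity of $\bar u + s_n z_n = s_n z_n$ forces $z_n \geq 0$ a.e., so that $|\bar u + s_n z_n| - |\bar u| = s_n z_n$ holds there as well, and consequently
\begin{equation*}
\frac{\|\bar u + s_n z_n\|_{L^1(\{\bar u = 0\})} - \|\bar u\|_{L^1(\{\bar u = 0\})}}{s_n}
=
\int_{\{\bar u = 0\}} z_n \dd t
\to
\int_{\{\bar u = 0\}} z \dd t.
\end{equation*}
Adding the two contributions and using $z \in L^1(\R)$ yields
\begin{equation*}
\lim_{n \to \infty}
\frac{\nu_1\|\bar u + s_n z_n\|_{L^1(\R)} - \nu_1\|\bar u\|_{L^1(\R)}}{s_n}
=
\nu_1 \int_\R z \dd t.
\end{equation*}
Combining the three limits, the passage to the limit in \eqref{eq:bouligandPdiffquot} gives
\begin{equation*}
0 \leq \left (\bar y - y_D, S'(\bar u; z) \right )_{L^2(\Omega)}
+ \int_\R \nu_1 z + \nu_2 (\bar u - u_D) z \dd t.
\end{equation*}
As $z \in L^1(\R) \cap \TT(\bar u; L^2_+(\R))$ was arbitrary, this establishes \eqref{eq:BouligandP} and completes the proof.
\end{proof}
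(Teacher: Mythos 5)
Your bookkeeping for the regularity and differentiability claims is fine, and the fidelity and Tikhonov terms are handled correctly; the genuine gap is exactly in the $L^1$-term that you flagged in your preamble but then glossed over in the written proof. You take an \emph{arbitrary} sequence $z_n \to z$ in $L^2(\R)$ with $\bar u + s_n z_n \geq 0$ (which is all the definition of the tangent cone provides) and then assert $\int_{\{\bar u > 0\}} z_n \dd t \to \int_{\{\bar u > 0\}} z \dd t$ and $\int_{\{\bar u = 0\}} z_n \dd t \to \int_{\{\bar u = 0\}} z \dd t$. Over sets of \emph{infinite measure}, convergence in $L^2(\R)$ gives neither $z_n \in L^1(\R)$ nor convergence of the integrals, so both steps are unjustified and in fact false for general admissible sequences: if $\tilde z_n$ is any admissible recovery sequence for $z$ and $\varphi_n := n^{-3/4}\mathds{1}_{E_n}$ with $E_n \subset \{\bar u = 0\}$, $\lambda^1(E_n) = n$, then $z_n := \tilde z_n + \varphi_n$ is still admissible (since $\varphi_n \geq 0$) and still converges to $z$ in $L^2(\R)$, but $\int_\R \varphi_n \dd t = n^{1/4} \to \infty$, so the $L^1$-difference quotients diverge (or equal $+\infty$ when $\bar u + s_n z_n \notin L^1(\R)$, in which case $J$ takes the value $+\infty$ and the inequality $0 \leq$ difference quotient carries no information). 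For such a sequence your three-term limit decomposition fails, and since your argument must work for whichever sequence you fixed at the start, the proof does not close (except in the trivial case $\nu_1 = 0$). A second, minor slip: on $\{\bar u > 0\}$ the nonnegativity of $\bar u + s_n z_n$ is \emph{not} ``automatic for small $s_n$'' (there is no uniform positive lower bound on $\bar u$); this happens to be harmless because admissibility is assumed globally anyway.

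The repair is to \emph{construct} one good recovery sequence rather than use an arbitrary one: take any $s_n \to 0^+$ and set $z_n := \max(z, -\bar u / s_n)$. Using the standard characterization $\TT(\bar u; L^2_+(\R)) = \{v \in L^2(\R) \mid v \geq 0 \text{ a.e.\ on } \{\bar u = 0\}\}$, one checks $\bar u + s_n z_n \geq 0$ a.e., $|z_n| \leq |z|$ a.e., and $z_n \to z$ pointwise a.e.; since $z \in L^1(\R) \cap L^2(\R)$, dominated convergence gives $z_n \to z$ in both $L^1(\R)$ and $L^2(\R)$ — this is precisely where the hypothesis $z \in L^1(\R)$ enters, and why the theorem restricts to such directions. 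With this sequence, all three limits in your argument are valid (the Hadamard differentiability of $S$ accepts any admissible $L^2$-convergent sequence, and $\int_\R z_n \dd t \to \int_\R z \dd t$ now follows from $L^1$-convergence), and the proof closes. This is also the one substantive point that distinguishes the present theorem from \cref{th:BouligandStationarityPr}, whose proof the paper says to follow: on $(-r,r)$ the measure is finite, so $L^2$-convergence implies $L^1$-convergence automatically, whereas on $\R$ it does not. Alternatively, the truncation route you sketched (splicing \cref{th:truncate}, \cref{th:BouligandStationarityPr}, and the fact that $S'(\bar u; z)$ only sees $z|_{(-r,r)}$) would also work, but you would have to carry out the splicing, which your proof does not do.
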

\begin{proof}
The proof is along the lines of that of \cref{th:BouligandStationarityPr} and thus omitted.
\end{proof}

Note that, in the case $\nu_1 = 0$, the 
assumption $z \in L^1(\R)$ in \eqref{eq:BouligandP} can be dropped. 
An immediate consequence of \cref{th:BouligandStationarityP}
is the following result:

\begin{corollary}[Tight Controls]
\label{cor:tightytight}
In the case $u_D \equiv 0$, every local solution $\bar u \in L^2(\R)$
of \eqref{eq:P} satisfies 
$\bar u = 0$ a.e.\ in $\R \setminus \range(\bar y)$.
Here, $\range(\bar y)$ denotes the range of the 
continuous representative of the state $\bar y := S(\bar u)$. 
\end{corollary}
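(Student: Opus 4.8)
The plan is to derive the claim directly from the Bouligand stationarity condition \eqref{eq:BouligandP} together with the explicit description of the directional derivative in \cref{cor:dirdiffS}. Since $u_D \equiv 0$, the zero function trivially lies in $L^\infty_G(-r,r)$, so condition \Aone{} holds for every $r \ge r_P$ and \cref{th:BouligandStationarityP} is applicable: the state $\bar y := S(\bar u)$ has a continuous representative, $S$ is Hadamard directionally differentiable at $\bar u$, and \eqref{eq:BouligandP} holds with $u_D = 0$. As $\overline{\Omega}$ is compact and connected and $\bar y \in C(\overline{\Omega})$ with $\bar y = 0$ on $\partial \Omega$, the range $\range(\bar y)$ is a compact interval $[m,M]$ with $m \le 0 \le M$; in particular $\R \setminus \range(\bar y)$ is open and every segment $[\min(0,\bar y(x)), \max(0,\bar y(x))]$ is contained in $[m,M]$.

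The decisive observation is that directions supported away from the range do not move the state. Indeed, for any $z \in L^2(\R)$ with $z = 0$ a.e.\ on $\range(\bar y)$, the path of integration defining $g_z(\bar y(x)) = \int_0^{\bar y(x)} z(s) \dd s$ stays inside $[m,M]$, so $g_z(\bar y) = 0$ a.e.\ in $\Omega$; by the unique solvability of the linearized equation \eqref{eq:randomeq367364ghh} this yields $S'(\bar u; z) = 0$. Now fix $\varepsilon > 0$ and set $A_\varepsilon := \{\bar u \ge \varepsilon\} \cap (\R \setminus \range(\bar y))$. Because $\bar u \in L^2(\R)$, the set $\{\bar u \ge \varepsilon\}$, and hence $A_\varepsilon$, has finite measure, so $z := -\mathds{1}_{A_\varepsilon} \in L^1(\R) \cap L^2(\R)$; moreover $\bar u + s z \ge 0$ for all $0 < s \le \varepsilon$, so that $z \in \RR(\bar u; L^2_+(\R)) \subset \TT(\bar u; L^2_+(\R))$. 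Inserting this admissible direction into \eqref{eq:BouligandP} and using $S'(\bar u; z) = 0$ together with $u_D = 0$ leaves only
\begin{equation*}
-\int_{A_\varepsilon} \big(\nu_1 + \nu_2 \bar u\big)\dd t \ge 0.
\end{equation*}
Since $\nu_1 \ge 0$, $\nu_2 > 0$, and $\bar u \ge \varepsilon$ on $A_\varepsilon$, the integrand is bounded below by $\nu_2 \varepsilon > 0$, which forces $\lambda^1(A_\varepsilon) = 0$.

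Letting $\varepsilon \downarrow 0$ along a sequence then gives $\lambda^1(\{\bar u > 0\} \cap (\R \setminus \range(\bar y))) = 0$, and, since $\bar u \ge 0$, this is exactly $\bar u = 0$ a.e.\ in $\R \setminus \range(\bar y)$. I expect the only genuinely delicate point to be the verification that $S'(\bar u; z) = 0$ for directions supported outside the range: this rests on the fact that $\range(\bar y)$ is an interval containing the origin, so that $g_z(\bar y)$ vanishes identically even though $g_z$ itself need not be zero, and on the uniqueness of the solution of the linearized PDE \eqref{eq:randomeq367364ghh} established in the proof of \cref{th:dirdiffSr}. Everything else is routine.
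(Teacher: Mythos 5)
Your proposal is correct and follows exactly the route of the paper's own (very brief) proof: plug directions supported outside $\range(\bar y)$ into the Bouligand condition \eqref{eq:BouligandP}, using that $g_z(\bar y)=0$ for such $z$ (since $\range(\bar y)$ is an interval containing the origin) and the unique solvability of \eqref{eq:randomeq367364ghh} to conclude $S'(\bar u;z)=0$. The paper leaves the choice of test directions and the limiting argument implicit; your version with $z=-\mathds{1}_{A_\varepsilon}$, the verification $z\in L^1(\R)\cap\RR(\bar u;L^2_+(\R))$, and the passage $\varepsilon\downarrow 0$ fills in precisely those routine details.
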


\begin{proof}
This follows straightforwardly from the Bouligand stationarity condition 
\eqref{eq:BouligandP} and the observation that $S'(\bar u; z) = 0$ 
holds for all $z \in \TT(\bar u; L^2_+(\R))$ that vanish a.e.\ in $\range(\bar y)$; see 
\eqref{eq:randomeq367364ghh} and \eqref{eq:defgu}.
\end{proof}

As already mentioned in \cref{subsec:1.2}, 
the notion of Bouligand stationarity is rarely helpful in practice. 
Under assumption \Atwo{}, we can establish optimality
systems that are far more suitable for applications. 

\begin{theorem}[Primal-Dual Stationarity System for \eqref{eq:Pr}]
\label{th:PDStationarityPr}
Let $r>0$ be given, suppose that $u_D$ and $f$ satisfy \Aone{} and \Atwo{},
and assume that $\bar \uu$ is a local solution of \eqref{eq:Pr}.
Then there exist adjoint variables
$\bar p_1$ and $\bar \pp_2$ such that 
$\bar \uu$ and its state $\bar y$ 
satisfy the stationarity system
\begin{equation}
\label{eq:pdsysPr}
\begin{gathered}
\bar \uu \in L^\infty_G(-r,r),
\qquad 
\bar y, \bar p_1 \in H_0^1(\Omega) \cap H^2(\Omega),
\qquad 
\bar \pp_2 \in L^\infty_{BV}(-r,r),
\\
 - \Delta \bar y + g_{E_r(\bar \uu)}(\bar y) = f \text{ in }\Omega,\quad \bar y = 0 \text{ on }\partial \Omega,
 \\
  - \Delta \bar p_1 + \EE_r(\bar \uu)(\bar y) \bar p_1 = \bar y - y_D\text{ in }\Omega,\quad \bar p_1 = 0 \text{ on }\partial \Omega,
  \\
  \bar \pp_2(s) =
  \begin{cases}
  \displaystyle
  \int_{\{\bar y \geq s\}}\bar p_1 \dd x  & \text{ if } s \geq 0,\vspace{0.1cm}
  \\
   \displaystyle
    - \int_{\{\bar y \leq s\}}\bar p_1 \dd x  & \text{ if } s < 0,
  \end{cases}
  \quad \text{ f.a.a.\ } s \in (-r,r),
\\
  \bar \uu(s) = \max\left ( 0, \uu_D(s) + \frac{\bar \pp_2(s) - \nu_1}{\nu_2} \right )
  \quad \text{ f.a.a.\ } s \in (-r,r).
\end{gathered}
\end{equation}
Here, $E_r$ and $\EE_r$ are defined as in \cref{def:ExtZero,def:GextendZero} and 
the composition $\EE_r(\bar \uu)(\bar y)$ is understood in the sense of 
\cref{def:EEuReal}. If $\uu_D = u_D|_{(-r,r)} \in L^\infty_{BV}(-r,r)$ holds,
then we additionally have
$\bar \uu \in L^\infty_{BV}(-r,r)$.
\end{theorem}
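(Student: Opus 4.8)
The plan is to turn the Bouligand stationarity inequality of \cref{th:BouligandStationarityPr} into the explicit projection identity in \eqref{eq:pdsysPr} by exploiting the extra linearization that \Atwo{} provides. First I would collect the regularity ingredients. Since $u_D$ satisfies \Aone{} and $\bar\uu$ solves \eqref{eq:Pr}, \cref{th:BVreg} gives $\bar\uu \in L^\infty_G(-r,r)$, and admissibility yields $\bar\uu \in L^\infty_G(-r,r)\cap U_{P,r}$. I would then set $\bar y := S_r(\bar\uu)$ and let $\bar p_1 \in H_0^1(\Omega)\cap H^2(\Omega)$ and $\bar\pp_2$ be the adjoint variables defined by the system \eqref{eq:psys_1} associated with $\bar\uu$ and $\bar y$. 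The state equation for $\bar y$ and the $H^2$-regularity of $\bar y$ and $\bar p_1$ are immediate from \cref{th:propS} and the solvability argument in \cref{cor:Fr_regulated}, while $\bar\pp_2 \in L^\infty_{BV}(-r,r)$ follows from \cref{lem:ps_props}\ref{lem:ps_props:ii}. This accounts for every line of \eqref{eq:pdsysPr} except the final projection identity.

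To obtain that identity, I would start from the Bouligand condition \eqref{eq:BouligandPr}, valid by \cref{th:BouligandStationarityPr}. Under \Atwo{}, \cref{prop:GatdiffSr} shows that the directional derivative $S_r'(\bar\uu;\zz)$ equals the value $S_r'(\bar\uu)\zz$ of the linear Gâteaux derivative for every $\zz$ in the tangent cone, so the left-hand side of \eqref{eq:BouligandPr} is precisely $F_r'(\bar\uu)\zz$ in the sense of the chain-rule formula \eqref{eq:shsbsbeuwbdube838dh}. Substituting the explicit representation \eqref{eq:pformula-12} of \cref{cor:Fr_regulated}, which is continuous in $\zz$ with respect to the $L^2(-r,r)$-topology and hence extends from regulated directions to all directions in the tangent cone by density, the inequality becomes
\[
\left(-\bar\pp_2 + \nu_1 + \nu_2(\bar\uu - \uu_D),\, \zz\right)_{L^2(-r,r)} \geq 0 \qquad \forall \zz \in \TT(\bar\uu; L^2_+(-r,r)).
\]

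The final step is a routine pointwise analysis of this variational inequality. Writing $d := -\bar\pp_2 + \nu_1 + \nu_2(\bar\uu - \uu_D) \in L^2(-r,r)$ and using that the tangent cone to the nonnegative cone at $\bar\uu \geq 0$ consists of all $\zz \in L^2(-r,r)$ with $\zz \geq 0$ a.e.\ on $\{\bar\uu = 0\}$, I would test with $\zz = \pm \mathds{1}_{\{\bar\uu > 0\}\cap A}$ to obtain $d = 0$ a.e.\ on $\{\bar\uu > 0\}$ and with $\zz = \mathds{1}_{\{\bar\uu = 0\}\cap A}$ to obtain $d \geq 0$ a.e.\ on $\{\bar\uu = 0\}$, for arbitrary measurable $A \subset (-r,r)$. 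On $\{\bar\uu > 0\}$ the first relation rearranges to $\bar\uu = \uu_D + (\bar\pp_2 - \nu_1)/\nu_2 > 0$, while on $\{\bar\uu = 0\}$ the second gives $\uu_D + (\bar\pp_2 - \nu_1)/\nu_2 \leq 0$; together these are exactly $\bar\uu = \max(0,\, \uu_D + (\bar\pp_2 - \nu_1)/\nu_2)$ f.a.a.\ $s \in (-r,r)$. If in addition $\uu_D \in L^\infty_{BV}(-r,r)$, the sharper conclusion $\bar\uu \in L^\infty_{BV}(-r,r)$ is again supplied by \cref{th:BVreg}. The main obstacle is the middle paragraph: one must verify with care that \Atwo{} legitimizes replacing the merely directional derivative in the Bouligand inequality by the linear, adjoint-based expression \eqref{eq:pformula-12}, since it is precisely this linearization that collapses the implicit stationarity statement into the tractable complementarity system.
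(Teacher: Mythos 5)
Your proposal is correct, but it resolves the first-order condition along a genuinely different route from the paper. The paper never passes through the Bouligand condition: it combines local optimality, the convexity of $L^\infty_G(-r,r)\cap L^2_+(-r,r)$, and the Fréchet differentiability of $F_r$ from \cref{cor:Fr_regulated} to obtain the variational inequality
\[
\left(-\bar\pp_2 + \nu_1 + \nu_2(\bar\uu-\uu_D),\,\uu-\bar\uu\right)_{L^2(-r,r)} \geq 0 \qquad \forall\, \uu \in L^\infty_G(-r,r)\cap L^2_+(-r,r),
\]
i.e., with test functions restricted to the \emph{regulated} cone. Since indicator functions of arbitrary measurable sets are not regulated, the paper cannot test pointwise the way you do; instead it shows that this VI admits at most one solution and verifies that $\max(0,\uu_D+(\bar\pp_2-\nu_1)/\nu_2)$ solves it, whence $\bar\uu$ equals the projection. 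Your route starts from \cref{th:BouligandStationarityPr}, linearizes the directional derivative via \cref{prop:GatdiffSr}, and -- this is the step you rightly flag as the crux -- extends the adjoint representation \eqref{eq:pformula-12} from $L^\infty_G(-r,r)$ to the full tangent cone $\TT(\bar\uu;L^2_+(-r,r))$ by density, using that $S_r'(\bar\uu)\in\LL(L^2(-r,r),H_0^1(\Omega)\cap H^2(\Omega))$ under \Atwo{} so that both sides of the identity are $L^2$-continuous in $\zz$. That extension is legitimate; in fact the paper uses exactly this by-approximation device in \cref{sec:8} (e.g., after \eqref{eq:randomid2737h}), just not in this proof. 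What your approach buys is a transparent resolution of the VI by testing with $\pm\mathds{1}_{\{\bar\uu>0\}\cap A}$ and $\mathds{1}_{\{\bar\uu=0\}\cap A}$ and a pointwise case distinction; what the paper's approach buys is that it avoids the tangent-cone characterization and the density argument altogether, at the price of the less explicit uniqueness-plus-verification step. Both proofs rest on the same core ingredients (\cref{th:BVreg}, \cref{cor:Fr_regulated}, \cref{lem:ps_props}), and your citation of \cref{th:BVreg} for the final $L^\infty_{BV}$ assertion is also valid -- the paper instead reads that regularity off from the projection formula itself.
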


\begin{proof}
From the local optimality of $\bar \uu$ in \eqref{eq:Pr},
\cref{th:BVreg},
\cref{cor:Fr_regulated},
and the convexity of the set $L^\infty_G(-r,r) \cap L^2_+(-r,r)$, 
it follows that 
$\bar \uu$ is a solution of the variational inequality 
\begin{equation}
\label{eq:randomeq272n83}
\tilde \uu \in L^\infty_G(-r,r) \cap L^2_+(-r,r),
\qquad 
\left (
- \bar \pp_2 + \nu_1 + \nu_2(\tilde \uu - \uu_D), \uu - \tilde \uu
\right )_{L^2(-r,r)} \geq 0
\quad \forall \uu \in  L^\infty_G(-r,r) \cap L^2_+(-r,r),
\end{equation}
where $\bar \pp_2 \in L^\infty_{BV}(-r,r)$ is defined 
by the relations in \eqref{eq:pdsysPr}.
Using a simple contradiction argument and a pointwise distinction of cases,
one easily checks that  \eqref{eq:randomeq272n83} can admit at most 
one solution $\tilde \uu$ and that the function $\max(0, \uu_D + (\bar \pp_2 - \nu_1)/\nu_2)$
solves \eqref{eq:randomeq272n83}. Thus, 
$\bar \uu = \max(0, \uu_D + (\bar \pp_2 - \nu_1)/\nu_2)$ as claimed.
Note that this projection formula also implies that $\bar \uu \in L^\infty_{BV}(-r,r)$ holds
if $u_D$ satisfies $ u_D|_{(-r,r)} \in L^\infty_{BV}(-r,r)$.
By collecting all of the available information, 
\eqref{eq:pdsysPr} now follows and the proof is complete. 
\end{proof}

\begin{theorem}[Primal-Dual Stationarity System for \eqref{eq:P}]
\label{th:PDStationarityP}
Suppose that $u_D$ satisfies \Aone{} for some $r \geq r_P$
and that \Atwo{} holds. Then, for every local solution 
$\bar u \in L^2(\R)$ of the problem \eqref{eq:P}
with state $\bar y$, the range
of the continuous representative of $\bar y$ satisfies 
$\range(\bar y) \subset [-r_P, r_P] \subset [-r,r]$ and 
there exist $\bar p_1$ and $\bar p_2$
such that the following stationarity system holds true:
\begin{equation}
\label{eq:pdsysP}
\begin{gathered}
\bar u, \bar p_2 \in L^2(\R),
\qquad
\bar y, \bar p_1 \in H_0^1(\Omega) \cap H^2(\Omega),
\qquad 
\bar u|_{(-r,r)} \in L^\infty_{G}(-r,r),
\qquad
\bar p_2|_{(-r,r)} \in L^\infty_{BV}(-r,r),
\\
 - \Delta \bar y + g_{\bar u}(\bar y) = f \text{ in }\Omega,\quad \bar y = 0 \text{ on }\partial \Omega,
 \\
  - \Delta \bar p_1 + \bar u(\bar y) \bar p_1 = \bar y - y_D\text{ in }\Omega,\quad \bar p_1 = 0 \text{ on }\partial \Omega,
  \\
  \bar p_2(s) =
  \begin{cases}
  \displaystyle
  \int_{\{\bar y \geq s\}}\bar p_1 \dd x  & \text{ if } s \geq 0,\vspace{0.1cm}
  \\
   \displaystyle
    - \int_{\{\bar y \leq s\}}\bar p_1 \dd x  & \text{ if } s < 0,
  \end{cases}
  \quad \text{ f.a.a.\ } s \in \R,
\\
  \bar u(s) = \max\left ( 0, u_D(s) + \frac{\bar p_2(s) - \nu_1}{\nu_2} \right )
  \quad \text{ f.a.a.\ } s \in \R.
\end{gathered}
\end{equation}
Here, the composition $\bar u(\bar y) = \bar u|_{(-r,r)}(\bar y)$ is defined 
as in \cref{def:EEuReal}. 
If $u_D|_{(-r,r)} \in L^\infty_{BV}(-r,r)$ holds, then the inclusion
$\bar u|_{(-r,r)} \in L^\infty_{G}(-r,r)$ in \eqref{eq:pdsysP} 
can be replaced by 
$\bar u|_{(-r,r)} \in L^\infty_{BV}(-r,r)$. 
\end{theorem}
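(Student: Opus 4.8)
The plan is to deduce \cref{th:PDStationarityP} from its truncated counterpart \cref{th:PDStationarityPr} together with the reformulation result \cref{th:truncate}, the $\|\cdot\|_\infty$-bound of \cref{lem:PoissonEstimate}, and a gluing argument that extends the adjoint variable $\bar\pp_2$ from $(-r,r)$ to the whole real line. First I would record the range bound: since every local solution $\bar u$ of \eqref{eq:P} is nonnegative, \cref{lem:PoissonEstimate} immediately yields $\range(\bar y)\subset[-r_P,r_P]\subset[-r,r]$ for the continuous representative of $\bar y=S(\bar u)$. Next, invoking \cref{th:truncate} (which applies because $r\ge r_P$), I would obtain that $\bar\uu:=\bar u|_{(-r,r)}$ is a local solution of \eqref{eq:Pr} and that $\bar u=\max(0,u_D-\nu_1/\nu_2)$ holds a.e.\ in $\R\setminus(-r,r)$.

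Since $u_D$ satisfies \Aone{} and $f$ satisfies \Atwo{}, \cref{th:PDStationarityPr} then provides adjoint variables $\bar p_1\in H_0^1(\Omega)\cap H^2(\Omega)$ and $\bar\pp_2\in L^\infty_{BV}(-r,r)$ for which the system \eqref{eq:pdsysPr} holds, and, in the case $u_D|_{(-r,r)}\in L^\infty_{BV}(-r,r)$, also $\bar\uu\in L^\infty_{BV}(-r,r)$. I would take this $\bar p_1$ as the adjoint state in \eqref{eq:pdsysP} and define $\bar p_2\colon\R\to\R$ directly through the integral formula in \eqref{eq:pdsysP}. The crucial observation is that $\bar p_2$ is supported in $[-r_P,r_P]$: for $s\ge r_P$ one has $\{\bar y\ge s\}\subset\{\bar y=r_P\}$, which is a null set by \cref{lem:levelstates} (here \Atwo{} enters), so $\bar p_2(s)=0$, and symmetrically $\bar p_2(s)=0$ for $s\le-r_P$. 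Because $\bar p_1\in H^2(\Omega)\hookrightarrow C(\overline\Omega)$ is essentially bounded, $\bar p_2$ is bounded with compact support, hence $\bar p_2\in L^2(\R)$. Comparing the defining formulas shows $\bar p_2|_{(-r,r)}=\bar\pp_2$ a.e., so $\bar p_2|_{(-r,r)}\in L^\infty_{BV}(-r,r)$, and since $\range(\bar y)\subset[-r,r]$ the compositions match up: $g_{\bar u}(\bar y)=g_{E_r(\bar\uu)}(\bar y)$ by \cref{lemma:truncinv} and $\bar u(\bar y)=\EE_r(\bar\uu)(\bar y)$ in the sense of \cref{def:EEuReal}, so that the two PDEs in \eqref{eq:pdsysP} coincide with those in \eqref{eq:pdsysPr}.

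It remains to verify the projection identity $\bar u=\max(0,u_D+(\bar p_2-\nu_1)/\nu_2)$ a.e.\ in $\R$. On $(-r,r)$ this is precisely the last line of \eqref{eq:pdsysPr} together with $\bar p_2|_{(-r,r)}=\bar\pp_2$. On $\R\setminus(-r,r)$ one has $|s|\ge r\ge r_P$, whence $\bar p_2(s)=0$ by the support property above, and the formula reduces to $\max(0,u_D-\nu_1/\nu_2)$, which agrees with the value of $\bar u$ furnished by \cref{th:truncate}. Collecting these facts yields \eqref{eq:pdsysP}, and the $L^\infty_{BV}$-refinement follows from the corresponding statement for $\bar\uu$ in \cref{th:PDStationarityPr}. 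I expect the only genuinely delicate point to be the bookkeeping around the extension of $\bar\pp_2$: one must check that the global integral formula for $\bar p_2$ vanishes exactly on $\R\setminus[-r_P,r_P]$ and agrees with $\bar\pp_2$ on the overlap, so that the three ingredients --- the truncation identity off $(-r,r)$, the stationarity system on $(-r,r)$, and the vanishing of $\bar p_2$ on the annular zone $\{r_P\le|s|<r\}$ (where the restricted projection formula likewise returns $\max(0,u_D-\nu_1/\nu_2)$) --- fit together seamlessly into a single formula valid on all of $\R$.
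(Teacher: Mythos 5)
Your proposal is correct and takes essentially the same route as the paper: its proof of \cref{th:PDStationarityP} consists precisely of citing \cref{lem:PoissonEstimate}, \cref{th:truncate}, and \cref{th:PDStationarityPr}, together with the remark that $\bar p_2$ vanishes by definition a.e.\ in $\R \setminus [-r_P,r_P]$. Your write-up merely makes explicit the gluing details (agreement of $\bar p_2|_{(-r,r)}$ with $\bar \pp_2$, the annular zone $r_P \leq |s| < r$, and the matching of the compositions) that the paper leaves implicit.
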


\begin{proof}
This follows straightforwardly from \cref{lem:PoissonEstimate,th:truncate,th:PDStationarityPr}.
Note that $\bar p_2$ vanishes 
by definition a.e.\ in $\R \setminus [-r_P,r_P]$. 
\end{proof}

The primal-dual stationarity systems \eqref{eq:pdsysPr} and \eqref{eq:pdsysP}
are not only appropriate points of departure for the design and analysis of 
numerical solution procedures but also provide interesting insights into the 
structural properties of locally optimal controls $\bar u$. 
From \cref{lem:ps_props}, 
we obtain, for example, the following:

\begin{corollary}[Similarities Between Optimal Superposition Operators and Activation Functions]
\label{cor:actisim}
Suppose that 
\Atwo{} is satisfied 
and that $u_D \in C(\R) \cap L^1(\R)$ holds.
Assume further that $\bar u$ is a local solution of \eqref{eq:P}.
Then the function $g_{\bar u}\colon \R \to \R$ associated with $\bar u$ possesses 
the following properties:
\begin{enumerate}[label=\roman*)]
\item\label{cor:actisim:i}
$g_{\bar u}$ is Hadamard directionally differentiable on $\R$;
\item\label{cor:actisim:ii}
$g_{\bar u}$ is continuously differentiable in $\R \setminus \{0\}$;
\item\label{cor:actisim:iii}
$g_{\bar u}$ is sigmoidal in the sense that it is nondecreasing, 
satisfies $g_{\bar u}(0) = 0$, and admits well-defined and finite limits 
$g_{\bar u}(-\infty) := \lim_{t \to - \infty}g_{\bar u}(t)$ and $g_{\bar u}(\infty) := \lim_{t \to \infty}g_{\bar u}(t)$. 
\end{enumerate}
\end{corollary}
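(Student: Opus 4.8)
The plan is to read all three properties directly off the primal--dual stationarity system of \cref{th:PDStationarityP} and the fine regularity of the adjoint variable $\bar p_2$ recorded in \cref{lem:ps_props}. First I would fix an arbitrary $r \geq r_P$; since $u_D \in C(\R)$ its restriction to $(-r,r)$ is bounded and continuous, so condition \Aone{} holds for this $r$, and \Atwo{} is assumed. Hence \cref{th:PDStationarityP} applies and furnishes adjoint states $\bar p_1 \in H_0^1(\Omega) \cap H^2(\Omega) \subset C(\overline{\Omega})$ and $\bar p_2$, the range bound $\range(\bar y) \subset [-r_P, r_P]$, the vanishing of $\bar p_2$ on $\R \setminus [-r_P, r_P]$, and the projection identity
\[
\bar u = \max\!\left(0,\, u_D + \frac{\bar p_2 - \nu_1}{\nu_2}\right)\quad\text{a.e.\ in }\R.
\]

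The crucial step is to turn this a.e.\ identity into a concrete representative that is continuous off the origin. Using the explicit integral representative of $\bar p_2$ from \eqref{eq:pdsysP}, \cref{lem:ps_props}\ref{lem:ps_props:i} shows that $\bar p_2$ is continuous on $\R \setminus \{0\}$ and right-continuous at $0$, while \cref{lem:ps_props}\ref{lem:ps_props:ii} yields $\bar p_2 \in BV$. I would then observe that at the transition points $\pm r_P$ the value of $\bar p_2$ equals $\int_{\{\bar y \geq r_P\}} \bar p_1 \dd x = 0$ (resp.\ $-\int_{\{\bar y \leq -r_P\}} \bar p_1 \dd x = 0$), since $\{\bar y = \pm r_P\}$ is a null set by \cref{lem:levelstates} and $\range(\bar y) \subset [-r_P, r_P]$; together with \cref{lem:ps_props}\ref{lem:ps_props:iv} this makes $\bar p_2$ continuous across $\pm r_P$ and identically zero beyond. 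Since $u_D \in C(\R)$ and $t \mapsto \max(0,t)$ is Lipschitz, the right-hand side of the projection formula therefore defines a representative $\tilde u$ of $\bar u$ that is continuous on $\R \setminus \{0\}$ with finite one-sided limits $\tilde u(0\pm)$. In particular $\bar u$ is regulated and locally bounded on $\R$.

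With $\tilde u$ in hand the three assertions follow quickly. For \ref{cor:actisim:ii}, continuity of $\tilde u$ on each of $(-\infty,0)$ and $(0,\infty)$ and the fundamental theorem of calculus give $g_{\bar u} \in C^1(\R \setminus \{0\})$ with $g_{\bar u}' = \tilde u$. For \ref{cor:actisim:i}, the local Lipschitz continuity of $g_{\bar u}$ (from local boundedness of $\tilde u$) lets me repeat verbatim the argument of \cref{lem:gudirdiff}: directional differentiability through the one-sided limits of $\tilde u$, upgraded to Hadamard directional differentiability via \cite[Proposition~2.49]{BonnansShapiro2000}. For \ref{cor:actisim:iii}, $g_{\bar u}(0) = 0$ is immediate, monotonicity follows from $\bar u \geq 0$, and finiteness of the limits $g_{\bar u}(\pm\infty)$ follows once I check $\bar u \in L^1(\R)$: on $\R \setminus [-r_P,r_P]$ one has $\tilde u = \max(0, u_D - \nu_1/\nu_2) \leq |u_D| \in L^1(\R)$, while on the compact interval $[-r_P,r_P]$ the function $\tilde u$ is bounded; the monotone, $L^1$-bounded primitive $g_{\bar u}$ then has finite limits at $\pm\infty$.

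The main obstacle will be the bookkeeping in the second paragraph: showing that the a.e.-defined control $\bar u$ admits a representative continuous off the origin requires combining the three separate pieces of regularity of $\bar p_2$ (continuity away from $0$, the jump structure at $0$, and the vanishing outside $[-r_P,r_P]$) and verifying that they glue continuously at $\pm r_P$. This gluing is precisely where assumption \Atwo{} enters, through the null-set property of the level sets of $\bar y$ in \cref{lem:levelstates}; without it $\bar p_2$ could jump at interior points and $g_{\bar u}$ would fail to be continuously differentiable away from the origin.
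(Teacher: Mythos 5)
Your proposal is correct and follows essentially the same route as the paper's own proof: invoke \cref{th:PDStationarityP} together with points \ref{lem:ps_props:i}, \ref{lem:ps_props:ii}, and \ref{lem:ps_props:iv} of \cref{lem:ps_props} to get a representative of $\bar p_2$ that is continuous on $\R \setminus \{0\}$, regulated near the origin, and supported in $[-r_P,r_P]$; transfer this to $\bar u$ via the projection formula; and then integrate, using $u_D \in L^1(\R)$ and the compact support of $\bar p_2$ for the limits in \ref{cor:actisim:iii}. The only difference is presentational -- you spell out the gluing of $\bar p_2$ at $\pm r_P$ and the $L^1$-domination explicitly, which the paper leaves implicit.
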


\begin{proof}
Due to the regularity $u_D \in C(\R)$, we know that 
\Aone{} holds for all $r>0$. 
This allows us to invoke 
\cref{th:PDStationarityP} and 
points \ref{lem:ps_props:i}, \ref{lem:ps_props:ii}, and \ref{lem:ps_props:iv} 
of \cref{lem:ps_props}
to obtain 
that the function  $\bar p_2 \in L^2(\R)$ in \eqref{eq:pdsysP} possesses
a representative which
is continuous in $\R \setminus \{0\}$, identical zero in $\R \setminus [-r_P, r_P]$,
and regulated on $[-r_P, r_P]$. 
Since $u_D$ is an element of $C(\R)$ and since
$\bar u = \max(0, u_D + (\bar p_2 - \nu_1)/\nu_2)$ holds by \eqref{eq:pdsysP},
the properties of $\bar p_2$ imply that $\bar u$ possesses a representative 
that is continuous in $\R \setminus \{0\}$
and regulated in every neighborhood of the origin. 
As $g_{\bar u}$ arises from $\bar u$ by integration, 
the assertions
 \ref{cor:actisim:i} and \ref{cor:actisim:ii} of the corollary 
 now follow immediately; cf.\  \cref{lem:gudirdiff}. 
 It remains to establish \ref{cor:actisim:iii}.
 To this end, we note that
 $g_{\bar u}\colon \R \to \R$ is trivially 
 nondecreasing and zero at the origin
 by the definition of  $g_{\bar u}$
 and the inequality $\bar u \geq 0$ a.e.\ in $\R$.
 From \eqref{eq:pdsysP} 
 and  the fact that $\bar p_2$ has a compact essential support,
 we further obtain that $\bar u \in L^1(\R)$ holds. Again by the definition of
 $g_{\bar u}$
 and due to the dominated convergence theorem,
 this yields
 \begin{equation*}
 \lim_{t \to \infty}g_{\bar u}(t)
 =
 \lim_{t \to \infty} \int_{(0, \infty)} \mathds{1}_{(0,t)} \bar u \dd s
 =
 \int_{0}^\infty  \bar u \dd s
 \end{equation*}
 and 
  \begin{equation*}
 \lim_{t \to - \infty}g_{\bar u}(t)
 =
 \lim_{t \to \infty} -\int_{(-\infty, 0)} \mathds{1}_{(-t, 0)} \bar u \dd s
 =
 -\int_{-\infty}^0  \bar u  \dd s.
 \end{equation*}
 The limits $g_{\bar u}(-\infty)$ and $g_{\bar u}(\infty)$ thus exist and the 
 proof is complete.
\end{proof}

Note that, from \cref{lem:levelstates} and the stationarity system \eqref{eq:pdsysP} 
(or, alternatively, from the expression for the distributional derivative in 
\eqref{eq:pushypushy}), we obtain that
 $\bar p_2$ can only possess a continuous representative  in the situation 
of \cref{cor:actisim} if $\bar p_1$ satisfies
\[
\int_{\{\bar y \geq 0\}} \bar p_1 \dd x 
+
 \int_{\{\bar y < 0\}}  \bar p_1 \dd x
 =
 \int_{\Omega}  \bar p_1 \dd x
 =
 0.
\]
Since cases in which $\bar p_1$ has zero mean  
are exceptional, this shows that the control
$\bar u = \max(0, u_D + (\bar p_2 - \nu_1)/\nu_2)$ tends to 
possess 
a proper jump at zero
(unless, of course, the parameters 
$u_D$, $\nu_1$, and $\nu_2$ are such that the jump is removed by the projection formula).
In particular, one has to expect that 
the function $g_{\bar u}$ arising from an optimal control $\bar u$ of \eqref{eq:P}
has a proper kink at the origin.
In \cref{sec:9}, we will see that such a nondifferentiability at zero is indeed 
what is typically observed in practice
and that solving a problem of the type \eqref{eq:P}
usually gives rise to a ``ReLU-like'' sigmoidal function
$g_{\bar u}\colon \R \to \R$.
 
Regarding the variable $\nu_1$ in \eqref{eq:pdsysP},
we would like to point out that this parameter promotes sparsity properties 
of the optimal controls $\bar u$ of the problem \eqref{eq:P}.
In fact, using the bound in \cref{th:propS}\ref{th:propS:item:ii},
the form of the adjoint equation for $\bar p_1$ in \eqref{eq:pdsysP},
and the estimate \eqref{eq:randomeq263636}, 
it is easy to check that there exists a constant $C>0$,
which depends only on $\Omega$, $y_D$, $\varepsilon_P$, and the bound $M > 0$ in 
\cref{th:propS}\ref{th:propS:item:ii}, such that 
every $\bar p_2 \in L^2(\R)$ associated with a local solution $\bar u \in L^2(\R)$ of \eqref{eq:P}
satisfies $\|\bar p_2\|_{L^\infty(\R)} \leq C$. Due to the projection formula
in \eqref{eq:pdsysP}, this implies that, in the case $u_D \in L^\infty(\R)$,
there exists a value $\bar \nu_1 > 0$ such 
that \eqref{eq:P} is solved only by $\bar u = 0$ for all $\nu_1 \geq \bar \nu_1$.
In the numerical experiments of \cref{sec:9}, we will 
demonstrate that this effect is also present in practice and that the essential support of 
the optimal controls of \eqref{eq:P} shrinks as $\nu_1$ goes to infinity. 

\section{A Gradient Projection Algorithm and its Convergence Analysis}
\label{sec:8}

In this section, we demonstrate that the differentiability results 
in \cref{sec:6} not only make it possible to derive first-order 
necessary optimality conditions for problems of the type \eqref{eq:P} and \eqref{eq:Pr}
but also to set up numerical solution algorithms 
for the calculation of local minimizers.
To be able to cover the 
control-discrete and the control-continuous case simultaneously,
we henceforth consider the following variant of problem \eqref{eq:Pr}:

\begin{equation}
\label{eq:PrU}
\tag{P$_{r, \UU}$}
\left.
	\begin{aligned}
		\text{Minimize} 
		\quad & F_r(\uu) = \frac12 \|S_r(\uu) - y_D\|_{L^2(\Omega)}^2 
		+ \int_{-r}^r \nu_1  \uu + \frac{\nu_2}{2}( \uu - \uu_D)^2\dd t \\
        \text{w.r.t.}
        \quad &\uu \in \UU,\\
		\text{s.t.} \quad &  \uu \geq 0 \text{ a.e.\  in } (-r,r).
	\end{aligned}~~
\right \}
\end{equation}

We collect our assumptions on the quantities in 
\eqref{eq:PrU} in:

\begin{assumption}[Standing Assumptions for \cref{sec:8}]
\label{ass:standingPrU}
Throughout this section, we assume that:
\begin{itemize}
\item $r$ is a given positive number;
\item $\uu_D$, $E_r$, $S_r$, $U_{P,r}$, and $F_r$ are defined as in \cref{def:ExtZero};
\item $u_D$ satisfies condition \Aone{};
\item $f$ satisfies condition \Atwo{};
\item  $\UU$ is either the Banach space $(L^\infty_G(-r,r), \|\cdot\|_{L^\infty(-r,r)})$
or a finite-dimensional control space of piecewise constant 
finite element functions subordinate to a 
grid $- r = r_0 < r_1 < r_2 < ... < r_N = r$, $N \in \mathbb{N}$, i.e., 
\begin{equation}
\label{eq:UUFEcase}
\UU = 
\left \{
\uu \in L^\infty (-r,r)
\mid 
\uu = \mathrm{const} \text{ a.e.\ on } (r_{n-1}, r_n) ~\forall n=1,...,N
\right \} \subset  L^\infty_G(-r,r).
\end{equation}
\end{itemize}
\end{assumption}

In addition, $u_D$, $y_D$, $\Omega$, $d$, $\nu_1$, $\nu_2$, and $f$
are, of course, still assumed to satisfy the conditions in \cref{ass:standing}. 
Note that, by setting up a numerical algorithm for the solution of 
\eqref{eq:PrU}, we also obtain a solution method for our original identification 
problem \eqref{eq:P} due to \cref{th:BVreg} and the equivalence in \cref{th:truncate}. 
Using \cref{th:BVreg} and the direct method of the calculus of variations,
one further easily checks that \eqref{eq:PrU}  possesses at least one global solution $\bar \uu \in \UU$.
To discuss the first-order necessary optimality condition of 
\eqref{eq:PrU} and the associated gradient projection method in an adequate manner, we introduce:

\begin{definition}[$L^2$-Projection onto $\UU$]
We define $\PP_\UU^{L^2} \colon L^\infty_G(-r,r) \to \UU$ to be 
the function that maps a given $\uu \in L^\infty_G(-r,r)$
to the unique solution $\PP_\UU^{L^2} (\uu)$ of the variational problem 
\[
\PP_\UU^{L^2} (\uu) \in \UU, 
\qquad
\left (
\uu - \PP_\UU^{L^2} (\uu), \zz
\right )_{L^2(-r,r)} =0
\quad 
\forall \zz \in \UU.
\]
\end{definition}
Note that, in the case $\UU = L^\infty_G(-r,r)$, the function $\PP_\UU^{L^2} $
is just the identity map on $L^\infty_G(-r,r)$. 
(In particular, it is not a problem in this case that $\UU =  L^\infty_G(-r,r)$ is not a 
closed subspace of $(L^2(-r,r), \|\cdot\|_{L^2(-r,r)})$ as would normally be
required to define the orthogonal projection in $L^2(-r,r)$.)
If $\UU$ is a finite element space as in \eqref{eq:UUFEcase}, then $\UU$ is 
a closed subspace of $L^2(-r,r)$
and $\PP_\UU^{L^2} \colon L^\infty_G(-r,r) \to \UU$ is the restriction of the orthogonal 
projection in $L^2(-r,r)$ onto $\UU$ to the set
$L^\infty_G(-r,r) \subset L^2(-r,r)$.
As a further preparation for our analysis, we introduce 
an abbreviation for the $L^2$-gradient of the 
reduced objective function $F_r$ of \eqref{eq:PrU}:

\begin{definition}[$L^2$-Gradient of $F_r$]
\label{def:gradient}
Given a control $0 \leq \uu \in \UU$, we define
$\nabla_{L^2} F_r(\uu) \in \UU$ to be the function
\begin{equation}
\label{eq:L2gradformula}
\nabla_{L^2}  F_r(\uu) 
:= 
\PP_\UU^{L^2}
\left(
- \pp_2 + \nu_1 + \nu_2(\uu - \uu_D)
\right ).
\end{equation}
Here, $\pp_2 \in L^\infty_{BV}(-r,r)$ is characterized by the system 
\eqref{eq:psys_1} with $y = S_r(\uu)$.
\end{definition}

Recall that $\pp_2$ is indeed an element of 
$ L^\infty_{BV}(-r,r)$ 
in the situation of \cref{def:gradient}
by \cref{lem:ps_props}.
In combination with the regularity of $\uu$ and assumption \Aone{},
this implies that $- \pp_2 + \nu_1 + \nu_2(\uu - \uu_D) \in L^\infty_G(-r,r)$
holds and that the right-hand side of \eqref{eq:L2gradformula} is well defined. 
Due to the properties
of $\smash{\PP_\UU^{L^2}}$ and
the identity  \eqref{eq:pformula-12}, we further have 
\begin{equation*}
\begin{aligned}
F_r'(\uu)\zz  &= 
\left (
- \pp_2 + \nu_1 + \nu_2(\uu - \uu_D), \zz
\right )_{L^2(-r,r)}
\\
&=
\left (
\PP_\UU^{L^2}\left (
- \pp_2 + \nu_1 + \nu_2(\uu - \uu_D) \right ), \zz
\right )_{L^2(-r,r)}
=
\left (
\nabla_{L^2} F_r(\uu) , \zz
\right )_{L^2(-r,r)}
\qquad \forall  \zz \in \UU\qquad \forall\, 0 \leq \uu \in \UU,
\end{aligned}
\end{equation*}
where $F_r'(\uu) \in L^\infty_G(-r,r)^*$  denotes the Fréchet derivative 
of $F_r\colon L^\infty_G(-r,r) \cap U_{P,r} \to \R$ at $\uu$
in the sense of  \cref{cor:Fr_regulated}.  
The function $\nabla_{L^2} F_r(\uu) $ is thus indeed the $L^2$-gradient of $F_r$ at $\uu$. 
For later use, we note:

\begin{lemma}[Boundedness of Adjoint Variables]
\label{lemma:boundedadjointvars}
There exists a constant $C>0$ such that,
for all $0 \leq \uu \in \UU$ with associated adjoint functions 
$p_1 \in H_0^1(\Omega) \cap H^2(\Omega)$ 
and $\pp_2 \in L^\infty_{BV}(-r,r)$ as in \eqref{eq:psys_1}, it holds
\[
\left \|p_1 \right \|_{H^1(\Omega)}
+
\left \|\pp_2 \right \|_{L^\infty(-r,r)} \leq C.
\]
\end{lemma}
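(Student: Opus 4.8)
The claim is a uniform bound on the two adjoint variables $p_1$ and $\pp_2$ associated with nonnegative controls $\uu \in \UU$. My plan is to bound $p_1$ first and then to transfer that bound to $\pp_2$ via its defining formula in \eqref{eq:psys_1}. For the $p_1$-estimate, I would test the adjoint PDE $-\Delta p_1 + \EE_r(\uu)(\bar y) p_1 = y - y_D$ against $p_1$ itself. The key structural fact is that, since $\uu \geq 0$ a.e.\ in $(-r,r)$ and $r$ is fixed, the coefficient $\EE_r(\uu)(y)$ satisfies $\EE_r(\uu)(y) \geq -\varepsilon_P$ a.e.\ in $\Omega$ (this was already observed in the proof of \cref{prop:GatdiffSr}), so the coercivity estimate \eqref{eq:randomeq263636} applies verbatim. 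This yields
\begin{equation*}
\frac{c_P - \varepsilon_P}{2\max(1,c_P)} \|p_1\|_{H^1(\Omega)}^2
\leq
\langle -\Delta p_1 + \EE_r(\uu)(y) p_1, p_1 \rangle_{H_0^1(\Omega)}
=
(y - y_D, p_1)_{L^2(\Omega)}
\leq
\|y - y_D\|_{L^2(\Omega)} \|p_1\|_{H^1(\Omega)}.
\end{equation*}
Dividing through gives $\|p_1\|_{H^1(\Omega)} \leq C_1 \|y - y_D\|_{L^2(\Omega)}$ with $C_1$ depending only on $c_P, \varepsilon_P$. Since $y = S_r(\uu) \in S(U_P)$, the uniform $H^2$-bound in \cref{th:propS}\ref{th:propS:item:ii} gives $\|y\|_{L^2(\Omega)} \leq M$, whence $\|y - y_D\|_{L^2(\Omega)} \leq M + \|y_D\|_{L^2(\Omega)}$, a constant independent of $\uu$. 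This establishes the bound on $\|p_1\|_{H^1(\Omega)}$.

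For the $\pp_2$-bound I would use that $p_1 \in H_0^1(\Omega) \cap H^2(\Omega) \hookrightarrow C(\overline{\Omega})$, so in fact I want an $L^\infty(\Omega)$-bound on $p_1$ rather than only an $H^1$-bound. To get this cleanly and uniformly, I would apply the same bootstrapping argument already used several times in the excerpt (cf.\ \eqref{eq:randomeq273hdhe73bsu4eb}): from $-\Delta p_1 = (y - y_D) - \EE_r(\uu)(y) p_1$ and the regularity estimate \eqref{eq:H2_reg_res}, together with $\|\EE_r(\uu)(y)\|_{L^\infty(\Omega)} \leq \|\uu\|_{L^\infty(-r,r)}$ — which is finite but \emph{not} uniform in $\uu$ — one obtains an $H^2$-bound that a priori depends on $\|\uu\|_{L^\infty}$. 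This is the main obstacle: the control $\uu$ enters the zeroth-order coefficient, so a naive $H^2$-estimate does not give a uniform constant. The fix is to absorb the coefficient using the coercivity/monotonicity sign structure rather than its magnitude: since $\EE_r(\uu)(y) \geq -\varepsilon_P$, the operator $v \mapsto -\Delta v + \EE_r(\uu)(y) v$ is uniformly coercive, and I would instead estimate $\|p_1\|_{L^\infty(\Omega)}$ directly by a Stampacchia-type truncation argument (as in the proof of \cref{lem:PoissonEstimate}) applied to the adjoint equation, which yields $\|p_1\|_{L^\infty(\Omega)} \leq C_2 \|y - y_D\|_{L^2(\Omega)}$ for a constant $C_2$ depending only on $\Omega, d, c_P, \varepsilon_P$ through the embedding and coercivity constants, but crucially independent of $\|\uu\|_{L^\infty}$.

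Once a uniform bound $\|p_1\|_{L^\infty(\Omega)} \leq C_2(M + \|y_D\|_{L^2(\Omega)})$ is in hand, the bound on $\pp_2$ is immediate from its definition in \eqref{eq:psys_1}: for every $s \geq 0$,
\begin{equation*}
|\pp_2(s)| = \left| \int_{\{y \geq s\}} p_1 \dd x \right| \leq \|p_1\|_{L^\infty(\Omega)} \lambda^d(\Omega),
\end{equation*}
and the analogous estimate holds for $s < 0$, so $\|\pp_2\|_{L^\infty(-r,r)} \leq \|p_1\|_{L^\infty(\Omega)} \lambda^d(\Omega)$. Combining the two pieces gives the asserted bound
\begin{equation*}
\|p_1\|_{H^1(\Omega)} + \|\pp_2\|_{L^\infty(-r,r)} \leq C
\end{equation*}
with $C$ depending only on $\Omega$, $d$, $y_D$, $c_P$, $\varepsilon_P$, and the constant $M$ from \cref{th:propS}\ref{th:propS:item:ii}, but independent of the particular control $\uu$. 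I expect the only delicate point to be making the $L^\infty(\Omega)$-estimate for $p_1$ genuinely uniform in $\uu$; everything else is a direct application of coercivity, the uniform state bound, and the explicit formula for $\pp_2$.
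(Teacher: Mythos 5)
Your proposal is correct, and its first half coincides with the paper's argument: test the adjoint equation with $p_1$, use the uniform coercivity of $v \mapsto -\Delta v + \EE_r(\uu)(y)v$ (the paper actually uses the slightly stronger fact that $\EE_r(\uu)(y) \geq 0$ holds for $\uu \geq 0$, rather than only $\geq -\varepsilon_P$), and bound $\|y - y_D\|_{L^2(\Omega)}$ uniformly via the state bound (the paper invokes $\|y\|_\infty \leq r_P$ from \cref{lem:PoissonEstimate}; your use of the $H^2$-bound $M$ from \cref{th:propS} works just as well). Where you diverge is the transfer to $\pp_2$: you treat the lack of a uniform $L^\infty(\Omega)$-bound on $p_1$ as the crux and propose a Stampacchia truncation argument to obtain one. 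That argument does go through (for $d \leq 3$ the right-hand side $y - y_D \in L^2(\Omega)$ satisfies the integrability threshold $q > d/2$, and the truncation constants depend only on the coercivity constant, $\Omega$, and $\|y-y_D\|_{L^2(\Omega)}$), but it is an unnecessary detour: since $\pp_2(s)$ is by \eqref{eq:psys_1} an integral of $p_1$ over a subset of the \emph{bounded} domain $\Omega$, Cauchy--Schwarz gives
\begin{equation*}
|\pp_2(s)| \leq \int_\Omega |p_1| \dd x \leq \lambda^d(\Omega)^{1/2}\,\|p_1\|_{L^2(\Omega)},
\end{equation*}
so the $H^1$-bound you already have suffices; this is exactly how the paper concludes. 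The "delicate point" you flagged is therefore a phantom — no $L^\infty$- or $H^2$-estimate on $p_1$ is needed. What your heavier route buys is a genuinely stronger intermediate conclusion (a uniform $\|p_1\|_{L^\infty(\Omega)}$-bound independent of $\|\uu\|_{L^\infty(-r,r)}$), which is not required here but could be of independent use; what the paper's route buys is a three-line proof.
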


\begin{proof}
Let $0 \leq \uu \in \UU$ with associated $y, p_1 \in H_0^1(\Omega) \cap H^2(\Omega)$ 
and $\pp_2 \in L^\infty_{BV}(-r,r)$ as in \eqref{eq:psys_1} be given. 
From \cref{lem:PoissonEstimate},
we obtain that  $\|y\|_{L^\infty(\Omega)} \leq r_P$ holds.
In combination with the PDE characterizing $p_1$ in \eqref{eq:psys_1},
 the fact that $0 \leq \EE_r(\uu)(y) \in L^\infty(\Omega)$ holds, 
and standard a-priori estimates for elliptic equations, 
this implies that there exists a constant $C_1>0$ 
satisfying $\|p_1\|_{H^1(\Omega)} \leq C_1$.
Due to the definition of $\pp_2$ and the boundedness of $\Omega$,
we may now deduce that there 
exists a constant $C_2 > 0$ satisfying 
$
\| \pp_2  \|_{L^\infty(-r,r)}
\leq
C_2 \|p_1\|_{L^2(\Omega)}
\leq
C_1 C_2
$.
In summary, this yields $\smash{\left \|p_1 \right \|_{H^1(\Omega)}
+
 \|\pp_2 \|_{L^\infty(-r,r)} \leq C_1( 1+ C_2)}$
which proves the claim. 
\end{proof}

\begin{lemma}[Continuity of Gradients]
\label{lemma:contigrads}
Suppose that a sequence $\{\uu_n\} \subset \UU$ satisfying
$0 \leq \uu_n \to \uu$ in $L^\infty(-r,r)$
for some $\uu \in L^\infty(-r,r)$ is given. Then it holds $0 \leq \uu \in \UU$ and 
$\nabla_{L^2} F_r(\uu_n) \to \nabla_{L^2} F_r(\uu)$ in $L^2(-r,r)$.
\end{lemma}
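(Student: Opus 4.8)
The plan is to first settle the membership statement and then reduce the gradient convergence to a convergence statement for the adjoint variable $\pp_2$. For the first assertion, since each $\uu_n$ is nonnegative and $\uu_n \to \uu$ in $L^\infty(-r,r)$, the limit satisfies $\uu \geq 0$ a.e.; and $\uu \in \UU$ holds because $\UU$ is closed in $(L^\infty(-r,r), \|\cdot\|_{L^\infty(-r,r)})$ --- in the finite element case as a finite-dimensional subspace, and in the case $\UU = L^\infty_G(-r,r)$ by \cref{cor:Gbanach}. For the gradient convergence, recall from \eqref{eq:L2gradformula} that $\nabla_{L^2}F_r(\uu) = \PP_\UU^{L^2}(g)$ with $g := -\pp_2 + \nu_1 + \nu_2(\uu - \uu_D)$, and analogously $g_n := -\pp_{2,n} + \nu_1 + \nu_2(\uu_n - \uu_D)$. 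Since $\PP_\UU^{L^2}$ is an orthogonal projection and hence nonexpansive in $L^2(-r,r)$, it suffices to prove $g_n \to g$ in $L^2(-r,r)$. The term $\nu_2(\uu_n - \uu_D) \to \nu_2(\uu - \uu_D)$ in $L^2(-r,r)$ is immediate from $\uu_n \to \uu$ in $L^\infty(-r,r)$ and the boundedness of $(-r,r)$, so everything reduces to establishing $\pp_{2,n} \to \pp_2$ in $L^2(-r,r)$, where $\pp_{2,n}$ is associated via \eqref{eq:psys_1} with $y_n := S_r(\uu_n)$ and $p_{1,n}$.

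The next step is to pass to the limit in the states and first adjoints. First, $y_n = S_r(\uu_n) \to y := S_r(\uu)$ in $H^2(\Omega)$ by \cref{th:propS}\ref{th:propS:item:i} (strong, hence weak, $L^2$-convergence of the controls), and therefore also in $C(\overline{\Omega})$ by the Sobolev embeddings. I would then show $p_{1,n} \to p_1$ in $H^1(\Omega)$ for the first adjoints from \eqref{eq:psys_1}. Subtracting the two adjoint equations, the difference $\mu_n := p_{1,n} - p_1$ solves $-\Delta \mu_n + \EE_r(\uu_n)(y_n)\mu_n = (y_n - y) - (\EE_r(\uu_n)(y_n) - \EE_r(\uu)(y))p_1$, and the operator on the left is uniformly coercive because $\EE_r(\uu_n)(y_n) \geq -\varepsilon_P$, exactly as in \eqref{eq:randomeq263636}; by Lax--Milgram it thus suffices to control the right-hand side in $L^2(\Omega)$. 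Here the coefficient difference is split as in the proof of \cref{th:Sregulated}: the part $\EE_r(\uu_n)(y_n) - \EE_r(\uu)(y_n)$ is bounded pointwise a.e. by $\|\uu_n - \uu\|_{L^\infty(-r,r)} \to 0$ through \cref{lem:RepStab} and \cref{def:EEuReal}, while $\EE_r(\uu)(y_n) - \EE_r(\uu)(y) \to 0$ a.e. in $\Omega$ because $\EE_r(\uu)(\,\cdot\,+)$ is continuous off an at most countable set, $y_n \to y$ uniformly, and all level sets of $y$ are null by \cref{lem:levelstates}; dominated convergence (using the essential boundedness of $p_1$) then yields $L^2$-convergence of the right-hand side. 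Hence $\mu_n \to 0$ in $H^1(\Omega)$, and in particular $p_{1,n} \to p_1$ in $L^1(\Omega)$.

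It then remains to go from $p_1$-convergence to $\pp_2$-convergence, which I would do by combining pointwise convergence with a uniform bound. For fixed $s \geq 0$, writing $\pp_{2,n}(s) - \pp_2(s) = \int_\Omega \mathds{1}_{\{y_n \geq s\}}(p_{1,n} - p_1)\dd x + \int_\Omega (\mathds{1}_{\{y_n \geq s\}} - \mathds{1}_{\{y \geq s\}})p_1 \dd x$, the first integral is bounded by $\lambda^d(\Omega)^{1/2}\|p_{1,n} - p_1\|_{L^2(\Omega)} \to 0$ uniformly in $s$, and the second tends to zero because $\mathds{1}_{\{y_n \geq s\}} \to \mathds{1}_{\{y \geq s\}}$ a.e. in $\Omega$ (the level set $\{y = s\}$ being null by \cref{lem:levelstates} and $y_n \to y$ uniformly) together with dominated convergence; the case $s < 0$ is analogous. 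Since \cref{lemma:boundedadjointvars} gives $\|\pp_{2,n}\|_{L^\infty(-r,r)} \leq C$ uniformly in $n$, the constant $C$ dominates on the finite interval $(-r,r)$, so the dominated convergence theorem upgrades the pointwise convergence $\pp_{2,n}(s) \to \pp_2(s)$ to $\pp_{2,n} \to \pp_2$ in $L^2(-r,r)$. Together with the reduction of the first paragraph, this proves the claim.

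The main obstacle is the convergence $p_{1,n} \to p_1$, specifically the passage to the limit in the merely regulated coefficient $\EE_r(\uu)(y_n)$; this is precisely the delicate point already encountered in \cref{th:Sregulated}, and it is what forces \Atwo{} (through \cref{lem:levelstates}) so that the compositions in \cref{def:EEuReal} are well defined and stable under uniform perturbations of the state. An alternative that avoids explicitly tracking $\pp_2$ is to observe that $g_n$ and $g$ are the $L^2(-r,r)$-Riesz representatives of $F_r'(\uu_n)$ and $F_r'(\uu)$ and to invoke the $L^2$-strengthening of the continuous Fréchet differentiability recorded in the remark following \cref{cor:Fr_regulated} (itself a consequence of \cref{th:Sregulated}\ref{th:Sregulated:i}), after which the nonexpansiveness of $\PP_\UU^{L^2}$ again delivers the result.
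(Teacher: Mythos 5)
Your proof is correct, but its core takes a genuinely different route from the paper's. The paper never returns to the adjoint PDE: using the representation of $F_r'(\uu_n)$ from \cref{cor:Fr_regulated}, it identifies $\pp_{2,n} = -S_r'(\uu_n)^*\left(S_r(\uu_n) - y_D\right)$ and obtains $\pp_{2,n} \to \pp_2$ in $L^2(-r,r)$ in one stroke from the operator-norm convergence $S_r'(\uu_n) \to S_r'(\uu)$ of \cref{th:Sregulated}\ref{th:Sregulated:i} combined with $S_r(\uu_n) \to S_r(\uu)$ from \cref{th:propS}\ref{th:propS:item:i}; the properties of $\PP_\UU^{L^2}$ then finish exactly as in your first paragraph. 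You instead redo the stability analysis at the level of the system \eqref{eq:psys_1}: uniform coercivity and Lax--Milgram for $p_{1,n} - p_1$, the coefficient splitting via \cref{lem:RepStab} together with the null-level-set property of \cref{lem:levelstates}, and then pointwise convergence of $\pp_{2,n}$ upgraded to $L^2(-r,r)$-convergence by the uniform bound of \cref{lemma:boundedadjointvars} and dominated convergence. All of these steps are sound (the compositions $\EE_r(\uu_n)(y_n)$ are well defined under \Atwo{}, the operators are uniformly coercive since $\uu_n \geq 0$, and $p_1 \in H^2(\Omega) \hookrightarrow L^\infty(\Omega)$ justifies the domination), but they duplicate for the adjoint equation precisely the argument the paper carried out once in the proof of \cref{th:Sregulated}\ref{th:Sregulated:i} and then reuses abstractly. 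What your route buys is more information --- convergence $p_{1,n} \to p_1$ in $H^1(\Omega)$ and convergence of $\pp_{2,n}(s)$ at every point $s$, not merely in $L^2(-r,r)$ --- at the cost of length; the paper's route is shorter and keeps all delicate PDE estimates in one place. Note also that the ``alternative'' you sketch at the end is exactly the paper's proof, and one small imprecision in your reduction: for $\UU = L^\infty_G(-r,r)$ the map $\PP_\UU^{L^2}$ is not an orthogonal projection onto a closed subspace of $L^2(-r,r)$ but simply the identity, which is of course still nonexpansive, so the reduction stands.
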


\begin{proof}
Assume that a sequence $\{\uu_n\} \subset \UU$  as in the assertion of the lemma 
is given. Then it follows from \cref{cor:Gbanach} and the finite-dimensionality of 
spaces of the form \eqref{eq:UUFEcase} that $0 \leq \uu \in \UU$. 
From \cref{th:Sregulated}, \cref{cor:Fr_regulated}, and the structure of the objective $F_r$,
we further obtain that 
\begin{equation}
\label{eq:randomid2737h-2}
\left (
S_r(\uu_n) - y_D, S_r'(\uu_n) \zz
\right )_{L^2(\Omega)}
+
\int_{-r}^r \nu_1  \zz +  \nu_2 ( \uu_n - \uu_D) \zz \dd t
=
F_r'(\uu_n) \zz = 
\left (
- \pp_{2, n} + \nu_1 + \nu_2(\uu_n - \uu_D),   \zz
\right )_{L^2(-r,r)}
\end{equation}
holds for all $n$ and all $\zz \in L^\infty_G(-r,r)$,
where $S_r'(\uu_n) \in \LL(L^2(-r,r), H_0^1(\Omega) \cap H^2(\Omega))$
denotes the Gâteaux derivative of $S_r$ at $\uu_n$ in the sense of 
\cref{th:Sregulated} and $\pp_{2,n} \in L^\infty_{BV}(-r,r)$
the function that is associated with $\uu_n$ via the system \eqref{eq:psys_1}.
Note that, as  $S_r'(\uu_n)$ is a linear and continuous operator 
from $L^2(-r,r)$ to  $H_0^1(\Omega) \cap H^2(\Omega)$ for all $n$, 
the equality between the left- and the right-hand side of \eqref{eq:randomid2737h-2} 
remains true for all test functions $\zz \in L^2(-r,r)$. 
Since $S_r'(\uu_n) \to S_r'(\uu)$ holds in $ \LL(L^2(-r,r), H_0^1(\Omega) \cap H^2(\Omega))$
by \cref{th:Sregulated}\ref{th:Sregulated:i}
and since $S_r(\uu_n) \to S_r(\uu)$ holds in $H_0^1(\Omega) \cap H^2(\Omega)$
by \cref{th:propS}\ref{th:propS:item:i}, this allows us to deduce that 
\begin{equation}
\label{eq:randomeq73747}
 \pp_{2,n}
=
- S_r'(\uu_n)^*\left (
S_r(\uu_n) - y_D
\right )
\to
- S_r'(\uu )^*\left (
S_r(\uu) - y_D \right )
\end{equation}
holds strongly in $L^2(-r,r)$, where a star denotes an adjoint operator. 
Using the exact same reasoning as in \eqref{eq:randomid2737h-2},
one obtains that the right-hand side of \eqref{eq:randomeq73747} 
is identical to the function $\pp_2 \in L^\infty_{BV}(-r,r)$ associated with $\uu$
via the system \eqref{eq:psys_1}. 
We thus have $\pp_{2,n} \to \pp_2$ in $L^2(-r,r)$. 
The assertion of the lemma now follows 
immediately from the definition of the $L^2$-gradient of $F_r$
and the properties of $\PP_\UU^{L^2}$. This completes the proof. 
\end{proof}

Using the $L^2$-gradient, we can formulate the first-order necessary optimality
condition of \eqref{eq:PrU} as follows:

\begin{theorem}[First-Order Necessary Optimality Condition for \eqref{eq:PrU}]
Suppose that $0 \leq \bar \uu \in \UU$ is a local solution of \eqref{eq:PrU}.
Then it holds
\begin{equation}
\label{eq:1stOrdPrU}
\nabla_{L^2} F_r(\bar \uu)
\begin{cases}
\geq 0 & \text{ a.e.\ in } \{\bar \uu = 0\},
\\
= 0 & \text{ a.e.\ in } \{\bar \uu > 0\}.
\end{cases}
\end{equation}
\end{theorem}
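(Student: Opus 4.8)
The plan is to turn local optimality into a variational inequality for the $L^2$-gradient and then read off the pointwise relations \eqref{eq:1stOrdPrU}. First I would note that $\bar\uu\in\UU\subset L^\infty_G(-r,r)$ together with $\bar\uu\geq 0\geq-\varepsilon_P$ gives $\bar\uu\in L^\infty_G(-r,r)\cap U_{P,r}$, so that $F_r$ is Fréchet differentiable at $\bar\uu$ in the sense of \cref{cor:Fr_regulated} (here \Aone{} and \Atwo{} from \cref{ass:standingPrU} enter). Since the admissible set $C:=\{\uu\in\UU\mid\uu\geq0\text{ a.e.}\}$ is convex, for every $\uu\in C$ the segment $\bar\uu+s(\uu-\bar\uu)=(1-s)\bar\uu+s\uu$ stays in $C$ for $s\in[0,1]$, and because $\uu-\bar\uu\in\UU\subset L^\infty(-r,r)$ the increment $s(\uu-\bar\uu)$ tends to $0$ in $L^\infty(-r,r)$ (and in $L^2(-r,r)$) as $s\downarrow0$, so it lies within the radius of local optimality for all sufficiently small $s$. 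Dividing $F_r(\bar\uu+s(\uu-\bar\uu))-F_r(\bar\uu)\geq0$ by $s$ and letting $s\downarrow0$, the Fréchet differentiability yields $F_r'(\bar\uu)(\uu-\bar\uu)\geq0$. Invoking the gradient identity $F_r'(\bar\uu)\zz=(\nabla_{L^2}F_r(\bar\uu),\zz)_{L^2(-r,r)}$ for $\zz\in\UU$ established immediately before the theorem, and using $\uu-\bar\uu\in\UU$, I obtain the variational inequality
\[
\left(\nabla_{L^2}F_r(\bar\uu),\,\uu-\bar\uu\right)_{L^2(-r,r)}\geq0\qquad\forall\,0\leq\uu\in\UU .
\]

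Writing $g:=\nabla_{L^2}F_r(\bar\uu)\in\UU$, I would then argue pointwise. In the continuous case $\UU=L^\infty_G(-r,r)$, the projection $\PP_\UU^{L^2}$ is the identity, so $g\in L^\infty_G(-r,r)\subset L^2(-r,r)$; moreover $\{0\leq\uu\in L^\infty_G(-r,r)\}$ is dense in $L^2_+(-r,r)$ (truncate and mollify, which preserves nonnegativity and yields continuous, hence regulated, functions), so by $L^2$-continuity of the functional $\uu\mapsto(g,\uu-\bar\uu)_{L^2(-r,r)}$ the inequality above extends to all $0\leq\uu\in L^2(-r,r)$. Testing with $\uu=\bar\uu+\mathds{1}_A$ for $A:=\{\bar\uu=0\}\cap\{g<0\}$ (admissible, since $\uu=1$ on $A$ and $\uu=\bar\uu\geq0$ elsewhere) forces $\int_A g\geq0$ and hence $A$ to be a null set, i.e.\ $g\geq0$ a.e.\ on $\{\bar\uu=0\}$; testing with $\uu=\bar\uu\pm\delta\mathds{1}_B$ for arbitrary measurable $B\subset\{\bar\uu\geq\delta\}$ (admissible for each $\delta>0$) gives $\int_B g=0$, so $g=0$ a.e.\ on $\{\bar\uu\geq\delta\}$, and letting $\delta\downarrow0$ yields $g=0$ a.e.\ on $\{\bar\uu>0\}$. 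This is precisely \eqref{eq:1stOrdPrU}.

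In the finite-dimensional case \eqref{eq:UUFEcase}, both $\bar\uu$ and $g$ are constant on each cell $(r_{n-1},r_n)$ with values $\bar u_n,g_n$; choosing in the variational inequality controls $\uu$ that coincide with $\bar\uu$ off a single cell decouples it into $g_n(u_n-\bar u_n)\geq0$ for all $u_n\geq0$ and each $n$. On cells with $\bar u_n>0$ the admissible choices $u_n=\bar u_n\pm\epsilon$ force $g_n=0$, while on cells with $\bar u_n=0$ one gets $g_nu_n\geq0$ for all $u_n\geq0$, hence $g_n\geq0$; as $\{\bar\uu=0\}$ and $\{\bar\uu>0\}$ are, up to a null set, unions of such cells, this again gives \eqref{eq:1stOrdPrU}. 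I expect the only genuinely delicate point to be the passage from the variational inequality, posed only over the restricted test space $\UU$, to the pointwise almost-everywhere statement in the continuous case: since indicator functions of measurable sets need not be regulated, one cannot test with them directly and must first extend the inequality to the full cone $L^2_+(-r,r)$ by density and $L^2$-continuity of $g$. Everything else is a routine complementarity argument, with \cref{lem:ps_props} and \cref{cor:Fr_regulated} guaranteeing that all objects in the variational inequality are well defined.
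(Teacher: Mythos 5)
Your proposal is correct, and its first half coincides with the paper's argument: both convert local optimality, via the Fréchet differentiability of \cref{cor:Fr_regulated} and the gradient identity following \cref{def:gradient}, into the variational inequality $\left(\nabla_{L^2}F_r(\bar\uu),\uu-\bar\uu\right)_{L^2(-r,r)}\geq 0$ for all $0\leq\uu\in\UU$. Where the two proofs genuinely part ways is in how they extract pointwise information from this inequality, whose test space is restricted to $\UU$. The paper argues ``analogously to the proof of \cref{th:PDStationarityPr}'': it views the inequality as a variational inequality with the affine, strongly monotone operator $\tilde\uu\mapsto\PP_\UU^{L^2}\left(-\pp_2+\nu_1-\nu_2\uu_D\right)+\nu_2\tilde\uu$ (the strong monotonicity coming from the $\nu_2$-term), deduces that it has at most one solution, verifies by a pointwise sign argument that the explicit candidate given by the max-formula solves it \emph{within} $\UU\cap L^2_+(-r,r)$ (this is where regulatedness, respectively piecewise constancy, of the candidate matters), and concludes the projection identity $\bar\uu=\max\left(0,\bar\uu-\nabla_{L^2}F_r(\bar\uu)\right)$, from which \eqref{eq:1stOrdPrU} follows by distinguishing cases. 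You instead enlarge the test space: in the case $\UU=L^\infty_G(-r,r)$ you use density of nonnegative regulated functions in $L^2_+(-r,r)$ together with $L^2$-continuity of the linear functional to extend the inequality to the whole cone, and then test with indicators $\mathds{1}_A$ of measurable sets — which, as you rightly point out, is exactly the step that is not directly available over $\UU$, since indicators of general measurable sets are not regulated; in the finite-element case \eqref{eq:UUFEcase} you decouple cell-wise. Both routes are valid and of comparable length. Your density-plus-indicator argument is the more elementary one and yields the complementarity conditions \eqref{eq:1stOrdPrU} directly; the paper's uniqueness-plus-candidate argument avoids any density/approximation step, reuses machinery already set up for \cref{th:PDStationarityPr}, and produces as a byproduct the fixed-point form $\bar\uu=\max\left(0,\bar\uu-\nabla_{L^2}F_r(\bar\uu)\right)$, which is precisely the structure exploited by the update rule of \cref{alg:gradproj}.
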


\begin{proof}
From \cref{cor:Fr_regulated} and \cref{def:gradient}, 
we obtain (analogously to the proof of \cref{th:PDStationarityPr})
that $\bar \uu \in \UU$ satisfies 
\begin{equation*}
\bar \uu \in \UU \cap L^2_+(-r,r),
\qquad 
\left (
\nabla_{L^2} F_r(\bar \uu), \uu - \bar \uu
\right )_{L^2(-r,r)} \geq 0
\quad \forall \uu \in  \UU \cap L^2_+(-r,r).
\end{equation*}
Again analogously to the proof of  \cref{th:PDStationarityPr},
one checks that, 
for both the case $\UU = L^\infty_G(-r,r)$ and the case 
where $\UU$ is of the form  \eqref{eq:UUFEcase},
the above variational inequality implies
$
 \bar \uu = 
 \max\left (
 0, \bar \uu - \nabla_{L^2} F_r(\bar \uu)
 \right )
$
a.e.\ in $(-r,r)$. Using a distinction of cases, 
\eqref{eq:1stOrdPrU} now follows immediately. 
\end{proof}

Motivated by the necessary optimality condition \eqref{eq:1stOrdPrU}, 
we introduce the following measures for the degree of stationarity
of an admissible control:

\begin{definition}[Stationarity Indicators]
\label{def:statindicators}
Given a control $0 \leq \uu \in \UU$  
and a number  $\epsilon > 0$, 
we define 
\[
\Theta_0(\uu)
:=
\left \|
\nabla_{L^2} F_r(\uu)\mathds{1}_{\{\uu > 0\}}
+
\min\left (0, \nabla_{L^2} F_r(\uu)\right ) \mathds{1}_{\{\uu = 0\}}
\right \|_{L^2(-r,r)}
\]
and
\[
\Theta_{\epsilon}(\uu)
:=
\left (
\int_{-r}^r 
\min\left (
\frac{\uu}{\epsilon}, \nabla_{L^2} F_r(\uu)
\right )
\nabla_{L^2} F_r(\uu)
\dd t
\right 
)^{1/2}.
\]
Here, the min-function appearing in the integral/the norm acts by superposition. 
\end{definition}

Using the definitions of $\Theta_0$ and $\Theta_\epsilon$, one easily 
establishes:

\begin{lemma}[Properties of the Stationarity Measures]
\label{lemma:propertiesTheta}
Let $0 \leq \uu \in \UU$ be given. Then the following is true:
\begin{enumerate}[label=\roman*)]
\item\label{lemma:propertiesTheta:i} $\uu$ satisfies the first-order condition \eqref{eq:1stOrdPrU} if and only if $\Theta_0(\uu) = 0$;
\item\label{lemma:propertiesTheta:ii} $\uu$ satisfies the first-order condition \eqref{eq:1stOrdPrU} if and only if 
$\Theta_\epsilon(\uu) = 0$ for one (and thus all) $\epsilon > 0$;
\item\label{lemma:propertiesTheta:iii} It holds 
$0 \leq \Theta_{\epsilon_1}(\uu) \leq  \Theta_{\epsilon_2}(\uu)\leq \Theta_0(\uu) $ for all $\epsilon_1 \geq \epsilon_2 > 0$;
\item\label{lemma:propertiesTheta:iv} It holds $\Theta_\epsilon(\uu)  \to \Theta_0(\uu)$ for $\epsilon \to 0$. 
\end{enumerate}
\end{lemma}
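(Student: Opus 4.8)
The plan is to reduce all four assertions to an elementary pointwise analysis of the integrand defining $\Theta_\epsilon$. Throughout, I abbreviate $g := \nabla_{L^2} F_r(\uu) \in \UU$ and decompose the interval $(-r,r)$ into the measurable sets $A := \{\uu > 0\}$ and $B := \{\uu = 0\}$. For part \ref{lemma:propertiesTheta:i}, I would argue directly from the definition: $\Theta_0(\uu) = 0$ holds precisely when $g\,\mathds{1}_A + \min(0,g)\,\mathds{1}_B = 0$ a.e.\ in $(-r,r)$, and since these two summands have disjoint supports, this identity is equivalent to $g = 0$ a.e.\ on $A$ together with $\min(0,g) = 0$, i.e.\ $g \geq 0$, a.e.\ on $B$. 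That is exactly the first-order condition \eqref{eq:1stOrdPrU}.

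For the remaining parts, the central tool is the behaviour of the scalar map $h_c(\tau) := \min(\tau, c)\,c$ for $\tau \geq 0$ and $c \in \R$. A routine distinction of cases based on the sign of $c$ and the ordering of $\tau$ and $c$ shows that $h_c(\tau) \geq 0$ for all $\tau \geq 0$, that $\tau \mapsto h_c(\tau)$ is nondecreasing, and that $h_c(\tau) \leq c^2$ with equality whenever $\tau \geq c$ or $c \leq 0$. Evaluating with $c = g(t)$ and $\tau = \uu(t)/\epsilon \geq 0$ identifies the integrand of $\Theta_\epsilon(\uu)^2$ as pointwise equal to $h_{g(t)}(\uu(t)/\epsilon)$, which is therefore nonnegative; this gives the first inequality in \ref{lemma:propertiesTheta:iii}. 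The monotonicity of $h_c$ in its argument, combined with $\uu/\epsilon_1 \leq \uu/\epsilon_2$ for $\epsilon_1 \geq \epsilon_2 > 0$, yields the middle inequality. For the bound $\Theta_\epsilon(\uu) \leq \Theta_0(\uu)$ I would split the integral over $A$ and $B$: on $B$ one has $\uu/\epsilon = 0$ and the algebraic identity $\min(0,g)\,g = \min(0,g)^2$, so the $B$-contributions to $\Theta_\epsilon(\uu)^2$ and $\Theta_0(\uu)^2$ coincide exactly, while on $A$ the estimate $h_{g}(\uu/\epsilon) \leq g^2$ gives the inequality after integration.

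Part \ref{lemma:propertiesTheta:ii} is then immediate from the nonnegativity of the integrand: $\Theta_\epsilon(\uu) = 0$ is equivalent to $h_{g(t)}(\uu(t)/\epsilon) = 0$ a.e. The pointwise description of $h_c$ shows that on $A$, where $\uu/\epsilon > 0$, vanishing forces $g = 0$, whereas on $B$ it forces $\min(0,g) = 0$, i.e.\ $g \geq 0$. Since neither of these conditions involves $\epsilon$, the vanishing holds for one value of $\epsilon > 0$ if and only if it holds for all, and the resulting condition is again precisely \eqref{eq:1stOrdPrU}. For part \ref{lemma:propertiesTheta:iv} I would pass to the limit $\epsilon \to 0$ under the integral: as $\epsilon \downarrow 0$ the argument $\uu/\epsilon$ increases to $+\infty$ on $A$, so by the monotonicity of $h_c$ the integrand increases monotonically to $g^2$ on $A$ and remains equal to $\min(0,g)^2$ on $B$. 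By the monotone convergence theorem, $\Theta_\epsilon(\uu)^2 \to \int_A g^2 \dd t + \int_B \min(0,g)^2 \dd t = \Theta_0(\uu)^2$, and hence $\Theta_\epsilon(\uu) \to \Theta_0(\uu)$.

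There is no substantial obstacle here; the entire argument is elementary. The only step that requires a little care is bookkeeping the sign cases in the analysis of $h_c$, and in particular noticing the exact identity $\min(0,g)\,g = \min(0,g)^2$ on $B$ -- it is this equality (rather than a mere inequality) that makes the comparison with $\Theta_0$ in \ref{lemma:propertiesTheta:iii} and the limit in \ref{lemma:propertiesTheta:iv} align without any slack.
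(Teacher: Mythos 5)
Your proof is correct and follows essentially the same route as the paper: an elementary pointwise analysis of the integrand establishing nonnegativity, monotonicity in $1/\epsilon$, exact agreement with the $\Theta_0$-integrand on $\{\uu = 0\}$ and domination by it on $\{\uu > 0\}$, followed by integration and a convergence theorem. The only cosmetic differences are that the paper organizes the case analysis by splitting $\nabla_{L^2}F_r(\uu)$ into $\min(0,\cdot)+\max(0,\cdot)$ parts (rather than your domain split and auxiliary function $h_c$) and cites dominated rather than monotone convergence for part iv); both work equally well.
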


\begin{proof}
Point \ref{lemma:propertiesTheta:i} is trivial. 
To prove the remaining assertions, we note that the nonnegativity of $\uu$ implies that,
for every $\epsilon > 0$, we have 
\begin{equation*}
\begin{aligned} 
&\min\left (
\frac{\uu}{\epsilon}, \nabla_{L^2} F_r(\uu)
\right )
\nabla_{L^2} F_r(\uu)
\\
&\qquad=
\min\left (
\frac{\uu}{\epsilon}, \nabla_{L^2} F_r(\uu)
\right )
\min(0, \nabla_{L^2} F_r(\uu))
+
\min\left (
\frac{\uu}{\epsilon}, \nabla_{L^2} F_r(\uu)
\right ) \max(0,  \nabla_{L^2} F_r(\uu)) 
\\
&\qquad= 
\min(0, \nabla_{L^2} F_r(\uu))^2 
+
\min\left (
\frac{\uu}{\epsilon}, \max(0,  \nabla_{L^2} F_r(\uu))
\right ) \max(0,  \nabla_{L^2} F_r(\uu))
 \quad \text{a.e.\ in }(-r,r)
\end{aligned}
\end{equation*}
and
\begin{equation*}
\begin{aligned}
0 &\leq \min(0, \nabla_{L^2} F_r(\uu))^2 
+
\min\left (
\frac{\uu}{\epsilon}, \max(0,  \nabla_{L^2} F_r(\uu))
\right ) \max(0,  \nabla_{L^2} F_r(\uu)) 
\\
&\leq
\min(0, \nabla_{L^2} F_r(\uu))^2 
+
\mathds{1}_{\{ \uu > 0\}}\max(0,  \nabla_{L^2} F_r(\uu))^2 \quad \text{a.e.\ in }(-r,r).
\end{aligned}
\end{equation*}
Using the above identities and inequalities, integration,  the dominated 
convergence theorem, and the definitions 
of $\Theta_0$ and $\Theta_\epsilon$, the assertions in 
  \ref{lemma:propertiesTheta:ii}, \ref{lemma:propertiesTheta:iii}, and \ref{lemma:propertiesTheta:iv}
  follow immediately. This completes the proof. 
\end{proof}

The stationarity measures $\Theta_\epsilon$, $\epsilon > 0$, are
advantageous from both the analytical and the numerical point of view because 
they are more stable w.r.t.\ small perturbations of $\uu$ than $\Theta_0$. 
Compare also with the 
convergence analysis of the standard gradient projection method 
in finite dimensions in this context; see \cite{Beck2014,Christof2020-TV}. 
We are now in the position to formulate the algorithm that we use for the 
numerical
solution of the problem \eqref{eq:PrU}.

\begin{Algorithm}[Gradient Projection Method for the Solution of \eqref{eq:PrU}]\label{alg:gradproj}
~\hspace{-10cm}
\begin{algorithmic}[1]
\STATE{Choose an initial guess $0 \leq \uu_0 \in \UU$ 
and parameters $\epsilon_1, \epsilon_2> 0$, $\sigma  > 0$, and $\omega, \theta \in (0,1)$.}
\FOR{$i=0,1,2,3,...$}
\STATE{\label{alg:line:3}Calculate $\nabla_{L^2}F_r(\uu_i) \in \UU$ 
by means of the system \eqref{eq:psys_1} and formula \eqref{eq:L2gradformula}.}
\IF{$\Theta_{\epsilon_1}(\uu_i)  \leq \epsilon_2$\label{alg:line:4}}
\STATE{{\bf \emph{break} }}
\ENDIF
\STATE{Initialize $\sigma_0 := \sigma$ and calculate a step size as follows:}\label{alg:line:7}
\FOR{$j=0,1,2,3,...$}\label{alg:line:8}
\STATE{\label{alg:line:9}Calculate the quantity
$
e_j := F_r(\uu_i) - F_r(\max(0, \uu_i - \sigma_j  \nabla_{L^2}F_r(\uu_i)))
 - 
 \sigma_{j}
\theta
\Theta_{\epsilon_1}(\uu_i)^2.
$}
\IF{$e_j < 0$\label{alg:line:10}}
\STATE{Define $\sigma_{j+1} := \omega \sigma_j$.}
\ELSE
\STATE{Define $\tau_i := \sigma_j$ and {\bf \emph{break}}}
\ENDIF
\ENDFOR\label{alg:line:15}
\STATE{\label{alg:line:16} Define $\uu_{i+1} := \max(0, \uu_i - \tau_i \nabla_{L^2}F_r(\uu_i))$.}
\ENDFOR
\end{algorithmic}
\end{Algorithm}

We emphasize that the $\max$-functions  
in steps \ref{alg:line:9} and \ref{alg:line:16} of \cref{alg:gradproj}
again act by superposition. The next two lemmas show that 
\cref{alg:gradproj} is sensible. 

\begin{lemma}[Finite Termination of the Line-Search]
\label{lem:finitelinesearch}
Suppose that 
numbers $\epsilon_1, \epsilon_2> 0$, $\sigma  > 0$, and $\omega, \theta \in (0,1)$
and a function $0 \leq \uu_i \in \UU$ satisfying $\Theta_{\epsilon_1}(\uu_i)  > \epsilon_2$
are given. Then the line-search procedure in lines \ref{alg:line:7} to \ref{alg:line:15}
of \cref{alg:gradproj} 
terminates after finitely many steps  with a step size $\tau_i > 0$. 
\end{lemma}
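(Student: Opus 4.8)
The plan is to recognize lines \ref{alg:line:7}--\ref{alg:line:15} of \cref{alg:gradproj} as a classical Armijo backtracking and to establish the sufficient-decrease inequality $e_j \ge 0$ for all sufficiently small step sizes $\sigma_j = \sigma\omega^j$. Throughout, I would write $\uu := \uu_i$, $g := \nabla_{L^2}F_r(\uu_i) \in \UU$, and, for $\sigma > 0$, set $\uu(\sigma) := \max(0, \uu - \sigma g)$ and $d(\sigma) := \uu(\sigma) - \uu$. The first observation to record is the pointwise identity $d(\sigma) = -\min(\uu, \sigma g)$, which follows from $\max(0,a) - b = \max(-b, a-b) = -\min(b, b-a)$ with $a = \uu - \sigma g$, $b = \uu$, using $\uu \ge 0$. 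Since $g$ is essentially bounded (its defining expression $-\pp_2 + \nu_1 + \nu_2(\uu - \uu_D)$ lies in $L^\infty_G(-r,r)$ by \cref{lem:ps_props} and condition \Aone{}, and $\PP_\UU^{L^2}$ preserves $L^\infty$-bounds), this representation yields $\|d(\sigma)\|_{L^\infty(-r,r)} \le \sigma \|g\|_{L^\infty(-r,r)}$ and in particular $\uu(\sigma) \in \UU \cap U_{P,r}$, so that $F_r$ is Fréchet differentiable at $\uu$ and $\uu(\sigma)$ in the sense of \cref{cor:Fr_regulated}.

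The decisive step is to connect the directional derivative and the stationarity measure. Using $F_r'(\uu)\zz = (g, \zz)_{L^2(-r,r)}$ for $\zz \in \UU$ together with the factorization $\min(\uu, \sigma g) = \sigma\min(\uu/\sigma, g)$, I obtain the exact identity $F_r'(\uu)d(\sigma) = (g, d(\sigma))_{L^2(-r,r)} = -\sigma\int_{-r}^r \min(\uu/\sigma, g)\,g\,\dd t = -\sigma\,\Theta_\sigma(\uu)^2$, which is precisely $-\sigma$ times the square of the stationarity indicator of \cref{def:statindicators} evaluated at $\epsilon = \sigma$. A companion pointwise estimate $\min(\uu,\sigma g)^2 \le \min(\uu,\sigma g)\,\sigma g$ (checked by distinguishing the cases $\sigma g \le \uu$ and $\sigma g > \uu \ge 0$) then gives $\|d(\sigma)\|_{L^2(-r,r)}^2 \le \sigma^2\Theta_\sigma(\uu)^2$, i.e.\ $\|d(\sigma)\|_{L^2(-r,r)} \le \sigma\,\Theta_\sigma(\uu)$.

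With these two facts, I would expand the decrease via Fréchet differentiability: $F_r(\uu) - F_r(\uu(\sigma)) = -F_r'(\uu)d(\sigma) - R(\sigma) = \sigma\,\Theta_\sigma(\uu)^2 - R(\sigma)$, where $R(\sigma) := F_r(\uu(\sigma)) - F_r(\uu) - F_r'(\uu)d(\sigma)$ satisfies $|R(\sigma)| \le \kappa(\sigma)\,\|d(\sigma)\|_{L^2(-r,r)} \le \kappa(\sigma)\,\sigma\,\Theta_\sigma(\uu)$ with $\kappa(\sigma) \to 0$ as $\sigma \to 0$; here I exploit that the Fréchet remainder of \cref{cor:Fr_regulated} may be divided by the $L^2$-norm (as remarked after \cref{th:Sregulated}) and that $\|d(\sigma)\|_{L^\infty(-r,r)} \to 0$. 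It then remains to reconcile the variable index $\sigma$ with the fixed tolerance $\epsilon_1$: by the monotonicity in \cref{lemma:propertiesTheta}\ref{lemma:propertiesTheta:iii}, once $\sigma \le \epsilon_1$ one has $\Theta_\sigma(\uu) \ge \Theta_{\epsilon_1}(\uu) > \epsilon_2 > 0$, so that $\kappa(\sigma)/\Theta_\sigma(\uu) \le \kappa(\sigma)/\epsilon_2 \to 0$ and hence $1 - \kappa(\sigma)/\Theta_\sigma(\uu) \ge \theta$ for all small enough $\sigma$. Combining everything gives $F_r(\uu) - F_r(\uu(\sigma)) \ge \sigma\Theta_\sigma(\uu)^2\bigl(1 - \kappa(\sigma)/\Theta_\sigma(\uu)\bigr) \ge \sigma\theta\,\Theta_{\epsilon_1}(\uu)^2$, i.e.\ $e_j \ge 0$ for $\sigma = \sigma_j$; since $\sigma_j = \sigma\omega^j \to 0$, this holds for some finite $j$, and the inner loop terminates with $\tau_i = \sigma_j > 0$. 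The main obstacle I anticipate is the bookkeeping in this last step — ensuring that the $L^\infty$-smallness required to apply the Fréchet estimate is compatible with the $L^2$-scaled remainder bound, and that the strict positivity $\Theta_{\epsilon_1}(\uu_i) > \epsilon_2$ is propagated correctly through the monotonicity to produce a uniform $\theta$-slack; once the identity $d(\sigma) = -\min(\uu,\sigma g)$ is available, the algebra relating $\Theta_\sigma$, $\|d(\sigma)\|_{L^2(-r,r)}$, and $F_r'(\uu)d(\sigma)$ is elementary.
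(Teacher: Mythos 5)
Your proof is correct, but it takes a genuinely different route from the paper's. The paper argues by contradiction: assuming the line-search never terminates, it forms the difference quotients $\zz_j = (\max(0,\uu_i - \sigma_j \nabla_{L^2}F_r(\uu_i)) - \uu_i)/\sigma_j$, observes that these converge to the projected negative gradient direction $\zz$ only in $L^2(-r,r)$ (not in $L^\infty(-r,r)$), and therefore passes to the limit via the Hadamard directional differentiability of $S_r$ from \cref{th:dirdiffSr} and \cref{prop:GatdiffSr}; identifying the limit as $-\|\zz\|_{L^2(-r,r)}^2 = -\Theta_0(\uu_i)^2$ then yields the contradiction $-\theta\Theta_0(\uu_i)^2 \le -\Theta_0(\uu_i)^2$. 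You instead argue directly, working with the unnormalized perturbation $d(\sigma) = -\min(\uu_i, \sigma \nabla_{L^2}F_r(\uu_i))$, whose $L^\infty$-norm vanishes as $\sigma \to 0$; this sidesteps exactly the obstruction the paper warns about after its proof (non-convergence of the directions in $L^\infty$), at the price of invoking the strengthened Fréchet property of $F_r$ — remainder divided by the $L^2(-r,r)$-norm under $L^\infty(-r,r)$-smallness — which the paper only \emph{remarks} upon after \cref{cor:Fr_regulated} and explicitly declines to state as a result "because it is not needed for the subsequent analysis." Your argument makes it needed, so a self-contained write-up should verify it (it does follow from \cref{th:Sregulated}\ref{th:Sregulated:ii}, the Lipschitz estimate in \cref{th:propS}\ref{th:propS:item:iv}, and the quadratic structure of $F_r$). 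What your route buys is a quantitative, constructive statement in the spirit of the classical finite-dimensional projected-gradient analysis: the exact identity $F_r'(\uu_i)d(\sigma) = -\sigma\Theta_\sigma(\uu_i)^2$, the bound $\|d(\sigma)\|_{L^2(-r,r)} \le \sigma\Theta_\sigma(\uu_i)$, and the monotonicity \cref{lemma:propertiesTheta}\ref{lemma:propertiesTheta:iii} together show the Armijo test is passed by \emph{every} sufficiently small step size, rather than merely showing that perpetual rejection is impossible; what the paper's route buys is that it needs only the directional differentiability machinery already formally established, with no appeal to the unstated mixed-norm Fréchet property.
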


\begin{proof}
Suppose that a function $0 \leq \uu_i \in \UU$ satisfying $\Theta_{\epsilon_1}(\uu_i)  > \epsilon_2$ 
is given and 
that the line-search does not terminate after finitely many steps. 
Then it holds  $e_j < 0$ for all $j \in \N_0$ and, as a consequence,
\[
\frac{F_r(\max(0, \uu_i - \sigma_j  \nabla_{L^2}F_r(\uu_i))) - F_r(\uu_i) }{\sigma_j}
>
 - 
\theta
\Theta_{\epsilon_1}(\uu_i)^2
\geq 
- \theta \Theta_{0}(\uu_i)^2 \qquad \forall j \in \N_0
\]
with $0 < \sigma_j \to 0$ for $j \to \infty$.
Here, the inequality involving $\Theta_0(\uu_i)$ follows from \cref{lemma:propertiesTheta}\ref{lemma:propertiesTheta:iii}.
  Let us define 
\begin{equation}
\label{eq:zfdef}
\zz_j := 
\frac{\max(0, \uu_i - \sigma_j  \nabla_{L^2}F_r(\uu_i)) - \uu_i}{\sigma_j}
=
\frac{\max(0, \uu_i - \sigma_j  \nabla_{L^2}F_r(\uu_i)) - \max(0,\uu_i)}{\sigma_j},
\qquad j \in \mathbb{N}_0.
\end{equation}
Using Lebesgue's dominated convergence theorem and the directional differentiability
and global Lipschitz 
continuity of the function  $\R \ni s \mapsto \max(0, s) \in \R$,
it is easy to check that $\zz_j \to \zz$ holds in $L^2(-r,r)$ with 
\begin{equation}
\label{eq:zzdef33}
\zz := - \nabla_{L^2}F_r(\uu_i) \mathds{1}_{\{\uu_i > 0\}} 
+ \max\left (0, - \nabla_{L^2}F_r(\uu_i) \right ) \mathds{1}_{\{\uu_i = 0\}}.
\end{equation}
In view of \cref{th:dirdiffSr} and \cref{prop:GatdiffSr}, this implies that 
\[
 \frac{S_r(\uu_i + \sigma_j \zz_j) - S_r(\uu_i)}{\sigma_j}
\to
S_r'(\uu_i)\zz
\]
holds in $H^2(\Omega)$ for $j \to \infty$,
where $S_r'(\uu_i) \in \LL(L^2(-r,r), H_0^1(\Omega) \cap H^2(\Omega))$
denotes the Hadamard-Gâteaux derivative of $S_r$ at $\uu_i$
in the sense of \cref{prop:GatdiffSr}. 
Due to the structure of $F_r$, it follows that
\begin{equation*}
\begin{aligned}
- \theta \Theta_{0}(\uu_i)^2
&\leq
\frac{F_r(\max(0, \uu_i - \sigma_j  \nabla_{L^2}F_r(\uu_i))) - F_r(\uu_i) }{\sigma_j}
\\
&=
\frac{F_r(\uu_i + \sigma_j \zz_j) - F_r(\uu_i) }{\sigma_j}
\\
&\to \left (
S_r(\uu_i) - y_D, S_r'(\uu_i)\zz
\right )_{L^2(\Omega)}
+
\int_{-r}^r \nu_1  \zz +  \nu_2 ( \uu_i - \uu_D)\zz \dd t
\qquad \text{for }j \to \infty.
\end{aligned}
\end{equation*}
Note that, for all $\tilde \zz \in L^\infty_G(-r,r)$, 
the Fréchet differentiability properties 
in \cref{th:Sregulated} and \cref{cor:Fr_regulated} imply that 
\begin{equation}
\label{eq:randomid2737h}
\left (
S_r(\uu_i) - y_D, S_r'(\uu_i)\tilde \zz
\right )_{L^2(\Omega)}
+
\int_{-r}^r \nu_1  \tilde \zz +  \nu_2 ( \uu_i - \uu_D)\tilde \zz \dd t
=
F_r'(\uu_i)\tilde \zz = 
\left (
- \pp_{2,i} + \nu_1 + \nu_2(\uu_i - \uu_D), \tilde \zz
\right )_{L^2(-r,r)}
\end{equation}
with $\pp_{2,i} \in L^\infty_{BV}(-r,r)$ defined via the system \eqref{eq:psys_1} for $\uu_i$.
As $S_r'(\uu_i) \in \LL(L^2(-r,r), H_0^1(\Omega) \cap H^2(\Omega))$ holds,
by approximation, we obtain that the equality between the left- and the right-hand side of \eqref{eq:randomid2737h} remains true 
for all $\tilde \zz \in L^2(-r,r)$. In particular, we have 
\[
- \theta \Theta_{0}(\uu_i)^2
\leq
\left (
- \pp_{2,i} + \nu_1 + \nu_2(\uu_i - \uu_D),  \zz
\right )_{L^2(-r,r)}.
\]
In the case $\UU = L^\infty_G(-r,r)$, the above inequality, 
the 
definitions of the functions $\zz$ and 
 $\nabla_{L^2}  F_r(\uu_i)$,
and
the identity
$- \pp_{2,i} + \nu_1 + \nu_2(\uu_i - \uu_D) = \PP_\UU^{L^2}
(
- \pp_{2,i} + \nu_1 + \nu_2(\uu_i - \uu_D)
 )$ 
  imply that 
\begin{equation}
\label{eq:randomeq273764zdh47}
- \theta \Theta_{0}(\uu_i)^2
\leq 
\left (
\nabla_{L^2}  F_r(\uu_i),  \zz
\right )_{L^2(-r,r)}
=
-
\|\zz\|_{L^2(-r,r)}^2
=
 - \Theta_{0}(\uu_i)^2.
\end{equation}
In the case where $\UU$ is a space of piecewise constant finite element functions
as in \eqref{eq:UUFEcase}, 
 it follows from \eqref{eq:zzdef33} that $\zz \in \UU$ holds,
and we may exploit the properties of $\PP_\UU^{L^2}$ 
and argue analogously to \eqref{eq:randomeq273764zdh47} to obtain that 
\begin{equation*}
- \theta \Theta_{0}(\uu_i)^2
 \leq 
\left (
- \pp_{2,i} + \nu_1 + \nu_2(\uu_i - \uu_D),  \zz
\right )_{L^2(-r,r)}
 =
\left (
\nabla_{L^2}  F_r(\uu_i),  \zz
\right )_{L^2(-r,r)}
=
-
\|\zz\|_{L^2(-r,r)}^2
=
- \Theta_{0}(\uu_i)^2.
\end{equation*}
In both of the above cases, one arrives at the inequality
$-\theta \Theta_{0}(\uu_i)^2 \leq - \Theta_{0}(\uu_i)^2$
which is impossible due to 
the assumptions $\Theta_{\epsilon_1}(\uu_i) > \epsilon_2 > 0$ 
and $\theta \in (0,1)$ and the inequality $ \Theta_{\epsilon_1}(\uu_i) \leq \Theta_{0}(\uu_i)$.
The line-search algorithm thus has to terminate after finitely many steps as claimed.
\end{proof}

Note that it is necessary to take the detour via 
\cref{th:dirdiffSr} and \cref{prop:GatdiffSr} in the proof of \cref{lem:finitelinesearch}
as the sequence $\{\zz_j\}$ in \eqref{eq:zfdef} only converges 
in $L^q(-r,r)$ for all $q \in [1, \infty)$ but \emph{not} in $L^\infty(-r,r)$ in general 
(as one may easily check by means of examples). This makes it impossible 
to invoke the Fréchet differentiability result in, e.g., \cref{cor:Fr_regulated}
directly. Next, we clarify that the update of $\uu_i$ in 
\cref{alg:gradproj} is sensible.

\begin{lemma}[Well-Definedness of Iterates]
Suppose that a function $0 \leq \uu_i \in \UU$
with associated gradient $\nabla_{L^2}  F_r(\uu_i) \in \UU$
and
a number $\tau_i > 0$ are given. Then
$\uu_{i+1} := \max(0, \uu_i - \tau_i \nabla_{L^2}F_r(\uu_i))$
satisfies $0 \leq \uu_{i+1} \in \UU$.
\end{lemma}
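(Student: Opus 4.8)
The plan is to treat the two defining properties of $\uu_{i+1}$ separately. The nonnegativity $\uu_{i+1} \geq 0$ a.e.\ in $(-r,r)$ is immediate from the identity $\uu_{i+1} = \max(0, \uu_i - \tau_i \nabla_{L^2} F_r(\uu_i))$ and the fact that the $\max$-function acts by superposition, so the entire substance of the lemma lies in the membership $\uu_{i+1} \in \UU$. To establish this, I would first set $w := \uu_i - \tau_i \nabla_{L^2} F_r(\uu_i)$ and observe that, since both admissible choices for $\UU$ in \cref{ass:standingPrU} are vector spaces and $\uu_i, \nabla_{L^2} F_r(\uu_i) \in \UU$ by assumption, we have $w \in \UU$. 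It then remains to show that $\UU$ is invariant under superposition with the map $\R \ni s \mapsto \max(0, s) \in \R$, i.e., that $\max(0, w) \in \UU$.

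For this second step I would distinguish the two cases for $\UU$. When $\UU$ is the finite-dimensional space in \eqref{eq:UUFEcase}, the argument is elementary: $w$ is a.e.\ constant on each cell $(r_{n-1}, r_n)$, and since $\max(0, \mathrm{const})$ is again a constant, $\max(0, w)$ is a.e.\ piecewise constant subordinate to the same grid, hence an element of $\UU$. When $\UU = L^\infty_G(-r,r)$, I would select an arbitrary $G[-r,r]$-representative of $w$ and compose it with $\max(0, \cdot)$; because $s \mapsto \max(0, s)$ is globally Lipschitz continuous, this composition is again a regulated function on $[-r,r]$ and furnishes a $G[-r,r]$-representative of $\max(0, w)$, so that $\max(0, w) \in L^\infty_G(-r,r) = \UU$.

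The main—and essentially only—obstacle is the regulated case, and it is already resolved by the tools employed in the excerpt: the stability of the regulated property under composition with a globally Lipschitz function is exactly the mechanism invoked at the end of the proof of \cref{th:BVreg} (via \cite[Theorem 4]{Josephy1981} and \cite[Definition 4.1.1]{Monteiro2019}). Since no estimates beyond these observations are required, combining the nonnegativity remark with the two case distinctions yields $0 \leq \uu_{i+1} \in \UU$ and completes the proof.
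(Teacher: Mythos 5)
Your proof is correct and follows essentially the same route as the paper's: the vector-space structure of $\UU$ gives $\uu_i - \tau_i \nabla_{L^2} F_r(\uu_i) \in \UU$, and closure of $\UU$ under superposition with $s \mapsto \max(0,s)$ (trivial in the piecewise-constant case \eqref{eq:UUFEcase}, and via stability of regulated functions in the case $\UU = L^\infty_G(-r,r)$) then yields $0 \leq \uu_{i+1} \in \UU$. The only cosmetic difference is that the paper simply asserts that the positive part of a regulated function is again regulated, whereas you justify this through the Lipschitz-composition mechanism also used at the end of the proof of \cref{th:BVreg}.
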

\begin{proof}
From \eqref{eq:L2gradformula} and the comments after 
\cref{def:gradient}, we obtain that $\nabla_{L^2}  F_r(\uu_i) \in \UU$.
This also implies that $\uu_i - \tau_i  \nabla_{L^2}  F_r(\uu_i) \in \UU$. 
In the case $\UU = L^\infty_G(-r,r)$, the
inclusion 
$\uu_{i+1} = \max \left (0, \uu_i - \tau_i  \nabla_{L^2}  F_r(\uu_i)\right ) \in \UU$
 now follows from the fact that the positive part of a regulated function is again regulated. 
In the case where $\UU$ is a
finite element space as in \eqref{eq:UUFEcase}, 
it is trivially true that the Nemytskii operator induced by 
$\R \ni s \mapsto \max(0, s) \in \R$ maps $\UU$ into itself,
so $\uu_{i+1} \in \UU$ again follows. Since we trivially have $\uu_{i+1} \geq 0$, this completes the proof. 
\end{proof}

We emphasize that the above lemma is crucial 
as it shows that updates with the $L^2(-r,r)$-gradient 
do not cause the sequence of iterates $\{\uu_i\}$ to leave the 
set $L^\infty_G(-r,r) \cap U_{P,r}$ where the derivatives of the functions
$S_r$ and $F_r$ are known to exist. 
Having checked that
\cref{alg:gradproj} is sensible, we can now turn our attention to the convergence analysis.
We start with:

\begin{lemma}[Descent Behavior]
\label{lemma:mon}
Suppose that the termination criterion 
in line \ref{alg:line:4} of 
\cref{alg:gradproj} is not triggered in iteration $i \in \mathbb{N}_0$. 
Then it holds 
\begin{equation}
\label{eq:suffdecrease}
F_r(\uu_i)  \geq 
F_r(\uu_{i+1})
+
\tau_i
\theta
\Theta_{\epsilon_1}(\uu_i)^2
\geq
F_r(\uu_{i+1}).
\end{equation}
\end{lemma}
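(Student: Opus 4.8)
The plan is to read off the claimed sufficient-decrease inequality directly from the exit condition of the inner line-search loop of \cref{alg:gradproj}, so the argument amounts to bookkeeping on the algorithm's indices rather than to any substantial estimate.

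First I would record the consequences of the hypothesis: since the termination criterion in line~\ref{alg:line:4} of \cref{alg:gradproj} is not triggered in iteration $i$, we necessarily have $\Theta_{\epsilon_1}(\uu_i) > \epsilon_2 > 0$. This is precisely the assumption under which \cref{lem:finitelinesearch} applies, so the line-search in lines~\ref{alg:line:7}--\ref{alg:line:15} terminates after finitely many steps and yields a step size $\tau_i > 0$. Let $j^\ast$ denote the index at which the inner loop exits, so that $\tau_i = \sigma_{j^\ast}$ and, by line~\ref{alg:line:16}, $\uu_{i+1} = \max(0, \uu_i - \tau_i \nabla_{L^2}F_r(\uu_i))$.

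Next I would exploit the exit rule. The loop leaves through the \emph{else} branch attached to the test in line~\ref{alg:line:10} precisely when the condition $e_{j^\ast} < 0$ fails, i.e., when $e_{j^\ast} \geq 0$. Inserting the definition of $e_j$ from line~\ref{alg:line:9} with $j = j^\ast$, and using $\tau_i = \sigma_{j^\ast}$ together with the identity for $\uu_{i+1}$ just recorded, this inequality reads
\begin{equation*}
F_r(\uu_i) - F_r(\uu_{i+1}) - \tau_i \theta \Theta_{\epsilon_1}(\uu_i)^2 \geq 0,
\end{equation*}
which is exactly the first inequality in \eqref{eq:suffdecrease} after rearranging.

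The second inequality is then immediate: since $\tau_i > 0$ by \cref{lem:finitelinesearch}, $\theta \in (0,1)$ by the choice of parameters, and $\Theta_{\epsilon_1}(\uu_i)^2 \geq 0$ by \cref{def:statindicators}, the added term $\tau_i \theta \Theta_{\epsilon_1}(\uu_i)^2$ is nonnegative, whence $F_r(\uu_{i+1}) + \tau_i \theta \Theta_{\epsilon_1}(\uu_i)^2 \geq F_r(\uu_{i+1})$. I do not anticipate any real obstacle here; the only point requiring care is the correct identification of the loop's exit condition as the negation of $e_j < 0$, and the matching of the terminal inner-loop index $j^\ast$ with the step size $\tau_i$ and the update in line~\ref{alg:line:16}.
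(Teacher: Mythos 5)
Your proof is correct and follows essentially the same route as the paper, which simply reads the first inequality off the line-search exit condition $e_{j^\ast}\geq 0$ and the second off the nonnegativity of $\tau_i\theta\,\Theta_{\epsilon_1}(\uu_i)^2$. The only cosmetic difference is that the paper attributes the nonnegativity $\Theta_{\epsilon_1}(\uu_i)\geq 0$ (i.e., that the integral defining $\Theta_{\epsilon_1}(\uu_i)^2$ is nonnegative) to \cref{lemma:propertiesTheta} rather than to \cref{def:statindicators} alone, but this does not affect the validity of your argument.
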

\begin{proof}
This follows immediately from the line-search procedure in \cref{alg:gradproj}
and \cref{lemma:propertiesTheta}.
\end{proof}

We are now in the position to prove our main convergence result for \cref{alg:gradproj}.

\begin{theorem}[Finite Convergence of the Gradient Projection Method]
\label{th:GradMain}
\cref{alg:gradproj} -- executed with arbitrary 
parameters  $\epsilon_1, \epsilon_2, \sigma > 0$  and $\omega, \theta \in (0,1)$
and with an arbitrary initial guess $0 \leq \uu_0 \in \UU$ -- terminates
after finitely many iterations with a final iterate $\uu_i$ that satisfies 
$\Theta_{\epsilon_1}(\uu_i)  \leq \epsilon_2$.
\end{theorem}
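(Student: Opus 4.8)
The plan is to argue by contradiction: assuming that \cref{alg:gradproj} never triggers the break in line~\ref{alg:line:4}, I would show that the step sizes $\tau_i$ are forced to zero and that this is incompatible with the Armijo-type acceptance test for small steps. First I would record the consequences of the descent estimate. Since $F_r\geq 0$ on the admissible set, \cref{lemma:mon} makes $\{F_r(\uu_i)\}$ nonincreasing and bounded below, so $\sum_{i=0}^\infty \tau_i\theta\Theta_{\epsilon_1}(\uu_i)^2\leq F_r(\uu_0)<\infty$. Non-termination means $\Theta_{\epsilon_1}(\uu_i)>\epsilon_2$ for every $i$, whence $\theta\epsilon_2^2\sum_i\tau_i<\infty$ and therefore $\tau_i\to 0$. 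As the trial step sizes are $\sigma,\omega\sigma,\omega^2\sigma,\dots$, the convergence $\tau_i\to 0$ forces the inner loop to perform at least one reduction for all large $i$; i.e.\ the trial $\tilde\tau_i:=\tau_i/\omega$ is rejected, which by the definition of $e_j$ reads $F_r(\uu_i)-F_r(\max(0,\uu_i-\tilde\tau_i\nabla_{L^2}F_r(\uu_i)))<\tilde\tau_i\theta\Theta_{\epsilon_1}(\uu_i)^2$ with $\tilde\tau_i\to 0$.

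Next I would turn this rejected-step inequality into a contradiction through an exact expansion of the difference quotient. Writing $g_i:=\nabla_{L^2}F_r(\uu_i)$ and $\zz_i:=\tilde\tau_i^{-1}(\max(0,\uu_i-\tilde\tau_i g_i)-\uu_i)$, a pointwise case distinction gives $|\zz_i|\leq|g_i|$ and the exact identity $-(g_i,\zz_i)_{L^2(-r,r)}=\Theta_{\tilde\tau_i}(\uu_i)^2$. Since the segment $[\uu_i,\uu_i+\tilde\tau_i\zz_i]$ stays in $L^\infty_G(-r,r)\cap U_{P,r}$, the continuous Fréchet differentiability from \cref{cor:Fr_regulated} together with the fundamental theorem of calculus yields
\[
\frac{F_r(\uu_i+\tilde\tau_i\zz_i)-F_r(\uu_i)}{\tilde\tau_i}=-\Theta_{\tilde\tau_i}(\uu_i)^2+\frac{\nu_2\tilde\tau_i}{2}\|\zz_i\|_{L^2(-r,r)}^2+\mathrm{Err}_i,
\]
where the quadratic part is computed explicitly and $\mathrm{Err}_i=\int_0^1(\pp_2(\uu_i)-\pp_2(\uu_i+s\tilde\tau_i\zz_i),\zz_i)_{L^2(-r,r)}\dd s$ collects the first-order Taylor error of the fidelity term, $\pp_2(\uu)$ denoting the adjoint quantity of \eqref{eq:psys_1} for the state $S_r(\uu)$. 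Using the monotonicity $\Theta_{\tilde\tau_i}\geq\Theta_{\epsilon_1}$ for $\tilde\tau_i\leq\epsilon_1$ (\cref{lemma:propertiesTheta}), the bound $\|\zz_i\|_{L^2(-r,r)}\leq\|g_i\|_{L^2(-r,r)}\leq C$, and $\Theta_{\epsilon_1}(\uu_i)>\epsilon_2$, the rejected-step inequality becomes $(1-\theta)\epsilon_2^2<\tfrac{\nu_2C^2}{2}\tilde\tau_i+\mathrm{Err}_i$, so that $\mathrm{Err}_i>\tfrac12(1-\theta)\epsilon_2^2>0$ for all large $i$. Thus everything reduces to proving $\mathrm{Err}_i\to 0$.

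The key preparatory estimate is a uniform a priori $L^\infty$-bound on the iterates. In the case $\UU=L^\infty_G(-r,r)$ the update reads $\uu_{i+1}=\max(0,(1-\tau_i\nu_2)\uu_i+\tau_i(\pp_{2,i}-\nu_1+\nu_2\uu_D))$ with $\pp_{2,i}:=\pp_2(\uu_i)$; combining $\tau_i\leq\sigma$, the uniform bound $\|\pp_{2,i}\|_{L^\infty(-r,r)}\leq C$ from \cref{lemma:boundedadjointvars}, and $\uu_D\in L^\infty_G(-r,r)$, an induction shows $\|\uu_i\|_{L^\infty(-r,r)}\leq B$ for all $i$; in the finite-element case the same follows from the $L^2$-bound implied by $F_r(\uu_i)\leq F_r(\uu_0)$ and the equivalence of norms on the finite-dimensional space $\UU$. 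Consequently $\|g_i\|_{L^\infty(-r,r)}$ and $\|\zz_i\|_{L^\infty(-r,r)}$ are uniformly bounded, the perturbations satisfy $\|s\tilde\tau_i\zz_i\|_{L^\infty(-r,r)}\leq\tilde\tau_i\|g_i\|_{L^\infty(-r,r)}\to 0$, and hence $|\mathrm{Err}_i|\leq C\sup_{s\in[0,1]}\|\pp_2(\uu_i)-\pp_2(\uu_i+s\tilde\tau_i\zz_i)\|_{L^2(-r,r)}$, with base points in a fixed $L^\infty$-ball and perturbations vanishing in $L^\infty$.

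The main obstacle is that this is a \emph{joint} limit in the iteration index and the step size, so that mere sequential continuity of $\uu\mapsto\pp_2(\uu)$ from \cref{lemma:contigrads} does not suffice, the $L^\infty$-ball being noncompact. I would estimate $\pp_2(\uu_i)-\pp_2(\uu_i+s\tilde\tau_i\zz_i)$ by subtracting the two adjoint equations in \eqref{eq:psys_1}, exploiting the uniform coercivity \eqref{eq:randomeq263636} and the uniform $L^\infty$-bound on $p_1$; this reduces the matter to showing that the coefficient difference $\EE_r(\uu_i)(y_i^s)-\EE_r(\uu_i)(y_i)\to 0$ in $L^2(\Omega)$, where $y_i=S_r(\uu_i)$ and $y_i^s=S_r(\uu_i+s\tilde\tau_i\zz_i)$ converge to one another uniformly by \cref{th:propS}. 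This is exactly the ``$g_n'(y_n)$'' passage highlighted in the introduction, and it is where \Atwo{} is essential: along a subsequence, complete continuity of $S_r$ gives $y_i\to y^*$ in $C(\overline\Omega)$, the pushforward $(y^*)_*\lambda^d$ is a finite non-atomic measure by \cref{lem:levelstates} and therefore uniformly non-atomic, and combining this with the regulated structure of the $\uu_i$ shows that $\EE_r(\uu_i)(y_i^s)$ and $\EE_r(\uu_i)(y_i)$ become arbitrarily close in measure, hence in $L^2(\Omega)$. This yields $\mathrm{Err}_i\to 0$, contradicting $\mathrm{Err}_i>\tfrac12(1-\theta)\epsilon_2^2$, so that the method must terminate after finitely many iterations with $\Theta_{\epsilon_1}(\uu_i)\leq\epsilon_2$.
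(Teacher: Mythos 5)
Your proof tracks the paper's closely up to the decisive point: the contradiction setup, the summability $\sum_i\tau_i<\infty$, the rejected-step inequality for $\tilde\tau_i=\tau_i/\omega$, the exact identity $-(g_i,\zz_i)_{L^2(-r,r)}=\Theta_{\tilde\tau_i}(\uu_i)^2$, the mean-value/FTC expansion isolating a remainder, and the reduction to $\mathrm{Err}_i\to0$ are all sound and essentially what the paper does. The genuine gap is in your last paragraph, where everything hinges on the claim that $\EE_r(\uu_i)(y_i^s)-\EE_r(\uu_i)(y_i)\to0$ in $L^2(\Omega)$ because $\|y_i^s-y_i\|_\infty\to0$, the limit state $y^*$ has non-atomic pushforward (\cref{lem:levelstates}), and each $\uu_i$ is regulated. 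That implication is false in general: non-atomicity of $(y^*)_*\lambda^d$ controls the measure of level sets of $y^*$, but says nothing about the oscillation of the \emph{varying} functions $\uu_i$ between nearby values. If $\uu_i$ oscillates at the scale $\delta_i:=\|y_i^s-y_i\|_\infty$ (say, like a $0$--$1$ square wave of period $2\delta_i$ on a neighborhood of $\range(y_i)$), the two compositions differ by $1$ on a set of positive measure uniformly in $i$. The mechanism of \cref{th:Sregulated} that you are implicitly invoking works only for a \emph{fixed} regulated function composed with varying states (fixed countable exceptional set $Z$, fixed modulus of continuity off $Z$); for varying $\uu_i$ there is no uniform control — in particular, since $\uu_D$ is only assumed regulated (\Aone{}), the iterates carry no uniform $BV$ bound that could substitute for it. So you correctly diagnosed that sequential continuity of $\uu\mapsto\pp_2(\uu)$ does not settle the joint limit, but the replacement argument does not close it either.

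The fix — and the paper's actual route — is already within reach of what you established: the update formula, the Lipschitz continuity of $s\mapsto\max(0,s)$, the stability of $\PP_\UU^{L^2}$, and \cref{lemma:boundedadjointvars} give $\|\uu_{i+1}-\uu_i\|_{L^\infty(-r,r)}\le C\tau_i$; combined with $\sum_i\tau_i<\infty$, this makes $\{\uu_i\}$ Cauchy in $(\UU,\|\cdot\|_{L^\infty(-r,r)})$, which is complete by \cref{cor:Gbanach}, so $\uu_i\to\bar\uu$ in $L^\infty(-r,r)$ for some $0\le\bar\uu\in\UU$. Then both the base points $\uu_i$ and the perturbed points $\uu_i+s\tilde\tau_i\zz_i$ converge in $L^\infty(-r,r)$ to the \emph{single} limit $\bar\uu$, the gradient continuity of \cref{lemma:contigrads} applies at that one point, and dominated convergence in $s$ yields $\mathrm{Err}_i\to0$, producing the contradiction. (A smaller point: your induction for the uniform $L^\infty$-bound on the iterates in the case $\UU=L^\infty_G(-r,r)$ also needs $\sum_i\tau_i<\infty$ rather than merely $\tau_i\le\sigma$, since for $\tau_i\le1/\nu_2$ the bound grows by $C\tau_i$ per step; this is harmless because you have summability, but it should be cited there.)
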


\begin{proof}
We argue by contradiction: 
Suppose that \cref{alg:gradproj}  does not terminate 
after finitely many steps. Then it produces a sequence of 
iterates $0 \leq \uu_i \in  \UU$ which satisfies 
$
F_r(\uu_i)  \geq 
F_r(\uu_{i+1})
+
\tau_i
\theta
\Theta_{\epsilon_1}(\uu_i)^2
$
and
$
\Theta_{\epsilon_1}(\uu_i)  > \epsilon_2
$
for all $i \in \mathbb{N}_0$ due to the termination criterion 
in line \ref{alg:line:4} of \cref{alg:gradproj} and \cref{lemma:mon}. 
As $F_r(\uu_i)   \geq 0$ holds for all $i \in \mathbb{N}_0$ 
by the definition of $F_r$ and the nonnegativity of $\uu_i$, 
this yields 
\[
F_r(\uu_0)
\geq
F_r(\uu_n)
+
\sum_{i=0}^{n-1} 
\tau_i
\theta
\Theta_{\epsilon_1}(\uu_i)^2
\geq
\sum_{i=0}^{n-1} 
\tau_i
\theta
\epsilon_2^2\qquad \forall n \in \mathbb{N}_0
\]
and, thus, $\sum_{i=0}^\infty \tau_i < \infty$. 

We now first prove that the sequence of iterates $\{\uu_i\} \subset \UU$ is bounded in $L^\infty(-r,r)$.
To this end, we note that the update formula in line \ref{alg:line:16}  of 
\cref{alg:gradproj}, \cref{def:gradient}, and the properties of $\PP_\UU^{L^2}$ imply that 
\begin{equation*}
\begin{aligned}
\|\uu_{i+1}\|_{L^\infty(-r,r)}
&=
\| \max\left (0, \uu_i - \tau_i \nabla_{L^2}F_r(\uu_i) \right ) \|_{L^\infty(-r,r)}
\\
&\leq
\left \|  \uu_i - \tau_i \PP_\UU^{L^2}
\left(
- \pp_{2, i} + \nu_1 + \nu_2(\uu_i - \uu_D)
\right )   
\right \|_{L^\infty(-r,r)}
\\
&=
\left \| (1 - \nu_2 \tau_i) \uu_i 
-
\tau_i \PP_\UU^{L^2}
\left(
- \pp_{2, i} + \nu_1 - \nu_2 \uu_D
\right )   
\right \|_{L^\infty(-r,r)} 
\end{aligned}
\end{equation*}
holds for all $i \in \mathbb{N}_0$.
Here, $\pp_{2,i} \in L^\infty_{BV}(-r,r)$ is again defined via the system \eqref{eq:psys_1} for $\uu_i$.
Since $\sum_{i=0}^\infty \tau_i < \infty$ holds, we know that $\tau_i>0$ converges to zero. 
This implies  that there exists an index $m \in \mathbb{N}$
such that $|1 - \nu_2 \tau_i| \leq 1$ holds for all $i \geq m$. Using the fact that 
$\PP_\UU^{L^2}$ is just the identity map in the case $\UU = L^\infty_G(-r,r)$
and given by the cell-wise averaging operator
\[
\PP_\UU^{L^2}(\vv) 
=
\frac{1}{r_n - r_{n-1}} \int_{r_{n-1}}^{r_n} \vv(s) \dd s
\quad
\text{ a.e.\ in } (r_{n-1}, r_n)
 \qquad 
\forall n = 1,...,N\qquad \forall \vv \in  L^\infty_G(-r,r)
\]
in the case where $\UU$ is a finite element space of the form 
\eqref{eq:UUFEcase} (as one may easily check), we further obtain that 
\begin{equation}
\label{eq:projstable}
\big \| \PP_\UU^{L^2}(\vv) \big \|_{L^\infty(-r,r)}
\leq
\left \| \vv \right \|_{L^\infty(-r,r)}
\end{equation}
holds for all $\vv \in L^\infty_G(-r,r)$. In combination with 
\cref{lemma:boundedadjointvars}, all of this allows us to conclude that 
\begin{equation*}
\begin{aligned}
\|\uu_{i+1}\|_{L^\infty(-r,r)}
&\leq
\left \| (1 - \nu_2 \tau_i) \uu_i 
-
\tau_i \PP_\UU^{L^2}
\left(
- \pp_{2, i} + \nu_1 - \nu_2 \uu_D
\right )   
\right \|_{L^\infty(-r,r)}
\\
&\leq
|1 - \nu_2 \tau_i|
\left \|  \uu_i \right \|_{L^\infty(-r,r)}
+
\tau_i 
\left \|
- \pp_{2, i} + \nu_1 - \nu_2 \uu_D
\right \|_{L^\infty(-r,r)}
\\
&\leq
\left \|  \uu_i \right \|_{L^\infty(-r,r)}
+
C \tau_i\qquad \forall i \geq m
\end{aligned}
\end{equation*}
holds with some constant $C>0$. After a trivial induction, the above yields 
\begin{equation*}
\begin{aligned}
\|\uu_{i}\|_{L^\infty(-r,r)}
&\leq
\left \|  \uu_m  \right \|_{L^\infty(-r,r)}
+
C \sum_{j=0}^\infty \tau_j  < \infty \qquad \forall i \geq m.
\end{aligned}
\end{equation*}
The sequence of iterates $\{\uu_i\} \subset \UU$ is thus indeed bounded in 
$L^\infty(-r,r)$ as claimed. 

Next, we show that the sequence $\{\uu_i\} \subset \UU$ even converges 
in $L^\infty(-r,r)$ to a limit function $0 \leq \bar \uu \in \UU$. To this end, 
we note that the update formula in line \ref{alg:line:16}  of 
\cref{alg:gradproj}, the global Lipschitz continuity of the function 
$\R \ni s \mapsto \max(0, s) \in \R$, the nonnegativity of the 
iterates $\uu_i$,  \cref{def:gradient}, the boundedness of the sequence 
$\{\uu_i\}$ in $L^\infty(-r,r)$, 
\cref{lemma:boundedadjointvars}, and again the stability estimate \eqref{eq:projstable} 
imply that there exists a constant $C>0$ satisfying 
\begin{equation}
\label{eq:randomeq272zwbdww2}
\begin{aligned}
\|\uu_{i+1} - \uu_i \|_{L^\infty(-r,r)}
&=
\| \max\left (0, \uu_i - \tau_i \nabla_{L^2}F_r(\uu_i) \right ) - \max(0, \uu_i)\|_{L^\infty(-r,r)}
\\
&\leq
\| \tau_i \nabla_{L^2}F_r(\uu_i) \|_{L^\infty(-r,r)}
\\
&\leq
\tau_i
\left \| \PP_\UU^{L^2}
\left(
- \pp_{2, i} + \nu_1 + \nu_2(\uu_i - \uu_D)
\right )   
\right \|_{L^\infty(-r,r)}
\\
&\leq C \tau_i\qquad \forall i \in \mathbb{N}_0,
\end{aligned}
\end{equation}
where  $\pp_{2,i} \in L^\infty_{BV}(-r,r)$ is defined as before.
Due to the estimate $\sum_{i=0}^\infty \tau_i < \infty$,
the above shows that 
\[
\|\uu_{m} - \uu_n \|_{L^\infty(-r,r)}
\leq
C \sum_{i=n}^\infty \tau_i\qquad \forall m,n \in \mathbb{N},~m \geq n,
\]
holds with $\sum_{i=n}^\infty \tau_i \to 0$ for $n \to \infty$.
The sequence $\{\uu_i\} \subset \UU$ is thus Cauchy 
in $L^\infty(-r,r)$. Since, for both the case $\UU =  L^\infty_G(-r,r)$
and the case where $\UU$ is of the form \eqref{eq:UUFEcase},
$(\UU, \|\cdot\|_{L^\infty(-r,r)})$ is  complete 
(see \cref{cor:Gbanach}), it now follows that there indeed exists
$\bar \uu \in \UU$ such that $\uu_i \to \bar \uu$ holds in $L^\infty(-r,r)$ for $i \to \infty$ as claimed. 
That this limit $\bar \uu$ is nonnegative is obvious. 

Having established that $\{\uu_i\} \subset \UU$ satisfies 
$0 \leq \uu_i \to \bar \uu$ in $L^\infty(-r,r)$
for a function $0 \leq \bar \uu \in \UU$, we can now produce a contradiction as desired:
Since $\tau_i \to 0$ holds, we know that there exists $n \in \mathbb{N}$ 
such that $\tau_i < \sigma$  and $\tau_i/\omega < \epsilon_1$ holds for all $i \geq n$. 
In view of the line-search procedure in lines \ref{alg:line:7} to \ref{alg:line:15}
of \cref{alg:gradproj}, this implies that the 
step size $\tau_i /\omega$ was rejected in line \ref{alg:line:10} and, as 
a consequence, that 
\begin{equation}
\label{eq:randomFr2636gd7}
F_r\left (\max\left (0, \uu_i - \frac{\tau_i}{\omega}  \nabla_{L^2}F_r(\uu_i)\right)\right)
-
F_r(\uu_i) 
>
-
 \frac{\tau_i}{\omega}
\theta
\Theta_{\epsilon_1}(\uu_i)^2
\geq 
-
 \frac{\tau_i}{\omega}
\theta
\Theta_{\tau_i/\omega}(\uu_i)^2\qquad \forall i \geq n,
\end{equation}
where the last estimate follows from \cref{lemma:propertiesTheta}\ref{lemma:propertiesTheta:iii} 
and the inequality $\tau_i/\omega < \epsilon_1$.
As \mbox{$F_r\colon L^\infty_G(-r,r) \cap U_{P,r} \to \R$}
is continuously Fréchet differentiable on $ L^\infty_G(-r,r) \cap U_{P,r}$ by \cref{cor:Fr_regulated},
we can apply the 
mean value theorem \cite[Theorem 3.2.6]{Drabek2007} 
 and \cref{def:gradient}
 to rewrite \eqref{eq:randomFr2636gd7} as
 \begin{equation}
 \label{eq:randomeq27373zhdh3u2odbiud}
\begin{aligned}
&-
\theta
\Theta_{\tau_i/\omega}(\uu_i)^2
\\
&\leq
\frac{\omega}{\tau_i}
\left 
(
F_r\left (\max\left (0, \uu_i - \frac{\tau_i}{\omega}  \nabla_{L^2}F_r(\uu_i)\right)\right)
-
F_r(\uu_i) 
\right )
\\
&=
\int_{0}^1 
F_r'\left ( (1 - s)\uu_i + s \max\left (0, \uu_i - \frac{\tau_i}{\omega}  \nabla_{L^2}F_r(\uu_i)\right) \right )
\left (
\frac{\max\left (0, \uu_i - \frac{\tau_i}{\omega}  \nabla_{L^2}F_r(\uu_i)\right) - \uu_i}{\tau_i / \omega}
\right )\dd s 
\\
&=
-
\int_{0}^1 
\left (
\nabla_{L^2} F_r\left ( (1 - s)\uu_i + s \max\left (0, \uu_i - \frac{\tau_i}{\omega}  \nabla_{L^2}F_r(\uu_i)\right) \right ),
 \min\left ( \frac{\omega}{\tau_i} \uu_i,  \nabla_{L^2}F_r(\uu_i)\right)
\right )_{L^2(-r,r)}\dd s 
\\
&=
-
\left (
\nabla_{L^2} F_r\left ( \uu_i   \right ),
 \min\left ( \frac{\omega}{\tau_i} \uu_i,  \nabla_{L^2}F_r(\uu_i)\right)
\right )_{L^2(-r,r)} + \varrho_i
\\
&=
- \Theta_{\tau_i/\omega}(\uu_i)^2 + \varrho_i \qquad \forall i \geq n,
\end{aligned}
\end{equation}
where the last line holds by \cref{def:statindicators} and where $\varrho_i$ denotes the remainder term
\begin{equation}
\label{eq:varrhodef}
\begin{aligned}
&\varrho_i
:=
\int_{0}^1 
\left (
\nabla_{L^2} F_r\left ( \uu_i  \right )
-
\nabla_{L^2} F_r\left  ( \uu_i - s \min\left ( \uu_i, \frac{\tau_i}{\omega}  \nabla_{L^2}F_r(\uu_i)\right) \right ),
 \min\left ( \frac{\omega}{\tau_i} \uu_i,  \nabla_{L^2}F_r(\uu_i)\right)
\right )_{L^2(-r,r)}\dd s.
\end{aligned}
\end{equation}
Note that, due to the convergence $ \uu_i \to \bar \uu $
in $L^\infty(-r,r)$,
the estimates in \eqref{eq:randomeq272zwbdww2}, and the nonnegativity of $\uu_i$, we know that 
\begin{equation}
\label{eq:randomeq273hdbd3833}
\begin{aligned}
\left \|
\uu_i - s \min\left ( \uu_i, \frac{\tau_i}{\omega}  \nabla_{L^2}F_r(\uu_i) \right )
- \bar \uu
\right \|_{L^\infty(-r,r)}
&\leq
\left \|
\uu_i -  \bar \uu
\right \|_{L^\infty(-r,r)}
+
\left \|
 \min\left ( \uu_i, \frac{\tau_i}{\omega}  \nabla_{L^2}F_r(\uu_i)\right )
\right \|_{L^\infty(-r,r)}
\\
&\leq\left \|
\uu_i -  \bar \uu
\right \|_{L^\infty(-r,r)}
+
\frac{1}{\omega}
\left \|
\tau_i  \nabla_{L^2}F_r(\uu_i)
\right \|_{L^\infty(-r,r)}
\\
&\to 0\qquad \text{ for }i \to \infty\qquad \forall s \in [0,1].
\end{aligned}
\end{equation}
In combination with \cref{lemma:contigrads}, this implies that 
\[
\nabla_{L^2} F_r\left ( \uu_i  \right )
-
\nabla_{L^2} F_r\left  ( \uu_i - s \min\left ( \uu_i, \frac{\tau_i}{\omega}  \nabla_{L^2}F_r(\uu_i)\right) \right )
\to
\nabla_{L^2} F_r\left ( \bar \uu  \right )
-
\nabla_{L^2} F_r\left ( \bar \uu\right )
=
0
\]
holds in $L^2(-r,r)$ for all $s \in [0,1]$ as $i$ goes to infinity.  
By exploiting the formula \eqref{eq:L2gradformula}, the stability estimate 
\eqref{eq:projstable}, \cref{lemma:boundedadjointvars},
the boundedness of $\{\uu_i\}$ in $L^\infty(-r,r)$, and the same estimates 
as in \eqref{eq:randomeq273hdbd3833},
we further obtain that 
there exist constants $C_1, C_2, C_3 > 0$ independent of $i$ and $s \in [0,1]$ satisfying 
\begin{equation*}
\begin{aligned}
&\left \|
\nabla_{L^2} F_r\left  ( \uu_i - s \min\left ( \uu_i, \frac{\tau_i}{\omega}  \nabla_{L^2}F_r(\uu_i)\right) \right )
\right \|_{L^\infty(-r,r)}
\\
&\qquad \leq 
C_1
+
\left \|
\nu_1 +  
\nu_2
 \left  ( \uu_i - s \min\left ( \uu_i, \frac{\tau_i}{\omega}  \nabla_{L^2}F_r(\uu_i)\right) \right )
  - \nu_2\uu_D
\right \|_{L^\infty(-r,r)}
\\
&\qquad \leq 
C_2
+\nu_2
\left \|
\min\left ( \uu_i, \frac{\tau_i}{\omega}  \nabla_{L^2}F_r(\uu_i)\right)
\right \|_{L^\infty(-r,r)}
\\
&\qquad \leq C_3\qquad \forall s \in [0,1]\qquad \forall i \in \mathbb{N}.
\end{aligned}
\end{equation*}
Again due to the nonnegativity of the iterates $\uu_i$,
we may now conclude that 
there exist constants $C_4, C_5, C_6>0$ independent of $s \in [0,1]$ and $i \in \mathbb{N}$
such that 
\begin{equation*}
\begin{aligned}
&\left |
\left (
\nabla_{L^2} F_r\left ( \uu_i  \right )
-
\nabla_{L^2} F_r\left  ( \uu_i - s \min\left ( \uu_i, \frac{\tau_i}{\omega}  \nabla_{L^2}F_r(\uu_i)\right) \right ),
 \min\left ( \frac{\omega}{\tau_i} \uu_i,  \nabla_{L^2}F_r(\uu_i)\right)
\right )_{L^2(-r,r)}
\right |
\\
&\leq
C_4
\left \|
\min\left ( \frac{\omega}{\tau_i} \uu_i,  \nabla_{L^2}F_r(\uu_i)\right)
\right \|_{L^2(-r,r)}
\\
&\leq
C_5
\end{aligned}
\end{equation*}
holds for all $s \in [0,1]$ and all $i \in \mathbb{N}$ and such that 
\begin{equation*}
\begin{aligned}
&\left |
\left (
\nabla_{L^2} F_r\left ( \uu_i  \right )
-
\nabla_{L^2} F_r\left  ( \uu_i - s \min\left ( \uu_i, \frac{\tau_i}{\omega}  \nabla_{L^2}F_r(\uu_i)\right) \right ),
 \min\left ( \frac{\omega}{\tau_i} \uu_i,  \nabla_{L^2}F_r(\uu_i)\right)
\right )_{L^2(-r,r)}
\right |
\\
&\leq
C_6
\left \|
\nabla_{L^2} F_r\left ( \uu_i  \right )
-
\nabla_{L^2} F_r\left  ( \uu_i - s \min\left ( \uu_i, \frac{\tau_i}{\omega}  \nabla_{L^2}F_r(\uu_i)\right) \right )
\right \|_{L^2(-r,r)}
\\
&\to 0
\end{aligned}
\end{equation*}
holds for all $s \in [0,1]$ as $i$ goes to infinity. 
If we combine all of the above with \eqref{eq:varrhodef}
and invoke the dominated convergence theorem, then 
it follows that $\varrho_i \to 0$ holds for $i \to \infty$. Due to the estimate in \eqref{eq:randomeq27373zhdh3u2odbiud}
and the fact that 
$\epsilon_2^2 \leq \Theta_{\epsilon_1}(\uu_i)^2 \leq \Theta_{\tau_i/\omega}(\uu_i)^2$
holds for all large enough $i$
(cf.\ \eqref{eq:randomFr2636gd7}), this yields 
\[
0
<
(1 - \theta) \epsilon_2^2
\leq
(1 - \theta) \Theta_{\tau_i/\omega}(\uu_i)^2
\leq
\varrho_i \to 0\qquad \text{ for } i \to \infty,
\]
which is impossible. 
\Cref{alg:gradproj} thus has to terminate after finitely many iterations 
as claimed in the theorem 
and the proof is complete.  
\end{proof}

We would like to point out that -- for the control space 
 $\UU = L^\infty_G(-r,r)$ --
 it is crucial 
to establish the existence of the limit $\bar \uu$ in the proof of \cref{th:GradMain} 
via the summability condition $\sum_{i=0}^\infty \tau_i < \infty$ 
that is obtained in the case of nonfinite convergence from the sufficient decrease condition 
\eqref{eq:suffdecrease}.
Indeed, only by exploiting this summability, 
we are able to prove that the sequence $\{\uu_i\}$
is convergent in $L^\infty(-r,r)$ and, thus, convergent in a topology 
for which continuity results for the gradient are available by \cref{lemma:contigrads}.
If one tries, for example, to exploit that the coercivity properties 
of the objective function $F_r$ imply that sequences 
of iterates generated by a gradient descent 
method for the numerical solution of \eqref{eq:PrU}
possess weak accumulation points in $L^2(-r,r)$,
then this turns out to be not very useful
since it is completely unclear how to establish that these accumulation points are
stationary in a meaningful sense if the space $\UU$ is infinite-dimensional.
(For a finite-dimensional space $\UU$ of the type \eqref{eq:UUFEcase}, 
these issues related to the topologies are, of course, completely irrelevant.)

\section{Numerical Experiments}
\label{sec:9}

We conclude this paper with numerical experiments that validate the analytical results of 
\cref{sec:3,sec:4,sec:5,sec:6,sec:7,sec:8}.
As examples, we consider two different instances of the problem 
\begin{equation}
\label{eq:modprobnum}
\left.
	\begin{aligned}
		\text{Minimize} 
		\quad &  \frac12 \|S_r(\uu) - y_D\|_{L^2(\Omega)}^2 
		+ \int_{-r}^r \nu_1  \uu + \frac{\nu_2}{2}( \uu - \uu_D)^2\dd t \\
        \text{w.r.t.}
        \quad &\uu \in L^\infty_G(-r,r),\\
		\text{s.t.} \quad &  \uu \geq 0 \text{ a.e.\  in } (-r,r),
	\end{aligned}~~
\right \}
\end{equation}
studied in \cref{sec:8}.

\subsection{Example 1:  A Test Case With a Known Analytical Solution} 
\label{ExampleA}
In our first test configuration,  the parameters in \eqref{eq:modprobnum} 
are chosen as
\begin{equation}
\label{eq:ex1defs}
\begin{gathered}
 r = 3,\qquad
\Omega = (0,1)^2,\qquad
 \nu_1 = 0,\qquad
 \nu_2 = 10^{-3}, 
 \\
 y_D(x_1,x_2) = \sin(2 \pi x_1) \sin(2 \pi x_2)  \text{ f.a.a.\ } (x_1, x_2)^\top \in \Omega,
 \\
 \uu_D(s) = 1  \text{ f.a.a.\ } s \in (-r,r),
 \\
  f(x_1,x_2) = (8 \pi^2 + 1)\sin(2 \pi x_1) \sin(2 \pi x_2)  \text{ f.a.a.\ } (x_1, x_2)^\top \in \Omega.
 \end{gathered}
\end{equation}
Note that, for the above choice of $f$, the solution $y_P$
of the Poisson problem \eqref{eq:fPoisson} is given by
\[
y_P(x_1, x_2) = \frac{(8 \pi^2 + 1)}{8 \pi^2 }\sin(2 \pi x_1) \sin(2 \pi x_2) 
\text{ f.a.a.\ } (x_1, x_2)^\top \in \Omega.
\]
In particular, we have $r_P = 2\|y_P\|_\infty = (8 \pi^2 + 1)/(4 \pi^2) \approx  2.0253$ and,
as a consequence, $r \geq r_P$. In combination with \cref{th:truncate,th:BVreg}, 
the $L^\infty_G(-r,r)$-regularity of $\uu_D$, 
and the fact that the right-hand side $f$ in \eqref{eq:ex1defs} 
satisfies \Atwo{} by \cite[Proposition~1]{Mityagin2020},
this shows that \eqref{eq:modprobnum} can be reformulated 
as a problem of the type \eqref{eq:Pr} or
\eqref{eq:P}, respectively, (with $u_D \in L^2(\R)$ as an arbitrary extension
of $\uu_D$ to the whole real line) and that the setting \eqref{eq:ex1defs} is covered by 
the results of \cref{sec:3,sec:4,sec:5,sec:6,sec:7,sec:8}. 
Using the formulas for $y_D$, $\uu_D$, and $f$,
one further easily checks that \eqref{eq:modprobnum} possesses the unique 
global solution $\bar \uu  = \uu_D \equiv 1$
in the situation of \eqref{eq:ex1defs} and that the optimal value of \eqref{eq:modprobnum} is zero.
A known analytical solution is thus indeed available under the assumptions of \eqref{eq:ex1defs}.
We remark that the configuration in \eqref{eq:ex1defs} 
is a rather exceptional one since, for the chosen $\uu_D$ and $y_D$, 
the adjoint variables $\bar p_1$ and $\bar \pp_2$ in the stationarity system \eqref{eq:pdsysPr}
satisfied by $\bar \uu$ and $\bar y$ are zero and the coefficient 
in front of the Dirac measure in the formula for the distributional derivative 
of $\bar \pp_2$ vanishes; see \eqref{eq:pushypushy}. This is a consequence of the construction 
that we have used in \eqref{eq:ex1defs} 
to obtain an instance of \eqref{eq:modprobnum} with a known analytical solution. 

\subsection{Example 2: A Test Case Without a Known Analytical Solution} 
\label{ExampleB}

In our second example, we suppose that the quantities in \eqref{eq:modprobnum}
are given by 
\begin{equation}
\label{eq:ex2defs}
\begin{gathered}
 r = 2,\qquad
\Omega = (0,1)^2,\qquad
 \nu_1 \in \left \{0, \frac{1}{4096}, \frac{1}{2048},\frac{1}{1024}, \frac{1}{512}, \frac{1}{256}, \frac{1}{128} \right \},\qquad
 \nu_2 = 10^{-4}, 
 \\
 y_D(x_1,x_2) = -0.125 + 0.275\,\sin(2 \pi x_1) \sin(2 \pi x_2)   \text{ f.a.a.\ } (x_1, x_2)^\top \in \Omega,
 \\
 \uu_D(s) = 0 \text{ f.a.a.\ } s \in (-r,r),
 \\
  f(x_1,x_2) =  8 \pi^2 \sin(2 \pi x_1) \sin(2 \pi x_2) \text{ f.a.a.\ } (x_1, x_2)^\top \in \Omega.
 \end{gathered}
\end{equation}
For this configuration, one easily checks that 
$r_P = r = 2$ holds, that $\uu_D$ is an element of $L^\infty_G(-r,r)$, and that $f$ satisfies 
condition \Atwo{}.
 The problem \eqref{eq:modprobnum} 
can thus again be recast in the form \eqref{eq:Pr} or \eqref{eq:P}, respectively,  (with $u_D = 0 \in L^2(\R)$)
and all of the results in \cref{sec:3,sec:4,sec:5,sec:6,sec:7,sec:8} are applicable.
To our best knowledge, an analytical solution  is not available
for the problem \eqref{eq:modprobnum} in the 
situation of \eqref{eq:ex2defs}. 

\subsection{Details on the Numerical Realization}
\label{subsec:ImplementationDetails}

To be able to solve the problem \eqref{eq:modprobnum} numerically, 
one has to discretize the control space $L^\infty_G(-r,r)$ and 
the governing PDE \eqref{eq:PDE}. For the 
discretization of the controls $\uu$, we 
consider piecewise constant 
finite element functions on an equidistant mesh of width $h_\uu = (2 r)/N_\uu$, 
$N_\uu \in \mathbb{N}$, i.e., 
we replace $L^\infty_G(-r,r)$ with the space 
\[
\UU_{h_\uu}
:=
\left \{
\uu \in L^\infty (-r,r)
\mid 
\uu = \mathrm{const} \text{ a.e.\ on } ((n-1)h_\uu - r, n h_\uu - r) ~\forall n=1,...,N_\uu
\right \} \subset  L^\infty_G(-r,r).
\]
Note that, if we discretize the controls $\uu$ in \eqref{eq:modprobnum}
along the above lines and keep the exact PDE, then the 
resulting semidiscrete minimization problem 
still satisfies the assumptions of \cref{sec:8}; cf.\ \eqref{eq:UUFEcase}.
This means that the convergence result established for 
\cref{alg:gradproj} in \cref{th:GradMain} applies to \eqref{eq:modprobnum}
in both the fully continuous and the control-discrete (but state-continuous) setting.
For the discretization of the PDE, 
this is different. In fact, it is easy to check that the results in
\cref{sec:6,sec:7,sec:8} \emph{do not} carry over to the 
state-discretized setting; see the arguments used in the proof of \cref{lem:levelstates}.
At the end of the day, this means that the algorithm that is run in practice 
in the numerical experiments presented below
should not be interpreted as a gradient descent algorithm for 
a control- and state-discrete version of \eqref{eq:modprobnum} but rather 
as a gradient descent algorithm for a 
control-discrete and state-continuous version of \eqref{eq:modprobnum} that is executed with 
inexact gradient and function value evaluations where the inexactness arises 
from the discretization of the governing PDE (\emph{first optimize -- then discretize}; cf.\
\cite{Cohen2018OnAW,Hertlein2019,Hinze2012}). 
As we will see in \cref{subsec:9.4}, these peculiarities of the discretization are also reflected 
by the numerical results. To keep it simple,
in what follows, we restrict our attention to 
the case where the state $y$ and the governing PDE
are discretized by means of
globally continuous and piecewise affine finite element functions 
on Friedrichs-Keller triangulations 
of width $h_y = 1/N_y$, $N_y \in \mathbb{N}$.
For the solution of the state equation and the PDE in \eqref{eq:psys_1},
that has to be solved to evaluate the 
$L^2$-gradient $\nabla_{L^2}F_r(\uu_i) \in 
\UU_{h_\uu}$ in step \ref{alg:line:3} of \cref{alg:gradproj},
we use the exact same scheme as in \cite{Christof2021,Christof2018nonsmoothPDE}, i.e., 
we employ mass lumping in the terms associated with the semilinearity and 
solve the resulting nonlinear system of equations with a semismooth Newton method. 
We remark that a very delicate point in the numerical realization of  \cref{alg:gradproj}
is the computation of the integrals appearing in \eqref{eq:psys_1}.
In our experiments, we evaluate the integrals in the formula for $\pp_2$ in \eqref{eq:psys_1}
by rewriting them as $L^2(\Omega)$-scalar products and by subsequently employing 
a quadrature rule based on the mass-matrix of the finite element space used for the discretization of $y$. 
For the cell-wise averaging that is necessary to calculate the 
$L^2(-r,r)$-projection onto $\UU_{h_\uu}$ in \eqref{eq:L2gradformula}, we use 
the quadrature rule 
\begin{equation}
\label{eq:cell_wise_quad_rule}
\frac{1}{h_\uu} \int_{(n-1)h_\uu - r}^{n h_\uu - r}
\vv(s) \dd s
\approx 
\frac{1}{Q}
\sum_{j=1}^Q 
\vv\left ((n-1)h_\uu - r +  \frac{j}{Q+1}h_\uu \right ),
\qquad  n=1,...,N_\uu,\qquad Q \in \N.
\end{equation}
Note that this rule avoids evaluating the integrand at the 
endpoints of the cells where the elements of $\UU_{h_\uu}$
are discontinuous. Numerical experiments suggest that it is advantageous 
to determine the $L^2(-r,r)$-projection onto $\UU_{h_\uu}$ in \eqref{eq:L2gradformula}
by means of quadrature rules 
whose evaluation points are relatively far away from the discontinuities of the controls.
(This is most likely caused by an interaction with the quadrature rule used in \eqref{eq:psys_1}.)

\subsection{Numerical Results}
\label{subsec:9.4}

The results that are obtained when 
\cref{alg:gradproj} is applied to problem \eqref{eq:modprobnum} in the situation of 
Example~\ExA{} and the discretization described in \cref{subsec:ImplementationDetails} 
is used for the numerical realization can be seen in 
\Cref{fig:Ex1-1,fig:Ex1-2} and \Cref{tab:Ex1Table1,tab:Ex1Table2} below. In all of these experiments, 
the line-search in \cref{alg:gradproj} 
was performed with the parameters $\sigma = 512$ and $\omega = \theta = 0.8$,
the semismooth Newton method used for the state equation
was run with the tolerance $10^{-12}$, the initial iterate $\uu_0$ was 
zero, the relaxation parameter $\epsilon_1$ appearing
in the stationarity measure $\Theta_{\epsilon_1}$ was set to $10^{-16}$,
and the number of cell-wise quadrature points in \eqref{eq:cell_wise_quad_rule} was 
chosen as
$Q = 5$.
\Cref{fig:Ex1-1,fig:Ex1-2} and \Cref{tab:Ex1Table1}
depict the convergence behavior of \cref{alg:gradproj} in the case 
where the tolerance $\epsilon_2$ in the termination criterion 
is set to zero.
(We stopped the algorithm in these experiments when the step size $\tau_i$ in the line-search loop 
of  \cref{alg:gradproj} became smaller than $10^{-10}$). As \Cref{fig:Ex1-1} shows, 
in this test configuration, the level that our gradient projection algorithm 
can drive the objective value and the stationarity measure to depends 
greatly on the  widths used for the discretization. 
For the test case with the largest widths ($h_y = h_\uu = 1/32$),
\cref{alg:gradproj} only achieves a final objective value of $\approx 1.424\cdot 10^{-5}$ 
and a final stationarity measure of $\approx 7.580\cdot 10^{-7}$. 
If the widths are reduced, then the ultimately obtained accuracy
improves significantly until, for the smallest widths ($h_y = h_\uu = 1/1024$),
we arrive at a final objective value of $\approx 1.397\cdot 10^{-11}$ and a final stationarity measure 
of $\approx 1.522 \cdot 10^{-11}$. Note that the latter values are equal
to the objective value/stationarity measure of the exact solution $\bar \uu \equiv 1$
(namely, zero)
up to an error that is not much larger than the tolerance $10^{-12}$ of the 
semismooth Newton method used for the solution of the state equation.
At this level of accuracy, the tolerance of the Newton method and 
the quadrature errors begin to limit the gains that are obtained by choosing smaller widths $h_y$ and $h_\uu$;
see the last two lines in
the second plot of \Cref{fig:Ex1-1}.

We remark that the above observations are 
consistent with the interpretation of the implemented algorithm as a gradient descent method for a 
control-discrete and state-continuous version of \eqref{eq:modprobnum} that is executed with 
inexact gradient and function value evaluations; see \cref{subsec:ImplementationDetails}: 
If the discretization is too coarse, 
then the level of inexactness is too high and the perturbation of the gradients 
prevents the method from reducing $\Theta_{\epsilon_1}$ to acceptable levels. Only once the widths are small enough, 
the approximations of the ``real'' gradients of the control-discrete and state-continuous 
objective function obtained from the FE-discretization of the involved PDEs
are sufficiently accurate to achieve an adequate reduction.
We remark that, by varying the width $h_\uu$ in the situation of 
\Cref{fig:Ex1-1,fig:Ex1-2}, one can check that the width $h_y$ is decisive for the 
final accuracy level while the choice of $h_\uu$ only has little influence.
 This reflects that the discretization of the PDEs
is indeed the source of the inexactness of the gradients
as described in \cref{subsec:ImplementationDetails}.
Compare also with the analysis in \cref{sec:8} in this context 
which is completely unaffected by the discretization of the control space.

A further important feature of \cref{alg:gradproj}
visible in \Cref{fig:Ex1-1} is the property of asymptotic mesh independence:
As can be seen in \Cref{fig:Ex1-1}, the graphs that
depict how the objective value and the stationarity measure 
decrease with the iteration number
initially all have the same slope  and only branch off once the maximum accuracy
of the respective discretization level is reached.
In practice, this means that, if the goal is to calculate a control
whose stationarity measure $\Theta_{\epsilon_1}$  is smaller than a 
given tolerance $\epsilon_2 > 0$, then \cref{alg:gradproj} is able to accomplish this 
with a number of iterations that does not tend to infinity as $h_\uu$ and $h_y$
go to zero but is effectively constant for all widths that are sufficiently 
small; see \cref{tab:Ex1Table2}. 

As a final observation, we would like to point out that, even though 
the analytical solution $\bar \uu$ is continuous in the situation of 
Example \ExA, the approximate solutions 
calculated by \cref{alg:gradproj}
possess discontinuities at the origin; see \Cref{fig:Ex1-2}. This illustrates that
problems of the type \eqref{eq:P} indeed promote a 
jump at zero and that cases with continuous 
solutions are exceptional and typically instable w.r.t.\ small perturbations. 
We remark that this can also be seen in the formula \eqref{eq:pushypushy} 
in which the coefficient in front of the Dirac measure only vanishes 
if $p_1$ has mean value zero. Note that, in the limit $h_y \to 0$ and $h_\uu \to 0$,
the discontinuities in \Cref{fig:Ex1-2} vanish and the approximate solutions 
converge to the exact optimal control $ \bar \uu \equiv 1$. For the finest discretization,
the $L^\infty(-r,r)$-error between the approximate and the exact solution is 
$\approx 5.684\cdot 10^{-5}$; see \Cref{tab:Ex1Table1}. 

\vspace{0.0cm}
\begin{figure}[H]
\begin{center}
\subfigure{\includegraphics[width=7.5cm,height=8.25cm,keepaspectratio]{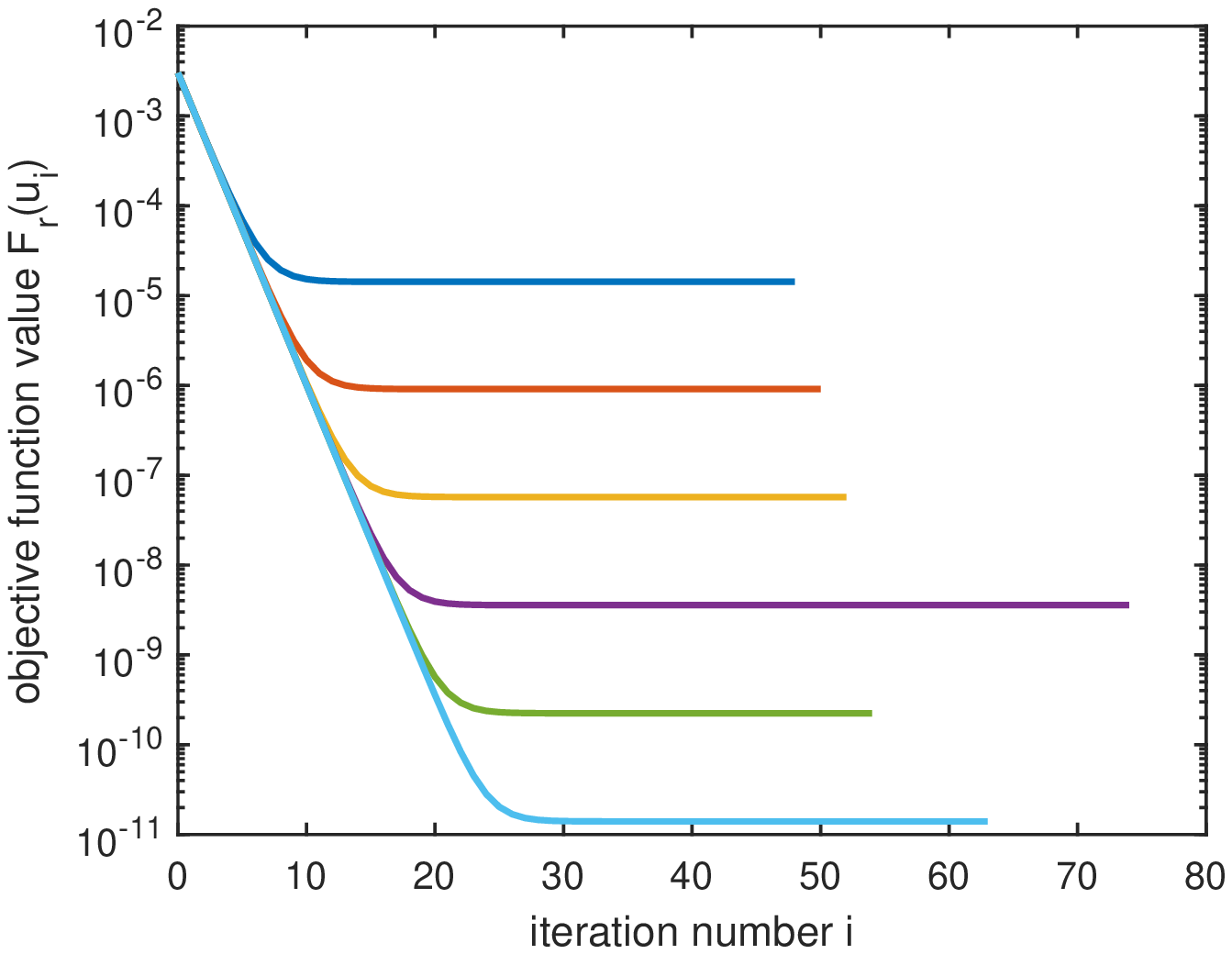}} 
\hspace{0.3cm}
\subfigure{\includegraphics[width=7.5cm,height=8.25cm,keepaspectratio]{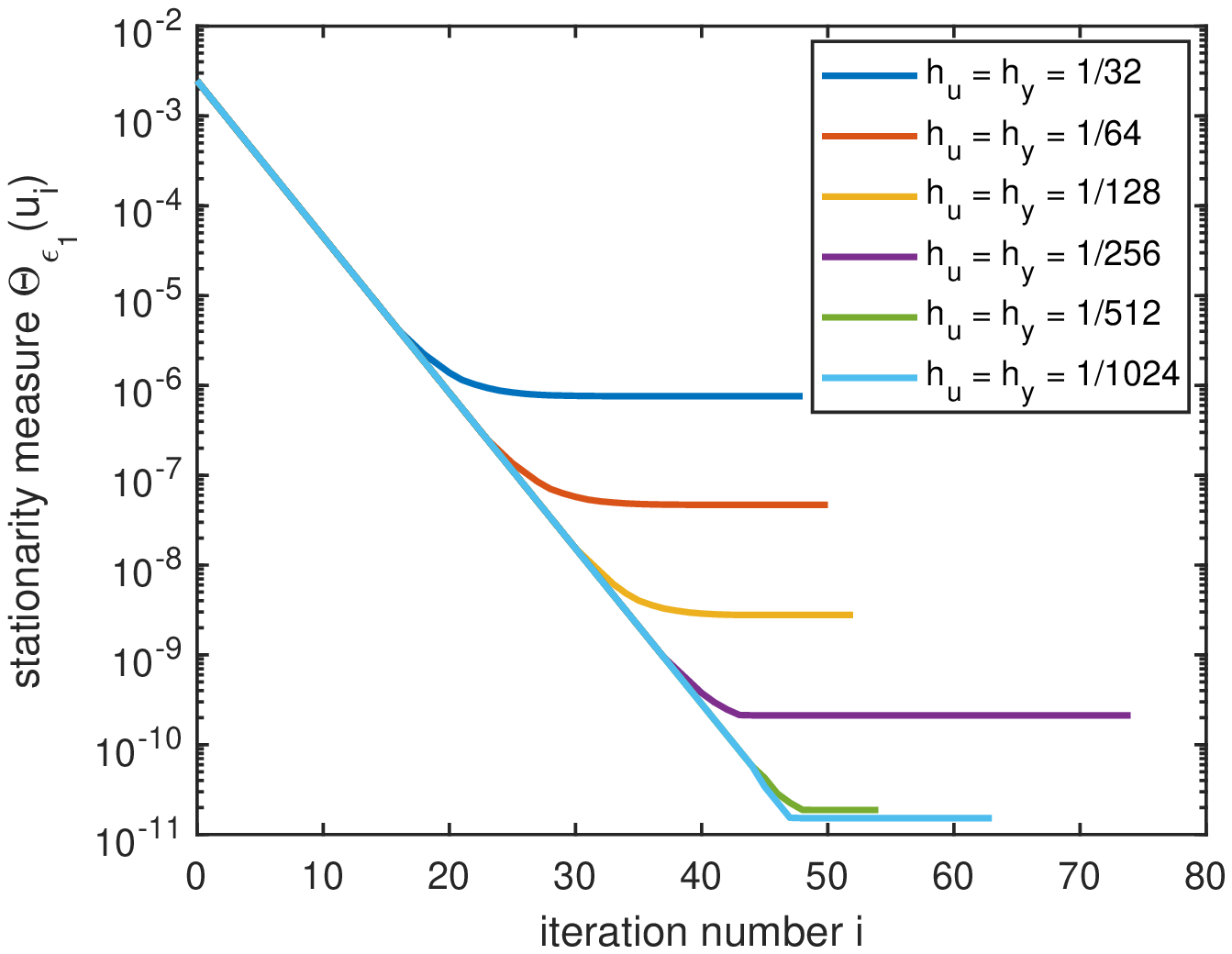}} 
\vspace{0.2cm}
\caption{Value of the objective function (left) and the stationarity measure 
$\Theta_{\epsilon_1}(\uu_i)$ (right) as 
functions of the iteration counter $i$ of \cref{alg:gradproj}
in the case of Example \ExA{} for different widths $h_\uu$ and $h_y$.
The legend refers to both figures.
It can be seen that 
the reduction of the objective value and stationarity measure stagnates at a 
threshold that depends on the discretization level. This reflects that, by discretizing the involved PDEs, 
one introduces an error and, thus, causes 
\cref{alg:gradproj} to run with inexact gradient and function value evaluations; see \Cref{subsec:ImplementationDetails}.}
\label{fig:Ex1-1}
\end{center}
\end{figure} 

\begin{figure}[H]
\begin{center}
\subfigure{\includegraphics[width=7.21cm,height=8.25cm,keepaspectratio]{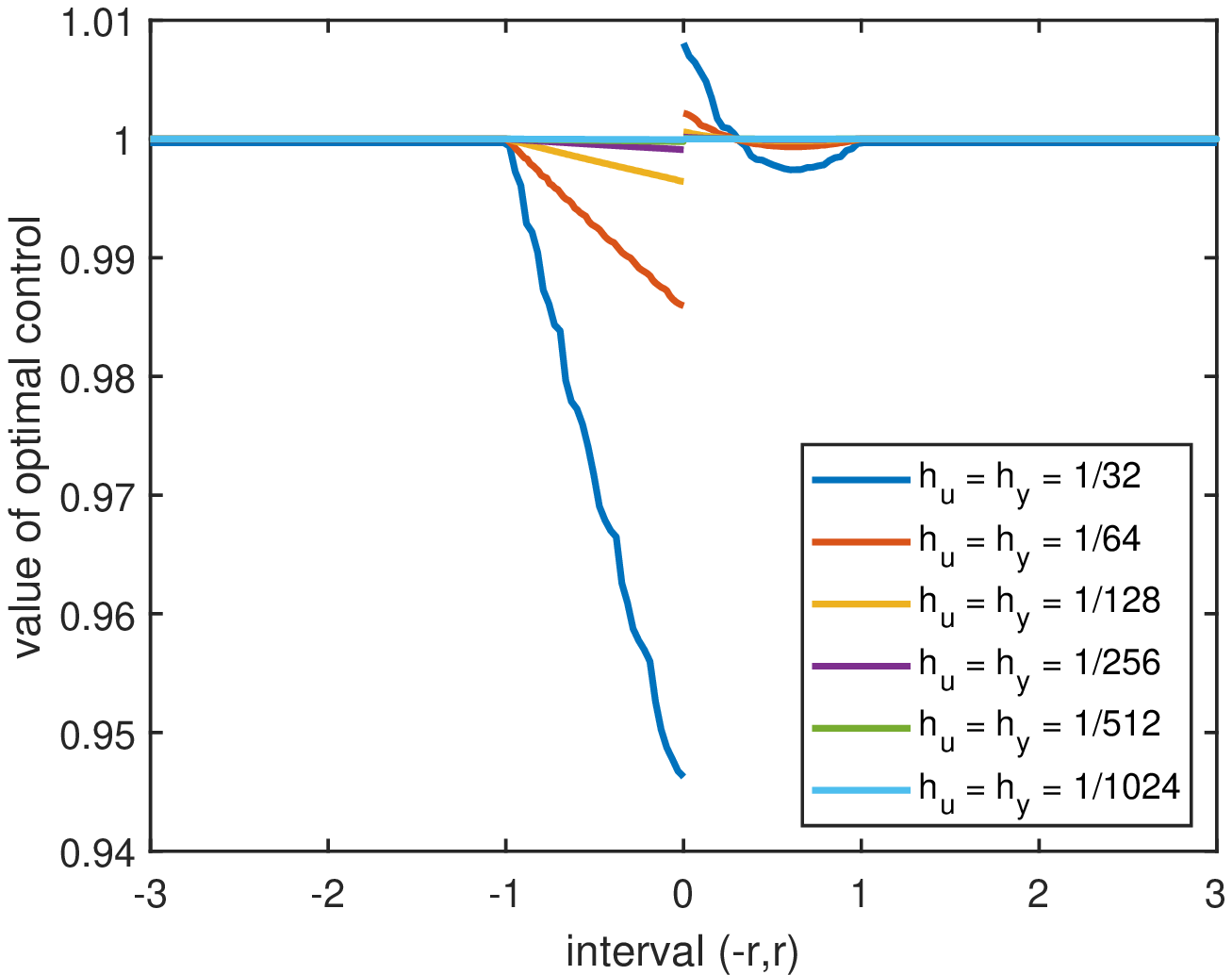}} 
\subfigure{\includegraphics[width=8cm,height=8.25cm,keepaspectratio]{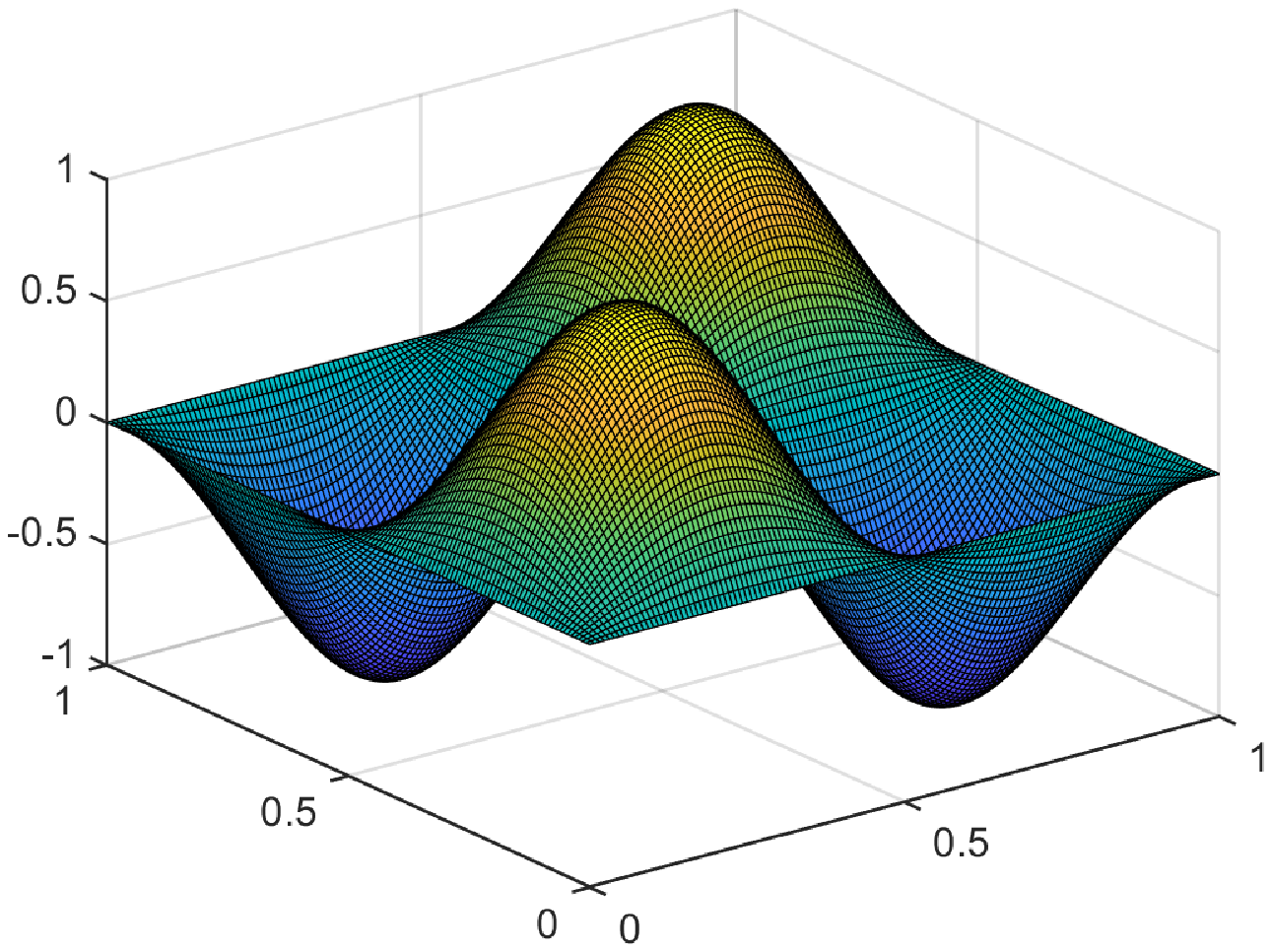}} 
\vspace{0.2cm}
\caption{Approximations of the optimal control $\bar \uu$
obtained from \cref{alg:gradproj} at the end of the calculations 
depicted in \Cref{fig:Ex1-1} for different mesh widths $h_\uu$ and $h_y$ (left)  and resulting optimal state $\bar y$
for $h_\uu = h_y = 1/128$ (right).}
\label{fig:Ex1-2}
\end{center}
\end{figure}
\vspace{-0.2cm}
\begin{table}[H]
\begin{tabular}{c | c c c c c c }
\hline\noalign{\smallskip}
$h_\uu = h_y$   & $1/32$ & $1/64$ & $1/128$ & $1/256$ & $1/512$  & 1/1024 \\
\noalign{\smallskip}\hline\noalign{\smallskip}
 \begin{tabular}{@{}c@{}}$F_r$ \end{tabular} 
   & $ 1.424\cdot 10^{-5}$   & $9.095\cdot 10^{-7}$   & $ 5.715 \cdot 10^{-8}$ & $3.577\cdot 10^{-9}$ & $2.236\cdot 10^{-10}$ & $1.397\cdot 10^{-11}$ \\
\noalign{\smallskip}\hline\noalign{\smallskip}
 \begin{tabular}{@{}c@{}} $\Theta_{\epsilon_1}$ \end{tabular} 
  & $7.580 \cdot 10^{-7}$   & $ 4.672\cdot 10^{-8}$   & $ 2.781\cdot 10^{-9}$ & $2.123\cdot 10^{-10}$ & $1.875\cdot10^{-11}$ & $1.522 \cdot 10^{-11}$\\
\noalign{\smallskip}\hline\noalign{\smallskip}
 \begin{tabular}{@{}c@{}} $L^\infty$-error \end{tabular} 
  & $ 5.373 \cdot 10^{-2}$   & $1.404 \cdot 10^{-2}$   & $3.580 \cdot 10^{-3} $ & $ 9.033 \cdot 10^{-4}$ & $2.268\cdot 10^{-4}$ & $5.684\cdot 10^{-5}$
\\
\noalign{\smallskip}\hline\noalign{\smallskip} 
\end{tabular}
~\\[0.4cm]
\caption{
Final objective value (second row), stationarity measure (third row),
and $L^\infty(-r,r)$-error between the approximate solution
obtained from \cref{alg:gradproj} and $\bar \uu \equiv 1$ (fourth row) 
at the end of the computations depicted in  \Cref{fig:Ex1-1} for different widths $h_\uu$ and $h_y$ (first row). 
The measured $L^\infty(-r,r)$-errors suggest that, for this test case, the convergence is quadratic.}
\label{tab:Ex1Table1}   
\end{table} 
\vspace{-0.2cm}
\begin{table}[H]
\begin{tabular}{c | c c c c c c }
\hline\noalign{\smallskip}
\diagbox{$h_y$}{$h_\uu$}  & $1/32$ & $1/64$ & $1/128$ & $1/256$ & $1/512$  & 1/1024 
\\
\noalign{\smallskip}\hline\noalign{\smallskip}
 \begin{tabular}{@{}c@{}}$1/80$ \end{tabular} 
& $\infty$    & $\infty$    &  $\infty$  & $\infty$  & $\infty$  & $\infty$ 
\\
\noalign{\smallskip}\hline\noalign{\smallskip}
 \begin{tabular}{@{}c@{}}$1/90$ \end{tabular} 
& $37$    & $38$    &  $\infty$  & $\infty$  & $\infty$  & $\infty$ 
\\
\noalign{\smallskip}\hline\noalign{\smallskip}
 \begin{tabular}{@{}c@{}}$1/100$ \end{tabular} 
& $32$    & $32$    &  $33$  & $34$  & $34$  & $34$ 
\\
\noalign{\smallskip}\hline\noalign{\smallskip}
 \begin{tabular}{@{}c@{}}$1/110$ \end{tabular} 
& $32$    & $32$    &  $32$  & $32$  & $32$  & $32$  
\\
\noalign{\smallskip}\hline\noalign{\smallskip}
 \begin{tabular}{@{}c@{}}$1/120$ \end{tabular} 
& $32$    & $32$    &  $32$  & $32$  & $32$  & $32$  
\\
\noalign{\smallskip}\hline\noalign{\smallskip}
 \begin{tabular}{@{}c@{}}$1/130$ \end{tabular} 
& $32$    & $32$    &  $32$  & $32$  & $32$  & $32$  
\\
\noalign{\smallskip}\hline\noalign{\smallskip}
\end{tabular}
~\\[0.4cm]
\caption{Number of gradient projection steps needed to drive the stationarity measure 
$\Theta_{\epsilon_1}$ below $\epsilon_2 = 10^{-8}$ in the situation 
of Example \ExA{} for different $h_y$ and $h_\uu$. It can be seen that 
the convergence is mesh-independent for all $h_y$ that are smaller than
$\approx 1/110$; cf.\ \Cref{fig:Ex1-1}.}
\label{tab:Ex1Table2}   
\end{table} 

Having demonstrated that \cref{alg:gradproj} works and 
is able to reproduce the analytical solution $\bar \uu = 1$ in the situation of Example \ExA{}, 
we now turn our attention to Example \ExB{} for which no analytical solution is available. 
The results that we have obtained for Example \ExB{} can be seen in \Cref{fig:Ex2-1,fig:Ex2-2,tab:Ex2Table1} below.
In all of the depicted experiments, the parameters 
in \cref{alg:gradproj}  were chosen as $\sigma = 2048$, 
$\omega = \theta = 0.8$, $\epsilon_1 = 10^{-16}$, and $\epsilon_2 = 10^{-8}$;
the tolerance of the semismooth Newton method  
was set to $10^{-12}$; the initial guess was $\uu_0 = 0$;
and the controls and states were discretized as described in \cref{subsec:ImplementationDetails}
with $Q = 10$.
\vspace{0.3cm}
\begin{figure}[H]
\begin{center}
\subfigure{\includegraphics[width=7.2cm,height=8.25cm,keepaspectratio]{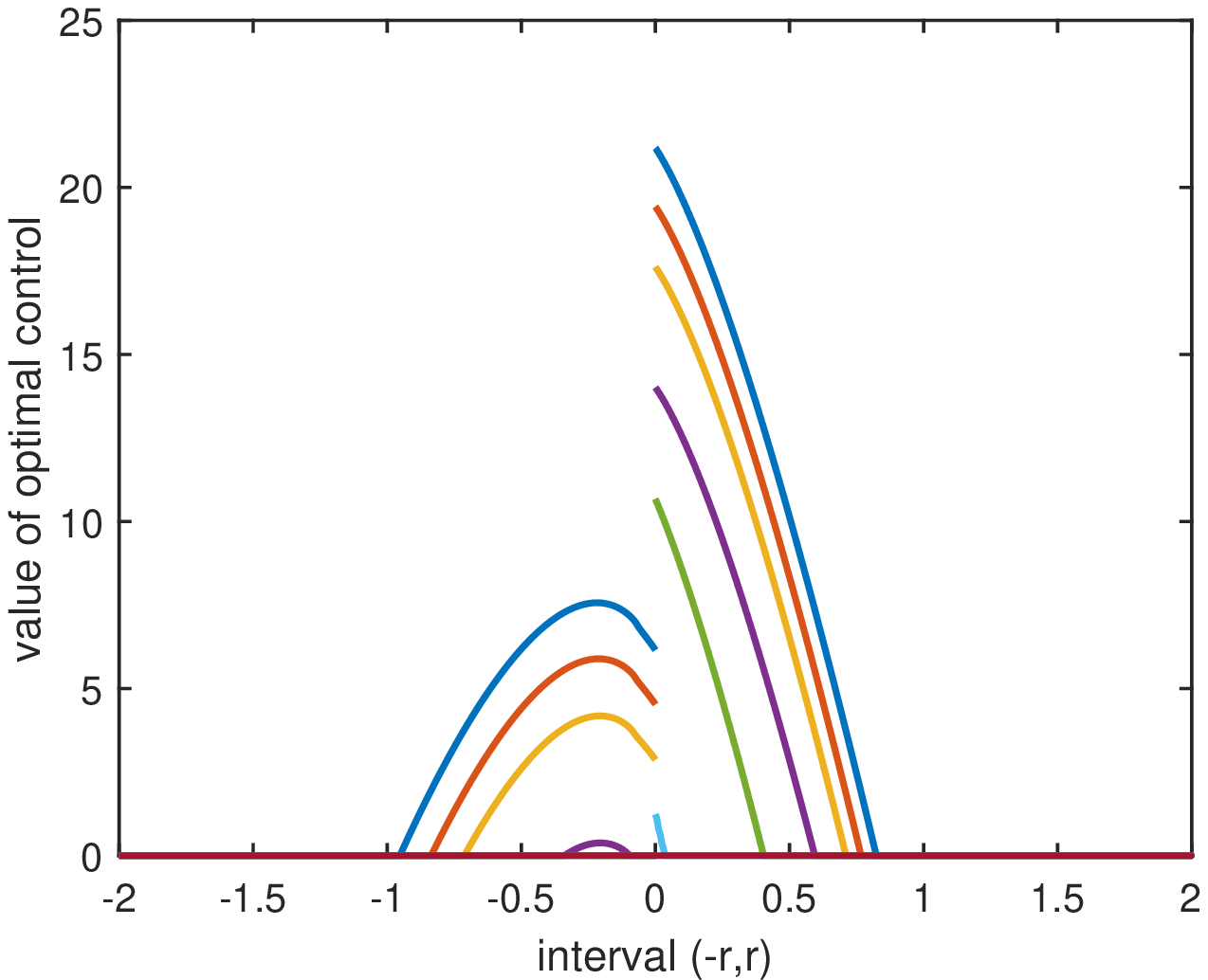}} 
\hspace{0.3cm}
\subfigure{\includegraphics[width=7.3cm,height=8.25cm,keepaspectratio]{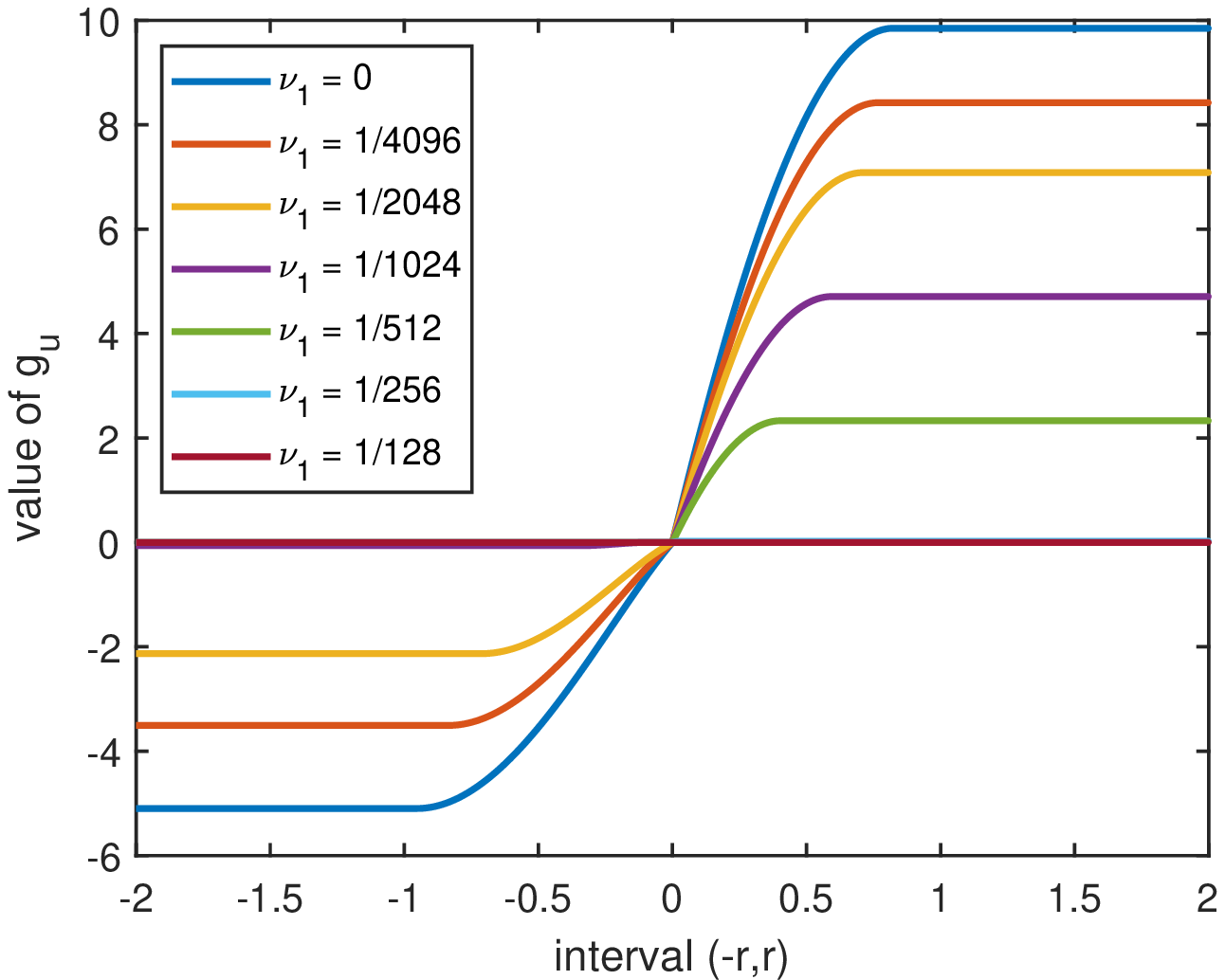}} 
\vspace{0.2cm}
\caption{Approximations of the optimal control $\bar \uu$ (left)
and the superposition function $g_{E_r(\bar \uu)}$ (right)
obtained from \cref{alg:gradproj} in the situation of Example \ExB{} for different values of $\nu_1$ 
and $h_y = h_\uu = 1/512$. The legend refers to both figures.
It can be seen that the $L^1$-regularization
promotes sparsity properties; cf.\ the stationarity system \eqref{eq:pdsysPr}.
For the highest value $\nu_1 = 1/128$, the optimal control is zero.}
\label{fig:Ex2-1}
\end{center}
\end{figure}
\vspace{-0.5cm}
\begin{figure}[H]
\begin{center}
\subfigure{\includegraphics[width=7.5cm,height=8.25cm,keepaspectratio]{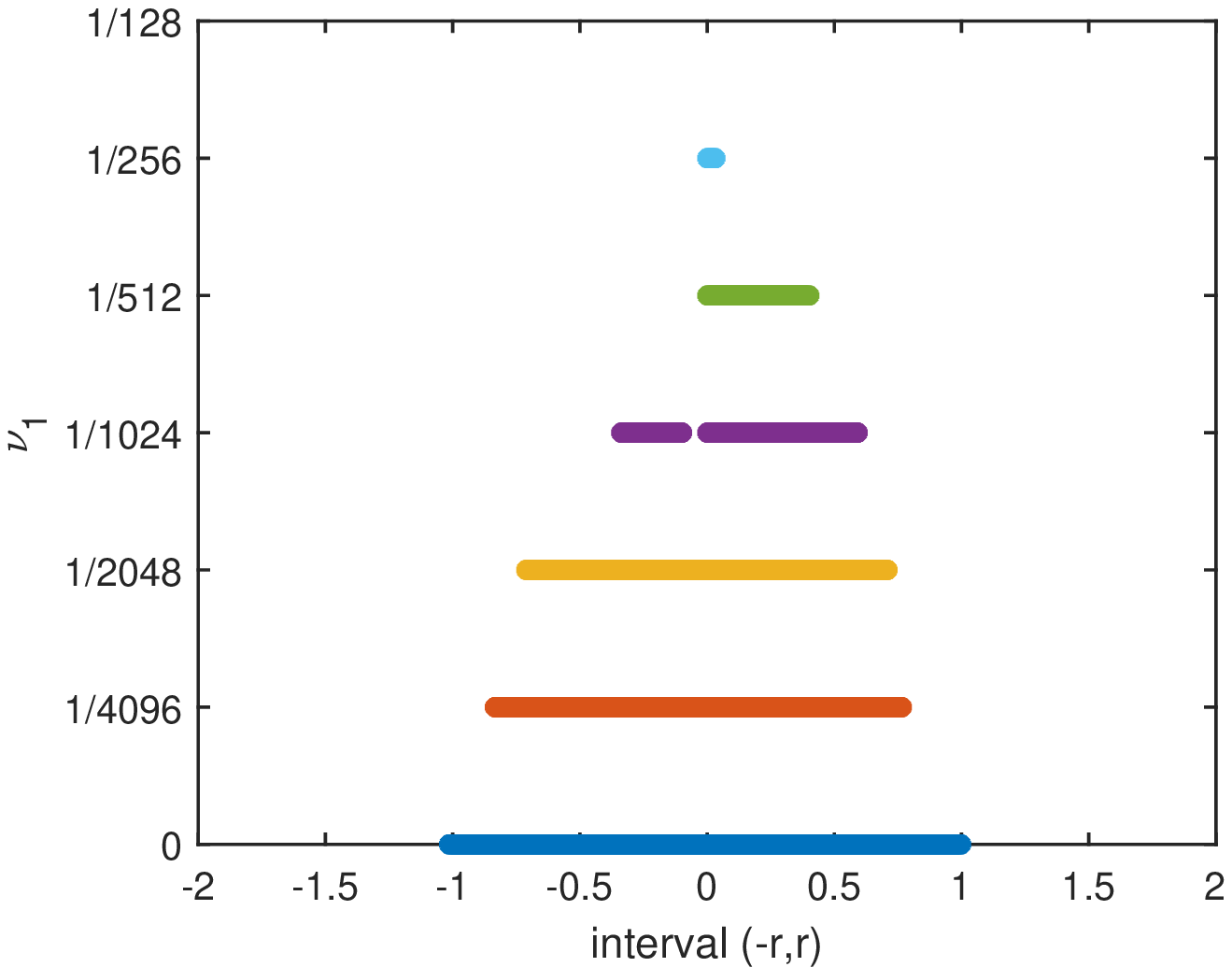}} 
\hspace{0.3cm}
\subfigure{\includegraphics[width=7.7cm,height=8.25cm,keepaspectratio]{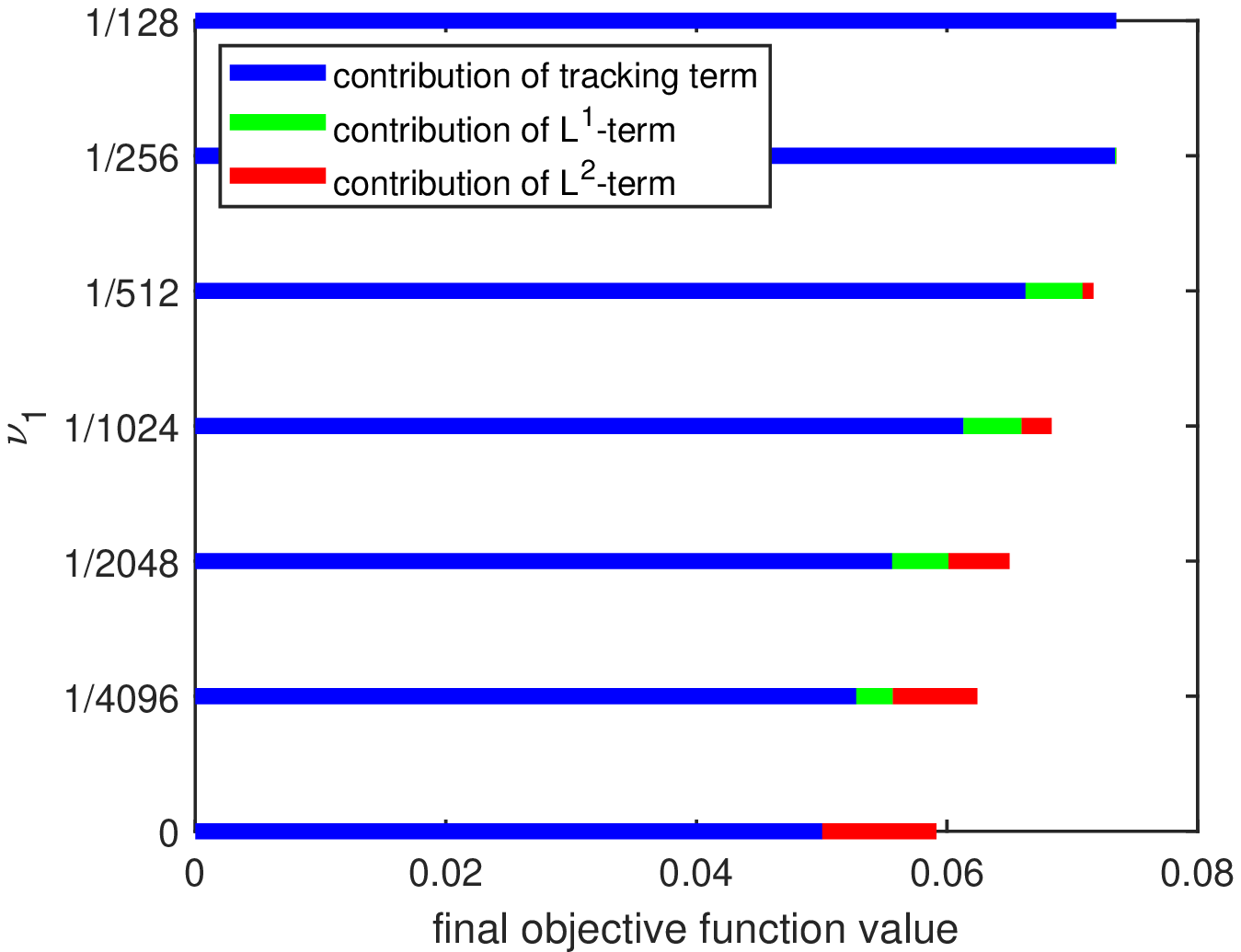}} 
\vspace{0.1cm}
\caption{Depiction of the (essential) support of the optimal control $\bar \uu$ (left)
and contributions of the tracking term and the $L^1(-r,r)$- and $L^2(-r,r)$-regularization terms
to the final objective function value (right) in the situation of \Cref{fig:Ex2-1}. 
It can be seen that the essential support of the optimal control $\bar \uu$ shrinks as 
$\nu_1$ increases. For the highest value $\nu_1 = 1/128$, it vanishes.}
\label{fig:Ex2-2}
\end{center}
\end{figure}

\begin{table}[H]
\begin{tabular}{c | c c c c c c }
\hline\noalign{\smallskip}
\diagbox{$h_y$}{$h_\uu$}  & $1/32$ & $1/64$ & $1/128$ & $1/256$ & $1/512$  & 1/1024 
\\
\noalign{\smallskip}\hline\noalign{\smallskip}
 \begin{tabular}{@{}c@{}}$1/64$ \end{tabular} 
& $45$    & $\infty$    &  $46$  & $47$  & $49$  & $48$ 
\\
\noalign{\smallskip}\hline\noalign{\smallskip}
 \begin{tabular}{@{}c@{}}$1/128$ \end{tabular} 
& $46$    & $46$    &  $46$  & $46$  & $46$  & $46$ 
\\
\noalign{\smallskip}\hline\noalign{\smallskip}
 \begin{tabular}{@{}c@{}}$1/256$ \end{tabular} 
& $46$    & $46$    &  $46$  & $46$  & $46$  & $46$ 
\\
\noalign{\smallskip}\hline\noalign{\smallskip}
 \begin{tabular}{@{}c@{}}$1/512$ \end{tabular} 
& $46$    & $46$    &  $46$  & $46$  & $46$  & $46$ 
\\
\noalign{\smallskip}\hline\noalign{\smallskip}
 \begin{tabular}{@{}c@{}}$1/1024$ \end{tabular} 
& $46$    & $46$    &  $46$  & $46$  & $46$  & $46$ 
\\
\noalign{\smallskip}\hline\noalign{\smallskip}
\end{tabular}
\vspace{0.4cm}
\caption{Number of gradient projection steps needed to drive the stationarity measure 
$\Theta_{\epsilon_1}$ below the tolerance $\epsilon_2 = 10^{-8}$ in the situation 
of Example \ExB{} for different mesh widths $h_y$ and $h_\uu$ and $\nu_1 = 1/1024$. It can again 
be seen that 
the convergence is mesh-independent for all $h_y$ that are 
sufficiently small. Compare with \Cref{fig:Ex1-1} and \Cref{tab:Ex1Table2} in this context.}
\label{tab:Ex2Table1}   
\end{table} 
~\\[-1cm]

As \cref{tab:Ex2Table1} shows,
when applied to Example \ExB{},
\cref{alg:gradproj} exhibits an asymptotically mesh-independent convergence behavior 
that is 
analogous to that observed for Example \ExA{} in \Cref{fig:Ex1-1,tab:Ex1Table2}. 
In \Cref{fig:Ex2-1}, one can further see that the optimal superposition functions $g_{E_r(\bar \uu)}$ calculated in Example \ExB{} 
are indeed ReLU-like and ``tight'' in the sense that they are sigmoidal, continuously differentiable away from the origin, 
possess a distinct kink at zero, and have a vanishing derivative outside of the range of the optimal state. 
This confirms the predictions made by \cref{cor:tightytight,cor:actisim} and shows that \eqref{eq:P} indeed promotes properties that are 
present in popular activation functions from machine learning. 
In \Cref{fig:Ex2-1,fig:Ex2-2}, one can also 
see that the $L^1$-regularization term in the objective function of \eqref{eq:modprobnum} 
causes the optimal controls to be sparse in the sense that they are supported on a set 
whose measure decreases as $\nu_1$ increases. This behavior is in good accordance with the 
influence that the parameter $\nu_1$ has on the system \eqref{eq:pdsysPr}; cf.\ 
\cite{Stadler2009}. We remark that
the derivation of a-priori error estimates for problems of the type \eqref{eq:modprobnum} 
is an interesting topic. We leave this field for future research. 


\bibliographystyle{plain}
\bibliography{references}

\end{document}